\setlist{nosep}
\newtheorem{theorem}{Theorem}[section]
\newtheorem*{theorem*}{Theorem}
\newtheorem{lemma}[theorem]{Lemma}
\newtheorem{corollary}[theorem]{Corollary}
\newtheorem{proposition}[theorem]{Proposition}
\newtheorem{remark}[theorem]{Remark}
\newtheorem{definition}[theorem]{Definition}
\newtheorem{IS}[theorem]{Incorrect Statement}
\newtheorem{notation}[theorem]{Notation}
\newtheorem{fact}[theorem]{Fact}
\newcommand\NIP{\ensuremath{\mathrm{NIP}}}
\newcommand\BS{\ensuremath{\mathrm{BS}}}
\newcommand\ABS{\ensuremath{\mathrm{ABS}}}
\newcommand\ACVF{\ensuremath{\mathrm{ACVF}}}
\newcommand\La{\mathrm{L}}
\newcommand\red{\mathrm{red}}
\newcommand\T{\mathrm{T}}
\newcommand\D{\mathrm{D}}
\newcommand\C{\mathrm{C}}
\newcommand\X{\mathrm{X}}
\newcommand\R{\mathrm{R}}
\newcommand\f{\mathrm{f}}
\newcommand\g{\mathrm{g}}
\newcommand\q{\mathrm{q}}
\newcommand\A{\mathrm{A}}
\newcommand\M{\mathrm{M}}
\newcommand\N{\mathrm{N}}
\newcommand\I{\mathrm{I}}
\newcommand\B{\mathrm{B}}
\newcommand\Z{\mathrm{Z}}
\newcommand\K{\mathrm{K}}
\newcommand\J{\mathrm{J}}
\newcommand\Q{\mathrm{Q}}
\newcommand\F{\mathrm{F}}
\newcommand\G{\mathrm{G}}
\newcommand\E{\mathrm{E}}
\newcommand\EMtype{\mathrm{EM type}}
\newcommand \EM{\mathrm{EM}}
\newcommand \Av{\mathrm{Av}}
\newcommand\tensor{\otimes}
\newcommand\Lat{\mathrm{S}}
\newcommand{\acl}{\mathrm{acl}}
\newcommand{\dcl}{\mathrm{dcl}}
\newcommand{\tp}{\mathrm{tp}}
\newcommand{\Cb}{\mathrm{Cseq}}
\newcommand{\cof}{\mathrm{cof}}
\newcommand{\St}{\mathrm{St}}
\newcommand{\p}{\mathrm{p}}
\newcommand{\Int}{\mathrm{Int}}
\newcommand{\Aut}{\mathrm{Aut}}
\newcommand{\h}{\mathrm{h}}
\newcommand{\rest}{\upharpoonright}
\newcommand{\ACF}{\mathrm{ACF}}
\newcommand{\V}{\mathrm{V}}
\newcommand{\Y}{\mathrm{Y}}
\newcommand{\EP}{\mathrm{EP}}
\newcommand{\GS}{\mathrm{GS}}
\newcommand{\Code}{\mathrm{Code}}
\newcommand{\dist}{\mathrm{dist}}
\newcommand{\dom}{\mathrm{dom}}
\newcommand{\cX}{\mathcal{X}}
\newcommand{\cY}{\mathcal{Y}}
\newcommand{\Tree}{\mathcal{T}}
\title{On Descent and germs}
\author{Pierre Simon and Mariana Vicar\'ia}
\begin{document}

\maketitle

\begin{abstract}
We present a new proof of descent for stably dominated types in any theory, dropping the hypothesis of the existence of global invariant extensions. Additionally, we give a much simpler proof of descent for stably dominated types in 
$\ACVF$. Furthermore, we demonstrate that any stable set in an $\NIP$ theory has the bounded stabilizing property. This result is subsequently used to correct Proposition 6.7 from the book on stable domination and independence in 
$\ACVF$.
\end{abstract}
\section{Introduction}
In \cite{HHM} D. Haskell, E. Hrushovski, and D. Macpherson introduced the theory of stable domination, which describes how a structure is governed by its stable part. A classical example of this is the theory of algebraically closed valued fields with a non trivial valuation ($\ACVF$), where the stable part comes from the residue field, which is an algebraically closed field (whose theory is $\ACF$ and is stable).

Stable domination later served as a bridge to lift machinery from the stable context to $\ACVF$, which is $\NIP$ (without the independence property). A key example is the description of definable abelian groups in  $\ACVF$ by E. Hrushovski and S. Rideau-Kikuchi in \cite[Theorem $5.16$]{metastable}\footnote{Recently, Paul Wang pointed out a gap in the published article. This has been successfully solved by the two original authors in collaboration with Wang in \cite{correction}}.  Another significant application is the development of a model-theoretic analogue of Berkovich analytification for varieties, along with the characterization of their homotopy types due to E. Hrushovski and F. Loeser \cite[Theorem $11.1.1$]{tame}.\\

We remind the reader of key definitions from prior work on stable domination. Given $\T$ a complete first order theory, $\M$ its monster model and $\A$ a small set of parameters the stable part of the structure, denoted as $\St_{\A}$, is the multi-sorted structure of all the $\A$-definable stable and stably embedded sets (including imaginary sorts) with the $\A$-induced structure (see \cref{def: stable part of the structure}).

Given a global type $\p$ we say that it is \emph{stably dominated over $\A$} if there is an $\A$-definable function $\f$ into a stable $\A$-definable set such that $\p$ is dominated by its pushforward along $\f$. Note that $\f$ might be a function with range in a pro-$\A$-definable set, in other words  $\f$ might be an infinite tuple of functions to ordinary sorts (see Definition \ref{def: stably dominated types}).\\

The notion of a stably dominated type is not obviously invariant under a change of base. In \cite[Chapter 4]{HHM}, D. Haskell, E. Hrushovski, and D. Macpherson address this issue. More precisely, they establish the following result:
\begin{theorem}(\cite[Proposition $4.1$,Theorem $4.9$]{HHM}) Let $\A=\acl(\A) \subseteq \B$ and $\p$ a global $\A$-invariant type. Assume that:
\begin{center}
\textbf{$(\EP)$} the type $\tp(\B/ \A)$ has a global $Aut(\M/\A)$-invariant extension. 
\end{center}
Then:
\begin{enumerate}
\item \emph{Going up:} if $\p$ is stably dominated over $\A$ then it is stably dominated over $\B$.
\item \emph{Descent:} if $\p$ is stably dominated over $\B$, then it is stably dominated over $\A$. 
\end{enumerate}
\end{theorem}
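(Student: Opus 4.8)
\emph{Setup.} By $(\EP)$, fix a global $\Aut(\M/\A)$-invariant extension $\q$ of $\tp(\B/\A)$; it enters in two ways. First, the basic consequence of $(\EP)$ one records is a structural comparison of stable parts: over $\B$ the stable part $\St_\B$ is controlled by $\St_\A$ together with $\B$, so that types over $\B$ concentrated on a stable, stably embedded sort reduce to the stable calculus inside $\St_\A$ relativised to $\B$. Second, $\q$ lets us replace $\B$ by an $\Aut(\M/\A)$-conjugate $\B'\models\q|\M$ in general position relative to a given realisation of $\p$ and finitely many auxiliary parameters, while keeping $\tp(\B/\A)$ fixed. I also use freely, from the earlier theory, that a type stably dominated over a set is generically stable there and so commutes with every invariant type, and that over an $\acl$-closed set a global type on a stable, stably embedded sort is definable and stationary with canonical base inside the stable part --- so germs of definable maps from $\p$ into such sorts are strong (coded in the stable part) and obey the usual stable calculus.

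\emph{Going up.} Suppose $\p$ is stably dominated over $\A$ via an $\A$-definable map $\f$ into a stable, stably embedded $\A$-definable set; I claim the same $\f$ works over $\B$. Unwinding the definition, for $c\models\p|\M$ and $e:=\f(c)$ one must check that $e$ is a generic point of $\f_*\p$ over $\B$ in the stable part and that $\tp(d/\B e)\vdash\tp(d/\B c)$ for every $d$ independent from $e$ over $\B$ there. Both statements are transported across the structural comparison of $\St_\B$ with $\St_\A$ from the Setup: independence of $d$ from $e$ over $\B$ absorbs $\B$ into the stable calculus of $\St_\A$ and yields independence over $\A$ of the relevant projection, stable domination of $\p$ over $\A$ then applies, and the conclusion is carried back. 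This direction is a bookkeeping argument inside a stable theory once the Setup is in place.

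\emph{Descent.} Suppose $\p$ is stably dominated over $\B$ via a $\B$-definable map $\g=\g_b$, $b$ enumerating $\B$, with values in a stable, stably embedded set which a standard reduction lets us take $\emptyset$-definable. For each $b'\models\q|\M$ the translate $\g_{b'}$ witnesses stable domination of $\p$ over $b'$; let $\gamma(b')\in\St$ be the (strong) germ of $\g_{b'}$ on $\p$, an element of $\dcl(b')$. The key claim is that $\gamma(b')$ is the same for all $b'\models\q|\M$. Granting it, for every $\sigma\in\Aut(\M/\A)$ we have $\sigma(b')\models\q|\M$ and $\sigma$ fixes $\p$, so $\sigma(\gamma(b'))=\gamma(\sigma b')=\gamma(b')$; thus the common value $\gamma$ is $\Aut(\M/\A)$-invariant, and, lying in the stable part where $\acl(\A)=\A$, it is $\A$-definable. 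Let $\f$ be an $\A$-definable map representing $\gamma$; then $\f$ and every $\g_{b'}$ agree on $\p$, so $\f_*\p=(\g_{b'})_*\p$, and transporting the domination inequality for $\g_{b'}$ over $b'$ down to $\A$ --- using that $\gamma$ is their common germ and again the structural comparison --- shows $\f$ witnesses stable domination of $\p$ over $\A$.

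\emph{Main obstacle.} The crux is the key claim. A transitivity argument reduces it to: $\g_b$ and $\g_{b'}$ have the same germ on $\p$ whenever $b'\models\q|\M b$. To see this, realise $c\models\p|\M b b'$, use commutation of $\p$ with the invariant type $\q$ to freely rearrange the order in which $b$, $b'$ and $c$ are realised, and compare $e_b:=\g_b(c)$ and $e_{b'}:=\g_{b'}(c)$ inside the stable part: each is a nonforking point over the relevant base, each determines $\tp(c/\M b)$, resp.\ $\tp(c/\M b')$, by stable domination, and $\tp(b/\A)=\tp(b'/\A)$ is $\q$-prescribed; stationarity of types in $\St$ over $\acl$-closed sets should then force $e_b$ and $e_{b'}$, hence the two strong germs, to coincide. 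Making this precise --- and pinning down exactly where $(\EP)$ is indispensable, rather than the weaker hypotheses the present paper isolates --- is the main technical burden, and is the point at which working with germs pays off.
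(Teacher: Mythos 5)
Your proof of descent rests on a claim that is false: that the strong germ $\gamma(b')=[\g_{b'}]_\p$ is the same for every realisation $b'\models\q\rest_{\M}$. In general the germ of a dominating function $\g_{b'}$ over $\A b'$ must code essentially all of $\St_\A\cap\dcl(\A b')$, which depends materially on $b'$; for two independent realisations $b,b'$ of $\q\rest_{\M}$ the two germs are typically interalgebraic with distinct elements of $\St_\A(b)$ and $\St_\A(b')$ respectively, hence distinct. Your ``stationarity should force $e_b=e_{b'}$'' step is exactly where this slips: $e_b\in\St_{\A b}$ and $e_{b'}\in\St_{\A b'}$ determine $\tp(c/\M b)$ and $\tp(c/\M b')$ over \emph{different} bases, and stationarity over $\A$ gives no identity between them as elements of $\M$. (Toy illustration: if $\p$ happens already to be $\A$-dominated by $\f$ and $b\in\St_\A$ is generic, then $\g_b:=\f+b$ also dominates over $\A b$, but $[\g_b]_\p$ is interdefinable with $b$ and so varies with the chosen realisation --- and descent is a statement about whatever $\g_b$ you were handed, not a $\g$ you get to choose.)

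This is precisely the obstruction that the paper's argument in the invariant case (Theorem~\ref{thm: descent with invariant extension}) is built to avoid. Rather than hoping for a single $\Aut(\M/\A)$-invariant germ, one takes a long Morley sequence $\bar b=(b_i)_{i<|\T(\A)|^+}$ of $\q$ over $\A$, proves (Proposition~\ref{prop: a independent from some element in morley seq of q}, Lemma~\ref{lem: big tail independent when there is a non-forking extension} --- and this is where $(\EP)$ actually enters) that for $a\models\p\rest_\A$ one may discard a bounded set of indices so that $a\downfree^{f}_\A b_\J$, and then dominates $\p$ over $\A b_{\J_{\ge i}}$ by the canonical base $\f(\bar b,a)_i=\Cb\bigl(\St_{\A b_i}(b_j,\St_{\A b_j}(a)) : j\in\J_{>i}\bigr)$ (Proposition~\ref{prop: domination by the canonical base}). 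The $\A$-definable dominating data is then not one germ but the interpretable quotient sorts $\Lat_\pi=\X/\E_\pi$ of Definition~\ref{def: Q and relations}, where $\E_\pi$ identifies $(\bar b,\bar d)$ and $(\bar b',\bar d')$ inducing the same $\pi$-germ on $\p$; one must verify definability of $\E_\pi$ (Lemma~\ref{lem: definability of almost everywhere both sequences agree}), that it is an equivalence relation (Lemma~\ref{lem:same germs}), and that the quotient lands in $\St_\A$ (Claim~2 in the proof of Theorem~\ref{thm: descent with invariant extension}). So the object replacing your $\gamma$ is built from \emph{sequences} of germ-data modulo a definable equivalence, not from a single $b'$. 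Your going-up paragraph is too vague to check --- ``transported across the structural comparison of $\St_\B$ with $\St_\A$'' names the desired conclusion rather than giving an argument --- but that direction (Proposition~\ref{prop: change of base going up}) is the easy one and, as stated there, does not even require $(\EP)$.
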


The proof of descent for stably dominated types seemed more complex than necessary. Indeed, in \cite{HHM}, the authors observe: “To show that stable domination is preserved when decreasing the parameter set is rather more difficult… The hypothesis in Theorem 4.9 (Descent) that 
$\tp(\B/\A)$ has a global invariant extension is stronger than might be needed; it would be beneficial to investigate the weakest assumptions under which some version of descent could be proved.” Later, in their work on metastable groups, E. Hrushovski and S. Rideau-Kikuchi pose the question: “Can descent be proved without the additional hypothesis that 
$\tp(\B/\A)$ has a global $Aut(\M/\A)$-invariant extension?” (see \cite[Question 1.3 (1)]{metastable}). They also note that the need for a global invariant extension in the definition of metastable theories arises precisely for descent to hold.\\
In this paper, we address these issues by clarifying the situation. A global type that is stably dominated is generically stable, and generically stable types are definable. 
One of our contributions is to present a significantly simplified version of the descent theorem in the case where the stable set is defined over the base of definition of the stably dominated type. 
\begin{theorem*}[\cref{thm: descent for internal sets}]Let $\p$ be generically stable over $\A$ and assume that it is dominated over $\A b$ by an $\A b$-definable function $\f_{b}$ into some $\A$-definable stably embedded set $\Lat$.  Assume furthermore that either:
\begin{enumerate}
\item $\tp(b/ \A)$ does not fork over $\A$, or
\item $\p^{\otimes n}$ is generically stable for all $n < \omega$ (which holds if $\Lat$ is stable). 
\end{enumerate}
 Then there is an $\A$-definable function $\h \colon \p \rightarrow \Int(\Lat,\A)$ that dominates $\p$ over $\A$, where $\Int(\Lat, \A)$ denotes the union of the $\A$-definable sets internal to $\Lat$. 
\end{theorem*}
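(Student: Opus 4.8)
The plan is to run a germ argument, using the generic stability of $\p$ to supply — in place of a global $\Aut(\M/\A)$-invariant extension of $\tp(b/\A)$ — the ``canonical genericity'' needed to make the germs well defined.

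\emph{Step 1: push forward and isolate the germ.} First I would form the pushforward $\q := (\f_b)_{*}\p$, a global $\A b$-invariant type concentrated on $\Lat^{<\omega}$. Since $\p$ is generically stable over $\A$ it is generically stable over $\A b$, hence so is $\q$; in case (2) every power $\q^{\otimes n}$ is generically stable as well. As $\Lat$ is stably embedded, $\q$ is definable and its canonical base $c := \Cb(\q)$ can be taken in $\dcl(\A b)\cap\Lat^{\eq}$; being a tuple of $\Lat$-imaginaries it lies in an $\A$-definable set internal to $\Lat$, so $c\in\Int(\Lat,\A)$. Because $\p$ is generically stable, the restriction of $\f_b$ to $\p$ depends only on its germ $e:=[\f_b]_\p$, which can be evaluated by a $\emptyset$-definable partial function $\bar{\f}$ with $\f_b(a)=\bar{\f}(e,a)$ for $a\models\p$, and $c\in\dcl(\A e)$. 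The content of the theorem is that, although $e$ is only in $\dcl(\A b)$, the $b$-ambiguity it carries is internal to $\Lat$ over $\A$ and can be eliminated.

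\emph{Step 2: descend the germ.} Here I would take a germ on the $b$-side. Fix $a\models\p\rest\M$. In case (1), $\tp(b/\A)$ does not fork over $\A$; since $\p$ is generically stable, forking is symmetric over $\p$, so $\tp(b/\A a)$ also does not fork over $\A$, and this gives a canonical notion of a realization $b'$ of $\tp(b/\A)$ generic over $\A a$. Define
\[
\h(a):=\bigl[\,b'\mapsto\f_{b'}(a)\,\bigr],
\]
the germ on $\tp(b/\A a)$ of the family $(\f_{b'})_{b'}$ evaluated at $a$. In case (2) the construction is the same, with the canonical genericity of $b'$ over $\A a$ replaced by the stationarity over their canonical bases of the relevant $\Lat$-types, which is exactly what generic stability of all powers $\p^{\otimes n}$ provides. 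Two things must then be checked: (i) $\h(a)\in\dcl(\A a)$ — the germ equivalence ``$\f_{b'}(a)=\f_{b'}(a')$ for generic $b'$'' is, by generic stability (resp.\ non-forking), a genuine $\A$-definable equivalence relation, so its classes are coded; and (ii) $\h(a)\in\Int(\Lat,\A)$ — the germ is determined by $\A$ together with boundedly many generic $b'$ and the values $\f_{b'}(a)\in\Lat$, so by compactness the ambient set of such germs is an $\A$-definable set internal to $\Lat$. This step is the heart of the matter.

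\emph{Step 3: domination, and where the difficulty lies.} Finally I would verify that $\h$ dominates $\p$ over $\A$. The key identity is that for $a\models\p\rest\M$ and $b$ generic over $\A a$ one has $\f_b(a)=\bar{\f}(\h(a),b)\in\dcl(\A b,\h(a))$, while $\h(a)$ is recovered from $\A b$ together with boundedly many values $\f_{b'}(a)$; so modulo $\Lat$-internal data over $\A b$ the functions $a\mapsto\f_b(a)$ and $a\mapsto\h(a)$ carry the same information. Given $C\supseteq\A$ satisfying the non-forking hypothesis in the definition of domination for $\h$, I would pick $b\models\tp(b/\A)$ generic over $Ca$, transfer the independence to $\f_b(a)$ over $\A b$ against $Cb$, invoke the hypothesis that $\p$ is dominated over $\A b$ by $\f_b$, and push the resulting domination back down to $\A\,\h(a)$, absorbing $b$ using that it is generic over $\A\,\h(a)\,C$ and that $\f_b(a)\in\dcl(\A b,\h(a))$. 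I expect Step 2 to be the real obstacle: the well-definedness and $\A$-definability of the germ is precisely the point where the classical proof needed a global invariant extension of $\tp(b/\A)$, and the whole contribution is that generic stability of $\p$ — via symmetry of forking in case (1), via generic stability of all powers $\p^{\otimes n}$ in case (2) — already supplies the canonical genericity the germ construction requires, with no hypothesis at all on how $\tp(b/\A)$ extends.
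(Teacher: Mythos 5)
Your Step 1 and the general strategy (use the strong germs theorem to replace $\f_b$ by a function $\g(e,\cdot)$ of the germ $e:=[\f_b]_\p$, then descend) are exactly in the spirit of the paper's argument. The gap is in Step 2. You define $\h(a)$ as a \emph{germ of the function $b'\mapsto \f_{b'}(a)$ on the type $\tp(b/\A a)$}, and you assert that non-forking of $\tp(b/\A)$ (case (1)), resp.\ generic stability of all $\p^{\otimes n}$ (case (2)), supplies ``the canonical genericity the germ construction requires.'' But germs are defined relative to a \emph{definable} (or at least canonically invariant) type of the parameter variable, and the theorem's whole point is that no global invariant or definable extension of $\tp(b/\A)$ is assumed. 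Non-forking of $\tp(b/\A)$ only guarantees the \emph{existence} of some $b'\equiv_\A b$ with $b'\downfree_\A^f d$ for any given $d$; it does not make the choice of $b'$ canonical, and different choices can give different values of $\f_{b'}(a)$. Symmetry of forking over $\p$ gives $b\downfree_\A^f a$ from $a\downfree_\A^f b$, but still no canonical generic $b'$ over $\A a$. Likewise I do not see how ``stationarity over canonical bases'' in case (2) fixes this. So as written, $\h(a)$ is not a well-defined element of $\dcl(\A a)$, and the ``genuine $\A$-definable equivalence relation'' you invoke for the germ equivalence is not established.

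The paper sidesteps this entirely by never forming a germ on the $b$-side. Having replaced $\f_b$ by $\g(e,\cdot)$, it observes that the set $\Theta$ of all $\p$-germs of instances of $\f_{\cdot}$ is an honest $\A$-definable set, internal to $\Lat$ (Lemma~\ref{lem: internal $p$-germs}(1)). It then sets $\Gamma_a := \{(e',\g(e',a)) : e'\in\Theta,\ (e',a)\in\X\}\subseteq\Theta\times\Lat$, an ordinary $\A a$-definable set, and takes $\h(a):=\ulcorner\Gamma_a\urcorner$, the code of this set, which lies in $\Int(\Lat,\A)$ by stable embeddedness of $\Int(\Lat,\A)$ (Lemma~\ref{lem: internal $p$-germs}(2)). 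No choice of a generic $b'$ or $e'$ is ever needed; well-definedness is for free because one codes the \emph{entire graph} of $e'\mapsto\g(e',a)$ rather than a germ of it. A further ingredient you omit is Proposition~\ref{prop:easy descent}, which shows in advance that $\p$ is dominated by $\g(e,\cdot)$ over $\A e$; this is what makes the final domination check go through in both cases (in case (1) by symmetry and left transitivity of forking, in case (2) by building a long sequence $(\bar a_i, e_i)$ of $\p$-bases and using generic stability of $\p^\omega$). Your Step~3 sketch is directionally right but inherits the problem of Step~2 and does not isolate this intermediate domination statement. In short: the fix for the well-definedness issue you correctly identified as ``the heart of the matter'' is not the one you propose; it is to code the $\A a$-definable graph on the $\Lat$-internal germ set $\Theta$.
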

This theorem applies in particular to
$\ACVF$, thus giving a much simpler proof in this case (see \cref{rem: easy ACVF}). 

We then address the general case of descent, with a substantially more technical proof.
\begin{theorem*}[\cref{thm:descent}]
    Let $\p$ be a global $\A$-invariant type and let $b$ be such that $\p$ is stably dominated over $\A b$. Then $\p$ is stably dominated over $\A$.
\end{theorem*}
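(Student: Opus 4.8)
The plan is to prove a strengthening of \cref{thm: descent for internal sets} in which the target of $\f_b$ is only required to be $\A b$-definable (and stable, hence lying in $\St_{\A b}$) rather than $\A$-definable; the present theorem then follows at once, since its remaining hypotheses are automatic here. Indeed, since $\p$ is stably dominated over $\A b$ it is generically stable, and stable domination is preserved under tensor powers (\cite{HHM}), so every $\p^{\otimes n}$ is stably dominated, hence generically stable, over $\A b$; as generic stability of an invariant global type does not depend on the chosen base of invariance, $\p$ and all its powers are generically stable over $\A$. In particular $\p$ is generically stable over $\A$ and condition $(2)$ of \cref{thm: descent for internal sets} holds. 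Passing to $\acl^{\eq}(\A)$ changes nothing, so assume $\A=\acl^{\eq}(\A)$.

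Fix an $\A b$-definable $\f_b$ witnessing stable domination over $\A b$, with values in a stable stably embedded set $\Lat\in\St_{\A b}$, and set $\q:=\f_{b*}\p$, a global generically stable type concentrated on $\Lat$. The core of the argument is to produce, for $c\models\p$, an element $\h(c)\in\St_\A$ depending \emph{$\A$-definably} on $c$ --- informally, the canonical base of the ``$\St_\A$-part'' of $\tp(c/\A)$ --- and the tool for this is the germ, as in \cite[Ch.~4]{HHM}. One takes a Morley sequence $(c_i)_{i<\omega}$ of $\p$ (over $\A b$, hence over $\A$), which is totally indiscernible since $\p^{\otimes\omega}$ is generically stable, together with the associated totally indiscernible sequence $(\f_b(c_i))_i$ in $\Lat$; working inside the stable stably embedded sort $\acl^{\eq}(\Lat)$ one forms the canonical base of a suitable restriction of this sequence and then argues --- and this is exactly where the germ of $\f_b$ on $\p$ intervenes --- that, although $\f_b$ is \emph{presented} using $b$, this canonical base is fixed by $\Aut(\M/\A c)$ and so lies in $\dcl^{\eq}(\A c)$; since the canonical base of a Morley sequence in a stationary type over a stable stably embedded set is algebraic over finitely many of its terms, it lies moreover in $\acl^{\eq}$ of a power of $\Lat$, a stable set, whence $\h(c)\in\St_\A$. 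I expect this elimination of $b$ --- making the construction genuinely \emph{over $\A$}, with no hypothesis on $\tp(b/\A)$ --- to be the main obstacle: it is precisely the step for which the earlier proofs required a global invariant (or at least non-forking) extension of $\tp(b/\A)$, which we replace here by the generic stability of all the powers of $\p$.

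Finally one checks that the resulting $\A$-definable map $\h\colon\p\rightarrow\St_\A$ dominates $\p$ over $\A$. Given $B\supseteq\A$ with $\tp(\h(c)/B)$ not forking over $\A$, one produces a realisation $b'$ of $\tp(b/\A\,\h(c))$ such that $\f_{b'}(c)$ is independent from $B$ over $\A b'$ in the stable sense --- using that $\h(c)$ is, by construction, the canonical base controlling $\f_{b'}(c)$ over $\A$, together with extension in the stable part --- and then transfers the required completeness of $\tp(c/B\,\h(c))$ from the domination of $\p$ over $\A b'$ by $\f_{b'}$. Since $\h$ dominates $\p$ over $\A$ and takes values in a stable stably embedded $\A$-definable set (by compactness, in a single one inside $\St_\A$), $\p$ is stably dominated over $\A$, as required.
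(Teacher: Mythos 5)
Your strategy---define an $\A$-definable map $\h$ into $\St_\A$ via canonical bases of a pushed-forward Morley sequence, using germ technology to eliminate $b$---is in the right spirit, and you correctly observe that $\p^{\otimes n}$ is generically stable for every $n$ (by \cref{fact: stable domination implies generically stable for all n}), so the case-(2) machinery of \cref{thm: descent for internal sets} is in play. But the proposal defers exactly the crux. You assert that the canonical base formed from $(\f_b(c_i))_i$ is fixed by $\Aut(\M/\A c)$ and that this works "with no hypothesis on $\tp(b/\A)$" because all powers of $\p$ are generically stable; no argument is given, and generic stability of $\p^{\otimes n}$ does not by itself control anything about conjugates of $b$. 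If $b, b'$ are $\A$-conjugate, there is no a priori reason that the canonical bases built from $\f_b$ and from $\f_{b'}$ agree over $\A$, and this well-definedness question is precisely what the global-invariant-extension hypothesis was previously used to resolve. Likewise, the announced "strengthening of \cref{thm: descent for internal sets}" with $\Lat$ merely $\A b$-definable would not inherit that theorem's proof, which depends on the germ space $\Theta$ and the graph $\Gamma_a$ being $\A$-definable, and those are $\A$-definable only because $\Lat$ is.

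What actually fills this gap in the paper is a mechanism quite different from generic stability of the powers of $\p$ alone. One first builds a long sequence $\bar b_0 = (b_i)$ of $\A$-conjugates of $b$ that is simultaneously a $\p$-independent sequence and $\acl^{\vee}$-independent over $\A$ (\cref{cor: total sequence}); obtaining such a sequence requires total GS-Morley sequences via the Kaplan--Ramsey Morley-tree construction (\cref{prop: total GS sequences exist}). The dominating function is then constructed from canonical bases along the sequence $(b_i)$---not along a Morley sequence of $\p$---through the maps $\f(\bar b, a)_i$ of \cref{prop: domination by the canonical base}, and the well-definedness you defer (that $a$ and $a'$ having the same germ is independent of the choice of the sequence $\bar b$) is established by a graph-theoretic argument exploiting $\acl^{\vee}$-independence (\cref{lem: fixing equivalence relation general case}). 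This machinery---total GS-Morley sequences and the $\acl^{\vee}$-independence calculus replacing the invariant extension---is the idea missing from your proposal.
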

In the first part of the paper we address and correct the incorrect statement found in \cite[Proposition 6.7]{HHM}. Specifically:\\
\textbf{Incorrect statement:} Let $\p$ be a global $\A$-definable type and assume that $\St_{\A}$ has $\BS$. Let $\f$  be a definable function on $\p(\M)$, the set of realizations of $\p \upharpoonright_{\A}$ and suppose that $\f(a) \in \St_{\A a}$ for all $a \in \p(\M)$. Then: 
\begin{itemize}
\item The germ $[\f]_{p}$ is strong over $\A$; 
\item $[\f]_{p} \in \St_{\A}$.
\end{itemize}
We demonstrate that the first part of this statement is false. In doing so, we show that any stable set in an 
$\NIP$ theory possesses the bounded stabilizing property (see \cref{def: BS}). Additionally, we provide a correct proof of the second part of the statement under the assumption of $\NIP$ in \cref{prop: correction} and discuss natural generalizations in \cref{rem: generalizations}.\\

 The paper is organized as follows:
 \begin{itemize}
 \item Section $2$: Preliminaries.
 \item Section $3$: We introduce the definitions of the bounded stabilizing property and the algebraic stabilizing property. We demonstrate that any stable set in an $\NIP$ theory has the bounded stabilizing property. Additionally, we present a counterexample to the first part of the incorrect statement (see \cref{notstrong}) and offer a corrected proof of the second part (see \cref{prop: correction}).
 \item Section $4$: Descent for stably dominated types. We begin with simplified versions of the descent theorem in \cref{sec: easy descent}, including a concise proof for $\ACVF$ provided by \cref{thm: descent for internal sets}. We then prove the general theorem. For sake of clarity, we first present a simplified proof under the assumption of global invariant extensions and then extend it to the general case. 
 \end{itemize}

\subsection*{Acknowledgments}
The authors wish to thank E. Hrushovski, S. Rideau-Kikuchi, I. Kaplan and N. Ramsey for
discussions on this topic. We thank K. Gannon and G. Conant for pointing out to the second author that \cite[Example $1.7$]{pillaygeneric} is incorrect. Lastly, we want to thank the anonymous referee for their careful reading and comments, that helped us to improve the presentation on the paper. The second authour was supported by the Humboldt Fellowship. 

\newpage
\section{Preliminaries}
Let $\T$ be a complete first order theory and $\M$  its monster model.

A number of results in this paper hold without any restriction on the theory $\T$, and when some tameness condition of the theory is required this will be explicitly stated.

Through the paper, we assume that $\T$ is a complete first order $\La$-theory that eliminates imaginaries. Such assumption is not strictly necessary as one can always work in $\T^{eq}$,  but it will help us to simplify notation, for example to write simply $\acl$ instead of $\acl^{eq}$, or $\dcl$ instead of $\dcl^{eq}$. 

For $\A$ a set of parameters, we let $\mathcal{S}_x(A)$ be the set of types over $\A$ in variable $x$. We write $\mathcal{S}^m(A)$ for $\mathcal{S}_{x_1\ldots x_m}(A)$. We drop $x$ if it is clear from the context.
\subsection{Properties of non-forking  and non-dividing in arbitrary theories}
\begin{definition} Let $a,b \in \M$ be (possibly infinite) tuples and $\A \subseteq \M$ a small set of parameters. We write 
\begin{itemize}
\item $a \downfree_{\A}^{d} b$ when $\tp(a/ \A b)$ does not divide over $\A$; 
\item $a \downfree_{\A}^{f} b$ when $\tp(a/ \A b)$ does not fork over $\A$. 
\end{itemize}
\end{definition}

\begin{proposition}\label{prop: properties of non dividing}
\begin{enumerate}
\item \emph{(Base Monotonicity for dividing)} If $a \downfree_{\A}^{d} b d$, then $a \downfree_{\A b}^{d} d$. 
\item \emph{(Left transitivity of non-dividing)} If $a \downfree_{\A b}^{d}d$ and $b \downfree_{\A}^{d} d$, then $a b \downfree_{\A}^{d} d$. 
\end{enumerate}
\end{proposition}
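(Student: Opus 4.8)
\medskip
\noindent\textbf{Proof strategy.} Both items are classical facts about dividing, valid in an arbitrary theory. For \textbf{(1)} I would argue by contradiction: suppose $\tp(a/\A b d)$ divides over $\A b$, witnessed by a formula $\varphi(x;d,b)$ in $\tp(a/\A bd)$ (with $\varphi$ over $\A$ and its parameters displayed), an $\A b$-indiscernible sequence $(d_i)_{i<\omega}$ with $d_0=d$, and some $k$ with $\{\varphi(x;d_i,b):i<\omega\}$ $k$-inconsistent. The point is that the sequence $(d_i b)_{i<\omega}$, obtained by appending the constant tuple $b$, is $\A$-indiscernible with first term $d b$, and the same $\varphi$ and $k$ then witness that $\varphi(x;d,b)\in\tp(a/\A (bd))$ divides over $\A$; this contradicts $a\downfree^{d}_{\A}bd$.

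For \textbf{(2)} I would first invoke the standard reformulation of non-dividing: if $\tp(c/\A e)$ does not divide over $\A$, then for every $\A$-indiscernible sequence $(e_i)_{i<\omega}$ with $e_0=e$ the partial type $\bigcup_{i<\omega}\tp(c/\A e)(x;e_i)$ is consistent, where $\tp(c/\A e)(x;e_i)$ denotes the result of substituting $e_i$ for the parameter $e$ (this direction is immediate from compactness and indiscernibility). Then argue by contradiction: assume $\tp(ab/\A d)$ divides over $\A$, witnessed by $\varphi(x,y;d)\in\tp(ab/\A d)$ (over $\A$), an $\A$-indiscernible $(d_i)_{i<\omega}$ with $d_0=d$, and $k$ with $\{\varphi(x,y;d_i):i<\omega\}$ $k$-inconsistent. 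Step 1: applying the reformulation to $b\downfree^{d}_{\A}d$ along $(d_i)$ yields $b_0$ with $b_0d_i\equiv_{\A}bd$ for all $i$. Step 2: by Ramsey and compactness, extract an $\A b_0$-indiscernible sequence $(e_i)_{i<\omega}$ whose increasing tuples satisfy only $\A b_0$-formulas already satisfied by some increasing tuple of $(d_i)$; then $(e_i)$ is in particular $\A$-indiscernible, and since both ``$\neg\exists x y\,\bigwedge_{j=1}^{k}\varphi(x,y;z_j)$'' and, for each $\theta(y;z)\in\tp(bd/\A)$, the formula ``$\theta(b_0;z_1)$'' are $\A b_0$-formulas satisfied along $(d_i)$, they remain satisfied along $(e_i)$; hence $\{\varphi(x,y;e_i)\}$ is still $k$-inconsistent and $b_0e_i\equiv_{\A}bd$ for all $i$. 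Step 3: choosing $\sigma\in\Aut(\M/\A)$ with $\sigma(b_0)=b$ and $\sigma(e_0)=d$ and setting $e_i':=\sigma(e_i)$ gives an $\A b$-indiscernible sequence $(e_i')_{i<\omega}$ with $e_0'=d$, with $\{\varphi(x,y;e_i')\}$ still $k$-inconsistent, and with $e_i'\equiv_{\A b}d$ for all $i$.

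To conclude, apply the reformulation to $a\downfree^{d}_{\A b}d$ (non-dividing of $\tp(a/\A bd)$ over $\A b$) along the $\A b$-indiscernible sequence $(e_i')$, obtaining $a_0$ with $a_0e_i'\equiv_{\A b}ad$ for every $i$. Since $\models\varphi(a,b;d)$ gives $\varphi(x,b;z)\in\tp(ad/\A b)$, we get $\models\varphi(a_0,b;e_i')$ for all $i$, so the pair $(a_0,b)$ realizes $\{\varphi(x,y;e_i'):i<\omega\}$, contradicting its $k$-inconsistency. Hence $ab\downfree^{d}_{\A}d$. I expect the only genuinely delicate point to be the extraction of Step 2: one must preserve \emph{simultaneously} the $k$-inconsistency of the witness formula and the fact that $b_0$ realizes the localized copy of $\tp(b/\A d)$ at every index, which is precisely why the extraction is performed over $\A b_0$ rather than merely passing to an $\A$-conjugate of $(d_i)$.
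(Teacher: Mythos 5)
The paper does not actually prove this proposition: part~(1) is labeled folklore and part~(2) is cited from the reference \texttt{transdiv}, with no argument given. Your blind proof supplies a complete, self-contained argument, and it is correct.

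For (1), appending the constant tuple $b$ to an $\A b$-indiscernible witness $(d_i)$ to obtain an $\A$-indiscernible sequence $(d_ib)$ is exactly the right one-line trick, and the $k$-inconsistency transfers verbatim. For (2), your argument is the standard proof of left transitivity of non-dividing: you use the characterization of non-dividing of $\tp(c/\A e)$ over $\A$ as the consistency of $\bigcup_i \tp(c/\A e)(x;e_i)$ along any $\A$-indiscernible $(e_i)$ with $e_0 = e$, first applied to $b\downfree^d_\A d$ to copy $b$ across $(d_i)$, then extract over $\A b_0$ to regain indiscernibility while preserving both the $k$-inconsistency formula and the fact $b_0 e_i \equiv_\A bd$, conjugate to move $b_0$ to $b$, and finally apply the characterization to $a\downfree^d_{\A b}d$ along the resulting $\A b$-indiscernible sequence. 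You are also right to flag Step~2 as the delicate point: extracting over $\A b_0$ (rather than just over $\A$) is what makes the simultaneous preservation work, since $(e_i)$ being $\A b_0$-indiscernible forces an ``all or none'' dichotomy on the $\A b_0$-formulas $\neg\exists xy\bigwedge_{j\le k}\varphi(x,y;z_j)$ and $\theta(b_0;z_1)$, and the extraction rules out the ``none'' case. One small remark: the consistency characterization you invoke is an if-and-only-if, and the direction you use (non-dividing $\Rightarrow$ consistency of the copied type) is the one that requires a brief compactness argument combining finitely many formulas of $\tp(c/\A e)$ into one; you gesture at this correctly.
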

\begin{proof}
The first statement is a folklore result while the second one is \cite[Lemma 1.5]{transdiv}. 
\end{proof}
\begin{definition}\label{def:non-dividing}  Let $\A$ be a set of parameters. We say that the sequence of tuples $\bar b= (b_i)_{i\in \I}$ is \emph{non-dividing over $\A$} if $b_{i} \downfree^{d}_{\A} b_{<i}$ for all $i \in \I$. 
\end{definition}

\begin{proposition}\label{prop: properties of non-forking}
 Let $\A$ be a small set of parameters and $a,b,a',b'$ tuples. The following statements hold: 
\begin{enumerate}
\item \emph{(Invariance under automorphism)} if $a \downfree_{\A}^{f} b$ then $\sigma(a) \downfree_{\sigma(\A)}^{f} \sigma(b)$ for $\sigma \in \Aut(\M)$; 
\item \emph{(Finite Character)} if for all finite tuples $a' \subseteq a$, $b' \subseteq b$ $a' \downfree_{\A}^{f} b'$ then $a \downfree_{\A}^{f} b$;
\item \emph{(Monotonicity)} $a a' \downfree_{\A}^{f} b b'$ implies $a \downfree_{\A}^{f} b$;
\item \emph{(Base Monotonicity)} if $a \downfree_{\A}^{f} b d$ then $a \downfree_{\A b}^{f} d$; 
\item \emph{(Left transitivity)} if $a \downfree_{\A}^{f} b$ and $a' \downfree_{\A a}^{f} b$ then $a a' \downfree_{\A}^{f} b$; 
\item \emph{(Right Extension)} if $a \downfree_{\A}^{f} b$ for any $d$ there is $d' \equiv_{\A b} d$ such that $a \downfree_{\A}^{f} b d'$;
\item  if $a \downfree_{\A}^{f} b$, then for any $d$ there is $a' \equiv_{\A b} a$ such that $a' \downfree_{\A}^{f} b d$. 
\end{enumerate}
\end{proposition}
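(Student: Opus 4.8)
The proposition records the standard basic properties of forking‑independence, so the plan is to obtain each item directly from the definitions, using one genuinely nontrivial ingredient — the extension lemma for non‑forking — and to isolate left transitivity, item~(5), as the only real difficulty. Recall that a formula forks over $\A$ if it implies a finite disjunction of formulas each dividing over $\A$, and that $\tp(a/\A b)$ forks over $\A$ if it contains such a formula. With this in hand, (1) and (3) are immediate: an automorphism $\sigma$ carries a witness that $\tp(\sigma(a)/\sigma(\A)\sigma(b))$ forks over $\sigma(\A)$ to a witness that $\tp(a/\A b)$ forks over $\A$, and back, giving invariance; and a formula over $\A b$ in the variables matching $a$ is also a formula over $\A b b'$ in the variables matching $aa'$, so a forking witness for $\tp(a/\A b)$ produces one for $\tp(aa'/\A b b')$, and taking the contrapositive gives monotonicity. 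For (2), a single forking formula $\varphi(x;\bar c)\in\tp(a/\A b)$ involves only finitely many coordinates of $x$ and finitely many parameters from $b$, so restricting to the corresponding finite subtuples $a'\subseteq a$, $b'\subseteq b$ would contradict the hypothesis. For (4), any $\A b$‑indiscernible sequence is a fortiori $\A$‑indiscernible, so a formula dividing over $\A b$ divides over $\A$, hence a formula forking over $\A b$ forks over $\A$; taking the contrapositive gives base monotonicity.

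For the extension items I would invoke the extension lemma for non‑forking: since $\tp(a/\A b)$ does not fork over $\A$, a compactness argument shows that $\tp(a/\A b)$ together with the negations of all formulas (over $\M$) dividing over $\A$ is consistent, and any completion of it to a global type fails to fork over $\A$ by the usual characterisation of forking for global types; restricting such a global extension yields a non‑forking‑over‑$\A$ extension of $\tp(a/\A b)$ to $\A b d$. Realising this extension by some $a''\equiv_{\A b}a$ and applying an automorphism that fixes $\A b$ and sends $a''$ to $a$ transports $d$ to a $d'$ with $d'\equiv_{\A b}d$ and $a\downfree_\A^f b d'$, which is (6); item (7) then follows from (6) by one more automorphism fixing $\A b$ and sending $d'$ back to $d$, taking $a'$ to be the image of $a$.

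The only item with real content is (5), left transitivity, which I expect to be the main obstacle, since forking — unlike dividing — is defined through finite disjunctions and so resists a direct indiscernible‑sequence argument. The route I would take is to reduce it to the dividing case: using the characterisation that $\tp(c/\A e)$ does not fork over $\A$ exactly when it extends to a global type not dividing over $\A$, take such global non‑dividing extensions of $\tp(a/\A b)$ (over $\A$) and of $\tp(a'/\A a b)$ (over $\A a$), realise them in a sufficiently saturated elementary extension, align them by an automorphism so that the induced type over $\A b$ agrees with $\tp(aa'/\A b)$, and then apply left transitivity of non‑dividing (\cref{prop: properties of non dividing}(2)) to conclude that $\tp(aa'/\A b)$ extends to a global type not dividing over $\A$, hence does not fork over $\A$. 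The bookkeeping with the ambient model here is the delicate point; alternatively, all of item (5) — and indeed the whole proposition — can simply be quoted from the standard treatment of forking in arbitrary theories, everything outside (5) being routine once the definitions and the extension lemma are on the table.
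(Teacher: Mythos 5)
The paper states this proposition without proof, treating it as a standard catalogue of basic properties of forking independence, so there is no argument of the paper's own to compare against; your proof fills the gap and is correct throughout. Items (1)--(4) are direct from the definition of forking via dividing formulas (for (4), the observation that an $\A b$-indiscernible sequence is in particular $\A$-indiscernible, so dividing over $\A b$ passes down to dividing over $\A$, and hence so does forking); items (6)--(7) follow as you describe from the fact that a non-forking type extends to a global non-forking type, restriction to $\A b d$, realisation, and pushing back with automorphisms of $\M$ fixing $\A b$. For (5), your reduction to left transitivity of non-dividing --- which is exactly the ingredient the paper records as \cref{prop: properties of non dividing}(2) --- is the standard and correct route, and the bookkeeping you flag does go through: in a larger monster, take $a_1$ realising the restriction to $\M$ of a global non-dividing extension of $\tp(a/\A b)$; conjugate $aa'$ to $a_1 a_1'$ by an automorphism fixing $\A b$; take $a_1''$ (in a further extension) realising the restriction of a global extension of $\tp(a_1'/\A a_1 b)$ that does not divide over $\A a_1$; apply non-dividing left transitivity to conclude $a_1 a_1'' \downfree^{d}_{\A} \M$; since $a_1 a_1'' \equiv_{\A b} aa'$ this yields $aa' \downfree^{f}_{\A} b$.
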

\subsubsection{Generically stable types}
\begin{definition} A global type $\p(x)$ is generically stable over $\A$ if it is $\A$-invariant and for every ordinal $\alpha \geq \omega$, any Morley sequence $(a_{i})_{i < \alpha}$ of $\p$ over $\A$ (i.e $a_{i} \vDash \p \rest_{\A a_{<i}}$) and any $\La(\M)$ formula $\phi(x)$, the set $\{ i < \alpha \ | \ \M \vDash \phi(a_{i})\}$ is finite or co-finite. 
\end{definition}
 We will use some results for generically stable types that hold in arbitrary theories. 
\begin{fact}\label{fact: first things about generically stable} 
\begin{enumerate}
\item Let $\p$ be a generically stable type over $\A$, and $\bar a= (a_{i}: i< \alpha)$ be an infinite Morley sequence of $\p$ over $\A$, then $\displaystyle{\p=\Av(\bar a/ \M)}$, where 
\[\Av(\bar a/ \M)=\{ \phi(x, b) \ | \ \{ i < \alpha \ | \ \M \models \phi(a_{i},b)\} \ \text{is infinite}\}.\]
\item Let $\p$ be a generically stable type over $\A$ and $\bar{a}= (a_{i} \ | \ i \in \I)$, where $|\I| \geq |\T(\A)|^{+}$, a Morley sequence of $\p$ over $\A$, and $b \in \M$, then there is a subset $\J \subseteq \I$ such that $|\I \backslash \J| \leq |\T(\A)|$ and $a_{j}\models \p\rest_{\A b}$ for $j \in \J$. 
\item If $\p$ is generically stable over a set $\B$ and $\A$-invariant, then it is generically stable over $\A$.
\end{enumerate}
\end{fact}
\begin{proof}
The first and third statement are \cite[Remark $9.3$(3,1)]{Casanovas}. The second statement is a direct consequence of the first one. 
\end{proof}

\begin{proposition}\label{prop: generically stable properties} Let $\p$ be a generically stable type over $\A$ and  $a \models \p \rest_{\A}$. 
\begin{enumerate}
\item Any Morley sequence of $\p$ over $\A$ is totally indiscernible over $\A$; 
\item $\p \rest_{\A}$ is stationary and $\p$ is its only global non-forking extension;
\item  $\p$ is definable over $\A$;
\item (Symmetry) if  $b \downfree_{\A}^{f} \A$, then $a \downfree _{\A}^{f} b$ if and only if $b \downfree_{\A}^{f} a$; 
\item (Right transitivity)  $a \downfree_{\A}^{f} b$ and $a \downfree_{\A b}^{f} d$  if and only if $a \downfree_{\A}^{f} bd$. 
\end{enumerate}
\end{proposition}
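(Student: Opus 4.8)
The plan is to establish (1)--(5) as the familiar package of properties of generically stable types, valid over an arbitrary theory: I would get (1) and (3) directly from the definition together with \cref{ fact: first things about generically stable}, and then deduce (2), (4), (5) from (1), (3) and the forking calculus of \cref{prop: properties of non-forking}, quoting \cite{Casanovas} for the classical stationarity and symmetry inputs. I expect (1) to be the only step requiring a genuine idea.

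For (3), fix $\phi(x,y)$ and a Morley sequence $\bar a=(a_i)_{i\in\I}$ of $\p$ over $\A$ with $|\I|\ge|\T(\A)|^{+}$; by \cref{ fact: first things about generically stable}(2), for each $b$ all but $\le|\T(\A)|$ of the $a_i$ realize $\p\rest_{\A b}$, so $\phi(x,b)\in\p$ if and only if $\models\phi(a_i,b)$ for all but $\le|\T(\A)|$ of the $i$; this exhibits $\{b:\phi(x,b)\in\p\}$ as definable over $\A\bar a$, and since $\A$-invariance of $\p$ makes it $\Aut(\M/\A)$-invariant, this definable set is in fact $\La(\A)$-definable. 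For (1) the point is that total indiscernibility is not a formal consequence of invariance and must be extracted from the finite/co-finite clause. Since a Morley sequence of an $\A$-invariant type is $\A$-indiscernible and every subsequence of a Morley sequence of $\p$ over $\A$ is again one, it suffices to show that inside a finite subtuple $(a_{j_0},\dots,a_{j_n})$ an adjacent transposition preserves $\A$-types. Given the two indices $j_k<j_{k+1}$ to be swapped, I would stretch the sequence so that $j_k,j_{k+1}$ lie in a block $B$ of order type $\omega+\omega$ that forms an interval of the index set, with the other $n-1$ of the $j_i$ outside $B$ in their correct relative order, and let $\chi(y_0,y_1)$ be the instance of the given $\La(\A)$-formula with those $n-1$ entries substituted. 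If the transposition failed then, by $\A$-indiscernibility, $\models\chi(a_u,a_v)\wedge\neg\chi(a_v,a_u)$ for all $u<v$ in $B$; deleting the $\omega$-th element $e$ of $B$ leaves a Morley sequence of $\p$ over $\A$ of order type $\omega+\omega$ on which the $\La(\M)$-formula $\chi(z,e)$ holds exactly on the first $\omega$-block, contradicting generic stability.

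Granting (1) and (3): for (2), $\p$ does not fork over $\A$ --- if some $\phi(x,b)\in\p$ divided over $\A$, an $\A$-indiscernible $(b_i)_{i<\omega}$ with $b_i\equiv_{\A}b$ witnessing it has $\phi(x,b_i)\in\p$ for all $i$ by $\A$-invariance, while \cref{ fact: first things about generically stable}(1) produces a long Morley sequence of $\p$ over $\A$ containing some $a_j$ realizing all the $\phi(x,b_i)$ at once, a contradiction; since $\p$ is complete this upgrades to non-forking. Stationarity of $\p\rest_{\A}$ and uniqueness of its global non-forking extension is the classical statement (\cite{Casanovas}): the average description of \cref{ fact: first things about generically stable}(1) together with total indiscernibility forces any global non-forking extension of $\p\rest_{\A}$ to equal $\p$. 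From (2) one reads off, for $a\models\p\rest_{\A}$ and any $C$, that $a\downfree_{\A}^{f}C$ if and only if $\tp(a/\A C)=\p\rest_{\A C}$ --- because $\p\rest_{\A C}$ is a restriction of the non-forking global type $\p$, hence non-forking over $\A$, and any non-forking extension of $\p\rest_{\A}$ extends to a global one, hence is $\p$ --- and by (3), $\p$ is definable, so invariant, so (restricting Morley sequences) generically stable over every $\A C$, so the same reading applies there. Then (5) is formal: the left-to-right direction is monotonicity and base monotonicity (items (3) and (4) of \cref{prop: properties of non-forking}), and conversely $a\downfree_{\A}^{f}b$ and $a\downfree_{\A b}^{f}d$ give $a\models\p\rest_{\A b}$ and then $a\models\p\rest_{\A bd}$, so $\tp(a/\A bd)=\p\rest_{\A bd}$ is non-forking over $\A$. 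And (4) is the symmetry of non-forking for $\p$: for $b\downfree_{\A}^{f}a\Rightarrow a\downfree_{\A}^{f}b$, reduce to non-dividing of $\tp(b/\A a)$ and apply the standard reformulation (every $\A$-indiscernible sequence through $a$ becomes $\A b$-indiscernible after conjugating $b$) to a long Morley sequence of $\p$ over $\A$ through $a$, using \cref{ fact: first things about generically stable}(2) and $\A$-invariance of $\p$ to derive a contradiction from $a\not\models\p\rest_{\A b}$; the reverse implication is \cite{Casanovas}. (The stated hypothesis $b\downfree_{\A}^{f}\A$ holds trivially.)

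The main obstacle is (1): it genuinely uses generic stability rather than mere invariance, and the argument has to manufacture from a long Morley sequence an $\La(\M)$-formula whose trace alternates between two infinite pieces, which is done by pulling one element out of the sequence and reinserting it as a parameter. The only delicate point is arranging the index set so that the removed element sits exactly between the two $\omega$-halves; after that the clash with the finite/co-finite clause is immediate, and everything downstream of (1) and (3) is routine forking calculus together with the quoted classical facts of \cite{Casanovas}.
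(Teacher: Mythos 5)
The paper's own proof is a one-line citation to Casanovas, so your self-contained derivation is a different route in form though the ideas are the same ones underlying the cited facts. Your argument for (1) — stretch the Morley sequence so the two transposed indices lie in an interval of order type $\omega+\omega$, delete the element $e$ at position $\omega$, and observe that $\chi(z,e)$ then has infinite and co-infinite trace on what is still a Morley sequence of $\p$ over $\A$ — is the standard proof and is correct. The gap is in (3). The biconditional "$\phi(x,b)\in\p$ iff $\models\phi(a_i,b)$ for all but $\leq|\T(\A)|$ of the $i$" is true, but its right-hand side is an infinitary condition on $b$, not an $\La(\A\bar a)$-formula; it does \emph{not} "exhibit $\{b:\phi(x,b)\in\p\}$ as definable over $\A\bar a$", and the invariance step cannot get started, because $\Aut(\M/\A)$-invariance of a set does not make it $\A$-definable unless the set is already known to be definable over \emph{some} small set. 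The missing ingredient is the compactness argument producing a uniform finite bound: using (1), and the fact that for each $b$ the trace $\{i:\models\phi(a_i,b)\}$ is finite or cofinite, one shows there is $n_\phi<\omega$ bounding the size of the minority uniformly in $b$ (else, normalising positions by total indiscernibility and applying compactness, one produces a parameter whose trace on a Morley sequence is infinite and co-infinite). Then "$\phi(x,b)\in\p$ iff at least $n_\phi+1$ of $a_0,\dots,a_{2n_\phi}$ satisfy $\phi(x,b)$" genuinely is an $\La(\A a_0\cdots a_{2n_\phi})$-formula, and only now does invariance give $\A$-definability. In particular (3) depends on (1), contrary to your plan of obtaining them independently.

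Two smaller points. The parenthetical "$b\downfree_\A^f\A$ holds trivially" is false over an arbitrary theory (and the paper works over arbitrary $\T$): a type can fork over its own domain when $\A$ is not an extension base, and that hypothesis is exactly what prevents $\tp(b/\A a)$ from forking over $\A$ for the degenerate reason that $\tp(b/\A)$ already does; it is needed in the direction $a\downfree_\A^f b\Rightarrow b\downfree_\A^f a$. And in (5) your two labels are swapped: monotonicity and base monotonicity (items (3), (4) of \cref{prop: properties of non-forking}) go \emph{from} $a\downfree_\A^f bd$ \emph{to} the two conjuncts, i.e.\ they prove the right-to-left direction of the stated biconditional, while what you call the "converse" is actually the left-to-right direction. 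Both directions are present, so this is only a labeling slip, but it would confuse a reader. The remaining steps — that $\p$ does not fork over $\A$, the characterisation $a\downfree_\A^f C\Leftrightarrow a\models\p\rest_{\A C}$ via (2) and (3), and the reductions of (4) and (5) to it — are fine.
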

\begin{proof}
These are \cite[Proposition 9.6 (4), 9.7,9.6 (3),9.8]{Casanovas}. 
\end{proof}
\begin{lemma}\label{lem: Existence of $p$-basis} Let $b \in \M$ be any tuple and $\p$ a global $\A$-invariant generically stable type. There is a Morley sequence $\bar a$ of $\p$ over $\A$ such for any $a\models\p \rest_{\A}$, we have the implication
    \[a\models \p\rest_{\A \bar a} \Longrightarrow a\models \p\rest_{\A b}.\]
    If this holds, we say that $\bar a$ is a \emph{$\p$-basis for $b$} over $\A$. 
\end{lemma}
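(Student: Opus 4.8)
The plan is to first turn the conclusion into a pure non-forking statement and then produce $\bar a$ by a greedy construction, using \cref{ fact: first things about generically stable}\,(2) only to see that the construction terminates quickly. \emph{Reduction.} By \cref{prop: generically stable properties}\,(2) the type $\p\rest_{\A}$ is stationary and $\p$ is its unique global non-forking extension; in particular $\p$ does not fork over $\A$, so $\p\rest_{\A b}$ is \emph{the} non-forking extension of $\p\rest_{\A}$ to $\A b$. Hence, for every $a\models\p\rest_{\A}$ one has $a\models\p\rest_{\A b}$ if and only if $a\downfree_{\A}b$. Since any realization of $\p\rest_{\A\bar a}$ already realizes $\p\rest_{\A}$, it is therefore enough to exhibit a Morley sequence $\bar a$ of $\p$ over $\A$ such that every $a\models\p\rest_{\A\bar a}$ satisfies $a\downfree_{\A}b$.

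\emph{Construction.} I would build $\bar a=(a_i)_{i<\delta}$ recursively: given the Morley sequence $a_{<i}$ of $\p$ over $\A$, if some $a\models\p\rest_{\A a_{<i}}$ satisfies $a\not\downfree_{\A}b$, choose one such $a$ as $a_i$; otherwise stop and put $\delta=i$. By construction this stays a Morley sequence of $\p$ over $\A$, and each added term satisfies $a_i\not\downfree_{\A}b$. The key point is that this must stop with $\delta\le|\T(\A)|$: were the construction to run to stage $|\T(\A)|^{+}$, we would obtain a Morley sequence $(a_i)_{i<|\T(\A)|^{+}}$ of $\p$ over $\A$ in which \emph{every} term forks with $b$ over $\A$, whereas \cref{ fact: first things about generically stable}\,(2) provides a subset of the index set omitting at most $|\T(\A)|$ indices --- hence non-empty --- whose members $a_j$ satisfy $a_j\models\p\rest_{\A b}$ and so $a_j\downfree_{\A}b$; contradiction. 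When the construction halts, no realization of $\p\rest_{\A\bar a}$ forks with $b$ over $\A$, i.e. every $a\models\p\rest_{\A\bar a}$ has $a\downfree_{\A}b$, and by the reduction every such $a$ realizes $\p\rest_{\A b}$. Thus $\bar a$ is a $\p$-basis for $b$ over $\A$.

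\emph{Where the difficulty lies.} All the real content is the termination argument, which is the only place beyond the reduction where generic stability is used; the rest is routine. The point to be careful with in the reduction is that it genuinely relies on $\p\rest_{\A}$ being stationary with $\p$ its unique global non-forking extension --- so that $\p\rest_{\A b}$ is \emph{the} non-forking extension --- precisely what \cref{prop: generically stable properties}\,(2) records.
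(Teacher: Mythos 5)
Your proof is correct and follows essentially the same route as the paper's: a greedy construction of the Morley sequence, with termination forced by Fact~\ref{ fact: first things about generically stable}(2). The initial "reduction" to a non-forking statement via Proposition~\ref{prop: generically stable properties}(2) is sound but not needed --- the greedy step and the termination argument can be phrased directly in terms of "does not realize $\p$ over $\A b$," which is what the paper does.
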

\begin{proof}
We construct such a sequence by a greedy algorithm: assume the sequence $\bar a$ has been constructed. If there is $a\models \p \rest_{\A \bar a}$ such that $a$ does not realize $\p$ over $\A b$, then we add $a$ to the sequence $\bar a$. By Fact \ref{fact: first things about generically stable}(2) this process must stop after less than $|\T(\A)|^{+}$ steps.
\end{proof}

\begin{definition}
    Let $\p$ be an $\A$-invariant type such that $\p^{\otimes n}$ is generically stable for each $n$. We say that the sequence $\bar a$ is a \emph{$\p$-basis for $b$ over $\A$} if it is a Morley sequence of $\p$ and for each $n$ and $\bar c \models \p^{\otimes n}$, we have the implication
    \[\bar c\models \p\rest_{\A \bar a} \Longrightarrow \bar c\models \p\rest_{\A b}.\]
\end{definition}

Note that a weak $\p$-basis for $\p^{\omega}$ is a $\p$-basis for $\p$. In particular, there always exists a $\p$-basis for $b$ over $\A$.

\begin{lemma}\label{lem: sequence for gen n stable} Let $b \in \M$ and $\p$ an $\A$-invariant type such that $\p^{\otimes n}$ is generically stable for each $n$. Then there is a sequence $(\bar{a}_{i}, b_{i} \ | \ i < |\T(\A)|^{+})$ where:
\begin{enumerate}
\item $\bar{a}_{i}$ is a Morley sequence in $\p^{\omega}$ over $\A$,
\item $\bar{a}_{i}$ is a $\p$-basis of $b_{i}$ over $\A$ for all $i < |\T(\A)|^{+}$, and 
\item $\bar{a}_{i} b_{i} \equiv_{\A} \bar{a}_{0}b$. 
\end{enumerate}
\end{lemma}
\begin{proof}
By Lemma \ref{lem: Existence of $p$-basis} one can find $\bar a$ a $\p$-basis of $b$ of length $\alpha$. We construct inductively the sequence $\displaystyle{(\bar a_{i}, b_{i} \ | \ i < |\T(\A)|^{+})}$. Set $\bar{a}_{0}= \bar{a}$ and $b_{0}=b$. Let $\eta<| \T(\A)|^{+}$ and assume $(\bar{a}_{i},b_{i} \ | \ i < \eta)$ has been constructed. Let $\bar{a}_{\eta} \models \p^{\alpha} \rest_{\A (\bar{a_{i}})_{i <\eta}}$. Let $\sigma \in \Aut(\M/ \A)$ sending the tuple $\bar{a}_{0}$ to $\bar{a}_{\eta}$, and let $b_{\eta}= \sigma(b_{0})$. By construction, $\bar{a}_{\eta} b_{\eta} \equiv_{\A} \bar{a}_{0}b_{0}$, and because $\bar{a}_{0}$ is a $\p$-basis of $b_{0}$ then $\bar{a}_{\eta}$ is also a $\p$-basis of $b_{\eta}$ (The notion of being a $\p$ basis is preserved under automorphisms fixing the base parameter set $\A$). 
\end{proof}
\begin{definition} Let $\p$ be a global type which is definable over a small set $\A \subseteq \M$. Let $\f_{c}$ be a definable function over a parameter $c$. We define an equivalence relation on the set of realizations of $\tp(c/\A)$ in $\M$ by $\E(c_{1},c_{2})$ if and only if $\f_{c_{1}}(x)=\f_{c_{2}}(x) \in \p(x)$. Since $\p$ is $\A$-definable, $\E$ is $\A$-definable. 
\begin{enumerate}
\item The class $c/\E$ is called the \emph{germ} of $\f_{c}$ on $\p$, denoted as $[\f_{c}]_{\p}$. 
\item We say that the germ  $[\f_{c}]_{\p}$ \emph{is strong over $\A$} if for any realization $a \vDash \p\rest_{\A}$, we have  $\f_{c}(a) \in \dcl([\f_{c}]_{\p},a, \A)$.  Equivalently, there is an $\A[\f_{c}]_{\p}$-definable function $\g$ such that for any realization $a \vDash \p \upharpoonright_{\A c}$, $\f_{c}(a)=\g(a)$. 
\end{enumerate}
\end{definition}
The following is \cite[Theorem 2.2]{stronggerms}.
\begin{theorem}\label{stronggerms} Let $\p$ be a global type generically stable over a small set $\A \subseteq \M$ and $\f_{c}$ a definable function defined on $\p$. Then the germ  $[\f_{c}]_{\p}$ is strong over $\A$. 
\end{theorem}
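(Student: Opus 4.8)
The plan is to construct, for a fixed realization $a\models\p\rest_{\A c}$, a single $\La(\A[\f_c]_\p)$-formula $\theta(x,y)$ which determines $\f_c(a)$ from $a$ and the germ $\gamma:=[\f_c]_\p$; the very same $\theta$ will serve as the graph of the $\A\gamma$-definable function $\g$ in the second formulation of strength, so both parts come out at once. Throughout I would work with the pushforward $\q:=(x\mapsto(x,\f_c(x)))_*\,\p$, a global type; the point is that $\q$ globally forces $y=\f_c(x)$, while being controlled by $\gamma$ alone.

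First I would fix a Morley sequence $\bar a=(a_i)_{i<\omega}$ of $\p$ over $\A c a$, set $p_i:=(a_i,\f_c(a_i))$ and $\bar p:=(p_i)_{i<\omega}$, and establish two structural inputs. \emph{(a)} The sequence obtained by prepending $a$ to $\bar a$ is a Morley sequence of $\p$ over $\A c$, hence totally indiscernible over $\A c$ by \cref{prop: generically stable properties}(1); moving $a$ to the end shows $a\models\p\rest_{\A c\bar a}$, and since $\A c\bar p\subseteq\dcl(\A c\bar a)$ and $x\mapsto(x,\f_c(x))$ is $\La(\A c)$-definable, pushing $a$ forward gives $(a,\f_c(a))\models\q\rest_{\A\bar p}$. \emph{(b)} $\gamma\in\dcl(\A\bar p)$: any $\sigma\in\Aut(\M/\A\bar p)$ fixes $\p=\Av(\bar a/\M)$ (\cref{ fact: first things about generically stable}(1)) and fixes every value $\f_c(a_i)$, so $\f_{\sigma(c)}$ agrees with $\f_c$ at each $a_i$; as $\p=\Av(\bar a/\M)$, the formula $\f_{\sigma(c)}(x)=\f_c(x)$ then lies in $\p$, i.e.\ $\sigma(\gamma)=\gamma$.

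The core of the argument is the $\La(\A\gamma)$-formula
\[
  \theta(x,y)\ :=\ \exists z\big([\f_z]_\p=\gamma\ \wedge\ y=\f_z(x)\big)\ \wedge\ \forall y'\Big(\exists z'\big([\f_{z'}]_\p=\gamma\ \wedge\ y'=\f_{z'}(x)\big)\to y'=y\Big),
\]
which says that $y$ is the well-defined common value at $x$ of all representatives of the germ $\gamma$ (this is $\La(\A\gamma)$ because $[\f_z]_\p=\gamma$ is membership of $z$ in the $\A\gamma$-definable class $\gamma$, and $z\mapsto\f_z$ is a uniformly definable family). I would show $\theta\in\q$ by testing it on a realization $(u,\f_c(u))$ of $\q$ in which $u$ realizes the canonical definable extension of $\p$: then for any $z'$ with $[\f_{z'}]_\p=\gamma$ one has $\f_{z'}(u)=\f_c(u)$, because $\f_{z'}$ and $\f_c$ agree on realizations of (the extension of) $\p$, so the uniqueness clause holds. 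By (b), $\theta$ is equivalent to an $\La(\A\bar p)$-formula, so $\theta\in\q\rest_{\A\bar p}$, and combining with (a) gives $\M\models\theta(a,\f_c(a))$. Plugging any $c'$ with $[\f_{c'}]_\p=\gamma$ into the universal clause of $\theta(a,\f_c(a))$ forces $\f_{c'}(a)=\f_c(a)$; taking $c'=\sigma(c)$ for $\sigma\in\Aut(\M/\A\gamma a)$ yields $\f_c(a)\in\dcl(\A\gamma a)$, and reading $\theta$ as a partial-function graph produces the $\g$ witnessing strength uniformly.

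I expect the main obstacle to be the formulation and verification of $\theta$, not any single computation. On one hand, one must notice that although $\gamma$ is in general \emph{not} in the definable closure of a Morley sequence of $\p$, it becomes so once the values of $\f_c$ along that sequence are retained as well (step (b)). On the other hand, the membership $\theta\in\q$ requires care: the inner quantifier over $z'$ ranges over the whole monster, so one cannot argue naively that ``$\f_{z'}$ has germ $\gamma$ and $u$ realizes $\p$, hence $\f_{z'}(u)=\f_c(u)$'', but must first pass to a sufficiently generic realization of $\q$. Everything else — total indiscernibility, the averaging description $\p=\Av(\bar a/\M)$, and the behaviour of types and Morley sequences under pushforward by a definable map — is routine and already recorded in \cref{prop: generically stable properties} and \cref{ fact: first things about generically stable}.
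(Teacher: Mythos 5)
The paper does not give its own proof of this statement: it is imported verbatim as \cite[Theorem 2.2]{stronggerms}, so there is no in-paper argument to compare against. I will therefore assess your proposal on its own terms.

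Your preparatory steps (a) and (b) are correct, and the derivation of strength from $\M\models\theta(a,\f_c(a))$ at the end is also correct. The problem is the central claim that $\theta\in\q$, and your own flagged worry about it is exactly where the argument collapses. You propose to verify $\theta\in\q$ by ``testing it on a realization $(u,\f_c(u))$ of $\q$ in which $u$ realizes the canonical definable extension of $\p$.'' But the truth value of $\theta(u,\f_c(u))$ is a function of $\tp(u/\A c\gamma)$ alone, since $\theta$ has parameters only in $\A\gamma\subseteq\dcl^{\eq}(\A c)$; all realizations of $\p\rest_{\A c\gamma}$, however ``generic'' you arrange them to be, have this same restricted type. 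So replacing $u$ by a realization of the canonical extension cannot change the answer. Concretely, $\theta\in\q$ unwinds to: for some (hence every) $u\models\p\rest_{\A c\gamma}$ in $\M$, \emph{every} $z'\in\M$ with $[\f_{z'}]_\p=\gamma$ satisfies $\f_{z'}(u)=\f_c(u)$. The germ identity only tells you $\f_{z'}(x)=\f_c(x)\in\p$, which forces $\f_{z'}(u)=\f_c(u)$ when $u\models\p\rest_{\A c z'}$; but $u$ realizes $\p$ only over $\A c\gamma$, and a fresh $z'$ can always be chosen over which $u$ is \emph{not} generic. Moving to a bigger monster $\M'$ and realizing $\q$ there does not escape this: the inner quantifier $\forall y'\exists z'$ then ranges over $\M'$, and the same regress recurs.

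Worse, $\theta\in\q$ is a priori \emph{stronger} than the theorem. The theorem asserts $\f_{c'}(a)=\f_c(a)$ only for $c'$ that are $\Aut(\M/\A\gamma a)$-conjugates of $c$; such $c'$ in particular satisfy $a\models\p\rest_{\A c'}$, because invariance of $\p$ transports $a\models\p\rest_{\A c}$ along $\sigma$. Your $\theta$ demands the equality for \emph{all} $z'$ with $[\f_{z'}]_\p=\gamma$, with no constraint that $a$ be generic over $z'$. So your proof reduces the theorem to a claim that is at least as hard, and your justification for that claim is circular. To repair this you need an argument that never leaves the ``safe'' realm of $z'$ over which $a$ is already generic, or that genuinely exploits the indiscernibility of the augmented sequence $\bar p$ (your step (b) recovers $\gamma$ from $\bar p$, but the strong conclusion would require recovering $\f_c(a)$ from $\A\gamma a$, not merely from $\A\bar p a$); neither is done here.
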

\subsection{Other notions of independence}
In this section we present results about various notions of independence that we will use. 
\subsubsection{$\acl$ and $\dcl$-independence}
\begin{definition}\label{def: acldclind} 
\begin{enumerate}
\item Let $a,b$ be tuples and $\A$ a set of parameters. We write $a \downfree_{\A}^{\acl} b$ to indicate $\acl(\A a) \cap \acl(\A b) \subseteq \acl(\A)$. Likewise, we denote $a \downfree_{\A}^{\dcl} b$ if $\dcl(\A a) \cap \dcl(\A b) \subseteq \acl(\A)$.
\item  Let $\bar b= (b_i)_{i\in \I}$ be a sequence of finite tuples and $\A$ a set of parameters. We say that $\bar{b}$ is $\acl$-independent over $\A$ if for any $\J, \J' \subseteq \I$ such that $\J \cap \J'=\emptyset$ we have $\acl(\A b_{\J}) \cap \acl(\A b_{\J'})=\acl(\A)$. 
\end{enumerate}
\end{definition}
We start by establishing a basic fact about non-dividing sequences (see \cref{def:non-dividing}.) 
\begin{lemma}\label{lem: acl non dividing}
   Let $\bar b= (b_i)_{i\in \I}$ be a sequence of finite tuples and $\A$ a set of parameters. Assume $\bar{b}$ is  indiscernible and non-dividing over $\A$, then it is $\acl$-independent over $\A$. 
\end{lemma}
\begin{proof}
We will show later (\emph{c.f.} \cref{lem: equivalence indipendence dcl and acl}) that it is enough to show the result for $\J < \J'$. By base monotonicity and left transitivity of non-dividing (\cref{prop: properties of non dividing}), we have $b_{\J'} \downfree^{d}_\A b_{\J}$. This implies that $b_{\J'}$ and $b_\J$ are $\acl$-independent over $\A$.
%To keep the presentation simple we rather present how to solve a particular case that can be generalized to the arbitrary case. 
%Let $\I_{0}=\{b_{1},b_{3}\}$ while $\I_{1}=\{b_{2},b_{4}\}$ we aim to show $\acl(\A b_{1} b_{3}) \cap \acl(\A b_{2} b_{4}) \subseteq \acl(\A)$. Since $b_{4} \downfree_{\A b_{1} b_{2}}^{d} b_{3}$ then $\acl(\A b_{1} b_{2} b_{4}) \cap \acl(\A b_{1} b_{2} b_{3}) \subseteq \acl(\A b_{1} b_{2})$, and consequently $\acl(\A b_{1} b_{3}) \cap \acl(\A b_{2} b_{4}) \subseteq \acl(\A b_{1} b_{3}) \cap \acl(\A b_{1} b_{2})$. Because $b_{3} \downfree_{\A b_{1}}^{d} b_{2}$ then $\acl(\A b_{1} b_{3}) \cap \acl(\A b_{1} b_{2}) \subseteq \acl(\A b_{1})$. Thus $\acl(\A b_{1} b_{3}) \cap \acl(\A b_{1} b_{2}) \subseteq \acl(\A b_{1})$, and hence $\acl(\A b_{1} b_{3}) \cap \acl(\A b_{2} b_{4}) \subseteq \acl(\A b_{1}) \cap \acl(\A b_{2} b_{4})$. Because $b_{4} \downfree_{\A b_{2}}^{d} b_{1}$ and $b_{2} \downfree_{\A}^{d} b_{1}$ by Proposition \ref{prop: properties of non dividing}(2) $b_{4} b_{2} \downfree_{\A}^{d} b_{1}$ so $\acl(\A b_{1}) \cap \acl(\A b_{2} b_{4}) \subseteq \acl(\A)$.
\end{proof}
\subsubsection{$\GS$-independence}

We will make some use of the notion of generically stable partial types, which was introduced in \cite{typedecom} and further studied in \cite{treeless}. It is not essential for us, but allows us to unify some arguments. Recall that $\T$ is a theory and $\M$ is its monster model.
\begin{definition} A partial type $\pi(x)$ over $\M$ (the monster model)  is a consistent set of formulas
with parameters in $\M$ that is closed under logical consequences and finite  conjunctions, that is:
\begin{itemize}
\item if $\phi(x), \psi(x) \in \pi(x)$ then $\phi(x) \wedge \psi(x) \in \pi(x)$,
\item if $\phi(x) \in \pi(x)$ and $\M \models \phi(x) \rightarrow \psi(x)$ then $\psi(x) \in \pi(x)$. 
\end{itemize}
Given a small set of parameters $\A$, we write $\pi(x) \rest_{\A}$ to denote the partial type obtained by taking the subset of $\pi(x)$ of formulas with parameters in $\A$. 
\end{definition}
\begin{definition} Let $\A$ be a small set of parameters.
\begin{enumerate}
\item We say that a partial type $\pi(x)$ is \emph{ind-definable over $\A$} if for every $\phi(x,y)$, the set
$\displaystyle{\{b \ | \ \phi(x,b) \in \pi(x)\}}$ is ind-definable over $\A$ (i.e., is a union of $\A$-definable sets).
\item We say that a partial type $\pi(x)$ is \emph{generically stable over $\A$} if it is \emph{ind}-definable over $\A$ and the following holds:\\
$(\mathrm{GS})$ if $(a_{k}\ | \ k < \omega)$ is a sequence such that $a_{k} \models \pi\rest_{\A a_{<k}}$ and $\phi(x,b)\in \pi(x)$, then for all $k$ but finitely many, we have $\M\models \phi(a_{k},b)$.
\end{enumerate}
\end{definition}

\begin{lemma}\label{lem: existential gen stable} Let $\alpha(y)$ be a partial type, generically stable over $\A$. Fix some $a,b \in \mathrm{M}$ such that $b \models \alpha(y) \rest_{\A}$ and let $\rho(x,y) \subseteq \tp(ab/ \A)$. Then the partial type $\displaystyle{\pi(x):= \exists y \big(\alpha(y) \wedge \rho(x,y)\big)}$ is generically stable over $\A$. 
 \end{lemma}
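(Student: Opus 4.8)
The plan is to verify directly the two clauses in the definition of "generically stable over $\A$" for the partial type $\pi(x)=\exists y\,(\alpha(y)\wedge\rho(x,y))$, reducing each clause to the corresponding property of $\alpha$. The ind-definability of $\pi$ over $\A$ is the routine half. Every formula of $\pi$ has the shape $\exists y\,(\alpha_0(y)\wedge\rho_0(x,y))$, where $\alpha_0$ is a finite conjunction of formulas of $\alpha$ and $\rho_0$ a finite conjunction of formulas of $\rho$; since $\rho\subseteq\tp(cd/\A)$, the relation defined by $\rho_0$ is over $\A$. For a template $\phi(y,z)$ the set $\{b:\phi(y,b)\in\alpha\}$ is a union of $\A$-definable sets by hypothesis, and conjoining the $\A$-definable relation $\rho_0(x,y)$ and then projecting off the variable $y$ preserves this; hence the locus of parameters with the corresponding formula in $\pi$ is again a union of $\A$-definable sets, so $\pi$ is ind-definable over $\A$.

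For the second clause, let $(a_k)_{k<\omega}$ satisfy $a_k\models\pi\rest_{\A a_{<k}}$ and let $\psi(x)=\exists y\,(\alpha_0(y)\wedge\rho_0(x,y))\in\pi$; the goal is $\models\psi(a_k)$ for all but finitely many $k$. I would attach to $(a_k)$ a sequence of witnesses $(d_k)_{k<\omega}$ with $\models\alpha(d_k)$ and $\models\rho(a_k,d_k)$ that is moreover a Morley sequence of $\alpha$ over $\A$, i.e.\ $d_k\models\alpha\rest_{\A d_{<k}}$. Given this, generic stability of $\alpha$ applied to $(d_k)$ and to the formula $\alpha_0\in\alpha$ gives $\models\alpha_0(d_k)$ for all but finitely many $k$; together with $\models\rho_0(a_k,d_k)$, which holds for every $k$ since $\models\rho(a_k,d_k)$, this yields $\models\exists y\,(\alpha_0(y)\wedge\rho_0(a_k,y))$, that is $\models\psi(a_k)$, for all but finitely many $k$. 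Here the hypotheses $d\models\alpha\rest_\A$ and $\rho\subseteq\tp(cd/\A)$ are exactly what make the partial type $\alpha(y)\cup\rho(a,y)$ consistent for $a\models\pi$, so that witnesses exist at all — $(c,d)$ being the seed realizing $\alpha(y)\wedge\rho(x,y)$.

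The delicate point, and the step I expect to be the main obstacle, is arranging the $(d_k)$ to be at once witnesses and a Morley sequence of $\alpha$ over $\A$. In a recursion on $k$ one has $d_{<k}$ with $(d_i)_{i<k}$ a Morley sequence of $\alpha$ over $\A$ and needs $d_k$ with $d_k\models\alpha\rest_{\A d_{<k}}$ and $\rho(a_k,d_k)$, i.e.\ the consistency of $\alpha\rest_{\A d_{<k}}(y)\cup\rho(a_k,y)$; but $a_k$ is only assumed generic for $\pi$ over $\A a_{<k}$, not over the larger set $\A a_{<k}d_{<k}$, so the formulas of $\pi$ with parameters in $d_{<k}$ are not directly available. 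I would resolve this by first fixing, by a greedy argument in the spirit of \cref{lem: Existence of $p$-basis} applied to $\alpha$, a basis that controls genericity of realizations of $\alpha$, and at each stage replacing $d_{<k}$ by an $\A a_{<k}$-conjugate independent from $a_k$ before choosing $d_k$ — this is legitimate by the symmetry and extension properties of the generically stable $\alpha$ (\cref{prop: generically stable properties}, \cref{prop: properties of non-forking}), and total indiscernibility of Morley sequences of $\alpha$ ensures the already-built part of the sequence is untouched by the replacement, after which the required consistency does follow from $a_k\models\pi\rest_{\A a_{<k}}$. With $(d_k)$ in hand the conclusion is as in the previous paragraph.
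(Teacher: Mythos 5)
The paper gives no proof of this lemma — it is cited as \cite[Corollary 1.12]{treeless} — so there is no internal argument to compare against. Your plan is the natural one: the ind-definability half is routine, and for the genericity clause you want to produce, alongside the given $\pi$-Morley sequence $(a_k)$, a witness sequence $(d_k)$ that is an $\alpha$-Morley sequence over $\A$ with $\models\rho(a_k,d_k)$, and then feed $(d_k)$ and the formula $\alpha_0\in\alpha$ into the generic stability of $\alpha$. If such $(d_k)$ can be produced, the conclusion does follow exactly as you say.

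The difficulty you flag, however, is a genuine gap and your proposed fix does not close it. The recursion needs the consistency of $\alpha\rest_{\A d_{<k}}(y)\cup\rho(a_k,y)$, whereas the hypothesis only makes $a_k$ generic for $\pi$ over $\A a_{<k}$; the parameters $d_{<k}$ are ones about which the hypothesis says nothing. Your resolution appeals to \cref{lem: Existence of $p$-basis} and \cref{prop: generically stable properties}, but those are stated and proved for \emph{complete} generically stable types, and the paper does not develop the corresponding forking calculus for partial generically stable $\alpha$: it is not set up what ``an $\A a_{<k}$-conjugate $d'_{<k}$ of $d_{<k}$ independent from $a_k$'' means, nor why $a_k\models\pi\rest_{\A a_{<k}}$ would transfer to $a_k\models\pi\rest_{\A a_{<k}d'_{<k}}$ — and deducing the latter from the former for a ``generic'' $d'_{<k}$ is essentially invoking generic stability of $\pi$, the very thing being proved. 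The remark that total indiscernibility keeps the already-built part ``untouched'' by the replacement is also off: the conjugation is applied to $d_{<k}$ itself, and preserving its $\A a_{<k}$-type does not by itself justify the next step of the recursion. To close the gap one would need either a global compactness/extraction argument producing $(d_k)$ all at once rather than a greedy recursion, or the machinery for generically stable partial types developed in \cite{treeless} that the paper is deliberately importing as a black box.
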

 \begin{proof}
 This is \cite[Corollary $1.12$]{treeless}.
\end{proof}

 \begin{lemma}\label{lem:existencemax} Let $\p(x) \in \mathcal{S}(A)$. There is a unique maximal global partial type $\pi_{\p}$ generically stable over $\A$ consistent with $\p$. That is, if $\pi$ is a global generically stable
partial type consistent $\p$, then $\pi \subseteq \pi_{\p}$. It follows, in particular, that $\pi_{\p}$ extends $\p$.
 \end{lemma}
 \begin{proof}
 This is \cite[Corollary $1.9$]{treeless}.
 \end{proof}

\begin{definition} Let $a,b$ be (possibly infinite) tuples in $\M$ and $\A$ a small set of parameters. We write $a \downfree_{\A}^{\mathrm{GS}} b$ if for every partial type $\pi(x)$ generically stable over $\A$, if $b \models \pi\rest_{\A}$ then $b \models \pi\rest_{\A a}$. Note that this is equivalent to stating that $b \models \pi_{*}\rest_{\A a}$ where $\pi_{*}$ is the maximal $\A$-invariant generically stable partial type extending $\tp(b/\A)$ given by \cref{lem:existencemax}.  
\end{definition}
\begin{theorem}\label{thm: properties GS relation} The relation $\downfree^{\GS}$ satisfies:
\begin{enumerate}
\item \emph{(Invariance)} if $a \downfree_{\A}^{\GS} b$ and $\sigma \in \Aut(M)$ then $\sigma(a) \downfree_{\sigma(\A)}^{\GS} \sigma(b)$, 
\item \emph{(Normality)} if $a \downfree_{\A}^{\GS} b$ then $a \A  \downfree_{\A}^{\GS} b \A$, 
\item \emph{(Monotonicity)} if $a \downfree_{\A}^{\GS}b$, $a' \subseteq a$ and $b' \subseteq b$ then $a' \downfree_{\A}^{\GS} b'$, 
\item \emph{(Left and right existence)} for all $\A, \B$ $\A \downfree_{\B}^{\GS} \B$ and $\A \downfree_{\A}^{\GS} \B$, 
\item \emph{(Right and left extension)} if $a \downfree_{\A}^{\GS} b$ and $b \subseteq b'$, then there is $a' \equiv_{\A b} a$ such that $a' \downfree_{\A}^{\GS} b'$. And if $a \subseteq a'$, there is $b' \equiv_{\A a} b$ such that $a' \downfree_{\A}^{\GS} b'$. 
\item \emph{(Finite character)} $\C \downfree_{\A}^{\GS} \B$ if and only if for every finite  $c \subseteq \C$ and $b \subseteq \B$, $c \downfree_{\A}^{\GS} b$. 
\item \emph{(Left transitivity)} if $a \downfree_{\A c}^{\GS} b$ and $c \downfree_{\A}^{\GS} b$, then $ac \downfree_{\A}^{\GS} b$. 
\item \emph{(Local character on a club)} for every finite tuple $a$ and set of parameters $\B$ there is $\mathcal{A} \subseteq [\B]^{\leq |\T|}$ such that $a \downfree_{\A} \B$ for all $\A \in \mathcal{A}$. 
\item \emph{(Anti-reflexivity)}  $a \downfree_{\A}^{\GS} a$ if and only if $a \in \acl(\A)$, 
\item \emph{(Algebraicity)} if $a \downfree_{\A}^{\GS} b$ then $a \downfree_{\A}^{\GS} \acl(b)$ and $\acl(a) \downfree_{\A}^{\GS} b$. 
\end{enumerate}
\end{theorem}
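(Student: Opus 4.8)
The plan is to verify the ten clauses directly from the definition of $\downfree^{\GS}$ and its reformulation via the maximal type $\pi_{*}$, using throughout three elementary facts about generically stable partial types. \textbf{(a)} If $\pi$ is generically stable over $\A$ and $\A\subseteq\A'$, then $\pi$ is generically stable over $\A'$: ind-definability over $\A$ is ind-definability over $\A'$, and every Morley sequence over $\A'$ is one over $\A$. \textbf{(b)} A projection principle, essentially \cref{lem: existential gen stable}: if $\pi(x,z)$ is generically stable over $\A$, then the partial type in the variable $x$ given by the formulas of $\pi$ not mentioning $z$ is generically stable over $\A$, since a Morley sequence of it projects to a Morley sequence of $\pi$ in the $x$-coordinate; dually, the pull-back of a generically stable partial type along a coordinate projection is generically stable. \textbf{(c)} A union of generically stable partial types over $\A$ is again one, so the maximal $\A$-invariant generically stable partial type $\pi_{*}$ extending $\tp(b/\A)$ exists, $a\downfree^{\GS}_{\A}b$ holds exactly when $b\models\pi_{*}\rest_{\A a}$, and by \cref{ fact: first things about generically stable}(2) the type $\pi_{*}$ admits a $\p$-basis in the sense of \cref{lem: Existence of $p$-basis}, of size at most $|\T(\A)|$.

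Granting \textbf{(a)}--\textbf{(c)}, most clauses are bookkeeping. Clause (1): an automorphism $\sigma$ maps the generically stable partial types over $\A$ bijectively onto those over $\sigma(\A)$. Clause (4): when the base already contains the relevant side, the implication defining $\downfree^{\GS}$ has vacuous conclusion. Clause (7): given $\pi$ generically stable over $\A$ with $b\models\pi\rest_{\A}$, by \textbf{(a)} $\pi$ is generically stable over $\A c$, so $c\downfree^{\GS}_{\A}b$ gives $b\models\pi\rest_{\A c}$ and then $a\downfree^{\GS}_{\A c}b$ gives $b\models\pi\rest_{\A ca}$. Clause (3): shrinking the left side is immediate from $\pi_{*}\rest_{\A a'}\subseteq\pi_{*}\rest_{\A a}$; shrinking the right side uses \textbf{(b)}, for the pull-back $\tilde{\pi}$ of $\pi_{*}^{b'}$ to the variable of $b$ is generically stable over $\A$, so $\pi_{*}^{b}\cup\tilde{\pi}$ is generically stable over $\A$ and extends $\tp(b/\A)$, whence $\tilde{\pi}\subseteq\pi_{*}^{b}$ by maximality, and $b\models\pi_{*}^{b}\rest_{\A a}$ forces $b'\models\pi_{*}^{b'}\rest_{\A a}$. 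Clause (2): adjoining $\A$ to both sides adds only constant diagonal coordinates and does not change the base $\A$, so it reduces to the hypothesis. Clause (6): membership of a single formula in $\pi_{*}\rest_{\A a}$ involves finitely many coordinates of $a$, and by \textbf{(b)} one may replace $\C,\B$ by finite subtuples; both directions follow.

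The two $\acl$-clauses use orbit counting. Let $a\downfree^{\GS}_{\A}b$, put $\pi_{*}=\pi_{*}^{b}$, and take $\phi(x,c)\in\pi_{*}$ with $c\in\acl(\A a)$. Since $\pi_{*}$ is $\Aut(\M/\A a)$-invariant, $\phi(x,c')\in\pi_{*}$ for each of the finitely many $\A a$-conjugates $c'$ of $c$; hence every realization of $\pi_{*}$ satisfies $\forall y\,(\delta(y)\to\phi(x,y))$, where $\delta$ defines that orbit, so this $\A a$-formula lies in $\pi_{*}$, is satisfied by $b$, and gives $\phi(b,c)$. This yields $\acl(a)\downfree^{\GS}_{\A}b$, and \textbf{(b)} with the symmetric argument yields $a\downfree^{\GS}_{\A}\acl(b)$; these are clause (10). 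Clause (9) in the direction $a\in\acl(\A)\Rightarrow a\downfree^{\GS}_{\A}a$ is the case $b=a$ above; for the converse one analyses $\pi_{*}^{a}\rest_{\A a}$ via the $\Aut(\M/\A)$-invariance of $\pi_{*}^{a}$ together with the compactness principle that an $\A$-ind-definable set containing an $\A$-type-definable set contains an intermediate $\A$-definable set, and deduces that $a\models\pi_{*}^{a}\rest_{\A a}$ forces $\tp(a/\A)$ to be algebraic.

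The real difficulty is clause (8), local character on a club, which I would handle separately. Fix a finite tuple $a$ and a set $\B$, and let $\mathcal{A}$ be the set of $\A\in[\B]^{\le|\T|}$ witnessing the independence in (8). From finite character (6) and \textbf{(a)}, $\mathcal{A}$ is closed under unions of increasing $|\T|$-chains; the substantive point is that $\mathcal{A}$ is cofinal in $[\B]^{\le|\T|}$. Starting from any small $\A_{0}\subseteq\B$, iterate: at stage $n$, using \cref{ fact: first things about generically stable}(2) in the style of the $\p$-basis construction of \cref{lem: Existence of $p$-basis}, adjoin to $\A_{n}$ the at most $|\T(\A_{n})|$ parameters of $\B$ needed so that every generically stable partial type over $\A_{n}$ that $\B$ realizes on its base is already decided once those parameters are present; after $|\T|$ stages $\A=\bigcup_{n}\A_{n}$ stabilises, and adjoining the remaining parameters of $\B$ can no longer force a new formula at $\B$, so $\A\in\mathcal{A}$. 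The delicate points are: making the ``already decided'' condition uniform enough that the iteration closes in $|\T|$ steps --- which is exactly where \cref{ fact: first things about generically stable}(2) and the precise definition of generic stability for partial types are needed; reconciling this with the non-forking formulation ``$a\downfree_{\A}\B$'' that appears in the statement; and checking that $\mathcal{A}$ actually contains a club and not merely a cofinal subset of $[\B]^{\le|\T|}$.
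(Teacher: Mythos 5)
The paper does not actually prove this theorem: it simply cites \cite[Theorem~2.2]{treeless}. Your blind attempt is therefore necessarily a different route, since you try to give a direct argument from the definition of $\downfree^{\GS}$. That is a reasonable thing to attempt, and the structure --- deriving everything from ind-definability, the maximal type $\pi_{*}$, and the projection lemma --- is the right skeleton, and indeed matches how the proof in \cite{treeless} is organized.

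However, there is a concrete gap: clause (5), \emph{right and left extension}, does not appear anywhere in your outline. You explicitly treat (1), (2), (3), (4), (6), (7), (9), (10), and (8), but (5) is skipped. Extension is not ``bookkeeping'': given $a\downfree^{\GS}_{\A}b$ and $b\subseteq b'$, producing $a'\equiv_{\A b}a$ with $a'\downfree^{\GS}_{\A}b'$ requires an argument that the condition $b'\models\pi_{*}^{b'}\rest_{\A a'}$ can be arranged while fixing $\tp(a/\A b)$; this involves the $\Aut(\M/\A)$-invariance of $\pi_{*}^{b'}$ together with a compactness argument, and it is where the choice of \emph{maximal} generically stable partial type actually earns its keep. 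Without it the proposal cannot be counted complete.

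Beyond that, you yourself flag clause (8) as unfinished --- you have not shown that the family $\mathcal{A}$ is a club rather than merely cofinal, nor that the greedy closure terminates in $|\T|$ steps, and the statement in the theorem uses $\downfree$ without superscript, which you note needs reconciling. The converse direction of clause (9), that $a\downfree^{\GS}_{\A}a$ forces $a\in\acl(\A)$, is also left at the level of an indication. Finally, in clause (4), the left-existence statement $\A\downfree^{\GS}_{\B}\B$ is not literally ``vacuous'': one needs the observation that the constant sequence $(\B,\B,\dots)$ is a Morley sequence of any generically stable $\pi$ over $\B$ with $\B\models\pi\rest_{\B}$, which then forces $\B$ to realize $\pi$ over all parameters. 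So the proposal is a plausible sketch of an alternative to citing \cite{treeless}, but it is not yet a proof.
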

\begin{proof}
This is \cite[Theorem $2.2$]{treeless}. 
\end{proof}
\begin{remark}\label{fullexistenceGS} By $(4)$, $(5)$ and $(3)$ the relation $\downfree^{\GS}$ satisfies \emph{ left and right full existence}, i.e. for any $a,b$ tuples in $\M$ and $\A$ a set of parameters there is $a' \equiv_{\A} a$ such that $a' \downfree_{\A}^{\GS} b$, and some $b' \equiv_{\A} b$ such that $a \downfree_{\A}^{\GS} b'$.
\end{remark}
\begin{proposition}\label{prop: forking stronger than GS and symmetry for GS}
\begin{enumerate}
\item  If $a \downfree_{\A}^{f} b$ or $b \downfree_{\A}^{f} a$ then $a \downfree_{\A}^{\GS} b$. 
\item Assume $\p^{\otimes n}$ is generically stable over $\A$ for all $n < \omega$ and let $a \models \p\rest_{\A}$. Then for any $b$, $a \downfree_{\A}^{\GS} b$ if and only if $b \downfree_{\A} ^{\GS} a$. 
\end{enumerate}
\end{proposition}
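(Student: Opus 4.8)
The plan is to work throughout with the ``maximal'' formulation of $\downfree^{\GS}$: write $\pi_{*}^{c}$ for the maximal $\A$-invariant generically stable partial type extending $\tp(c/\A)$. This exists because the union of all $\A$-invariant generically stable partial types extending $\tp(c/\A)$ is again ind-definable over $\A$ and generically stable over $\A$ (a Morley sequence of the union is a Morley sequence of each member), and by definition $a\downfree_{\A}^{\GS}b$ is equivalent to $b\models\pi_{*}^{b}\rest_{\A a}$. The single observation underlying everything is this: if $\q$ is a \emph{complete} global type generically stable over $\A$ and $c\models\q\rest_{\A}$, then, $\q$ being $\A$-definable, $\A$-invariant, complete and generically stable over $\A$, we have $\pi_{*}^{c}=\q$; hence for every $b$
\[
b\downfree_{\A}^{\GS}c\iff c\models\q\rest_{\A b}\iff c\downfree_{\A}^{f}b ,
\]
the last equivalence because $\q$ is the unique global non-forking extension of $\q\rest_{\A}$ (\cref{prop: generically stable properties}(2)). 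This will be applied with $\q=\p$, and under hypothesis (2) also with $\q=\p^{\otimes n}$ and $\q=\p^{\otimes\omega}$.

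For (1) it suffices to prove that $a\downfree_{\A}^{f}b$ or $b\downfree_{\A}^{f}a$ implies $b\models\pi\rest_{\A a}$ whenever $\pi$ is generically stable over $\A$ and $b\models\pi\rest_{\A}$. Suppose not: then some $\A$-formula $\phi(y,z)$ has $\phi(y,a)\in\pi$ while $\neg\phi(y,a)\in\tp(b/\A a)$. The set $\{z:\phi(y,z)\in\pi\}$ is a union of $\A$-definable sets, hence $\Aut(\M/\A)$-invariant, so $\phi(y,a_{i})\in\pi$ for every $\A$-indiscernible sequence $(a_{i})_{i<\omega}$ in $\tp(a/\A)$; a generic-stability argument, feeding such a sequence together with a Morley sequence of $\pi$ and using $b\models\pi\rest_{\A}$, then shows that $\tp(b/\A a)$ forks over $\A$ --- contradicting $b\downfree_{\A}^{f}a$. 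For the case $a\downfree_{\A}^{f}b$ one reduces to the previous one by using left full existence (\cref{fullexistenceGS}) to produce $a'\equiv_{\A}a$ with $a'\downfree_{\A}^{\GS}b$, and then exploiting non-forking of $\tp(a/\A b)$ together with the extension and invariance properties of $\downfree^{\GS}$ (\cref{thm: properties GS relation}) to move $a'$ back onto $a$ over $\A b$; this reduction is the point of (1) I expect to need the most care.

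For (2), the easy direction is immediate: by the displayed equivalence with $\q=\p$, $b\downfree_{\A}^{\GS}a$ yields $a\downfree_{\A}^{f}b$, and then (1) gives $a\downfree_{\A}^{\GS}b$. For the converse it is enough to prove $a\downfree_{\A}^{\GS}b\Rightarrow a\downfree_{\A}^{f}b$, i.e.\ $a\models\p\rest_{\A b}$. Assume not and fix an $\A$-formula $\psi(x,y)$ with $\models\psi(a,b)$ and $\neg\psi(x,b)\in\p$. Since $\neg\psi(x,b)\in\p$, any Morley sequence of $\p$ over $\A b$ is an infinite $\A$-Morley sequence of $\p$ every term of which avoids $\psi(\cdot,b)$. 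The plan is to turn this into a partial type $\pi(y)$ that is generically stable over $\A$, satisfies $b\models\pi\rest_{\A}$, but fails over $\A a$, contradicting $a\downfree_{\A}^{\GS}b$: using that $\p^{\otimes\omega}$ is generically stable over $\A$ --- here hypothesis (2) is needed in full --- and the existential closure \cref{lem: existential gen stable}, a partial type of the form ``there is an $\A$-Morley sequence of $\p$, in a prescribed configuration involving $a$, all of whose terms avoid $\psi(\cdot,y)$'' is generically stable over $\A$ and is realized by $b$ over $\A$, while $\models\psi(a,b)$ obstructs its restriction to $\A a$. Making this configuration precise --- so that the resulting $\pi$ is genuinely generically stable over $\A$, has $b\models\pi\rest_{\A}$, and genuinely fails over $\A a$ rather than merely over some larger set --- is the main obstacle of the whole argument.
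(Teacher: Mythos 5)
The paper disposes of both parts by citation: part (1) is exactly \cite[Lemma~2.1]{treeless}, and the hard direction of part (2) ($a\downfree^{\GS}_{\A}b\Rightarrow b\downfree^{\GS}_{\A}a$) is "the exact same argument as in \cite[Proposition~2.4]{treeless}, working with $\p^{\otimes n}$ instead of $\pi$''; only the easy converse of (2) is spelled out, and your treatment of that direction (reduce via $\pi_{*}^{a}=\p$ to $a\downfree^{f}_{\A}b$ and apply (1)) is word-for-word the paper's. Your opening observation that a complete generically stable global type is its own maximal generically stable partial extension, and the resulting equivalence $b\downfree^{\GS}_{\A}c\iff c\downfree^{f}_{\A}b$ for $c\models\q\rest_{\A}$, are also correct and match what the paper implicitly uses.

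Where you diverge is in trying to re-prove the two \cite{treeless} facts from scratch, and there your sketch has a real gap. In part (1) you have the two cases the wrong way round: the case $a\downfree^{f}_{\A}b$ is the one that a direct generic-stability argument handles cleanly (take an $\A$-Morley sequence $(b_{i})$ of $\pi$ with $b_{0}=b$, use non-dividing of $\tp(a/\A b)$ to find $a'\equiv_{\A b}a$ making $(b_{i})$ indiscernible over $\A a'$, and then apply generic stability plus indiscernibility to conclude $\models\phi(b,a')$ and hence $\models\phi(b,a)$). For $b\downfree^{f}_{\A}a$ the indiscernible sequence sits in the \emph{parameter} slot of $\phi$, so generic stability of $\pi$ (which controls the variable slot) does not bite directly; your sketch "feeding such a sequence together with a Morley sequence of $\pi$ then shows $\tp(b/\A a)$ forks'' does not, as written, produce a $k$-inconsistent family. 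And the proposed reduction for the other case — taking $a'\equiv_{\A}a$ with $a'\downfree^{\GS}_{\A}b$ by full existence and then "moving $a'$ back onto $a$ over $\A b$'' — does not work: $a'\equiv_{\A}a$ gives an automorphism over $\A$, not over $\A b$, and nothing in \cref{thm: properties GS relation} lets you upgrade this to $a'\equiv_{\A b}a$. For the hard direction of (2), the partial type you actually want is not "Morley sequences of $\p$ avoiding $\psi(\cdot,y)$'' but the $\p$-basis type $\pi_{\p}$ that the paper constructs in \cref{prop: basic facts on p rel} via \cref{lem: existential gen stable}; with $\pi_{\p}$ in hand the conclusion is exactly $a\downfree^{\p}_{\A}b$, which for $a\models\p\rest_{\A}$ gives $a\models\p\rest_{\A b}$ by \cref{ lem: properties p relation}(4). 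So the skeleton of your argument for (2) is right, but the construction you gesture at should be replaced by the one already present in Section~2 of the paper.
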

\begin{proof}
The first part of the statement is \cite[Lemma $2.1$]{treeless}. For $(2)$, the left to right direction follows by the exact same argument as in \cite[Proposition $2.4$]{treeless} (instead of working with $\pi$ we work with $\p^{\tensor n}$). For the converse, assume that $b \downfree_{\A}^{\GS} a$, then $a \models \p \rest_{\A b}$. %(as $\p\rest_{\A}$ is stationary)
Hence $a \downfree_{\A}^{f} b$, and by the first statement we must have $a \downfree_{\A}^{\GS} b$. 
\end{proof}

\begin{definition}\label{def:GSMorley}
    A GS-Morley sequence over $\A$ is a sequence $(a_i)_{i\in \I}$ which is indiscernible over $\A$ and such that $a_{<i} \downfree^{\GS}_\A a_i$ for all $i \in \I$.
\end{definition}

Note that we ask for $a_{<i} \downfree^{\GS}_\A a_i$ and not $a_i \downfree^{\GS}_\A a_{<i}$. In particular, if $\pi(x)$ is the maximal generically stable partial type consistent with $\tp(a/\A)$ and the sequence $a_0=a, a_1, \ldots$ is $\A$-indiscernible with $a_i \models \pi \rest_{A a_{<i}}$, then that sequence is a GS-Morley sequence over $\A$.

\begin{lemma}\label{lem: gs-sequencences indiscernible over a}
Let $(a_i)_{i\in \I}$ be a GS-Morley sequence over $\A$ and assume that $(a_i)_{i\in \I}$ is indiscernible over $\B\supseteq \A$. Then $\B \downfree^{\GS}_\A a_i$ for all $i< \omega$.
\end{lemma}
\begin{proof}
Let $\pi$ be a generically stable partial type over $\A$ such that $a_0 \models \pi \rest_\A$. As $(a_i)_{i\in \I}$ is a GS-Morley sequence over $\A$, we have $a_i \models \pi \rest_{\A a_{<i}}$ for all $i<\omega$. Let $\phi(x,b) \in \pi\rest_\B$. By the definition of generically stable partial types, we have $\phi(x,b) \in \tp(a_i/\B)$ for all but at most finitely many values of $i$. Since the sequence $(a_i)_{i\in \I}$ is indiscernible over $\B$, we must have $\phi(x,b) \in \tp(a_i/\B)$ for all $i$ and in particular $\phi(x,b) \in \tp(a_0/\B)$. This shows $\B \downfree^{\GS}_\A a_0$, and hence $\B \downfree^{\GS}_\A a_i$ by indiscernibility.
\end{proof}

GS-Morley sequences exist over any set. Those will however not be enough for our purposes: we will need a sequence which is a \emph{total GS-Morley sequence} in the sense that we have $a_{\J} \downfree^{GS} a_{\J'}$ for all $\J', \J \subseteq \I$ with $\J < \J'$. To show their existence, we will use tree-indiscernibles as in \cite{kaplanramsey}. The link between them and total Morley sequences was made in \cite{JH}.

We start by introducing tree bookkeeping notation, which we repeat from \cite[Section $5.1$]{kaplanramsey}.
\begin{notation} For any ordinal $\alpha$, $\La_{s,\alpha}$ is the language $\{ \trianglelefteq, \wedge, <_{lex}, (\mathrm{P}_{\beta})_{\beta < \alpha}\}$, where $\trianglelefteq$ and $<_{lex}$ are binary relations, $\wedge$ is a binary function and each $\mathrm{P}_{\beta}$ is a unary relation. We may view a tree with $\alpha$ levels as
an $\La_{s,\alpha}$-structure where we interpret:
\begin{itemize}
\item $\trianglelefteq$ as the tree partial order,
\item $\wedge$ as the binary meet
function,
\item $<_{lex}$ as the lexicographic order, and
\item $P_{\beta}$ as the level $\beta$. 
\end{itemize}
\end{notation}

\begin{definition}  Let $\alpha$ be an ordinal. We define $\mathcal{T}_{\alpha}$ to be the set of functions $\f$ such that:
\begin{itemize}
\item $ran(\f)\subseteq \omega$;
\item $dom(\f)$ is an end-segment of $\alpha$ of the form $[\beta, \alpha)$, where $\beta$ is 0 or a successor ordinal, and if $\alpha$ is a successor ordinal we allow $\beta = \alpha$, that is $f=\emptyset$;
\item $\f$ has finite support, i.e.  the set $\{ y \in dom(\f) : \f(y) \neq 0 \}$ is finite.
\end{itemize} 
 We interpret $\mathcal{T}_{\alpha}$ as a $\La_{s, \alpha}$-structures in the following way:
\begin{itemize}
\item $\f \trianglelefteq \g$ if and only if $\f \subseteq \g$;
\item $\f \wedge \g= \f\rest_{[\beta, \alpha)}= \g\rest_{[\beta, \alpha)}$ where $\beta= \min \{ \gamma \ | \ \f\rest_{[\gamma, \alpha)}= \g\rest_{[\gamma, \alpha)} \}$, if non-empty (note that $\beta$ will not be a limit, by finite support). If the set is empty, we define $\f \wedge \g$ to be the empty function (note that this cannot hold if $\alpha$ is a limit);
\item $\f <_{lex} \g$ if and only if $\f \trianglelefteq \g$ or $ \f$ and $\g$ are $\trianglelefteq$-incomparable and  $\f(\gamma)< \g(\gamma)$ where $\dom(\f \wedge \g)=[\gamma+1, \alpha)$;
\item $\mathrm{P}_{\beta}(\f)$ holds if and only if $\dom(\f)=[\beta, \alpha)$. 
\end{itemize}
\end{definition}
\begin{notation}
\begin{itemize}
%\item   We write $\mathcal{F}_{\alpha+1} := \mathcal{T}_{\alpha+1} \backslash \{\emptyset\}$.
\item If $\f \in \mathcal{T}_{\alpha}$ with $\dom(\f)=[ \beta+1, \alpha)$ and $i < \omega$ then $\f^{\smallfrown}\langle i \rangle$ denotes the function $\f \cup \{ (\beta, i)\}$. While, $\langle i \rangle^{\smallfrown} \f$ denotes the element of $\mathcal{T}_{\alpha+1}$ given by $\f \cup \{ (\alpha,i)\}$.
\item for $\beta< \alpha$ we write $\zeta_{\beta}$ to denote the function with domain $[\beta, \alpha)$ such that $\zeta_{\beta}(\gamma)=0$ for all $\gamma \in [\beta, \alpha)$.
\item For $\alpha<\beta$, we define the canonical embedding $\iota_{\alpha \beta}\colon \mathcal{T}_{\alpha}\to\mathcal{T}_{\beta}$  by extending all elements of $\mathcal{T}_{\alpha}$ by 0 on $[\alpha, \beta)$. Note that this is an $\La_{s,\alpha}$-embedding.
\end{itemize}
\end{notation}
\begin{definition} We say that a tree  $(b_{\f})_{\f \in \mathcal{T}_{\alpha}}$ is $s$-indiscernible over $\A$ if for any tuples $\f_{0},\dots, \f_{n-1}$ and $\g_{0}, \dots, \g_{n-1}$ in $\mathcal{T}_{\alpha}$ with $\f_{0}, \dots, \f_{n-1} \equiv^{\mathrm{qf}} \g_{0},\dots, \g_{n-1}$, then $b_{\f_{0}}, \dots, b_{\f_{n-1}} \equiv_{\A} b_{\g_{0}}, \dots, b_{\g_{n-1}}$ where quantifier free type is in the language $\La_{s,\alpha}$.
%We also say that $(b_{\f})_{\f \in \mathcal{F}_{\alpha}}$ is $s$-indiscernible over $\A$ (for $\alpha$ a non-limit ordinal) if it is $s$-indiscernible over $\A$ after adding a root node $b_{\epsilon_{\alpha}}=\emptyset$. \textcolor{red}{can you introduce the notation for $b_{\epsilon_{\alpha}}$? }
\end{definition}
Given $\f \in \mathcal{T}_{\alpha}$, we denote as $b_{\trianglerighteq \f}$ a fixed enumeration of the set $\displaystyle{\{ b_{\g} \ | \ \g \in \mathcal{T}_{\alpha} \ | \ \f \trianglelefteq \g \}}$. We choose this enumeration so that if $(b_{\f})_{\f \in \mathcal{T}_{\alpha}}$ is $s$-indiscernible over $\A$, then for any $\f$ of successor length, the sequence $(b_{\f^{\smallfrown} \langle i \rangle})_{i< \omega}$ is $\A$-indiscernible.

If $w\in [\alpha \setminus \lim(\alpha)]^{<\omega}$ is a finite subset of $\alpha \setminus \lim(\alpha)$, we let $\mathcal{T}_\alpha \upharpoonright w$ be the \emph{restriction of $\mathcal{T}_\alpha$ to the sets of level $w$}, that is
\[\mathcal{T}_\alpha \upharpoonright w = \{\f\in \mathcal T_\alpha\colon \min(\dom(\f))\in w \text{ and }\beta \in \dom(\f)\setminus w \Rightarrow \f(\beta)=0\}.\]

We adapt Defintion 5.7 of \cite{kaplanramsey}.
\begin{definition}
\begin{enumerate}
    \item We say that a tree $(a_\f)_{\f\in \mathcal{T}_\alpha}$ is \emph{spread out} over $\A$ if for all $\f\in \mathcal{T}_\alpha$ with $\dom(\f) = [\beta+1, \alpha)$ for some $\beta<\alpha$, the sequence of subtrees $(a_{\trianglerighteq \f^{\smallfrown} \langle i \rangle})_{i<\omega}$ is a GS-Morley sequence over $\A$.
    \item The tree $(a_\f)_{\f\in \mathcal{T}_\alpha}$ is a \emph{Morley tree} over $\A$ if it is spread out and s-indiscernible over $\A$ and furthermore for all $v,w \in [\alpha \setminus \lim(\alpha)]^{<\omega}$ with $|v|=|w|$,
    \[(a_\f)_{\f\in \mathcal{T}_\alpha \upharpoonright v} \equiv_\A (a_\f)_{\f\in \mathcal{T}_\alpha \upharpoonright w}.\]
\end{enumerate}
\end{definition}
We now restate results from \cite{kaplanramsey} in our context. In that paper, the authors work with actual Morley sequences of invariant types. We work instead with GS-Morley sequences. By doing that, we lose invariance and in order to carry out the construction of an s-indiscernible tree, we will need to recover it by extraction. The following two lemmas will make that possible.

\begin{lemma}\label{lem:modeling}
Let $(a_\f)_{\f\in \mathcal{T}_\alpha}$ for some $\alpha$, then for any $\beta$, there is $(b_\g)_{\g\in \mathcal{T}_\alpha}$ which is \emph{based on} $(a_\f)_{\f\in \mathcal{T}_\alpha}$, meaning that for any finite tuple $\bar \g = (\g_1,\ldots, \g_k) \in \mathcal{T}_\alpha^k$ and formula $\phi(x_1,\ldots,x_n) \in \tp(\bar \g)$, there is $\bar \f = (\f_1,\ldots,\f_n)\in \mathcal{T}_\alpha^k$ such that:

-- $\bar \f$ and $\bar \g$ have the same quantifier-free $\La_{s,\alpha}$-type;

-- $\models \phi(a_{\f_1},\ldots, a_{\f_n})$.
\end{lemma}
\begin{proof}
By compactness it is enough to show this for a finite $\alpha$ and this is Theorem 4.3 in \cite{tree_indiscernibles} (see the first sentence of the proof there which reduces to the case of tress of finite heights by the same compactness argument).
\end{proof}

\begin{lemma}\label{lem: limit of GS-ind is GS-ind}
    Let $c$ be a (possibly infinite) tuple and $\B \supseteq \A$ a set of parameters. Assume that for any formula $\phi(x, b) \in \tp(c/\B)$, with $b$ a tuple from $\B$, there exists $c', b'$ such that $\tp(c'/\A) = \tp(c/\A)$, $\phi(c', b')$ holds and $b' \downfree^{\GS} c'$. Then $\B \downfree^{\GS} c$.
\end{lemma}
\begin{proof}
This follows at once from the definition of GS-independence: Assume that $\B \ndownfree^{\GS}_\A c$. Then there is a generically stable partial type $\pi(x)$ over $\A$ such that $c \models \pi \rest_\A$, but $c \nmodels \pi \rest_{\B}$. Let $\phi(x,b) \in \tp(c/\B)$ be such that $\neg \phi(x,b) \in \pi$. As $\pi$ is Ind-definable over $\A$, there exists $\theta(y)\in \tp(b/\B)$ such that $\neg \phi(x,b') \in \pi$ for all $b' \models \theta(y)$. By assumption, we can find $a' \downfree_\A c'$ such that $\tp(c'/\A) = \tp(c/\A)$ and $\phi(c',b')\wedge \theta(b)$ holds. As $\tp(c'/\A) = \tp(c/\A)$, we have $c' \models \pi \rest_\A$. As $b' \downfree_\A c'$, we have $c' \models \pi \rest_{\A b'}$. This is a contradiction since $\neg \phi(x,b') \in \pi$. 
\end{proof}

\begin{lemma}
    Let $\p(x) \in \mathcal{S}(\A)$ be given, then for any ordinal $\alpha$, there is a spread out and s-indiscernible tree $(c_\f)_{\f \in \mathcal T_\alpha}$ over $\A$ such that $\tp(c_\f/\A) = p$ for all $\f$.
\end{lemma}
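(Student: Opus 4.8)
The plan is to build the tree in two moves: first produce a tree of very large height that is \emph{spread out} over $C$ while ignoring $s$-indiscernibility, and then extract from it an $s$-indiscernible subtree of the prescribed height $\alpha$, checking that being spread out survives the extraction. (I read "$p\in\tp(C)$" as "$p$ a complete type over $C$", as the conclusion $\tp(c_\f/C)=p$ makes clear.)

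\textbf{Step 1: a spread out tree of large height.} Fix a cardinal $\kappa$ large enough for the tree modeling property of \cite{kaplanramsey} to apply over $C$ with target height $\alpha$, and construct by transfinite recursion on $\gamma\le\kappa$ a tree $(a_\f)_{\f\in\mathcal{T}_\gamma}$ which is spread out over $C$ and satisfies $\tp(a_\f/C)=p$ for every $\f$. At a successor stage $\gamma=\delta+1$ I would start from the previously built spread-out tree of height $\delta$, use left and right full existence for $\downfree^{\GS}$ (\cref{fullexistenceGS}) together with finite character to obtain a GS-Morley sequence $(\bar a_i)_{i<\omega}$ over $C$ whose first term is a $C$-copy of an enumeration of that tree, and then place these $\omega$ subtrees under a fresh root realizing $p$; spread-outness internal to each $\bar a_i$ is inherited, since it only constrains types over $C$ and $C$-indiscernibility, both preserved under $C$-conjugacy. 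Limit stages are handled by compactness, using that "spread out over $C$" unwinds — via finite character of $\downfree^{\GS}$ and of indiscernibility — into a conjunction of conditions each mentioning only finitely many nodes.

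\textbf{Step 2: extraction and verification.} I would then apply the tree modeling property of \cite{kaplanramsey} to $(a_\f)_{\f\in\mathcal{T}_\kappa}$: for $\kappa$ large enough there is a tree $(c_\f)_{\f\in\mathcal{T}_\alpha}$ which is $s$-indiscernible over $C$ and \emph{locally based} on $(a_\f)_{\f\in\mathcal{T}_\kappa}$ over $C$, in the sense that every finite tuple $\f_0,\dots,\f_{n-1}$ from $\mathcal{T}_\alpha$ is matched by a tuple $\g_0,\dots,\g_{n-1}$ from $\mathcal{T}_\kappa$ with the same quantifier-free $\mathcal{L}_s$-type and $\tp(c_{\f_0}\cdots c_{\f_{n-1}}/C)=\tp(a_{\g_0}\cdots a_{\g_{n-1}}/C)$. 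Taking $n=1$ gives $\tp(c_\f/C)=p$ for all $\f$, and $s$-indiscernibility is built in, so it remains to see that $(c_\f)_{\f\in\mathcal{T}_\alpha}$ is spread out over $C$. Fixing $\f$ with $\dom(\f)=[\beta+1,\alpha)$, the sequence $(c_{\trianglerighteq \f^{\smallfrown}\langle i\rangle})_{i<\omega}$ is $C$-indiscernible by $s$-indiscernibility, so I only need the independence $c_{\trianglerighteq \f^{\smallfrown}\langle 0\rangle}\cdots c_{\trianglerighteq \f^{\smallfrown}\langle i-1\rangle}\downfree^{\GS}_C c_{\trianglerighteq \f^{\smallfrown}\langle i\rangle}$. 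By finite character of $\downfree^{\GS}$ (\cref{thm: properties GS relation}(6)) this reduces to finite sub-tuples, which involve only finitely many nodes; local basedness matches them with finitely many nodes of $(a_\f)_{\f\in\mathcal{T}_\kappa}$ of the same quantifier-free $\mathcal{L}_s$-type and the same type over $C$, and that quantifier-free type encodes, through $\wedge$, $\trianglelefteq$ and $<_{lex}$, that the matched nodes lie in distinct children of a common node of $(a_\f)$, in the same left-to-right order. Since $(a_\f)_{\f\in\mathcal{T}_\kappa}$ is spread out, the relevant subtrees form a GS-Morley sequence over $C$; monotonicity of $\downfree^{\GS}$ (\cref{thm: properties GS relation}(3)) gives independence of the matched finite tuples over $C$, and since $\downfree^{\GS}_C$ depends only on types over $C$ (\cref{thm: properties GS relation}(1) together with ind-definability of generically stable partial types) so does the original one, giving spread-outness.

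\textbf{Main obstacle.} The delicate point is exactly this last verification: \emph{spread out} is phrased in terms of GS-Morley sequences of infinite subtrees, so transporting it across the extraction forces a descent to finite configurations — where finite character and monotonicity of $\downfree^{\GS}$ do the work — and then a careful check that the quantifier-free $\mathcal{L}_s$-type of a finite set of nodes really pins down enough of the tree combinatorics (which child of which common ancestor, and in which order) for the $\downfree^{\GS}$-statement to transfer. Everything else is a routine transcription of the corresponding arguments in \cite{kaplanramsey}, with Morley sequences of an invariant type replaced throughout by GS-Morley sequences; the point of the extra extraction in Step 2 is that here, unlike in the invariant-type setting where the standard tree is automatically $s$-indiscernible, the type of a GS-Morley sequence over $C$ is not determined by $C$ alone.
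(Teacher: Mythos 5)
Your proposal is correct, but it takes a genuinely different route from the paper's. The paper (following the blueprint of Kaplan--Ramsey's Lemma 5.11) builds the tree by a single transfinite induction that maintains \emph{both} spread-outness and $s$-indiscernibility at every stage: at each successor step one takes a GS-Morley sequence of copies of the previous tree, chooses a root $c\models p$ with $c\downfree^{\GS}_C$ of the sequence, extracts an indiscernible-over-$c$ sequence by Erd\H{o}s--Rado, and (implicitly, as in Kaplan--Ramsey) finishes the step with $s$-modeling. You instead decouple the two properties: the recursion in Step~1 manages spread-outness alone, and a single application of the modeling property for $s$-indiscernible trees at the end restores $s$-indiscernibility, at the cost of the transfer argument in Step~2 that spread-outness survives the extraction. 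That transfer argument is correctly reduced via finite character, monotonicity, and type-invariance of $\downfree^{\GS}$, together with the observation that the quantifier-free $\mathcal{L}_{s,\alpha}$-type of a finite set of nodes already records the "distinct children of a common ancestor, in this left-to-right order" configuration that spread-outness speaks about. The gain of your decomposition is modularity — the inductive step becomes essentially trivial — at the price of concentrating all the combinatorics in one transfer lemma, which is the natural place for it. One imprecision worth fixing: in Step~2 you invoke "$\kappa$ large enough" as if you were using the paper's own extraction lemma (the one immediately preceding this one), but that lemma takes a tall tree that is \emph{already} $s$-indiscernible and returns a Morley tree indexed by $\mathcal T_\omega$; your Step~1 tree is not $s$-indiscernible, so the tool you actually need is the Kim--Kim--Scow-style modeling property for $s$-indexed indiscernibles, which produces an $s$-indiscernible tree locally based on the given one with the \emph{same} index set $\mathcal T_\alpha$ and needs no largeness hypothesis. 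So you should simply take $\kappa=\alpha$ in Step~1; with that adjustment the argument goes through.
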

\begin{proof}
The construction is similar to \cite[Lemma 5.11]{kaplanramsey} with an extra extracting step to make trees indiscernible. 

We argue by induction on $\alpha$, building an increasing sequence of trees, respecting the canonical embeddings $\iota_{\alpha \beta}$. For the case $\alpha = 1$, take a GS-Morley sequence $(a_k)_{k>= - 1}$ in $p$ over $\A$. Set $c^1_\emptyset = a_{-1}$ and $c^1_{\langle i\rangle} = a_i$ for $i<\omega$.

At a limit step, we take the union of the trees constructed so far. Since the canonical embeddings respect the $\La_{s,\alpha}$-structure, s-indiscernibility goes through automatically. To see that the resulting $(c_\f)_{\f \in \Tree_\alpha}$ is spread-out, let $\f \in \mathcal T_\alpha$ with $\dom(\f) = [\beta+1,\alpha)$. Then as the domain of $\f$ is finite, $\f$ already belongs to some $\Tree_\gamma$ for some $\gamma < \alpha$. Then the sequence of subtrees $(a_{\trianglerighteq \f^{\smallfrown} \langle i \rangle})_{i<\omega}$ is a sequence of subtrees of $\Tree_\gamma$ and hence a GS-Morley sequence over $\A$ by induction.

At a successor step $\alpha+1$, let $q$ denote the type of the tree constructed at step $\alpha$. Build a GS-Morley sequence $(b_i)_{i<\omega}$ of $\q$ over $\mathrm{A}$. Let $u$ be a realization of $\p$ so that $u \downfree^{GS}_\mathrm{A} b_{<\omega}$. Let $u^{\alpha+1}_\emptyset=u$ be the root of the tree $\mathcal T_{\alpha+1}$ and place the sequence $(d_i)_{i<\omega}$ above it so that $d_i = u^{\alpha+1}_{\trianglerighteq \langle i\rangle}$. This gives a tree $(u_\f)_{\f \in \Tree_{\alpha+1}}$ which extends the tree constructed at the previous step. This tree is spread-out by construction, but is not necessarily s-indiscernible.

By Lemma \ref{lem:modeling}, there is a tree $(c_\f)_{\f\in \Tree_{\alpha+1}}$ that is s-indiscernible and based on the tree $(u_\f)_{\f\in \Tree_{\alpha+1}}$ constructed above. We now have to check that this new tree is also spread-out.

Take a tuple $(\f_1,\ldots, \f_n), (\g_1,\ldots,\g_n) \in \Tree_{\alpha+1}^n$ with the same $\La_{s,\alpha+1}$-type. Assume first that $\f_i(\alpha)=\g_i(\alpha)=0$ for all $i$, so that all the element $u_{\f_i}$, $u_{\g_i}$ lie in $b_0$. The type of the tree $b_0$ is the same as that of $(c_\f)_{\f\in \Tree_\alpha}$ from the previous stage of the construction. That tree is s-indiscernible, hence it follows that $\tp(u_{\f_1},\ldots, u_{\f_n}/\A) = \tp(u_{\g_1},\ldots, u_{\g_n}/\A)$. Assume next that the meet $(\f_1,\ldots,\f_n)$ has level $<\alpha+1$, then $\f_1, \ldots, \f_n$ all lie inside one of the subtrees $b_i$ and the same is true of $(\g_1,\ldots,\g_n)$. The trees $b_i$ all have the same type as $b_0$, hence the same conclusion holds: $\tp(u_{\f_1},\ldots, u_{\f_n}/\A) = \tp(u_{\g_1},\ldots, u_{\g_n}/\A)$. It follows that the extraction step cannot change the types of tuples whose meet is higher than the root: if $\f_1, \ldots, \f_n$ have have a meet that is strictly above the root of $\Tree_{\alpha+1}$, then $\tp(c_{\f_1},\ldots, c_{\f_n}/\A)=\tp(u_{\f_1},\ldots, u_{\f_n}/\A)$. In particular, we can assume that the tree $(c_\f)_{\f\in \Tree_{\alpha+1}}$ extends the tree $(c_\f)_{\f\in \Tree_\alpha}$ from the previous stage.

We now check that the tree is spread-out. Let $\f \in \Tree_{\alpha+1}$ with $\dom(\f) = [\beta+1, \alpha+1)$ for some $\beta \leq \alpha$. If $\beta < \alpha$, then by the previous paragraph, the sequence $(c_{\trianglerighteq \f^{\smallfrown} \langle i \rangle})_{i<\omega}$ has the same type as $(u_{\trianglerighteq \f^{\smallfrown} \langle i \rangle})_{i<\omega}$ over $\A$ and all those elements lie within one tree $b_i$. As all those trees have the same type as the tree from step $\alpha$ of the construction, and that tree is spread out, if follows that $(u_{\trianglerighteq \f^{\smallfrown} \langle i \rangle})_{i<\omega}$ and hence $(c_{\trianglerighteq \f^{\smallfrown} \langle i \rangle})_{i<\omega}$ is a GS-Morley sequence over $\A$ as required.

Assume now that $\beta = \alpha$, that is $\f = \emptyset$. The sequence 
$(u_{\trianglerighteq \f^{\smallfrown} \langle i \rangle})_{i<\omega}= (u_{\trianglerighteq \langle i \rangle})_{i<\omega}$ is just the sequence of trees $(b_i)_{i< \omega}$, and that is a GS-Morley sequence over $\A$ by construction. We have to show that the same is true of the sequence $(c_{\trianglerighteq \langle i \rangle})_{i<\omega}$. For ease of notation, set $C_i = c_{\trianglerighteq \langle i \rangle}$ and $U_i = u_{\trianglerighteq \langle i \rangle}$. We want to apply Lemma \ref{lem: limit of GS-ind is GS-ind}. Pick some $i<\omega$, and a finite tuple $e_i \in C_i$. Let $\phi(x; e_0, \dots, e_{i-1}) \in \tp(c/C_0 \ldots C_{i-1})$, where each $e_j$ is a tuple from $C_j$. By construction, we can find indices $k_1<\cdots < k_i$ and tuples $e'_j \in U_{k_j}$ such that $\phi(e'_i; e'_0, \dots, e'_{i-1})$ and furthermore (by the previous paragraph) $\tp(e'_j/\A) = \tp(e_j/\A)$ for all $j$. We can now apply Lemma \ref{lem: limit of GS-ind is GS-ind} to conclude that $e_{<i} \downfree^{GS} e_i$. Therefore $C_{<i} \downfree^{GS}_\A C_i$ by finite character and $(c_{\trianglerighteq \langle i \rangle})_{i<\omega}$ is a GS-Morley sequence over $\A$ as required. This shows that the tree built at step $\alpha+1$ is spread-out and finishes the proof of the lemma.
\end{proof}

\begin{lemma}
    Suppose $(a_\f)_{\f\in \mathcal T_\kappa}$ is a tree of tuples spread out and s-indiscernible over $M$. If the ordinal $\kappa$ is large enough, there is a Morley tree $(b_\f)_{\f\in \mathcal T_\omega}$ so that for all $w \in [\omega]^{<\omega}$, there is $v \in [\kappa \setminus \lim(\kappa)]^{<\omega}$ so that
    \[(a_\f)_{\f\in \mathcal T_\kappa \upharpoonright v} \equiv (b_\f)_{\f\in \mathcal T_\omega \upharpoonright w}.\]
\end{lemma}
\begin{proof}
    The proof is exactly the same as that of \cite[Lemma 5.10]{kaplanramsey}, replacing "Morley sequences of an invariant type" by "GS-Morley sequences". (The only property of Morley sequences of invariant types that is used in the proof is the fact that this property depends only on the type of the sequence over the base, which is of course also true for GS-Morley sequences.)
\end{proof}

\begin{proposition}\label{prop: total GS sequences exist} Given $b$ and $\A$ there is an $\A$-indiscernible sequence $(b_{i})_{i<\omega}$, $b_0=b$, such that $b_{\J} \downfree_{\A}^{\GS} b_{\I}$ for all $\I < \J$. 
\end{proposition}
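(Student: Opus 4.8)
The plan is to deduce \cref{prop: total GS sequences exist} from the tree machinery developed in the preceding lemmas, following the template that relates Morley trees to total Morley sequences (as in \cite{JH}, adapted to $\GS$-independence). First I would invoke the second-to-last lemma: starting from the constant type $\p = \tp(b/\A)$, there is a spread-out, $s$-indiscernible tree $(c_\f)_{\f \in \mathcal{T}_\kappa}$ over $\A$ with $\tp(c_\f/\A) = \tp(b/\A)$ for all $\f$, where $\kappa$ is taken large enough. Then I would apply the first of the two restated Kaplan--Ramsey lemmas to extract from it a Morley tree $(b_\f)_{\f \in \mathcal{T}_\omega}$ over $\A$, with the property that each finite subtree of $(b_\f)_{\f \in \mathcal{T}_\omega}$ is $\A$-equivalent to a finite subtree of the original, so in particular $\tp(b_\f/\A) = \tp(b/\A)$ is preserved.

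Next I would read off the desired sequence from the leftmost or the "stem" branch of the Morley tree. Concretely, consider the sequence $(b_i)_{i < \omega}$ given by $b_i := b_{\langle i \rangle ^\smallfrown \zeta_1}$ (or the analogous branch indexing, depending on the precise conventions fixed above for $\mathcal{T}_\omega$), enumerated so that going up the tree corresponds to increasing the index. The $s$-indiscernibility of the Morley tree together with the uniformity clause in the definition of Morley tree gives that $(b_i)_{i<\omega}$ is $\A$-indiscernible. For the total $\GS$-independence statement, I would use the spread-out condition: for a cut $\I < \J$ of $\omega$, the tuple $b_\J$ sits inside the union of subtrees $(b_{\trianglerighteq \f^\smallfrown \langle i\rangle})_{i \in \J}$ hanging off an appropriate node $\f$, and the tuple $b_\I$ sits in the earlier part; since by spread-out-ness the sequence of subtrees $(b_{\trianglerighteq \f^\smallfrown\langle i\rangle})_{i<\omega}$ is a $\GS$-Morley sequence over $\A$, we get $b_{\trianglerighteq \f^\smallfrown \langle i_0\rangle} \downfree^\GS_\A (b_{\trianglerighteq \f^\smallfrown\langle i\rangle})_{i < i_0}$ for the least $i_0 \in \J$, and then left transitivity (\cref{thm: properties GS relation}(7)) and base-point bookkeeping along the branch upgrade this to $b_\J \downfree^\GS_\A b_\I$. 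Monotonicity (\cref{thm: properties GS relation}(3)) then lets us pass from the full subtree tuples to the branch elements $b_i$.

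The main obstacle I expect is the bookkeeping: matching the tree-indexing conventions ($\mathcal{T}_\alpha$ versus $\mathcal{T}^*_\alpha$, the roles of $\zeta_\beta$ and of $\f^\smallfrown\langle i\rangle$ versus $\langle i\rangle^\smallfrown \f$) so that the branch really is $\A$-indiscernible and so that an arbitrary cut $\I < \J$ of the branch is genuinely "separated" by a single node $\f$ of the tree in the way the spread-out condition requires. In particular one has to be careful that the $\GS$-Morley property of the subtree sequences, which is stated level-by-level, can be chained along the spine of the tree using left transitivity without any symmetry or base-monotonicity assumptions on $\downfree^\GS$ beyond those listed in \cref{thm: properties GS relation} and \cref{fullexistenceGS}. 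Once the indexing is pinned down, each individual step is routine given the properties already collected. It is also worth remarking that finite character (\cref{thm: properties GS relation}(6)) reduces the claim $b_\J \downfree^\GS_\A b_\I$ to finite sub-cuts, which is what makes the finite-subtree extraction in the Morley-tree lemma sufficient.
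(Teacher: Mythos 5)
Your plan correctly identifies the framework --- build a Morley tree via the two preceding lemmas and read a sequence off it --- but the mechanism you propose for extracting \emph{total} $\GS$-independence has two gaps that together break the argument. First, the spread-out condition gives you, for a sibling subtree-sequence $(a_{\trianglerighteq\f^\smallfrown\langle i\rangle})_{i<\omega}$, the GS-Morley property with the \emph{earlier} subtrees on the left, namely $(a_{\trianglerighteq\f^\smallfrown\langle i\rangle})_{i<i_0} \downfree^\GS_\A a_{\trianglerighteq\f^\smallfrown\langle i_0\rangle}$; you assert the reverse, $a_{\trianglerighteq\f^\smallfrown\langle i_0\rangle} \downfree^\GS_\A (a_{\trianglerighteq\f^\smallfrown\langle i\rangle})_{i<i_0}$. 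Since $\downfree^\GS$ is not symmetric in general (\cref{prop: forking stronger than GS and symmetry for GS}(2) recovers symmetry only under extra hypotheses unavailable here), this is not a cosmetic re-indexing: the inequality you need is exactly the one spread-out-ness does not supply. Second, even granting the direction, chaining successive elements with left transitivity (\cref{thm: properties GS relation}(7)) implicitly requires moving the base, i.e.\ some form of base monotonicity for $\downfree^\GS$, which is not among the listed properties and is not known to hold. This is precisely why a GS-Morley sibling sequence off a single node of the tree (or a single branch) is not automatically a \emph{total} GS-Morley sequence.

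The paper's proof avoids both issues by reading off a \emph{comb}, $b_k = c_{\zeta_{2k+1}^\smallfrown\langle 1\rangle}$, rather than a sibling sequence or a branch. For each cut $k$ it introduces a fresh sibling subtree-sequence $(e_i)$ hanging off a node at the appropriate level, chosen so that the tail $b_{>k}$ branches off above that node while $b_{\leq k}$ is contained in one of the $e_i$. Then $(e_i)$ is GS-Morley over $\A$ by spread-out-ness and, by $s$-indiscernibility of the Morley tree, is indiscernible over $\A b_{>k}$. The decisive step is \emph{not} left transitivity but the definition of a generically stable partial type: a $\pi$-Morley sequence that is indiscernible over a side tuple $D$ must realize $\pi$ over $\A D$ termwise, because the ``all but finitely many'' clause together with indiscernibility forces ``all.'' This gives $b_{>k} \downfree^\GS_\A e_i$ directly, in the correct direction, with no appeal to symmetry or base monotonicity, and then monotonicity of $\downfree^\GS$ yields $b_{>k} \downfree^\GS_\A b_{\leq k}$. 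Your reduction to finite cuts via finite character and your observation that $s$-indiscernibility supplies $\A$-indiscernibility of the extracted sequence are both fine; it is the core independence step that requires the comb-and-sibling argument rather than transitivity chaining.
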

 \begin{proof}
By the two previous lemmas applied successively, we construct a Morley tree $(c_\f)_{\f \in \mathcal T_\omega}$ over $\A$ such that $\tp(c_\f/\A) = \tp(b/\A)$ for all $\f$. For $k<\omega$, set $b_k = c_{\zeta_{k}}$.

By the indiscernibility assumption on the tree, this sequence is indiscernible over $\mathrm{A}$. It is then enough to show that for every $k<\omega$, we have \[b_{>k} \downfree^{GS}_\mathrm{C} b_{\leq k}.\]

Fix $k<\omega$. Consider the sequence $(e_i)_{i<\omega}$ defined by $e_i = c_{{\zeta_{k+1}}^{\smallfrown}\langle i \rangle}$. This is a GS-Morley sequence over $\mathrm{A}$. It is also indiscernible over $b_{> k}$. By Lemma \ref{lem: gs-sequencences indiscernible over a}, $b_{>k} \downfree^{GS}_\mathrm{A} e_0$. Since $b_{\leq k}$ is subtuple of $e_0$, we have what we want.
 \end{proof}
 \subsubsection{$\p$-independence}
\begin{definition} Let $\p$ be a global $\A$-invariant type. Assume $\p^{\otimes n}$ is generically stable for all $n < \omega$. For (possibly infinite) tuples $a,b$, we define $a \downfree_{\A}^{\p} b$ as: 
\begin{center}
for any $\bar{d} \models \p^{\otimes n}\rest_{\A}$ there is $\bar{d}' \equiv_{\A a} \bar{d}$ such that $\bar{d}' \models \p^{\otimes n} \rest_{\A b}$.
\end{center}
\end{definition}

\begin{proposition}\label{prop: basic facts on p rel} Assume $\p^{\otimes n}$ is generically stable for all $n < \omega$. The following are equivalent:
\begin{enumerate}
\item $a \downfree_{\A}^{\p} b$;
\item  There is $\bar{e} \models \p^{\alpha}\rest_{\A}$ a $\p$-basis of $a$ over $\A$ with $\bar{e} \models \p^{\alpha}\rest_{\A b}$;
\item $b \downfree_{\A}^{\p} a$. 
\end{enumerate}
Furthermore, given $a$, there is a generically stable partial type $\pi_{\p}(x)$ over $\A$ consistent with $\tp(a/ \A)$ such that $a \models \pi_{\p}(x) \rest_{\A b}$ if and only if $a \downfree_{\A}^{\p} b$. 
\end{proposition}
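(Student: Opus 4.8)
The plan is to reduce everything to statements about $\p$-bases (\cref{lem: Existence of $p$-basis}) and to one generically stable partial type $\pi_\p$ extracted from \cref{lem: existential gen stable}. First I would record a few routine facts. Since products of $\A$-definable types are $\A$-definable and $\p^{\otimes n}$ is generically stable over $\A$ for every finite $n$, the type $\p^{\otimes\lambda}$ is generically stable over $\A$ for every ordinal $\lambda$, and $\p$ (hence each $\p^{\otimes\lambda}$) remains generically stable over any $\A'\supseteq\A$. If $\bar e$ is a $\p$-basis of $a$ over $\A$ and $\bar e\subseteq\bar e'$ with $\bar e'$ a Morley sequence of $\p$ over $\A$, then $\bar e'$ is again a $\p$-basis of $a$ over $\A$ (a realization of $\p\rest_{\A\bar e'}$ realizes $\p\rest_{\A\bar e}$); and being a $\p$-basis of $a$ over $\A$ depends only on $\tp(\bar e/\A a)$ — an $\Aut(\M/\A a)$-image of such an $\bar e$ is again one, using $\A$-invariance of $\p$. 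Finally, by compactness $a\downfree^\p_\A b$ is equivalent to the same statement with $\p^{\otimes n}$ replaced by $\p^{\otimes\lambda}$ for an arbitrary ordinal $\lambda$; and writing a witness $\bar d'$ as $\sigma(\bar d)$ with $\sigma\in\Aut(\M/\A a)$, $a\downfree^\p_\A b$ is equivalent to: for every $n$ and every $\bar d\models\p^{\otimes n}\rest_\A$ there is $b'\equiv_{\A a}b$ with $\bar d\models\p^{\otimes n}\rest_{\A b'}$.

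With this in hand, $(1)\Rightarrow(2)$ is short: take any $\p$-basis $\bar e$ of $a$ over $\A$, of length $\mu$; then $\bar e\models\p^{\otimes\mu}\rest_\A$, so by the $\lambda$-ary form of $(1)$ there is $\bar e'\equiv_{\A a}\bar e$ with $\bar e'\models\p^{\otimes\mu}\rest_{\A b}$, and $\bar e'$ is still a $\p$-basis of $a$ over $\A$, hence witnesses $(2)$. For $\pi_\p$, fix a $\p$-basis $\bar e$ of $a$ over $\A$ of length $\mu$ and put $\pi_\p(x):=\exists\bar y\,\bigl(\p^{\otimes\mu}\rest_\A(\bar y)\wedge\rho(x,\bar y)\bigr)$ with $\rho(x,\bar y):=\tp(a\bar e/\A)$; by \cref{lem: existential gen stable} (applied to the generically stable partial type $\p^{\otimes\mu}\rest_\A$, the witness $\bar e$, and $\rho$), $\pi_\p$ is generically stable over $\A$, and $a\models\pi_\p\rest_\A$. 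Unwinding the construction, $a\models\pi_\p\rest_{\A b}$ holds iff there is $\bar e'\equiv_{\A a}\bar e$ with $\bar e'\models\p^{\otimes\mu}\rest_{\A b}$; the point is that the canonical extension of $\pi_\p$ to $\A b$ is obtained by replacing $\p^{\otimes\mu}\rest_\A$ by its canonical extension $\p^{\otimes\mu}\rest_{\A b}$, which needs a short argument best extracted from the proof of \cref{lem: existential gen stable}. Such an $\bar e'$ is a $\p$-basis of $a$ over $\A$ that is Morley over $\A b$, so $a\models\pi_\p\rest_{\A b}$ implies $(2)$; combining with $(1)\Rightarrow(2)$ we get $(1)\Rightarrow a\models\pi_\p\rest_{\A b}\Rightarrow(2)$.

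It then remains to close the loop, i.e.\ to prove $(2)\Rightarrow(1)$ and the symmetry $(1)\Leftrightarrow(3)$; after that $(1)\Leftrightarrow(2)\Leftrightarrow(3)\Leftrightarrow a\models\pi_\p\rest_{\A b}$, and since $\pi_\p$ is generically stable over $\A$ and consistent with $\tp(a/\A)$, the "furthermore'' is done. I expect the crux to be a \emph{tuple form of the defining property of a $\p$-basis}: if $\bar e$ is a $\p$-basis of $a$ over $\A$ then every $\bar d\models\p^{\otimes\lambda}\rest_{\A\bar e}$ also satisfies $\bar d\models\p^{\otimes\lambda}\rest_{\A a}$. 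I would prove this by transfinite induction on $\lambda$; the delicate successor step reduces to the following: given $e\models\p\rest_\A$ with $e\downfree^f_\A\bar e\,\bar d_{<i}$ — so $e\downfree^f_\A a$ by the $\p$-basis property, and $\bar e$ is a Morley sequence of $\p$ over $\A\bar d_{<i}$ by total indiscernibility of the Morley sequence $\bar e^{\smallfrown}\bar d$ over $\A$ (\cref{prop: generically stable properties}(1)) — one must conclude $e\downfree^f_{\A a}\bar d_{<i}$; that is, one has to \emph{transfer} independence witnessed by $\bar e$ to independence witnessed by $a$. A bare forking calculus (base monotonicity, left/right transitivity, symmetry for generically stable types, \cref{prop: generically stable properties}) does not seem to close this, and I expect it to require the canonical‑base / strong‑germ machinery (\cref{stronggerms}), recording that the $\p$-basis of $a$ encodes the $\p$-relevant part of $a$ inside $\dcl^{\eq}$ of finitely much of $\bar e$. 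Granting the tuple lemma, $(2)\Rightarrow(1)$ would run as follows: given $\bar d\models\p^{\otimes n}\rest_\A$, let $\bar h$ be a $\p$-basis of $a$ over $\A\bar d$ (\cref{lem: Existence of $p$-basis}); then $\bar d^{\smallfrown}\bar h$ is a $\p$-basis of $a$ over $\A$ (a realization of $\p\rest_{\A\bar d\bar h}$ realizes $\p\rest_{\A\bar d a}$, hence $\p\rest_{\A a}$), so it suffices to transfer the "Morley over $\A b$'' property from the $\p$-basis provided by $(2)$ to some $\Aut(\M/\A a)$-conjugate of $\bar d^{\smallfrown}\bar h$ — a comparison of two $\p$-bases of $a$ over $\A$ that again goes through the tuple lemma — after which $\bar d$, being an initial segment, is Morley over the corresponding $b'\equiv_{\A a}b$, which is the "$\exists b'$'' form of $(1)$. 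For the symmetry $(1)\Leftrightarrow(3)$, via $(1)\Leftrightarrow(2)$ this is the assertion that "some $\p$-basis of $a$ over $\A$ is Morley over $\A b$'' is symmetric in $a,b$, which I would again derive from the tuple lemma by producing a single Morley sequence of $\p$ over $\A$ that is simultaneously a $\p$-basis of $a$ and of $b$ and is Morley over $\A a$. In all three places the transfer step is the single real obstacle; everything else is bookkeeping with Morley sequences of generically stable types.
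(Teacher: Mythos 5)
Your analysis of the structure — the implication $(1)\Rightarrow(2)$ via $\A$-invariance of $\p$, the construction of $\pi_\p$ from \cref{lem: existential gen stable}, and the need to close a cycle through $(2)$ and $(3)$ — matches the paper's own proof, and you have correctly isolated the only non-routine step: the \emph{tuple form} of the $\p$-basis property, i.e.\ that if $\bar e$ is a $\p$-basis of $a$ over $\A$ and $\bar d\models\p^{\otimes n}\rest_{\A\bar e}$, then $\bar d\models\p^{\otimes n}\rest_{\A a}$. Your worry about that step is well founded. The defining property from \cref{lem: Existence of $p$-basis} quantifies over single realizations of $\p\rest_\A$, and the transfer you isolate — from $d_i\downfree^f_{\A\bar e}d_{<i}$ to $d_i\downfree^f_{\A a}d_{<i}$ — does not close by base monotonicity, left/right transitivity, or the symmetry clause of \cref{prop: generically stable properties}; one keeps landing back on an instance of the same statement. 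Be aware that the paper's own proof of $(2)\Rightarrow(3)$ invokes precisely this tuple form without argument ("Since $\bar e$ is a $\p$-basis of $\tp(a/\A)$ and $\bar d'$ is independent from $\bar e$, we have $\bar d'\models\p^{\otimes n}\rest_{\A a}$"). So you have not merely failed to finish your own attempt; you have located a step the published proof also leaves unjustified.

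That said, I would not reach for \cref{stronggerms}; there is a much lighter repair, which is to strengthen the notion of $\p$-basis so that the tuple form holds by construction. Run the greedy algorithm of \cref{lem: Existence of $p$-basis} simultaneously for every power $\p^{\otimes n}$: starting from $\bar e=\emptyset$, whenever there are $n<\omega$ and $\bar d\models\p^{\otimes n}\rest_{\A\bar e}$ with $\bar d\not\models\p^{\otimes n}\rest_{\A a}$, append $\bar d$ to $\bar e$. The result is still a Morley sequence of $\p$ over $\A$, and the process must stop before $|\T(\A)|^+$: otherwise, by regularity of $|\T(\A)|^+$ some fixed $n$ is chosen $|\T(\A)|^+$ many times, and the appended $n$-tuples form a Morley sequence of $\p^{\otimes n}$ over $\A$ of length $|\T(\A)|^+$ none of whose terms realizes $\p^{\otimes n}\rest_{\A a}$, contradicting Fact \ref{ fact: first things about generically stable}(2) applied to the generically stable type $\p^{\otimes n}$. (This is also exactly where the standing hypothesis that $\p^{\otimes n}$ is generically stable for all $n$ is genuinely used.) Reading "$\p$-basis" in item $(2)$, and in the partial type $\rho$ defining $\pi_\p$, as this tuple-aware version makes the tuple form automatic; then $(2)\Rightarrow(3)$, the symmetry $(1)\Leftrightarrow(3)$, and the "furthermore" close exactly as you sketch, with no canonical-base or strong-germ input.
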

\begin{proof}
To prove $(1) \rightarrow (2)$, let $\bar d \models \p^\alpha$ be a $\p$-basis of $a$ over $A$. Then by definition of $\downfree_{\A}^{\p}$ and compactness, there is $\bar e \equiv_{\A a} \bar d$ with $\bar{e} \models \p^{\alpha}\rest_{\A b}$.

For $(2) \rightarrow (3)$ let $\bar{d} \models \p^{\alpha} \rest_{\A}$. As $\bar{e} \models \p^{\alpha} \rest_{\A b}$, we can find $\bar{d}' \equiv_{\A b} \bar{d}$ such that $\bar{e} \models \p^{\alpha}\rest_{\A b \bar{d}'}$. Since $\bar{e}$ is a $\p$-basis of $\tp(a/\A)$ and $\bar{d'}$ is independent from $\bar{e}$, we have $\bar{d}' \models \p^{\otimes n} \rest_{ \A a}$ as required. For $(3) \rightarrow (1)$ we argue as in $(2) \rightarrow (3)$ exchanging the roles of $a$ and $b$.\\

For the last part of the statement, let $\bar{e} \models \p^{\alpha}\rest_{\A}$ a $\p$-basis of $a$ over $\A$. Set $\alpha(\bar{y})=\p^{\alpha}$ and $\rho(x,\bar{y}) = \tp(a,\bar e/\A)$. By \cref{lem: existential gen stable}, the partial type $\pi_{\p}(x):= \exists \bar{y} \big(\alpha(\bar{y})\wedge \rho(x,y)\big)$ is generically stable over $\A$. Then $\pi_{\p}(x)$ is a generically stable type over $\A$ which is consistent with $\tp(a/\A)$. And $a \downfree_{\A}^{\p} b$ if and only if $b \downfree_{\A}^{\p} a$ if and only if $a \models \pi_{\p}(x)\rest_{\A b}$.
\end{proof}
\begin{lemma}\label{ lem: properties p relation} Let $\p$ be a global type, $a,b, c$ be finite tuples in $\M$ and $\A$ a small set of parameters. Assume that $\p^{\otimes n}$ is generically stable over $\A$ for all $n<\omega$. \\
The relation $\downfree^{\p}$ has the following properties:
\begin{enumerate}
\item \emph{(Finite Character)} if for all finite tuples $a' \subseteq a$, $b' \subseteq b$, $a' \downfree_{\A}^{\p} b'$, then $a \downfree_{\A}^{\p} b$.
\item \emph{(Extension)} if $a \downfree_{\A}^{\p} b$  then there is $c' \equiv_{\A b} c$ such that $a \downfree_{\A}^{\p} b c'$. 
\item \emph{(Symmetry)} $a \downfree_{\A}^{\p} b$ if and only if $b \downfree_{\A}^{\p} a$. 
\item \emph{(Transitivity)} If $a \downfree_{\A c}^{\p} b$ and $ c \downfree_{\A}^{\p} b$, then $ac \downfree_{\A}^{\p} b$. 
\item If $\bar{a} \models \p ^{\alpha} \rest_{\A}$ then $\bar{a} \downfree_{\A}^{\p} b$ if and only if $\bar{a} \models \p^{\alpha}\rest_{\A b}$. 
\end{enumerate}
\end{lemma}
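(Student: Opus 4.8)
The plan is to read off all four items from \cref{prop: basic facts on p rel} together with the bare definition of $\downfree^{\p}$. Observe first that since $\p^{\otimes n}$ is generically stable over $\A$ it is also generically stable over $\A c$ (every Morley sequence of $\p^{\otimes n}$ over $\A c$ is in particular one over $\A$), so $\downfree^{\p}_{\A c}$ is defined and \cref{prop: basic facts on p rel} is available over the base $\A c$ as well. Given this, \emph{Symmetry} (2) is literally the equivalence $(1)\Leftrightarrow(3)$ of \cref{prop: basic facts on p rel}, so there is nothing to prove. The $(\Leftarrow)$ direction of (4) is equally immediate: if $\bar a\models\p^{\alpha}\rest_{\A}$ and $\bar a\models\p^{\alpha}\rest_{\A b}$, then $\bar a$ is trivially a $\p$-basis of itself over $\A$ and it realizes $\p^{\alpha}\rest_{\A b}$, so $(2)\Rightarrow(1)$ of \cref{prop: basic facts on p rel} gives $\bar a\downfree^{\p}_{\A}b$.

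For the $(\Rightarrow)$ direction of (4), I would feed finite subtuples of $\bar a$ into the definition of $\bar a\downfree^{\p}_{\A}b$. Fix $i<\alpha$ and a finite $F\subseteq i$, and let $\bar d$ be the subtuple of $\bar a$ on $F\cup\{i\}$ in the order inherited from $\bar a$, so that $a_i$ is its last entry. Being a subsequence of a Morley sequence, $\bar d\models\p^{\otimes(|F|+1)}\rest_{\A}$, so there is $\bar d'\equiv_{\A\bar a}\bar d$ with $\bar d'\models\p^{\otimes(|F|+1)}\rest_{\A b}$; but a subtuple of $\bar a$ is pinned down by its type over $\A\bar a$, hence $\bar d'=\bar d$, and reading off the last coordinate gives $a_i\models\p\rest_{\A b\bar a_{F}}$. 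As $F$ was arbitrary, finite character yields $a_i\models\p\rest_{\A b\bar a_{<i}}$ for all $i<\alpha$, i.e. $\bar a\models\p^{\alpha}\rest_{\A b}$.

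The key point for \emph{Extension} (1) is that one needs a \emph{single} object witnessing $a\downfree^{\p}_{\A}b$ that can be transported uniformly, rather than the $\bar d$-by-$\bar d$ data of the raw definition — and this is exactly the $\p$-basis supplied by $(1)\Leftrightarrow(2)$ of \cref{prop: basic facts on p rel}. So I would fix $\bar e\models\p^{\alpha}$, a $\p$-basis of $a$ over $\A$, with $\bar e\models\p^{\alpha}\rest_{\A b}$; a standard non-forking extension argument for the generically stable, hence stationary (\cref{prop: generically stable properties}), type $\p^{\alpha}\rest_{\A b}$ then produces $c'\equiv_{\A b}c$ with $\bar e\models\p^{\alpha}\rest_{\A bc'}$, and since $\bar e$ remains a $\p$-basis of $a$ over $\A$, $(2)\Rightarrow(1)$ of \cref{prop: basic facts on p rel} applied with parameter set $bc'$ gives $a\downfree^{\p}_{\A}bc'$. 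For \emph{Transitivity} (3) I would first pass, using (2), to the symmetric form: it suffices to show that $b\downfree^{\p}_{\A}c$ and $b\downfree^{\p}_{\A c}a$ imply $b\downfree^{\p}_{\A}ac$. Given $\bar d\models\p^{\otimes n}\rest_{\A}$, use $b\downfree^{\p}_{\A}c$ to find $\bar d_{1}\equiv_{\A b}\bar d$ with $\bar d_{1}\models\p^{\otimes n}\rest_{\A c}$, then feed $\bar d_{1}$ (which realizes $\p^{\otimes n}\rest_{\A c}$) into $b\downfree^{\p}_{\A c}a$ to obtain $\bar d_{2}\equiv_{\A cb}\bar d_{1}$ with $\bar d_{2}\models\p^{\otimes n}\rest_{\A ca}$; since then $\bar d_{2}\equiv_{\A b}\bar d_{1}\equiv_{\A b}\bar d$, the tuple $\bar d_{2}$ witnesses $b\downfree^{\p}_{\A}ac$.

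I do not anticipate any genuine obstacle: once \cref{prop: basic facts on p rel} is in hand everything is bookkeeping. The only points demanding attention are choosing the right reformulation for each item — the $\p$-basis characterization for (1), the symmetric form for (3), and the raw definition on finite subtuples for (4) — and the handling of parameter sets, notably the fact that a subtuple of $\bar a$ is determined by its type over $\A\bar a$, which is what collapses the $(\Rightarrow)$ direction of (4) to a one-liner.
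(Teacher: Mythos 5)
Your proposal is correct and matches the paper's proof in all essentials: Symmetry is read off from \cref{prop: basic facts on p rel} directly; Extension is proved exactly as in the paper, by taking a $\p$-basis $\bar e$ of $a$ over $\A$ with $\bar e\models\p^{\alpha}\rest_{\A b}$ and extending it via right extension of non-forking (plus generic stability/stationarity of $\p^\alpha$) to be independent from $bc'$; Transitivity follows the same two-step replacement of $\bar d$ through $\bar d_1$ and $\bar d_2$, passing through the symmetric form.

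The one place you do something genuinely more careful than the paper is the forward direction of (4). The paper dispatches it in one line — ``$\bar a$ is a $\p$-basis of itself, so $\bar a\downfree^\p_\A b$ iff $\bar a\models\p^\alpha\rest_{\A b}$'' — but the equivalence $(1)\Leftrightarrow(2)$ of \cref{prop: basic facts on p rel} only asserts the existence of \emph{some} $\p$-basis realizing $\p^\alpha\rest_{\A b}$, not that the specific basis $\bar a$ does. Your finite-subtuple argument closes this gap cleanly: feeding a subtuple $\bar d$ of $\bar a$ into the definition of $\bar a\downfree^\p_\A b$ and using that $\bar d'\equiv_{\A\bar a}\bar d$ forces $\bar d'=\bar d$, you get $\bar d\models\p^{\otimes n}\rest_{\A b}$ directly, and finite character finishes. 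This is a more rigorous route to exactly the same conclusion, and I would regard it as an improvement in exposition rather than a change of approach.
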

\begin{proof}
Finite character follows immediately from the definition and compactness.

For extension, since $a \downfree_{A}^{\p} b$ one can take $\bar{e}$ a $\p$-basis of $a$ over $\A$ such that $\bar{e} \downfree{f}_{\A} b$ given by \cref{prop: basic facts on p rel}. Let $c' \equiv_{\A b} c$ such that $\bar{e} \downfree_{ \A} b c'$ (\emph{c.f.} \cref{prop: properties of non-forking}.(7)), then $a \downfree_{\A}^{\p} bc'$ by \cref{prop: basic facts on p rel}. Symmetry follows by \cref{prop: basic facts on p rel}.

We aim to show transitivity. By symmetry it is enough to show that $b \downfree_{\A}^{\p} ac$, this is given $\bar{d} \models \p^{\otimes n} \rest_{\A}$ there is 
$\bar{d}'' \equiv_{\A b} \bar{d}$ so that $\bar{d}'' \models \p\rest_{\A ac}$. 
Because $c \downfree_{\A}^{\p} b$, by symmetry, $b \downfree_{\A}^{\p} c$. Thus for any  $\bar{d} \models \p^{\otimes n} \rest_{\A}$  there is some $\bar{d}' \equiv_{\A b} \bar{d}$ so that $\bar{d}' \models \p^{\otimes  n} \rest_{\A c}$. By symmetry, $b \downfree_{\A c}^{\p} a$ as $a \downfree_{\A c}^{\p} b$. Then there is $\bar{d}'' \equiv_{\A bc} \bar{d}'$ such that $\bar{d}'' \models \p^{\otimes n}\rest_{\A ac}$. We conclude that $b \downfree_{\A}^{\p} ac$ as $\bar{d}'' \equiv_{\A b} \bar{d}$.\\
For the last statement, assume that $\bar{a} \models \p^{\alpha}\rest_{\A}$ then $\bar{a}$ is a $\p$-basis of itself over $\A$, so by \cref{prop: basic facts on p rel} we have $\bar{a} \downfree_{\A}^{\p} b$ if and only if $\bar{a} \models \p^{\alpha} \rest_{\A b}$. 
\end{proof}
\subsubsection{$\acl^{\vee}$-independence}

By \emph{graph}, we always mean undirected graph.

\begin{definition}\label{def: def code} Let $\A$ be a set of parameters.
\begin{itemize}
\item Let $\mathrm{G}=(\V, \R)$ be an $\emptyset$-definable graph, i.e. the set $V$ and the relation $\R(x,y)$ are $\emptyset$-definable. Given $a, a' \in \V$ and $n \in \mathbb{N}_{\geq 1}$ we write $\dist_{\mathrm{G}}(a,a') \leq n$ to denote that there is a path from $a$ to $a'$ in $\mathrm{G}$ of length less or equal than $n$. Note that this is expressible by a first order formula $\phi_{\dist_{\mathrm{G}, n}}(x,y)$. Given $\Y \subseteq \V$ we denote $\mathrm{diam}^{\mathrm{G}}(\Y)=\sup\{ \dist_{\mathrm{G}}(y,y') \ | \ (y,y') \in \Y \times \Y\}$. 
We write $[a]$ to denote the \emph{connected component} of $a \in \V$ in the graph $\mathrm{G}$, i.e. $[a]=\{ b \in \V \ | \ \dist_{\mathrm{G}}(a,b)< \infty\}$.
\item We say that the connected component $[a]$ is \emph{$\vee$-definable over $\A$}, if every automorphism $\sigma \in \Aut(\mathrm{M}/\A)$ fixes $[a]$, i.e. $[a]=[\sigma(a)]$. Likewise we say that the connected component $[a]$ is \emph{$\vee$-algebraic over $\A$} if the set $\{[\sigma(a)]:\sigma \in \Aut(\mathrm{M}/\A)\}$ is finite.
\item For each natural number $\ell \in \mathbb{N}_{\geq 2}$, define the relation $\R_{n}(x_{1},\dots,x_{\ell}; y_{1}, \dots, y_{\ell})$ by $\displaystyle{\bigvee_{\sigma \in \mathcal{S}_{\ell}} \bigwedge_{i \leq n} \R(x_{i}, y_{\sigma(i)})}$, where $\mathcal{S}_{\ell}$ is the set of permutations of $\ell$-elements. 
The relation $\R_{\ell}$ defines a graph on $\V^{\ell}$. We write $\mathrm{Code}(a_{1},\dots, a_{\ell})$ to denote the connected component of $(a_{1},\dots, a_{\ell})$ in the graph $(\V^{\ell}, \R_{\ell})$.  
\item We define $\acl^{\vee}(\A)$ to be the set of connected components of $\emptyset$-definable graphs that are $\vee$-algebraic over $\A$, and $\dcl^{\vee}(\A)$ the set of connected components of $\emptyset$-definable graphs that are $\vee$-definable over $\A$. Note that if $[a] \in \acl^{\vee}(\A)$ then $\Code(a_{1},\dots, a_{n}) \in \dcl^{\vee}(\A)$, where $\{[a_{1}], \dots, [a_{n}]\}$ is the set of conjugates of $[a]$ under the action of $\Aut(\mathrm{M}/\A)$. 
\end{itemize}
\end{definition}

\begin{remark}\label{rem: infinite tuples} In the definition above and in all that follows, tuples of variables are allowed to be infinite. Of course, if $x$ and $y$ are infinite tuples of variables and $\mathrm{G}=(\V(x), \R(x,y))$ is a definable graph, then only finitely many variables from the tuples $x$ and $y$ actually appear in the definition of $\mathrm{G}$. The rest are just dummy variables. Since we will be often manipulating infinite tuples (for instance infinite Morley sequences), it is convenient to allow such tuples in the definitions. None of the definitions or statements are affected by adding dummy variables to the formulas, so this creates no difficulties.
\end{remark}

\begin{remark}\label{rem: reduction to graphs over the emptyset} Let $\A$ be a set of parameters and $\mathrm{G}=(\V, \R)$ be an $\A$-definable graph. Let $c$ be a finite tuple in $\A$ such that $\V(x,c)$ and $\R(x,y, c)$ are $\La(c)$-formulas defining $\mathrm{G}$. Consider $\V'=\{ (x,z) \ | \ \mathrm{M} \models \V(x,z)\}$ and the relation $\R'$ on $\V'$ defined by:
\begin{equation*}
(x, z) \R' (y, z') \ \text{if and only if} \ z=z'\, \wedge \, \R(x,y, z).
\end{equation*}
Let $\mathrm{G}'=(\V', \R')$. This is a $\emptyset$-definable graph. For any element $a \in \V$ let $[a]$ be the connected $\mathrm{G}$-component and $[(a,c)]$ the connected $\mathrm{G}'$-component. Then for any $y \in \V$, $y \in [a]$ if and only if $(y,c) \in [(a,c)]$. In particular, for any $\A \subseteq \B$ the connected component $[a]$ has finitely many conjugates under the action of $\Aut(\mathrm{M}/ \B)$ if and only if $[(a,c)]$ has finitely many conjugates under the action of $\Aut(\mathrm{M}/ \B)$.

Thanks to this observation, we will only need to consider $\emptyset$-definable graphs, and not $\A$-definable graphs.
\end{remark}

\begin{lemma}\label{lem: characterizing being in aclV}
Let $\A$ be a small set of parameters and let $[a]$ be the connected component of some $\emptyset$-definable graph $\mathrm{G}$. Then $[a] \in \dcl^{\vee}(\A)$ if and only if there is a formula $\phi(x) \in \tp(a/\A)$, such that $\phi(x)$ implies that $x \in \mathrm{G}$ and $ \mathrm{diam}(\phi) \leq n$. Moreover, $[a] \in \acl^{\vee}(\A)$ if and only if there is an $\La(\A)$-formula $\psi(x) \in \tp(a/\A)$ that intersects finitely many connected $\mathrm{G}$-components, which are precisely the conjugates of $[a]$ under the action of $\Aut(\mathrm{M}/\A)$. 
\end{lemma}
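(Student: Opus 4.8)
The plan is to establish both equivalences by compactness, using throughout the standard fact that in the monster model $\M$ the realizations of $\tp(a/\B)$ are exactly the elements $\sigma(a)$ with $\sigma\in\Aut(\M/\B)$, together with the observation that automorphisms and formulas over $\B$ send connected components to connected components (since they preserve $\R$), and that $\phi_{\dist_{\mathrm{G}},n}\vdash\phi_{\dist_{\mathrm{G}},N}$ whenever $n\leq N$.

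For the $\dcl^{\vee}$ statement, one direction is immediate: if $\phi(x)\in\tp(a/\B)$ satisfies $\mathrm{diam}^{\mathrm{G}}(\phi)\leq n$, then for every $\sigma\in\Aut(\M/\B)$ both $a$ and $\sigma(a)$ satisfy $\phi$, so $\dist_{\mathrm{G}}(a,\sigma(a))\leq n<\infty$, whence $[a]=[\sigma(a)]$ and $[a]\in\dcl^{\vee}(\B)$. For the converse I would assume $[a]\in\dcl^{\vee}(\B)$ and consider the partial type over $\B$
\[\Sigma(x,y)=\tp(a/\B)(x)\cup\tp(a/\B)(y)\cup\{\neg\phi_{\dist_{\mathrm{G}},n}(x,y):n\geq 1\}.\]
If $(a_1,a_2)\models\Sigma$, then $a_1\equiv_{\B}a\equiv_{\B}a_2$, so writing $a_i=\sigma_i(a)$ with $\sigma_i\in\Aut(\M/\B)$ and using $[a]\in\dcl^{\vee}(\B)$ gives $[a_1]=[a]=[a_2]$; but $\Sigma$ forces $\dist_{\mathrm{G}}(a_1,a_2)=\infty$, i.e. $[a_1]\neq[a_2]$, a contradiction. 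Hence $\Sigma$ is inconsistent, and compactness produces a single formula $\phi(x)\in\tp(a/\B)$ and $n\in\mathbb{N}$ with $\phi(x)\wedge\phi(y)\vdash\phi_{\dist_{\mathrm{G}},n}(x,y)$, that is, $\mathrm{diam}^{\mathrm{G}}(\phi)\leq n$.

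For the $\acl^{\vee}$ statement, if $\psi(x)\in\tp(a/\B)$ meets only the finitely many components $[a_1],\dots,[a_m]$, then each $\sigma(a)$ ($\sigma\in\Aut(\M/\B)$) satisfies $\psi$, so $[\sigma(a)]$ lies in that finite set; thus the $\Aut(\M/\B)$-orbit of $[a]$ is finite and $[a]\in\acl^{\vee}(\B)$. Conversely, assuming $[a]\in\acl^{\vee}(\B)$, the orbit $O=\{[\sigma(a)]:\sigma\in\Aut(\M/\B)\}$ is contained in the finite $\Aut(\M/\B)$-invariant set of components provided by the definition, hence is finite; list $O=\{[a_1],\dots,[a_m]\}$ with $[a_1]=[a]$. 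The partial type over $\B\cup\{a_1,\dots,a_m\}$ given by $\tp(a/\B)(x)\cup\{\neg\phi_{\dist_{\mathrm{G}},n}(x,a_i):1\leq i\leq m,\ n\geq 1\}$ is inconsistent, since any realization $a'$ of $\tp(a/\B)$ has $[a']\in O$; compactness then yields $\psi(x)\in\tp(a/\B)$ and $N\in\mathbb{N}$ with $\psi(x)\vdash\bigvee_{i\leq m}\phi_{\dist_{\mathrm{G}},N}(x,a_i)$, so $\psi(\M)\subseteq\bigcup_{i\leq m}[a_i]$ and $\psi$ meets only components belonging to $O$. Finally, since $\psi\in\mathcal{L}(\B)$, the set $S$ of components meeting $\psi(\M)$ is a union of $\Aut(\M/\B)$-orbits and contains $[a]$, so $S\supseteq O$ and hence $S=O$; thus the components $\psi$ meets are exactly the $\Aut(\M/\B)$-conjugates of $[a]$.

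I do not anticipate a real obstacle: the argument is a routine double compactness. The two points deserving attention are that in the $\acl^{\vee}$ step the auxiliary parameters $a_1,\dots,a_m$ are permitted even though the conclusion formula $\psi$ must be over $\B$ — this is harmless because compactness is applied to the partial type whose persistent part $\tp(a/\B)(x)$ already constrains $\psi$ to $\mathcal{L}(\B)$ — and the closing invariance argument $S=O$, which uses exactly that $\psi$ is over $\B$; one also uses the monotonicity $\phi_{\dist_{\mathrm{G}},n}\vdash\phi_{\dist_{\mathrm{G}},N}$ to collapse the finite disjunction coming out of compactness to a single radius.
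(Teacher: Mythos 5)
Your proof is correct and follows essentially the same route as the paper's: in both cases one observes that $\dcl^{\vee}$ (resp.\ $\acl^{\vee}$) membership forces all realizations of $\tp(a/\B)$ to lie at finite $\mathrm{G}$-distance from $a$ (resp.\ from one of finitely many representatives), and compactness then converts this into a $\B$-formula of bounded diameter (resp.\ meeting only finitely many components). You are slightly more complete in also spelling out the easy converses and the closing invariance argument showing the components $\psi$ meets are \emph{exactly} the conjugates of $[a]$, but the underlying idea is the same.
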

\begin{proof} Let $a' \equiv_{\A} a$ and $\sigma \in \Aut(\mathrm{M}/ \A)$ such that $\sigma(a)=a'$. Since $[a] \in \dcl^{\vee}(\A)$, then $[a]=[\sigma(a)]=[a']$. Consequently, $\dist_{\mathrm{G}}(a,a')<\infty$. Since this holds for an arbitrary $a' \equiv_{\A} a$, by compactness there is a formula $\phi(x) \in \tp(a/\A)$ that implies that $x \in \mathrm{G}$ and  $\dist_{\mathrm{G}}(a,x)\leq k$ for some $k \in \mathbb{N}$. Then $\mathrm{diam}(\phi) \leq 2k=n$.

For the second part of the statement, $[a] \in \acl^{\vee}(\A)$ if and only if there are finitely many elements $a_{1}, \dots, a_{n}$ such that $\X=\{[a_{1}],\dots, [a_{n}]\}$ are the conjugates of $[a]$ under the action of $\Aut(\mathrm{M}/ \A)$. Given $d \equiv_{\A} a$ for some $i\leq n$ $[d]=[a_{i}]$,  thus $\displaystyle{\dist_{\mathrm{G}}(d, a_{i})<\infty}$. By compactness, there is some $k \in \mathbb{N}$ and $\phi(x) \in \tp(b/\A)$ such that for any $d\models \phi(x)$, we have $\displaystyle{\bigvee_{i=1}^{n} \dist_{\mathrm{G}}(d,a_{i})\leq k}$. Hence $\phi(x)$ intersects finitely many $\G$-components as required. 
\end{proof}
\begin{remark}\label{rem: finite character aclV closure} Let $\A$ be a small set of parameters. Then $\acl^{\vee}$ and $\dcl^{\vee}$ have \emph{finite character} i.e. $\displaystyle{\acl^{\vee}(\A)= \bigcup_{\A_{0} \subseteq_{\text{fin}} \A} \acl^{\vee}(\A_{0})}$ and $\displaystyle{\dcl^{\vee}(\A)= \bigcup_{\A_{0} \subseteq_{\text{fin}}\A} \dcl^{\vee}(\A_{0})}$.
\end{remark}
\begin{proof}
This is an immediate consequence of \cref{lem: characterizing being in aclV}.
\end{proof}

\begin{lemma}\label{lem: elements in aclV are definable when base is acl closed} Assume $\A=\acl(\A)$. If $[a] \in \acl^{\vee}(\A)$ then $[a] \in \dcl^{\vee}(\A)$.  
\end{lemma}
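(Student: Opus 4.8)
The plan is to attach to the component $[a]$ a genuine imaginary element, well defined up to the finite ambiguity coming from the $\Aut(\M/\A)$-conjugates of $[a]$, and then use $\A=\acl^{eq}(\A)$ to eliminate that ambiguity.

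Let $\mathrm{G}=(\V,\R)$ be the $\emptyset$-definable graph of which $[a]$ is a connected component. By \cref{lem: characterizing being in aclV}, the hypothesis $[a]\in\acl^{\vee}(\A)$ yields an $\mathcal{L}(\A)$-formula $\psi(x)\in\tp(a/\A)$ meeting exactly the finitely many components $[a_{1}],\dots,[a_{n}]$, which are the distinct $\Aut(\M/\A)$-conjugates of $[a]=[a_{1}]$. Then $\psi(\M)$ is the disjoint union of the nonempty sets $C_{i}:=\psi(\M)\cap[a_{i}]$, and the heart of the argument is to show that each $C_{i}$ is \emph{definable}. On the one hand $C_{i}=\bigcup_{k}A^{i}_{k}(\M)$ where $A^{i}_{k}(x)$ is the formula $\psi(x)\wedge\phi_{\dist_{\mathrm{G}},k}(a_{i},x)$, an increasing chain; on the other hand its complement in $\psi(\M)$, namely $\bigcup_{j\neq i}C_{j}$, equals $\bigcup_{k}B^{i}_{k}(\M)$ for a similar increasing chain $B^{i}_{k}$. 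Since $\psi(\M)=\bigcup_{k}\bigl(A^{i}_{k}(\M)\cup B^{i}_{k}(\M)\bigr)$ and $\psi$ is a single formula, saturation produces one level $k_{0}$ with $\psi(\M)=A^{i}_{k_{0}}(\M)\cup B^{i}_{k_{0}}(\M)$; as $A^{i}_{k_{0}}(\M)\subseteq C_{i}$ and $B^{i}_{k_{0}}(\M)\subseteq\psi(\M)\setminus C_{i}$ are disjoint, this forces $C_{i}=A^{i}_{k_{0}}(\M)$. I expect this compactness step — extracting a single level $k_{0}$ valid for all of $\psi(\M)$ — to be the only real point, and it is routine; note it is also exactly where the hypothesis $[a]\in\acl^{\vee}(\A)$ is used, through the existence of $\psi$.

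With each $C_{i}$ definable, fix a code $e_{i}\in\M^{eq}$ for it. Any $\sigma\in\Aut(\M/\A)$ fixes $\psi(\M)$ setwise and, preserving the $\emptyset$-definable graph $\mathrm{G}$, permutes the connected components meeting $\psi(\M)$; hence it permutes $C_{1},\dots,C_{n}$ and so permutes $e_{1},\dots,e_{n}$. Thus each $e_{i}$ has finite $\Aut(\M/\A)$-orbit, so $e_{i}\in\acl^{eq}(\A)=\A$, and in particular every $\sigma\in\Aut(\M/\A)$ fixes $e_{1}$, hence fixes $C_{1}=\psi(\M)\cap[a]$ setwise. Since $a\in C_{1}$ we get $\sigma(a)\in C_{1}\subseteq[a]$ while also $\sigma(a)\in\sigma([a])$, so the components $[a]$ and $\sigma([a])$ share a vertex and therefore coincide. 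Hence $[a]$ is fixed by every element of $\Aut(\M/\A)$, i.e.\ $[a]\in\dcl^{\vee}(\A)$, as desired.
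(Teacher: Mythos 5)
Your argument is correct and follows essentially the same route as the paper: apply \cref{lem: characterizing being in aclV} to get an $\A$-definable set meeting only the finitely many conjugate components, use compactness to cut each piece $[a_i]\cap\psi(\M)$ out by a uniform distance bound, and then invoke $\A=\acl^{eq}(\A)$ to show the resulting (finite orbit of) codes is fixed by $\Aut(\M/\A)$. The paper packages the compactness step as a uniform bound $n$ on $\mathrm{diam}([a_i]\cap\X)$ and takes the class $a_i/\E$ for an $\A$-definable equivalence relation $\E$, while you stabilize two increasing chains and take a code for $C_i$ directly, but the content is identical.
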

\begin{proof}
Let $[a] \in \acl^{\vee}(\A)$, then there is an $\emptyset$-definable graph $\mathrm{G}=(\V, \R)$  and finitely many elements $a_{1},\dots, a_{n}$ such that $[a]=[a_{1}]$ and $\{[a_{1}],\dots, [a_{n}]\}$ is the set of conjugates of $[a]$ under the action of $\Aut(\mathrm{M}/ \A)$. By \cref{lem: characterizing being in aclV} there is an $\A$-definable set $\X$ containing $a$ such that $\X$ intersects only the connected components $[a_{i}]$. By compactness, there is some $ n \in \mathbb{N}$ such that $\mathrm{diam}([a_{i}] \cap \X ) \leq n$. Consider the $\A$-definable equivalence relation $\mathrm{E}$ on $\X$ defined by:
\begin{equation*}
x\mathrm{E} y \ \text{if and only if} \ \mathrm{dist}_{\mathrm{G}} (x,y) \leq n \ \text{if and only if} \ \phi_{\mathrm{dist}_{\mathrm{G}, n}}(x,y).
\end{equation*}
Note that  $[a_{i}] \cap \X=\{ y \in \X \ | \ y \E a_{i}\}$ for all $i \leq n$. In particular, $a_{i}/ \mathrm{E} \in \acl(\A)=\A$ and $[a_{i}] \cap \X$ is an $\A$-definable set.  Consequently, $[a_{i}] \in \dcl^{\vee}(\A)$. 
\end{proof}
\begin{definition}
 We write $a \downfree_{\A}^{\acl^{\vee}} b$ if every connected component $[x]$ of an $\emptyset$-definable graph $\mathrm{G}$ that is $\vee$-algebraic over $\A a$ and $\A b$ is already $\vee$-algebraic over $\A$, i.e.
 \begin{equation*}
 \acl^{\vee}(\A a) \cap \acl^{\vee}(\A b)= \acl^{\vee}(\A).
 \end{equation*}
 Similarly, we write $a \downfree_{\A}^{\dcl^{\vee}} b$ to denote 
 \begin{equation*}
 \dcl^{\vee} (\A a) \cap \dcl^{\vee}(\A b) \subseteq \acl^{\vee}(\A).
 \end{equation*}
\end{definition}
Note that if $\acl$ satisfies exchange and hence defines a pregeometry, the corresponding notion of independence for $\acl$ coincides with independence in the pregeometry only when the latter is modular. So we cannot expect too much from this independence notion in general. However, it will be sufficient for our purposes, because we will mostly apply it to indiscernible sequences where it automatically becomes better behaved. For instance in a stable theory, it implies forking independence: see \cref{lem: acl implies Morley}.

\begin{lemma}
\begin{enumerate}
\item We have $a \downfree_{\A}^{\acl^{\vee}} b$ if and only if every connected component $[x]$ of an $\A$-definable graph $\mathrm{G}$ that is $\vee$-algebraic over $\A a$ and $\A b$ is already $\vee$-algebraic over $\A$.
\item If $\A=\acl(\A)$ if $a$ and $b$ are tuples, then $a \downfree_{\A}^{\dcl^{\vee}}b$ if and only if $\dcl^{\vee}(\A a) \cap \dcl^{\vee}(\A b) = \dcl^{\vee}(\A)$. 
\end{enumerate}
\end{lemma}
\begin{proof}
The first point follows from \cref{rem: reduction to graphs over the emptyset} and the second one follows from \cref{lem: elements in aclV are definable when base is acl closed}.

\end{proof}

\begin{lemma}\label{rem: properties aclV independence} Let $a,b, c$ be finite tuples in $\mathrm{M}$ and $\A$ a small set of parameters.
The relation $\downfree^{\acl^{\vee}}$ has the following properties:
\begin{itemize}
\item \emph{(Extension)}If $a \downfree_{\A}^{\acl^{\vee}} b$ then there is $c' \equiv_{\A b} c$ such that $a \downfree_{\A}^{\acl^{\vee}} bc'$;
\item \emph{(Symmetry)} $a \downfree_{\A}^{\acl^{\vee}} b$ if and only if $b \downfree_{\A}^{\acl^{\vee}} a$;
\item \emph{(Transitivity)} If $a \downfree_{\A c}^{\acl^{\vee}} b$ and $c \downfree_{\A}^{\acl^{\vee}} b$ then $ac \downfree_{\A}^{\acl^{\vee}} b$. 
\end{itemize}
\end{lemma}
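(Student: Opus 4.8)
The strategy is to transfer each of the three properties from an already-established independence relation, and the natural candidate is $\downfree^{\GS}$ together with its total-Morley-sequence machinery (\cref{prop: total GS sequences exist}), or alternatively to argue directly with the combinatorial description of $\acl^{\vee}$ provided by \cref{lem: characterizing being in aclV}. I would in fact do symmetry and transitivity more or less by hand from the definitions, using \cref{lem: characterizing being in aclV}, \cref{rem: finite character aclV closure} and \cref{rem: reduction to graphs over the emptyset}, and keep the heavier indiscernible-sequence input in reserve for extension.

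\emph{Symmetry.} This should be immediate from the definition: $a \downfree_{\A}^{\acl^{\vee}} b$ is literally the symmetric condition $\acl^{\vee}(\A a)\cap\acl^{\vee}(\A b)=\acl^{\vee}(\A)$, so there is nothing to prove. (If the paper's convention makes the definition look asymmetric because of the "for any $\A$-definable graph $\G$ containing $a$" reformulation, one reconciles the two using the reduction in the paragraph after \cref{rem: reduction to graphs over the emptyset}.)

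\emph{Transitivity.} Suppose $a\downfree_{\A c}^{\acl^{\vee}} b$ and $c\downfree_{\A}^{\acl^{\vee}} b$; I want $ac\downfree_{\A}^{\acl^{\vee}} b$. Take a connected component $[x]$ of an $\emptyset$-definable graph $\G$ with $[x]\in\acl^{\vee}(\A a c)\cap\acl^{\vee}(\A b)$. By \cref{rem: reduction to graphs over the emptyset} we may freely enlarge the base, so from $[x]\in\acl^{\vee}(\A c b)$ and $c\downfree_{\A}^{\acl^{\vee}} b$ applied after noting $[x]\in\acl^{\vee}(\A b)$, together with $[x]\in \acl^{\vee}(\A a c)=\acl^{\vee}((\A c)a)$ and $a\downfree_{\A c}^{\acl^{\vee}} b$, we get $[x]\in\acl^{\vee}(\A c b)$ and hence $[x]\in\acl^{\vee}(\A c)\cap\acl^{\vee}(\A b)=\acl^{\vee}(\A)$ from the second hypothesis. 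Concretely: $a\downfree_{\A c}^{\acl^{\vee}} b$ gives $[x]\in\acl^{\vee}(\A c)$; then $c\downfree_{\A}^{\acl^{\vee}} b$ gives $[x]\in\acl^{\vee}(\A)$. I would also use \cref{lem: elements in aclV are definable when base is acl closed} if it is convenient to pass to $\acl^{eq}$-closed bases, but the bare chase above already works.

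\emph{Extension.} This is the real content. Given $a\downfree_{\A}^{\acl^{\vee}} b$ and an arbitrary $c$, I need $c'\equiv_{\A b} c$ with $a\downfree_{\A}^{\acl^{\vee}} bc'$. The plan is to build $c'$ as a suitably generic copy of $c$ over $\A b$: take a sequence that is indiscernible over $\A b$ in the type $\tp(c/\A b)$, sufficiently long, obtained from a total GS-Morley-type construction (\cref{prop: total GS sequences exist}) relative to the base $\A b$, and then use a pigeonhole/finite-character argument — via \cref{lem: characterizing being in aclV} and \cref{rem: finite character aclV closure}, each potential violating connected component is witnessed by a single $\A b c'$-formula of bounded diameter intersecting boundedly many components, so a long enough indiscernible sequence must contain a member $c'$ for which no such component is newly $\vee$-algebraic over $\A a$ that was not already $\vee$-algebraic over $\A$. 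The hard part will be making this genericity argument precise: one must show that if every member of a long $\A b$-indiscernible sequence $(c_i)$ created a fresh $\vee$-algebraic component over $\A a$, then — because "being $\vee$-algebraic over $\A a$" is a countable union of formulas and the indiscernible sequence spreads these out — one contradicts either indiscernibility or the bound on the number of conjugates. This is exactly the point where the total-Morley/tree technology of \cref{prop: total GS sequences exist} earns its keep, and where I expect to spend essentially all the effort; symmetry and transitivity are bookkeeping by comparison.
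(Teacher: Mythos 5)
Symmetry and transitivity are correct; the symmetric form of the definition handles symmetry outright, and your ``concretely'' chain for transitivity (first use $a\downfree^{\acl^{\vee}}_{\A c}b$ to push $[x]$ from $\acl^{\vee}(\A ac)\cap\acl^{\vee}(\A b)\subseteq\acl^{\vee}(\A ac)\cap\acl^{\vee}(\A cb)$ into $\acl^{\vee}(\A c)$, then use $c\downfree^{\acl^{\vee}}_{\A}b$) is exactly the right bookkeeping. The middle of your transitivity paragraph is a bit garbled --- you claim to derive $[x]\in\acl^{\vee}(\A cb)$ from the two hypotheses, but that is automatic from $[x]\in\acl^{\vee}(\A b)$ --- but the subsequent sentence supplies the correct deduction, so this is cosmetic.

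For extension, however, you are taking a genuinely different and much heavier route than the paper, and you have not actually closed the argument. The paper's proof is one line: extension (more precisely full existence, from which extension follows) holds for $\downfree^{\acl^{\vee}}$ by the same B.~H.~Neumann coset-covering argument used to prove it for $\downfree^{\acl}$; no Morley trees, no generically stable machinery. Your plan --- produce a long $\A b$-indiscernible, $\acl^{\vee}$-independent-over-$\A b$ sequence $(c_i)$ in $\tp(c/\A b)$ via \cref{prop: total GS sequences exist}, \cref{prop: GS independence is stronger} and \cref{lem: equivalence indipendence dcl and acl}, and pigeonhole --- can be made to work, but the step you leave open is the entire content. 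The clean way to finish it: if every $c_i$ fails, pick for each $i$ a component $[x_i]\in\acl^{\vee}(\A a)\cap\acl^{\vee}(\A bc_i)\setminus\acl^{\vee}(\A)$; since $|\acl^{\vee}(\A a)|\le|\T(\A a)|$, a sequence of length $|\T(\A a)|^+$ forces $[x_i]=[x_j]$ for some $i\ne j$, and $\acl^{\vee}$-independence of $(c_i)$ over $\A b$ then puts this common component into $\acl^{\vee}(\A b)$, contradicting $a\downfree^{\acl^{\vee}}_{\A}b$. What you wrote instead (``no such component is newly $\vee$-algebraic over $\A a$ that was not already $\vee$-algebraic over $\A$'') misstates what a violating component is --- it lives in $\acl^{\vee}(\A a)$ from the start, the issue is its membership in $\acl^{\vee}(\A bc')$ --- and you never name the pigeonhole that makes the genericity bite. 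Compared with Neumann's lemma, your route buys nothing and costs the whole tree apparatus of Section~2.2.1; you should either spell out the collision argument above or, better, simply cite Neumann as the paper does.
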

\begin{proof}
Extension for $a \downfree_{\A}^{\acl^{\vee}} b$ follows from Neumann's Lemma as for $\acl$. ( \emph{c.f.} see for example \cite[Proposition $2.2$]{aclextension}). Transitivity and symmetry are straightforward to prove.
\end{proof}

\begin{lemma}\label{lem: equivalence indipendence dcl and acl} Let $\A$ be a small set of parameters and $\bar{b}=(b_{i})_{i \in \K}$ be an $\A$-indiscernible sequence in $\mathrm{M}$. The following are equivalent: 
\begin{enumerate}
\item $b_{\I} \downfree_{\A}^{\dcl^{\vee}} b_{\J}$ for all $\I \cap \J= \emptyset$;
\item $b_{\I} \downfree_{\A}^{\acl^{\vee}} b_{\J}$ for all $\I \cap \J= \emptyset$; 
\item $b_{\I} \downfree_{\A}^{\dcl^{\vee}} b_{\J}$ for $\I < \J$;
\item  $b_{\I} \downfree_{\A}^{\acl^{\vee}} b_{\J}$ for $\I < \J$. 
\end{enumerate}
Furthermore, if we take a sequence $(b'_{i})_{i \in (\mathbb{Z} \times \mathbb{Z}, \leq_{lex})}$ of same EM-type as $(b_{i})_{i \in K}$ and let $\mathbf{d}_{k}= (b'_{i})_{i \in \{k\} \times \mathbb{Z}}$, then $\mathbf{d}_{\I} \downfree_{\A \mathbf{d}_{n}}^{\acl^{\vee}} \mathbf{d}_{\J}$ holds for $n < \I < \J$.
\end{lemma}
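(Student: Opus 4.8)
The plan is to prove the cycle $(1)\Rightarrow(3)\Rightarrow(4)\Rightarrow(2)\Rightarrow(1)$ and then the final assertion; the two substantive steps are comparing $\dcl^{\vee}$ with $\acl^{\vee}$ and upgrading ``$\I<\J$'' to ``$\I\cap\J=\emptyset$''. Throughout, by finite character of $\acl^{\vee}(\A\,\cdot\,)$ and $\dcl^{\vee}(\A\,\cdot\,)$ (\cref{rem: finite character aclV closure}) one may assume $\I,\J$ finite, and since $b_{\I}\downfree_{\A}b_{\J}$ depends only on the order type of $\I\cup\J$ together with its partition into $\I$ and $\J$, one may freely replace $\bar b$ by any $\A$-indiscernible sequence of the same EM-type; in particular we assume $\K$ infinite and densely ordered without endpoints. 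The implication $(1)\Rightarrow(3)$ is trivial, and $(4)\Rightarrow(3)$ and $(2)\Rightarrow(1)$ are immediate from $\dcl^{\vee}(\B)\subseteq\acl^{\vee}(\B)$.

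\textit{Comparing $\acl^{\vee}$ and $\dcl^{\vee}$ ($(3)\Rightarrow(4)$).} Suppose $[c]\in\acl^{\vee}(\A b_{\I})\cap\acl^{\vee}(\A b_{\J})$ with $\I<\J$ but $[c]\notin\acl^{\vee}(\A)$. By \cref{lem: characterizing being in aclV} the finite set of conjugates of $[c]$ over $\A b_{\I}$ (resp.\ over $\A b_{\J}$, over $\A b_{\I}b_{\J}$) is coded by an element $E_{\I}\in\dcl^{\vee}(\A b_{\I})$ (resp.\ $E_{\J}$, $E_{\I\J}$). Using $\A$-indiscernibility to slide $\J$ within its gaps of $\I$ to a disjoint copy $\J'$ one transports these coded orbits and, comparing them, extracts from the failure $[c]\notin\acl^{\vee}(\A)$ a genuine element of $\dcl^{\vee}(\A b_{\I^{*}})\cap\dcl^{\vee}(\A b_{\J^{*}})\setminus\dcl^{\vee}(\A)$ for suitable disjoint $\I^{*}<\J^{*}$, contradicting $(3)$. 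The same bookkeeping gives $(2)\Rightarrow(1)$ directly, so one actually obtains $(1)\Leftrightarrow(2)$ and $(3)\Leftrightarrow(4)$.

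\textit{The upgrade $(4)\Rightarrow(2)$.} First ``blow up'' $\bar b$: pass to an $\A$-indiscernible sequence indexed by $\K\times\mathbb{Z}$ (lexicographic) of the same EM-type, with blocks $\mathbf d_{k}=(b_{(k,m)})_{m\in\mathbb{Z}}$ and $b_{i}=b_{(i,0)}$; this still satisfies $(4)$ over $\A$ and $b_{\I}\subseteq\mathbf d_{\I}$, $b_{\J}\subseteq\mathbf d_{\J}$, so by monotonicity it suffices to prove $\mathbf d_{\I}\downfree^{\acl^{\vee}}_{\A}\mathbf d_{\J}$ for disjoint finite sets $\I,\J$ of block indices. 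Induct on $|\I|+|\J|$: let $k=\max(\I\cup\J)$ and, by symmetry, assume $k\in\J$, $\J=\J_{0}\cup\{k\}$. Since $\I<\{k\}$, condition $(4)$ gives $\mathbf d_{k}\downfree^{\acl^{\vee}}_{\A}\mathbf d_{\I}$, so by symmetry and transitivity (\cref{rem: properties aclV independence}) it remains to show $\mathbf d_{\J_{0}}\downfree^{\acl^{\vee}}_{\A\mathbf d_{k}}\mathbf d_{\I}$. Now $(\mathbf d_{i})_{i<k}$ is $\A\mathbf d_{k}$-indiscernible and $|\I|+|\J_{0}|<|\I|+|\J|$, so the induction closes provided $(\mathbf d_{i})_{i<k}$ satisfies $(4)$ over the base $\A\mathbf d_{k}$ --- which is exactly the ``Furthermore''.

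\textit{The ``Furthermore'', and the $\acl$-clause.} This is the crux. We must show: if $(\mathbf d_{i})_{i}$ is $\A$-indiscernible, consists of infinite blocks, and satisfies $(4)$ over $\A$, then $\mathbf d_{K}\downfree^{\acl^{\vee}}_{\A\mathbf d_{n}}\mathbf d_{L}$ whenever $n<K<L$. A single point in the base would not suffice: $\downfree^{\acl^{\vee}}$ has transitivity, symmetry and extension (\cref{rem: properties aclV independence}) but \emph{not} base monotonicity, so it is essential that $\mathbf d_{n}$ is itself an infinite indiscernible sequence. The plan is to realize the blocks $\mathbf d_{K}$ ($K>n$), over $\A\mathbf d_{n}$, as a \emph{comb} in a Morley tree exactly as in the construction preceding \cref{prop: total GS sequences exist}, but with $\mathbf d_{n}$ absorbed into the base: spread-out-ness of the tree forces these blocks to be a total $\GS$-Morley sequence over $\A\mathbf d_{n}$, and spread-out-ness then yields $\downfree^{\acl^{\vee}}$-independence of the later blocks over $\A\mathbf d_{n}$ (via the $\downfree^{\GS}$-properties of \cref{prop: forking stronger than GS and symmetry for GS} together with the fact that elements of $\acl^{\vee}(\A\mathbf d_{n}\,\cdot\,)$ have finite orbit, hence generically stable type, over $\A\mathbf d_{n}$, or by a direct tree argument). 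Iterating (or taking an initial segment of blocks as the added base) gives $(4)$ over $\A\mathbf d_{n}$ in the required form. Finally the $\downfree^{\acl}$-clause follows from the $\downfree^{\acl^{\vee}}$-clause, since $\acl(\B)\subseteq\acl^{\vee}(\B)$: a singleton $\{e\}$ is the connected component of $e$ in the equality graph and is interdefinable with $e$ over $\emptyset$. The main obstacle is precisely this ``Furthermore'': the elementary independence calculus for $\downfree^{\acl^{\vee}}$ gives no base monotonicity, so one cannot just push a point into the base, and the argument must exploit the robustness of a whole spread-out block --- hence route through the Morley-tree machinery --- rather than proceed by a direct manipulation.
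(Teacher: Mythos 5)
Your cycle for the equivalences $(1)$–$(4)$ is a legitimate alternative decomposition, and the $(3)\Rightarrow(4)$ step via coding of finite orbits is the same kind of argument the paper uses. But your route $(4)\Rightarrow(2)$ (blow up, induct on $|\I|+|\J|$, bootstrap through the ``Furthermore'') is genuinely different from the paper's, which instead proves $(3)\Rightarrow(1)$ directly: take $[a]\in\dcl^{\vee}(\A b_{\I})\cap\dcl^{\vee}(\A b_{\J})$ with $\I,\J$ disjoint, use indiscernibility and density of the index set to slide $\J$ (fixing one of the two tuples at each stage so as to fix $[a]$) until some $\J^{*}<\I$ with $[a]\in\dcl^{\vee}(\A b_{\J^{*}})$, and then invoke $(3)$. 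No blow-up and no separate induction are needed. Your route therefore makes the ``Furthermore'' load-bearing for the main equivalence, which it is not in the paper.

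The genuine gap is in your proof of the ``Furthermore.'' You propose to ``realize the blocks $\mathbf d_{K}$ ($K>n$), over $\A\mathbf d_{n}$, as a comb in a Morley tree.'' But the lemma is stated for an \emph{arbitrary} $\A$-indiscernible sequence $\bar b$: nothing says it was produced by the Morley-tree construction, and given such a sequence there is no way to reconstruct a spread-out tree over $\A\mathbf d_{n}$ having it as a comb --- the $\downfree^{\GS}$-structure you would need over the new base is precisely what you are trying to establish, so the plan is circular. Moreover your meta-claim that ``one cannot just push a point into the base'' and must route through the tree machinery is wrong: the paper's proof of the ``Furthermore'' \emph{is} a direct manipulation. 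One reduces to $\dcl^{\vee}$-independence via the already-proved equivalence $(3)\Leftrightarrow(4)$ applied to $(\mathbf d_{k})_{k>n}$ over $\A\mathbf d_{n}$; then, given $e\in\dcl^{\vee}(\A\mathbf d_{n}\mathbf d_{\I})\cap\dcl^{\vee}(\A\mathbf d_{n}\mathbf d_{\J})$, one uses finite character (\cref{rem: finite character aclV closure}) to restrict to finitely many $b_{i}$'s, and then uses indiscernibility of the full $\mathbb{Z}\times\mathbb{Z}$-indexed sequence to slide those finitely many supporting indices, one at a time, into the infinite block $\mathbf d_{n}$ while keeping the order type and while fixing the other finite set of supports (and hence $e$). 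This never appeals to base monotonicity for $\downfree^{\acl^{\vee}}$; the lack of base monotonicity is circumvented precisely because $\mathbf d_{n}$ is an infinite block with room to absorb finitely many slid-down indices, not because of any tree robustness. You should replace the Morley-tree plan by this sliding argument; with that fixed, the rest of your outline (including the observation that the $\downfree^{\acl}$-clause follows because singletons are connected components in the equality graph) goes through.
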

\begin{proof} The direction $(2) \rightarrow (1)$ follows by definition. For $(1)\rightarrow (2)$, we may extend the sequence and assume that $\K$ is a dense linear order without end-points. Let $[a] \in \acl^{\vee}(\A b_{\I}) \cap \acl^{\vee}(\A b_{\J})$ and $\X=\{ [a_{i}] \ | i \leq n\}$ be the finite orbit of $[a]$ over $\A b_{\I}$. Without loss of generality we may assume that $\X$ has minimal cardinality, as we may increase the size of the tuple $b_{\I}$.\\
\underline{Claim:} \emph{ Let $\mathcal{J}$ be the set of finite tuples in $\K$ with the same order type as $\J$ with respect to $\I$. Then $[a] \in \acl^{\vee}(\A b_{\I}) \cap \acl^{\vee}(\A b_{\J'})$, for $\J' \in \mathcal{J}$.} \\
\textit{Proof:} By indiscernibility, $b_{\J} \equiv_{\A b_{\I}} b_{\J'}$ thus there is $\sigma \in \Aut(\mathrm{M}/ \A b_{\I})$ such that $\sigma([a])=[a]'$ and  $[a]' \in \acl^{\vee}(\A b_{\I}) \cap \acl^{\vee}(\A b_{\J'})$. Hence $[a],[a]' \in \acl^{\vee}(\A b_{\I}) \cap \acl^{\vee}(\A b_{\J} b_{\J'})$. If $[a] \notin \acl^{\vee}(\A b_{\J'})$ then $\mathrm{mult}([a]/ \A b_{\J} b_{\J'}) < \mathrm{mult}([a]/ \A b_{\J})$, but this is a contradiction to the minimality of $|\X|$.

\smallskip
By the Claim and the minimality of $|\X|$, the code $\Code(a_{1},\dots, a_{n})$ (\emph{c.f.} see \cref{def: def code}) lies in $\dcl^{\vee}(\A b_{\I}) \cap \dcl^{\vee}(\A b_{\J'})$ for every $\J' \in \mathcal{J}$. In particular, $\Code(a_{1},\dots, a_{n}) \in \dcl^{\vee}(\A b_{\I}) \cap \dcl^{\vee}(\A b_{\J}) \subseteq \acl^{\vee}(\A)$ by $\dcl^{\vee}$-independence. Since $[a] \in \acl^{\vee}(\Code(a_{1},\dots, a_{n}))$, then $[a] \in \acl^{\vee}(\A)$.

A similar argument gives the equivalence between $(3)$ and $(4)$.

The direction $(2)\rightarrow (3)$ is immediate. For the converse, by increasing the sequence we may assume that $\K$ is a dense order linear order without endpoints.  Let $[a] \in \dcl^{\vee}(\A b_{\I}) \cap \dcl^{\vee}(\A b_{\J})$. To simplify the notation let $\{ i_{1},\dots, i_{n}\}$ be an increasing enumeration of $\I$, and we will denote $b_{i_{1}},\dots, b_{i_{n}}$ the corresponding enumeration of $b_{\I}$. Set $i_{0}=-\infty$ and $i_{n+1}=\infty$ and let 
\begin{equation*}
K_{\ell}= \{ j \in \J \ | \ i_{\ell}< j< i_{\ell+1}\}, \text{for $\ell \in \{ 0, 1, \dots, n\}$}. 
\end{equation*}
Without loss of generality we assume each $K_{\ell}$ is not empty. For each tuple $ b_{K_{\ell}}$ we can find a tuple of the same length $ b_{K'_{\ell}}$ such that $K'_{\ell}<K_{\ell}$ and such that  $i_{\ell}< K_{\ell}'<i_{\ell+1}$. In particular, $\J$ and $\J'$ have the same order type over $\I$, where  $\displaystyle{\J'= \bigcup_{\ell \leq n} K_{\ell}'}$.  By indiscernibility over $\A$, there is $\sigma \in \Aut(\mathrm{M}/ \A b_{\I})$ such that $\sigma(b_{\J})=b_{\J'}$ and $\sigma([a])=[a]$. Consequently, $[a] \in \dcl^{\vee}(\A b_{\J}) \cap \dcl^{\vee}(\A b_{\J'})$.

Again, because $\K$ is a dense linear order without endpoints, for each $\ell \in \{ 0,\dots, n\}$ and tuple $ b_{K_{\ell}'}$ we can find a tuple of the same length $ b_{K''_{\ell}}$ such that $K''_{\ell}<K'_{\ell}$ and $K_{1}''<i_{1}$.  By indiscernibililty, there is an automorphism $\sigma \in \Aut(\mathrm{M}/ \A b_{\J})$ sending $\sigma(b_{\J'})=b_{\J''}$ and $\sigma([a])=[a]$. Thus $[a] \in \dcl^{\vee}(\A b_{\J}) \cap \dcl^{\vee}(\A b_{\J''})$. %%Now we can move the other sequence\\
By iterating this process finitely many times we can find a sequence $b_{\J^{*}}$ such that $[a] \in \dcl^{\vee}(\A b_{\J^{*}})$ and $\J^{*}< \I$.  Hence,  $[a] \in \acl^{\vee}(\A)$ as $b_{\I} \downfree_{\A}^{\dcl^{\vee}} b_{\J^{*}}$.

\smallskip
For the last part of the statement,  without loss of generality assume $\ell < \I < \J$, because the sequence $(\mathbf{d}_{k})_{k > \ell}$ is $\A \mathbf{d}_{\ell}$-indiscernible. By the first part of the statement, it is sufficient to argue that $\mathbf{d}_{\I} \downfree_{\A \mathbf{d}_{\ell}}^{\dcl^{\vee}} \mathbf{d}_{\J}$. Let $e \in \dcl^{\vee}(\A \mathbf{d}_{\ell} \mathbf{d}_{\I}) \cap \dcl^{\vee}(\A \mathbf{d}_{\ell} \mathbf{d}_{\J})$. By \cref{rem: finite character aclV closure} if $e \in \dcl^{\vee}(\A \mathbf{d}_{\ell} \mathbf{d}_{\I}) \cap \dcl^{\vee}(\A \mathbf{d}_{\ell} \mathbf{d}_{\J})$ there are finite sets $\B_{0} \subseteq \A \mathbf{d}_{\ell} \mathbf{d}_{\I}$ and $\B_{1} \subseteq \A \mathbf{d}_{\ell} \mathbf{d}_{\J}$ such that $e \in  \dcl^{\vee}(\B_{0}) \cap \dcl^{\vee}(\B_{1})$. Choose $n$ large enough such that $\B_{0} \subseteq \A (b_{i})_{i \in \{\ell\} \times [-n, n]} \mathbf{d}_{\I}$ and $\B_{1} \subseteq \A (b_{i})_{i \in \{ \ell\} \times [-n, n]} \mathbf{d}_{\J}$. Let $\I^{*}=\{ i \in \mathbb{Z} \times \mathbb{Z} \ | \ b_{i} \in \B_{0} \backslash \mathbf{d}_{\ell}\}$, and $i^{*}=\min \I^{*}$. By indiscernibility, we can replace $b_{i^{*}}$ by any $b_{j}$ with $(1,n)<j < \I$. So we can replace $b_{i^{*}}$ by an element in $\mathbf{d}_{\ell}$. Then we iterate this construction to move all of $b_{\I^{*}}$ inside $\mathbf{d}_{\ell}$ so $e \in \dcl^{\vee}(\A \mathbf{d}_{\ell})$.
\end{proof}
\begin{remark}\label{rem: equivalence for usual dcl and acl} A similar statement as in \cref{lem: equivalence indipendence dcl and acl} holds for the usual $\dcl$ and $\acl$-closure. In particular the last part of the statement holds also for $\acl$-independence i.e. $\mathbf{d}_{\I} \downfree_{\A \mathbf{d}_{n}}^{\acl} \mathbf{d}_{\J}$ for $n < \I < \J$.  The same proof in \cref{lem: equivalence indipendence dcl and acl}  works replacing $\dcl^{\vee}$ and $\acl^{\vee}$ by $\dcl$ and $\acl$. 
\end{remark}
\begin{definition} Let $\p$ be a global type and assume that $\p^{\otimes n}$ is generically stable over $\A$ for all $n<\omega$. We write $\downfree^{\p, \vee}$ to indicate the independence notion given by  $\downfree^{\p} \wedge \downfree^{\acl^{\vee}}$.
\end{definition}

\begin{proposition}\label{prop: GS independence is stronger} Let $\A$ be a small set of parameters, $a,b$ finite set of tuples in $\M$ and $\p$  a global type so that $\p^{\otimes n}$ is generically stable over $\A$ for all $n <\omega$. If $a \downfree_{\A}^{\GS} b$ then $a \downfree_{\A}^{\acl^{\vee}} b$ and $a \downfree_{\A}^{\p} b$. 
\end{proposition}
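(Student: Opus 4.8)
The plan is to establish the two conclusions by separate arguments.

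\emph{The relation $\downfree^{\p}$.} Since $\downfree^{\p}$ is symmetric (\cref{prop: basic facts on p rel}), it suffices to prove $b \downfree_{\A}^{\p} a$. I would apply the last clause of \cref{prop: basic facts on p rel} with $b$ in the role of $a$: there is a partial type $\pi^{b}(y)$, generically stable over $\A$ and consistent with $\tp(b/\A)$, such that for every set $S$ one has $b \downfree_{\A}^{\p} S$ if and only if $b \models \pi^{b}\rest_{\A S}$. In particular $b \models \pi^{b}\rest_{\A}$. Now $\pi^{b}$ is a generically stable partial type over $\A$, so the defining property of $a \downfree_{\A}^{\GS} b$, applied to $\pi = \pi^{b}$, yields $b \models \pi^{b}\rest_{\A a}$; that is, $b \downfree_{\A}^{\p} a$, hence $a \downfree_{\A}^{\p} b$. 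This half presents no difficulty.

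\emph{The relation $\downfree^{\acl^{\vee}}$.} By the characterization recorded just after the definition of $\downfree^{\acl^{\vee}}$, it is enough to show: for every graph $\G$, which by \cref{rem: reduction to graphs over the emptyset} we may take $\emptyset$-definable, having $a$ as a vertex, if the connected component $[a]$ lies in $\acl^{\vee}(\A b)$ then it lies in $\acl^{\vee}(\A)$. Note that $[a] \in \dcl^{\vee}(\A a)$ automatically, since $a$ itself is a parameter. The strategy is to feed $a \downfree_{\A}^{\GS} b$ into the tree machinery behind \cref{prop: total GS sequences exist} to produce an $\A a$-indiscernible sequence $(b_{i})_{i<\omega}$ with $b_{i} \equiv_{\A a} b$ for all $i$ and which is a \emph{total GS-Morley sequence over $\A$} (so $b_{\J} \downfree_{\A}^{\GS} b_{\I}$ whenever $\I < \J$); this combines the extension and left transitivity of $\downfree^{\GS}$ from \cref{thm: properties GS relation} with an Erd\H{o}s--Rado extraction to restore indiscernibility over $\A a$. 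Because $[a] \in \dcl^{\vee}(\A a)$ and $b_{i} \equiv_{\A a} b$, an automorphism over $\A a$ carrying $b$ to $b_{i}$ fixes $[a]$ and carries a witnessing $\A b$-formula to an $\A b_{i}$-formula; hence $[a] \in \acl^{\vee}(\A b_{i})$ for all $i$, and in particular $[a] \in \acl^{\vee}(\A b_{0}) \cap \acl^{\vee}(\A b_{1})$. It then remains to show that a total GS-Morley sequence over $\A$ is $\acl^{\vee}$-independent over $\A$, which gives $[a] \in \acl^{\vee}(\A)$. For this I would rerun the minimal-orbit argument from the proof of \cref{lem: equivalence indipendence dcl and acl}, coding the relevant $\vee$-algebraic components as honest imaginaries through \cref{lem: characterizing being in aclV} and \cref{lem: elements in aclV are definable when base is acl closed}, and then contradicting the finite/cofinite behaviour of the generically stable partial type each $b_{i}$ realizes over $\A b_{<i}$; the residual $\acl^{\eq}$-statement can alternatively be closed using anti-reflexivity and algebraicity of $\downfree^{\GS}$ from \cref{thm: properties GS relation}, since for an imaginary $e \in \acl^{\eq}(\A a) \cap \acl^{\eq}(\A b)$ one gets $e \downfree_{\A}^{\GS} b$, then $e \downfree_{\A}^{\GS} e$, whence $e \in \acl^{\eq}(\A)$.

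The main obstacle is this final step: simultaneously arranging the total GS-Morley sequence to respect the base $\A a$ — so that an automorphism over $\A a$ preserves the \emph{same} component $[a]$ — and to be independent over the strictly smaller base $\A$ (the stated form of \cref{prop: total GS sequences exist} produces a sequence over a single base, so some extra bookkeeping with the properties in \cref{thm: properties GS relation} is needed), together with carrying out the $\acl^{\vee}$-to-$\acl^{\eq}$ coding carefully enough not to invoke the proposition circularly. The generic-stability input is precisely what makes the finite/cofinite dichotomy available along the sequence, and hence what makes the orbit-minimality argument go through.
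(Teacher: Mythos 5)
Your argument for the $\downfree^{\p}$ clause is correct and matches the paper exactly: one observes that $b\downfree_{\A}^{\p} S$ is governed by the generically stable partial type $\pi_{\p}$ of the last clause of \cref{prop: basic facts on p rel} (applied to $b$), so $a\downfree_{\A}^{\GS} b$ immediately yields $b\models\pi_{\p}\rest_{\A a}$, hence $b\downfree_{\A}^{\p} a$ and by symmetry $a\downfree_{\A}^{\p} b$.

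The $\downfree^{\acl^{\vee}}$ clause is where the proposal breaks. Your plan is to build a total GS-Morley sequence $(b_i)_{i<\omega}$ in $\tp(b/\A)$ that is simultaneously indiscernible over $\A a$ with $b_i\equiv_{\A a}b$, deduce $[a]\in\acl^{\vee}(\A b_i)$ for every $i$, and then invoke $\acl^{\vee}$-independence of the sequence over $\A$ to land $[a]$ in $\acl^{\vee}(\A)$. The difficulty is that "total GS-Morley sequences over $\A$ are $\acl^{\vee}$-independent over $\A$" \emph{is} the content of the proposition being proved: in the paper this fact appears as \cref{cor: total sequence}, and its proof is a direct appeal to \cref{prop: GS independence is stronger}. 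Rerunning the orbit-minimality argument of \cref{lem: equivalence indipendence dcl and acl} would not remove the circularity, because that lemma compares $\dcl^{\vee}$- and $\acl^{\vee}$-independence of an indiscernible sequence but does not produce either from GS-independence. Your fallback via anti-reflexivity and algebraicity of $\downfree^{\GS}$ does establish $\acl^{\eq}(\A a)\cap\acl^{\eq}(\A b)=\acl^{\eq}(\A)$, but that is the wrong closure operator: a connected component $[a]$ of an $\A$-definable graph is not an imaginary, and the coding in \cref{lem: elements in aclV are definable when base is acl closed} only turns $[a]$ into an imaginary \emph{after} one already knows that $[a]$ is $\vee$-algebraic over the relevant base (one needs a formula of bounded diameter, which is exactly what being in $\acl^{\vee}$ provides). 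So there is no non-circular reduction from $\acl^{\vee}$ to $\acl^{\eq}$ available at this stage. There is also a secondary gap: $a\downfree_{\A}^{\GS} b$ constrains generically stable partial types realized by $b$, and by itself does not manufacture a sequence that is both a total GS-Morley sequence over $\A$ and $\A a$-indiscernible with each term $\equiv_{\A a} b$; right extension (\cref{thm: properties GS relation}(5)) only gives a conjugate $a'\equiv_{\A b}a$ of $a$ independent from the whole sequence, so extra bookkeeping would still be required to transport the conclusion back to $a$.

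The paper avoids all of this by a direct argument in the same spirit as your $\downfree^{\p}$ clause: one builds an explicit ind-definable partial type $\pi^{\vee}$ extending $\tp(b/\A)$ consisting of $\tp(b/\A)$ together with the conditions $\dist_G(x,d)>n$ for every $\A$-definable graph $G$ in which $b$ does not lie in a component algebraic over $\A$ and every $d\in\M$. One checks (i) that realizing $\pi^{\vee}\rest_{\B}$ is equivalent to $\downfree_{\A}^{\acl^{\vee}}$-independence from $\B$, and (ii) that $\pi^{\vee}$ is generically stable over $\A$, the point being that along a $\pi^{\vee}$-Morley sequence the terms land in pairwise distinct components of each such $G$, so by indiscernibility none of them can land in a component algebraic over an extended base. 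Once both clauses of the proposition are reduced to "the relevant independence is governed by a generically stable partial type," the conclusion is a single application of the definition of $\downfree^{\GS}$. If you want to repair your proposal, this is the step to replace: rather than routing through a total GS-Morley sequence, produce a generically stable partial type witnessing $\downfree^{\acl^{\vee}}$ directly.
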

\begin{proof}
By assumption if $a \downfree_{\A}^{\GS} b$, then $b \models \pi \rest_{\A a}$ for every partial type $\pi$ generically stable stable over $\A$ consistent with $\tp(b/\A)$.  We need to show that each of $\downfree^{p}$ and $\downfree^{\acl^{\vee}}$ are given by a generically stable partial type. For $\downfree^p$, this is the last statement in Proposition \ref{prop: basic facts on p rel}.

Let us now consider $\downfree_{\A}^{\acl^{\vee}}$. Let $\pi^{\vee}(x)$ be the partial type defined by taking the closure under logical implication of \[\tp_x(b/\A) \cup \bigcup_{\mathrm{G}, \phi} \{\neg (\exists c) \phi(c,x) \wedge \dist_{\mathrm{G}}(c,d) > n:d \in \mathrm{M}\},\]
where the pair $(\mathrm{G},\phi)$ ranges over all $\A$-definable graphs $\G$ and formulas $\phi$ such that $\phi(\M,b)$ does not intersect any connected component of $\mathrm{G}$ that is $\vee$-algebraic over $\A$. This partial type is ind-definable over $\A$.

We show that for any tuple $b_* \equiv_{\A} b$ and $\B$ a set of parameters we have the equivalences:
\[b_*\downfree_\A^{\acl^\vee} \B \iff \B\downfree_\A^{\acl^\vee} b_* \iff b_*\models \pi^{\vee}(x) \rest_{\B}.\]

Indeed, if $b_*\models \pi^{\vee}(x) \rest_{\B}$, then there is $b' \equiv_{\B} a$ such that $b' \models \pi^{\vee}$ (over the monster model $\M$) and $b' \downfree_{\A}^{\acl^\vee} \M$ by definition of $\pi^{\vee}$. Conversely, assume $b_* \downfree_{\A}^{\acl^\vee} \B$ and $b_*\equiv_{\A} b$. Take a model $\M_0 \supseteq \B$. By extension (\emph{c.f.} \cref{rem: properties aclV independence}.(1)), there exists $b' \equiv_{\B} b_*$ with $b' \downfree_{\A}^{\acl^\vee} \M_0$. Then $b'$ satisfies
\[\tp_x(b/\A) \cup \bigcup_{\mathrm{G}, \phi} \{\neg (\exists c) \phi(c,x) \wedge \dist_{\mathrm{G}}(c,d) > n:d \in \mathrm{M_0}\}.\]
Therefore $b_*$ satisfies all formulas over $\B$ that are implied by that partial type. Since this is true for all $\M_0$, $b_*$ satisfies all the implications of $\pi^{\vee}(x)$ over $\B$, that is $b_*$ satisfies $\pi^{\vee}(x) \rest_{\B}$.\\

It is therefore sufficient to show that $\pi^{\vee}$ is generically stable. Let $d$ be a tuple in $\M$ and $(b_k)_{k<\omega}$ be a $\pi^{\vee}$-Morley sequence over $\A$ which is indiscernible over $\A d$. For any $\A$-definable graph $\mathrm{G}$ and $\A b$-definable set $X_b$ such that $X_b$ does not intersect a component of $\mathrm{G}$ that is $\vee$-algebraic over $\A$, the elements $X_{b_k}$ intersect distinct connected components of $\mathrm{G}$. By indiscernibility, they cannot intersect a connected component of $\mathrm{G}$ algebraic over $\A d$. Hence $b_k \models \pi^{\vee} \rest_{\A d}$ for each $k$, which shows that $\pi^{\vee}$ is generically stable.
\end{proof}

\begin{corollary}\label{cor: total sequence} Given $b, \A$ and an ordinal $\kappa$, there is an $\A$-indiscernible sequence $(b'_{i})_{i < \kappa}$ such that:
%$(b_{i})_{i < \omega}$ in  $\tp(b/\A)$ such that for any $\I < \J \subseteq \omega$, $b_{\I} \downfree_{\A}^{\p, \vee} b_{\J}$. In particular, we may assume that we can find $(b_{i})_{i \in |\T(\A)|^{+}}$ $\A$-indiscernible such that:
\begin{itemize}
\item $b$ is a subtuple of $b'_0$;
\item $b'_{\I} \downfree_{\A}^{\p} b'_{\J}$ for all sets $ \I < \J$;
\item $b'_{\I} \downfree_{\A}^{\acl^{\vee}} b'_{\J}$ for all sets $\I \cap \J=\emptyset$;
\item  the sequence $(b'_{i})_{i>0}$ is $\acl^{\vee}$ and $\acl$-independent over $\A b'_{0}$. 
\end{itemize}
\end{corollary}
\begin{proof}
By \cref{prop: total GS sequences exist} we can find an $\A$-indiscernible sequence $(b_{i})_{i \in \omega}$ in $\tp(b/\A)$ such that $b_{\J} \downfree_{\A}^{\GS} b_{\I}$ for all $\I > \J$. By \cref{prop: GS independence is stronger} $b_{\I} \downfree_{\A}^{\p, \vee} b_{\J}$. By \cref{lem: equivalence indipendence dcl and acl} and Ramsey we can replace the sequence by one that satisfies the required conditions.  
\end{proof}

\begin{lemma}\label{lem: staying indiscernible} Let $(b_{i})_{i \in \K}$ be an indiscernible and $\acl$-independent sequence over $\A$; and let $\p$ be a global $\A$-invariant generically stable type. Let $a \models \p\rest_{\A (b_{i})_{i \in \K}}$, then $(ab_{i})_{i \in \K}$ is an $\A a$-indiscernible sequence and is $\acl$-independent over $\A a$. 
\end{lemma}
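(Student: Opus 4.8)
The statement has two halves: that $(ab_i)_{i\in\K}$ is $\A a$-indiscernible, and that it is $\acl$-independent over $\A a$. Since $a$ lies in the base $\A a$, the first half just says $(b_i)_{i\in\K}$ is $\A a$-indiscernible, and I would deduce this from $\p$ being definable over $\A$ (\cref{prop: generically stable properties}). Given an $\La(\A a)$-formula $\psi(\bar y)$, write $\psi(\bar y)=\phi(a,\bar y)$ with $\phi$ over $\A$ and let $(d_{\p}x)\phi(\bar y)$ be the corresponding $\phi$-definition, an $\La(\A)$-formula. For an increasing tuple $i_1<\dots<i_n$ in $\K$ we have $a\models\p\rest_{\A b_{i_1}\cdots b_{i_n}}$, so $\M\models\psi(b_{i_1},\dots,b_{i_n})$ iff $\phi(x,b_{i_1},\dots,b_{i_n})\in\p$ iff $\M\models(d_{\p}x)\phi(b_{i_1},\dots,b_{i_n})$, and by $\A$-indiscernibility of $(b_i)$ this depends only on $n$. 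So $\psi$ cannot separate two increasing $n$-tuples, i.e.\ $(b_i)$ is $\A a$-indiscernible; adjoining the constant $a$ to each term changes nothing.

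For $\acl$-independence over $\A a$: by the first half and \cref{rem: equivalence for usual dcl and acl} it suffices to show $\acl(\A ab_{\I})\cap\acl(\A ab_{\J})\subseteq\acl(\A a)$ for $\I<\J$ finite (using finite character of $\acl$), and enlarging $(b_i)$ we may assume it sits in a long $\A$-indiscernible, $\acl$-independent sequence, with $a$ still generic over it (re-choosing $a$ by an $\A$-automorphism, as $\p$ is $\A$-invariant). Since $a\models\p\rest_{\A b_{\I}b_{\J}}$ and $\p\rest_{\A}$ is stationary with $\p$ its unique global nonforking extension (\cref{prop: generically stable properties}), we get $a\downfree_{\A}^{f}b_{\I}b_{\J}$, hence $a\downfree_{\A b_{\I}}^{f}b_{\J}$ and $a\downfree_{\A b_{\J}}^{f}b_{\I}$ by base monotonicity. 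Now fix $e\in\acl(\A ab_{\I})\cap\acl(\A ab_{\J})$. Let $\hat e_{\I}\in\dcl^{\eq}(\A ab_{\I})$ code the finite orbit of $e$ over $\A ab_{\I}$, so $e\in\acl^{\eq}(\hat e_{\I})$, and write $\hat e_{\I}=h(a,b_{\I})$ with $h$ an $\A$-definable partial function whose domain in the first variable is an $\A b_{\I}$-definable set containing $a\models\p\rest_{\A b_{\I}}$, hence lies in $\p$. By \cref{stronggerms} the germ $\gamma_{\I}:=[h(\cdot,b_{\I})]_{\p}$ is strong over $\A$, so $\hat e_{\I}=g(a)$ for an $\A\gamma_{\I}$-definable $g$, whence $e\in\acl^{\eq}(\A\gamma_{\I}a)$ with $\gamma_{\I}\in\dcl^{\eq}(\A b_{\I})$; symmetrically $e\in\acl^{\eq}(\A\gamma_{\J}a)$ with $\gamma_{\J}\in\dcl^{\eq}(\A b_{\J})$. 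From $\acl$-independence of $\bar b$ over $\A$ and $a\downfree_{\A}^{f}b_{\I}b_{\J}$ (in $\M^{\eq}$) one gets $\acl^{\eq}(\A a)\cap\acl^{\eq}(\A\gamma_{\I})\subseteq\acl^{\eq}(\A)$ and $\acl^{\eq}(\A\gamma_{\I})\cap\acl^{\eq}(\A\gamma_{\J})\subseteq\acl^{\eq}(\A)$. Hence each of $\gamma_{\I},\gamma_{\J}$ is either in $\acl^{\eq}(\A)$ — in which case, say, $e\in\acl^{\eq}(\A\gamma_{\I}a)=\acl^{\eq}(\A a)$ and we are done — or lies outside $\acl^{\eq}(\A a)$.

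The hard case, which I expect carries the real content, is when both germs are nontrivial over $\A$. Here I would use the ambient sequence: taking disjoint copies $\I_1,\I_2,\dots$ of $\I$ inside $\bar b$ on the same side of $\J$ and conjugating over $\A ab_{\J}$ produces elements $e_t\in\acl^{\eq}(\A\gamma_{\I_t}a)$ all lying in the fixed finite orbit of $e$ over $\A ab_{\J}$, so by pigeonhole two coincide, yielding one element in $\acl^{\eq}(\A\gamma_{\I_s}a)\cap\acl^{\eq}(\A\gamma_{\I_t}a)$ with $\gamma_{\I_s},\gamma_{\I_t}$ coming from disjoint blocks (so $\acl$-independent over $\A$) and still $a\downfree_{\A}^{f}\gamma_{\I_s}\gamma_{\I_t}$. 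Iterating the germ construction on $(\gamma_{\I_s},\gamma_{\I_t})$, with each new germ lying in $\dcl^{\eq}$ of the previous datum, should shrink the relevant object at each step; the crux is to show this terminates with a germ algebraic over $\A$ — for instance via a length/rank argument, or by replacing the germ bookkeeping with a direct multiplicity count against sufficiently many disjoint blocks of $\bar b$. Granting this, every such $e$ lies in $\acl(\A a)$, giving the lemma.
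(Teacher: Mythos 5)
Your proof of the first half (indiscernibility over $\A a$, via $\A$-definability of $\p$) is correct, and it is a fact the paper uses implicitly. But the second half has a genuine gap: you explicitly leave the "hard case" open ("the crux is to show this terminates\dots Granting this\dots"), so the argument is not complete.

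The idea you are missing is that one should pass to $\dcl$-independence (via \cref{rem: equivalence for usual dcl and acl}) and then exploit $\A a$-indiscernibility to replace one endpoint by a \emph{shifted} copy, which makes a \emph{single} germ appear. Concretely: take $x \in \dcl(\A a b_0) \cap \dcl(\A a b_2)$ and write $x = \f(a,b_0) = \g(a,b_2)$ with $\f,\g$ $\A$-definable. Since $b_0 b_2 \equiv_{\A a} b_1 b_2$, applying an automorphism over $\A a$ fixing $b_2$ gives $\f(a,b_1) = \g(a,b_2)$ as well, hence $\f(a,b_0) = \f(a,b_1)$. Because this is an $\A b_0 b_1$-definable condition on $a \models \p\rest_{\A b_0 b_1}$ and $\p$ is $\A$-definable, it holds generically, so the germs $e := [\f_{b_0}]_\p = [\f_{b_1}]_\p$ coincide. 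Thus $e \in \dcl(\A b_0) \cap \dcl(\A b_1) \subseteq \acl(\A)$ by $\acl$-independence of $(b_i)$, and by \cref{stronggerms} there is an $\A e$-definable $g$ with $x = \f_{b_0}(a) = g(a)$, giving $x \in \dcl(\A a e) \subseteq \acl(\A a)$. Your route instead produces two germs $\gamma_\I$ (from $\f$) and $\gamma_\J$ (from $\g$) associated to \emph{different} functions, which have no reason to coincide and therefore sit in separate algebraic closures with no shared object to pin down; that is exactly why your iterated germ/pigeonhole step cannot be closed. Also, your framing in terms of $\acl(\A a b_\I) \cap \acl(\A a b_\J)$ and orbit codings adds complications (e.g.\ you need $a$ in the domain of a partial function attached to an \emph{orbit}, not a single element) that the $\dcl$ reformulation avoids entirely.
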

\begin{proof}
By \cref{rem: equivalence for usual dcl and acl} it is enough to show that $(a b_{i})_{i \in \K}$ is $\dcl$-independent over $\A$. Let $I<I'<J$ subsets of $\K$ with $I$ of same order type as $I'$ and let $x \in \dcl(\A a b_{I}) \cap \dcl(\A a b_{J})$. Then there are $\A$-definable functions $\f$ and $\g$ such that $x=\f(a,b_{I})=\g(a, b_{J})$. By indiscernibility over $\A a$,  $\f(a, b_{I'})=\g(a, b_{J})$ and consequently $x= \f(a, b_{I})=\f(a, b_{I'})$. We write $\f_{y}(x)$ to denote the function $\f(x,y)$, then $\f_{b_{I}}(a)=\f_{b_{I'}}(a)$ and $e=[\f_{b_{I}}]_{\p}=[\f_{b_{I'}}]_{\p}$. In particular $e \in \dcl(\A b_{I}) \cap \dcl(\A b_{I'}) \subseteq \acl(\A)$ since $(b_{i})_{i\in \K}$ is $\acl$-independent over $\A$.  By \cref{stronggerms} the germ $e$ is strong over $\A$, so $x=\f_{b_{I}}(a) \in \dcl(\A a e) \subseteq \acl(\A a)$, as required. 
 \end{proof}
\subsection{Stable embeddedness and internality}
Recall that we assume that $\T$ eliminates imaginaries.

\begin{definition}

 Let $\A$ be a small subset of $\M$. Let $\X$ and $\Lat$  be definable sets, and let $\mathcal{X}
= (\X_{i})_{i \in I}$ and $\cY = (\Y_{j})_{j \in J}$ be (small) families of
$\A$-definable sets in $\M$. We often identify (small) families of definable sets \((X_i)_{i\in I}\) with the
associated ind-definable set \(\bigcup_i X_i\).
\begin{enumerate}
\item $\X$ is said to be $\Lat$-internal if $\X= \emptyset$ or there are $m \in \mathbb{N}$ and a surjective $\M$-definable function $\g \colon \ \Lat^{m} \rightarrow \X$.
\item Suppose $\Lat$ is an $\emptyset$-definable set, we write $\Int(\Lat, \A)$ to denote the union of the $\A$-definable sets internal to $\Lat$. 
\item An \(\A\)-definable subset of \(\cX\) is an \(\A\)-definable subset of
some finite product $\prod_{\ell\leq n} \X_{i_{\ell}}$, where $i_{\ell} \in I$. The family
\(\cX\) is \emph{stably embedded over $\A$} if any \(\M\)-definable subset
of \(\cX\) is definable with parameters from $\A \cup \bigcup_{i \in I}
\X_{i}(\M)$.\\
In that case, we denote by \(\cX^{eq}\) the family of all quotients of
\(\A\)-definable subsets of \(\cX\) by \(\A\)-definable equivalence relations.
\item The family \(\cX\) is \emph{$\cY$-internal} if there exists an
(ind-)\(\M\)-definable subset \(\mathrm{Z}\) of \(\cY\) and a surjective
(ind-)$M$-definable function $\g : \mathrm{Z} \rightarrow \cX$.
\end{enumerate}
\end{definition}
When \(\cX\) is stably embedded over \(\A\), we will often consider it as a
structure whose sorts are the \(\X_i\) with the full \(\A\)-induced structure on
products of sorts.\\
The following are folklore results (e.g. their proof can be find in \cite[Lemma $2.2.(2)$ and Lemma $2.6$]{resdom}).

\begin{lemma}\label{lem: facts stably embedded} Let $\A$ be a small set of parameters in $\M$ and let $\mathcal{X}=(\X_{i})_{i \in \I}$ be a (small ) family of $\A$-definable sets in $\M$. 
Assume that $\cX$ is stably embedded over $\A$. 
\begin{enumerate}
\item   Let $a$ be a tuple in $\mathcal{X}$ and let $b \in \M$. Then
$\tp(a/\A \mathcal{X}^{eq}(\A b)) \vdash \tp(a/\A b)$ and
$\tp(b/\A)\cup\tp(\mathcal{X}^{eq}(\A b)/\A a) \vdash \tp(b/ \A a)$.
\item Let \(c\) be a tuple of elements from \(\mathcal{X}\), let $d\in \M$ and let
$\B\subseteq \M$ that contains $\A$. Then 
\begin{equation*}
c \downfree_{\B} d \text{ if and only if } c \downfree_{\A \mathcal{X}^{eq}(\B)} \mathcal{X}^{eq}(\B d)  
\end{equation*}
where $\downfree$ denotes forking independence.\\
In particular, if $\mathcal{X}$ eliminates imaginaries, we have
\begin{equation*}
c \downfree_{\B} d \text{ if and only if } c \downfree_{\A \mathcal{X}(\B)} \mathcal{X}(\B d) 
\end{equation*}
\end{enumerate}
\end{lemma}

\begin{lemma}\label{lem: internal $p$-germs}  Let $\p$ be an $\A$-definable generically stable type, and $\Lat$ an $\A$-definable set. 
\begin{enumerate}
\item Let $\f_{b}$ be a definable family of functions to $\Lat$. Then the set of $\p$-germs of instances of $\f_{b}$ is $\Lat$-internal. 
 \item If $\Lat$ is stably embedded, then so is $\Int(\Lat, \A)$.  
\end{enumerate}
\end{lemma}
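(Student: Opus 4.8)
\textbf{Proof proposal for \cref{lem: internal $p$-germs}.}

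The plan is to treat the two parts separately, as they require different inputs. For part (1), the key observation is that every instance $\f_b$ of the definable family gives a function $p\to \Lat$, and its $p$-germ $[\f_b]_p$ is, by definition, an element of $c/\E$ where $c$ ranges over realizations of $\tp(b/\A)$ and $\E$ is the $\A$-definable equivalence relation identifying those $c$ that agree with $b$ on $\p$. To show the set of such germs is $\Lat$-internal, I would first invoke \cref{stronggerms}: since $\p$ is generically stable over $\A$, the germ $[\f_b]_p$ is \emph{strong} over $\A$, so there is an $\A[\f_b]_p$-definable function $\g$ with $\g(a)=\f_b(a)$ for all $a\models\p\rest_{\A b}$. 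The point is then that $[\f_b]_p$ is interdefinable over $\A a$ (for $a$ a suitable realization, or better, a suitable finite Morley tuple $\bar a$ of $\p$) with a tuple from $\Lat$: namely, fixing a sufficiently long Morley sequence $\bar a=(a_1,\dots,a_k)$ of $\p$ over $\A$ which is a $\p$-basis for $b$ (via \cref{lem: Existence of $p$-basis}), the tuple $(\f_b(a_1),\dots,\f_b(a_k))\in \Lat^k$ determines $[\f_b]_p$: if $\f_{b'}$ agrees with $\f_b$ on all of $\bar a$ then, $\bar a$ being a $\p$-basis, $\f_{b'}$ and $\f_b$ agree on all of $\p\rest_{\A bb'}$ hence have the same germ. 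So there is an $\A\bar a$-definable surjection from (a definable subset of) $\Lat^k$ onto the set of germs, parametrized over $\bar a\models \p^k\rest_\A$; since the family of functions is $\A$-definable and $\p$ is $\A$-definable, one can absorb the parameter $\bar a$ using that $\tp(\bar a/\A)$ is realized, exhibiting the germ-set as the image of $\Lat^{k}$ under an $\M$-definable map, i.e.\ $\Lat$-internal. The mild subtlety is making the "$\bar a$ determines $[\f_b]_p$" map uniformly definable: this uses that $\E$ is $\A$-definable and that "$\bar a$ is a $\p$-basis for $b$" is an $\A$-definable condition on $b$ relative to $\bar a$ together with the generic stability of $\p$.

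For part (2), suppose $\Lat$ is stably embedded; I want $\Int(\Lat,\A)$ stably embedded. Let $\X$ be any $\A$-definable set internal to $\Lat$, witnessed by a surjective $\M$-definable $\g\colon \Lat^m\to\X$, and let $Z\subseteq \X^n$ (equivalently a definable subset of a finite product of such $\X$'s) be $\M$-definable, say $Z=Z_d$ for a parameter $d$. Pulling back along $\g$, the set $\g^{-1}(Z)\subseteq \Lat^{mn}$ is $\M$-definable, hence by stable embeddedness of $\Lat$ it is definable with parameters from $\A\cup\Lat$. Pushing forward again through $\g$ (which is definable over some fixed parameter, but that parameter can be taken from $\A\cup\Lat$ after a similar argument on the graph of $\g$), one recovers $Z$ as an $\A\cup\Lat$-definable set. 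The one point requiring care is that the internalizing function $\g$ is a priori $\M$-definable with an arbitrary parameter, not an $\A\cup\Lat$-parameter; but the graph of $\g$ is itself a definable subset of $\Lat^m\times \X\subseteq \Lat^m\times \Lat^{m'}$ (using a section of an internalizing map for $\X$ in the other direction is not needed — $\X$ embeds into a power of $\Lat$ via any finite separating family of $\Lat$-valued functions, which exist by internality), so stable embeddedness of $\Lat$ applies to the graph too and lets us choose $\g$ over $\A\cup\Lat$. With that normalization the pull-back/push-forward argument goes through cleanly.

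The main obstacle I anticipate is the bookkeeping in part (1): establishing, with uniform definability over the parameter $\bar a$, that the tuple of values of $\f_b$ on a $\p$-basis $\bar a$ both determines and is determined by the germ $[\f_b]_p$, so that the set of germs is genuinely the image of a single $\M$-definable map out of a power of $\Lat$. This is where \cref{stronggerms} (strength of the germ) and \cref{lem: Existence of $p$-basis} ($\p$-bases exist and are short) do the real work, and the generic stability hypothesis on $\p$ is used essentially. Part (2) is comparatively routine once the internalizing function is arranged to be $\A\cup\Lat$-definable.
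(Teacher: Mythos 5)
Both parts of your proposal contain a genuine gap at the crux of the argument.

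In part (1), the central claim --- ``if $\f_{b'}$ agrees with $\f_b$ on all of $\bar a$ then, $\bar a$ being a $\p$-basis, $\f_{b'}$ and $\f_b$ agree on all of $\p\rest_{\A bb'}$'' --- does not follow from the definition of a $\p$-basis. Recall that $\bar a$ being a $\p$-basis for $b$ over $\A$ (\cref{lem: Existence of $p$-basis}) means: for $a\models\p\rest_\A$, if $a\models\p\rest_{\A\bar a}$ then $a\models\p\rest_{\A b}$. This says nothing about $b'$, and even if $\bar a$ were a $\p$-basis for $bb'$, agreement of $\f_b$ and $\f_{b'}$ \emph{on the finitely many $a_i$} would not propagate to agreement on realizations of $\p\rest_{\A\bar a}$. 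Moreover, the interdefinability you assert fails in the other direction too: $[\f_b]_\p=[\f_{b'}]_\p$ does not force $\f_b(a_i)=\f_{b'}(a_i)$, because the $a_i$ need not realize $\p\rest_{\A bb'}$. Fortunately only one implication is needed, and for that one neither \cref{stronggerms} nor $\p$-bases is the right tool. The clean argument is compactness plus $\p=\Av(\bar a/\M)$ (\cref{ fact: first things about generically stable}(1)): if $\neg(b\E b')$ then $\f_b(x)\neq\f_{b'}(x)\in\p$, so $\{i<\omega : \f_b(a_i)=\f_{b'}(a_i)\}$ is finite; the type asserting $\f_b(a_i)=\f_{b'}(a_i)$ for all $i<\omega$ together with $\neg(b\E b')$ is therefore inconsistent, and compactness yields a fixed $k$ such that $(\f_b(a_i))_{i<k}=(\f_{b'}(a_i))_{i<k}$ implies $b\E b'$. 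The map $\tau(b)=(\f_b(a_i))_{i<k}$ then descends to a definable surjection from a definable subset of $\Lat^k$ onto the germ set, as required.

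In part (2), the claim that ``$\X$ embeds into a power of $\Lat$ via any finite separating family of $\Lat$-valued functions, which exist by internality'' is false. Internality only provides a definable \emph{surjection} $\Lat^m\twoheadrightarrow\X$, and there need be no definable injection $\X\hookrightarrow\Lat^{m'}$: take for instance the theory of an equivalence relation $E$ on $\Lat$ with infinitely many infinite classes, and $\X=\Lat/E$; then $\X$ is $\Lat$-internal, but any definable $\Lat$-valued map on $\X$ is constant on each class, so no separating family exists. Since your argument to make the internalizing map $\g$ definable over $\A\cup\Lat$ rests entirely on that embedding, the pull-back/push-forward step does not go through as written. Arranging that the parameters of $\g$ (or at least the relevant push-forwards) can be taken in $\A\cup\Int(\Lat,\A)$ is precisely the non-routine point, and requires a more careful argument than the one you sketch; this is where the real content of \cite[Lemma 2.2]{internal} lies.
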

\begin{proof}
This is \cite[Lemma $2.1$ and $2.2$]{internal}.
\end{proof}
\subsection{Stable part of a structure and domination}
%\begin{definition}Let $\A$ be a small subset of $\M$. 
 %A (small) family $(\X_{i})_{i \in \I}$ of $\A$-definable sets in $\M$ is \emph{stably embedded} if for any finite sequence $(i_{1},\dots, i_{n})$ in $\I$, any $\M$-definable subset of $\displaystyle{\prod_{j=1}^{n} \X_{i_{j}}}$ is definable with parameters from $\displaystyle{\A \cup \bigcup_{i \in \I} \X_{i}}$.
%\end{definition}
\begin{definition}\label{def: stable part of the structure} Let $\A$ be a small set of parameters. We denote by $\St_{\A}$  the multi-sorted structure $(\D_{i}, \R_{j})_{i \in \I, j \in \J}$ whose sorts $\D_{i}$ are the $\A$-definable, stable, stably embedded subsets of $\M$. For each finite set of sorts $\D_i$, all the $\A$-definable relations on their finite product are included as $\emptyset$-definable relations $\R_{j}$. 
\end{definition}
By \cite[Lemma $3.2$]{HHM}, the structure $\St_{\A}$ is stable and stably embedded.
In \cite{HHM} is introduced the notion of a stably dominated type, we refer the reader to \cite[Section $2.2$]{tame} for a detailed treatment on pro-definable maps and stable domination.
\begin{notation} Let $\A$ be a small set of parameters, we write $\St_{\A}(\A; b)$ to denote $\St_{\A} \cap \dcl(\A; b)$. This is different from the notation in \cite{HHM}, where the $\A$ is implicit. They write $\St_{\A}(b)$ to denote $\St_{\A} \cap \dcl(\A b)$. As the parameter set $\A$ for the base will play an important role, we prefer the first notation to make some of the arguments more explicit. 
\end{notation}
\begin{definition}\label{def: stably dominated types}
\begin{enumerate}
\item Let $\q$ be a type over $\A$ and $\f=(\f_{i})_{i} \colon \q \rightarrow \St_{\A}$ a pro-$\A$-definable map. The type $\q$ is said to be \emph{stably dominated via $\f$} if for any tuple $b \in \M$ whenever $\St_{\A}(\A; b) \downfree^{f}_{\A} \f(a)$ then $\tp(b/\A \f(a))\vdash \tp(b/ \A a)$.
\item A global type $\p$ is said to be \emph{stably dominated over $\A$} if $\p\upharpoonright_{\A}$ is stably dominated. 
\item Let $\p$ be a global $\A$-invariant type and $b$ be a tuple in $\M$.  Let
$\f_{b}$ be an $\A b$-definable function. We say that \emph{$\p$ is dominated by $\f_{b}$ over $\A b$} if for any $a \models \p\rest_{\A b}$ and $d \in M$ whenever $\f_{b}(a) \models \f_{b}(\p) \rest_{\A b d}$ then $a \models \p\rest_{\A b d}$. 
\end{enumerate}
\end{definition}
\begin{remark} Note that definition $(3)$ could be stated in the following way: if $\p$ is a global $\A$-invariant type and $\f$ is an $\A$-definable function we say that $\p$ is dominated by $\f$ over $\A$ if for any $a \models \p \rest_{\A}$ and $d \in \M$ whenever $\f(a) \models \f(\p)\rest_{\A d}$ then $a \models \p \rest_{\A d}$. However, through the paper we will be interested in working with a global type $\p$ that is $\A$-invariant and that is dominated by an $\A b$-definable function $\f_{b}$, thus we find more precise to keep the formulation as stated in \cref{def: stably dominated types}.(3). 
\end{remark}
The following are \cite[ Corollary 3.31.(iii) and Proposition 3.13]{HHM}.  
\begin{proposition}\label{prop: basic facts stable domination} For all $a\in \M$ and $\A$ a small set of parameters of $\M$:
\begin{enumerate}
\item  $\tp(a/ \A)$ is stably dominated if and only if $\tp(a/ \acl(\A))$ is stably dominated; 
\item If $\A=\acl(\A)$ and $\tp(a/\A)$ is stably dominated via $\f$, then $\tp(a/\A)$ has a unique $\A$-definable global extension $\p$. Moreover, for all $\A \subseteq \B$, $a \models \p \rest_{\B}$ if and only if $\St_{\A}(\B) \downfree_{\A}^{f} \f(a)$.
\end{enumerate}
\end{proposition}
The following is \cite[Proposition $4.1$]{HHM}. This is the easy direction of change of base that allows one to always increase the base for stable domination and its proof does not use descent.
\begin{proposition}\label{prop: change of base going up} Let $\p$ be a global $\A$-invariant type and assume $\p$ is stably dominated over $\A$. Let $\A \subseteq \B$, then $\p$ is stably dominated over $\B$. 
 \end{proposition}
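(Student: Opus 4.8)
The plan is to show that $\p\rest_\B$ is stably dominated via the very map that already witnesses stable domination over $\A$. Fix a pro-$\A$-definable map $\f=(\f_i)_i\colon\p\to\St_\A$ such that $\p\rest_\A$ is stably dominated via $\f$. Every sort of $\St_\A$ is $\A$-definable, stable and stably embedded, hence is also a sort of $\St_\B$; so $\St_\A\subseteq\St_\B$, the map $\f$ is a fortiori pro-$\B$-definable into $\St_\B$, and $\St_\A(c)\subseteq\St_\B(c)$ for every tuple $c$. I will verify the domination condition for $\f$ over $\B$: fix $a\models\p\rest_\B$ and a tuple $b$ with $\St_\B(b)\downfree_\B^f\f(a)$, and write $\beta$ for an enumeration of $\B$. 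I must show $\tp(b/\B\f(a))\vdash\tp(b/\B a)$, which — the $\beta$-coordinates being the same on both sides — is the same as $\tp(\beta b/\A\f(a))\vdash\tp(\beta b/\A a)$.

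The heart of the argument is the claim $\St_\A(\beta b)\downfree_\A^f\f(a)$: granting it, stable domination of $\p$ over $\A$ applied to the tuple $\beta b$ gives exactly $\tp(\beta b/\A\f(a))\vdash\tp(\beta b/\A a)$, and we are done. To prove the claim I combine two independences inside the stable structure $\St_\A$. First, a global $\A$-invariant type does not divide, hence does not fork, over $\A$; since $\f(a)\in\dcl^{eq}(\A a)$, the type $\tp(\f(a)/\B)$ does not fork over $\A$ either, which (using symmetry of non-forking in the stable $\St_\A$ and that $\St_\A(\B)$ is the part of $\B$ inside $\St_\A$) says $\St_\A(\B)\downfree_\A^f\f(a)$. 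Second, $\St_\A(\beta b)=\St_\A(\B b)\subseteq\St_\B(b)$, so by monotonicity $\St_\A(\B b)\downfree_\B^f\f(a)$; because $\St_\A$ is stably embedded in $\M$ (\cite[Lemma $3.2$]{HHM}) and both $\St_\A(\B b)$ and $\f(a)$ lie in $\St_\A$, non-forking over $\B$ between them is detected inside $\St_\A$ over the trace $\St_\A(\B)$, that is $\St_\A(\B b)\downfree_{\St_\A(\B)}^f\f(a)$. Transitivity of non-forking in $\St_\A$ combines these into $\St_\A(\B b)\downfree_\A^f\f(a)$, proving the claim. (Equivalently, this just says $a\models\p\rest_{\B b}$.)

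It remains to harvest the conclusion: from $\tp(\beta b/\A\f(a))\vdash\tp(\beta b/\A a)$ we get that $b'\equiv_{\B\f(a)}b$ implies $\beta b'\equiv_{\A\f(a)}\beta b$, hence $\beta b'\equiv_{\A a}\beta b$, hence $b'\equiv_{\B a}b$. Thus $\p\rest_\B$ is stably dominated via $\f$, so $\p$ is stably dominated over $\B$. The argument uses no form of descent and no global invariant extension of $\tp(\B/\A)$ — only stable embeddedness of $\St_\A$ together with standard facts about non-forking.

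The step I expect to be the main obstacle is transferring the hypothesis $\St_\B(b)\downfree_\B^f\f(a)$ down to $\St_\A$, that is, justifying that non-forking between tuples of a stably embedded stable set over an external base agrees with non-forking computed inside that set over the trace of the base. Once this is granted, the whole proof reduces to the slogan "stable domination over $\B$ via $\f$ is stable domination over $\A$ via $\f$ with $\B$ absorbed into the test tuple", provided the absorbed tuple stays $\St_\A$-independent from $\f(a)$ over $\A$ — which is exactly what the two displayed independences above secure.
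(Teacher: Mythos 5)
Your argument is correct. The paper itself gives no proof for this proposition---it is stated with a bare citation to \cite[Proposition 4.1]{HHM}---so there is nothing in the text to compare against directly, but your self-contained argument is valuable and, as far as I can tell, tighter than the source it replaces: HHM's Proposition 4.1 is formulated with the hypothesis that $\tp(\B/\A)$ has a global $\A$-invariant extension, whereas your proof gets by with only the $\A$-invariance of $\p$ itself (used precisely where you derive $a\downfree^f_\A\B$ and hence $\f(a)\downfree^f_\A\B$). The reduction "domination over $\B$ via $\f$ is domination over $\A$ via $\f$ applied to $\beta b$" is exactly the right move, and the chain $\St_\A(\B)\downfree^f_\A\f(a)$, $\St_\A(\B b)\downfree^f_{\St_\A(\B)}\f(a)$, transitivity inside the stable structure, is clean. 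The one place that deserves a citation or a short justification---and you honestly flag it---is the transfer of non-forking between $\St_\A$-tuples from base $\B$ to base $\St_\A(\B)$. This does hold: since $\St_\A$ is stably embedded, the code of any $\B$-definable finite subset of $\St_\A^{\eq}$ lies in $\St_\A(\B)$, so $\acl^{\eq}(\B d)\cap\St_\A^{\eq}=\acl^{\eq}(\A\St_\A(\B)d)\cap\St_\A^{\eq}$; the canonical base of $\tp(c/\acl^{\eq}(\B d))$ for $c\in\St_\A$ therefore coincides with that of $\tp(c/\acl^{\eq}(\A\St_\A(\B)d))$, which yields the equivalence of the two independences. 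This is in the spirit of the facts collected in Chapter 3 of \cite{HHM} (e.g.\ around Lemma 3.22), and it would be worth making the canonical-base computation explicit, since "detected inside $\St_\A$ over the trace" reads more obvious than it is.
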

It is well known that in $\NIP$ theories, if $\p$ is generically stable so it is $\p^{\tensor n}$. In the general case this is still an open question. In \cite[Example $1.7$]{pillaygeneric} Adler, Casanovas and Pillay incorrectly provide an example of a type $\p$ that is generically stable while $p^{2}=\p\otimes \p$ is not. However, later in \cite[Remark $8.6$]{randomizations}, G. Conant, K. Gannon and J. Hanson argue that the type $\p$ given in \cite[Example $1.7$]{pillaygeneric} is not well defined. And  in \cite[Corollary $4.13$]{randomizations} they  proved that if the theory is $\mathrm{NTP}_{2}$ the product of two generically stable types is still generically stable. The general case is still open but we emphasize that it holds for stably dominated types. 
 \begin{fact}\label{fact: stable domination implies generically stable for all n} Let $\p$ be a global $\A$-invariant type that is stably dominated over $\A b$. Then $\p^{\tensor n}$ is generically stable over $\A$ for all $n< \omega$. 
\end{fact}
\begin{proof}
By Proposition \ref{prop: change of base going up} $\p$ is stably dominated over any $\B \supseteq \A b$, and by \cite[Proposition $6.11$]{HHM}\footnote{The reader might be concerned by the presence of some circularity in Section $4$, as this result is in a later chapter than the proof of descent. However, this result does not make any use of descent. In any case we first reprove descent for the invariant case where this fact is not used.}
 $\p^{\tensor n}\rest_{\B}$ is stably dominated over any $\B \supseteq \A b$ for every $n$. Consequently, $\p^{\otimes n}$ is generically stable over $\A b$ for all $n$. 
 Since $\p^{\otimes n}$ is $\A$-invariant, then it is generically stable over $\A$. (\emph{c.f.} \cref{fact: first things about generically stable}.(3)). 
\end{proof}
\begin{lemma}\label{lem: acl implies Morley} Assume $\T$ is stable. Let $(a_{i})_{i \in \I}$ be an infinite indiscernible sequence set over $\B$, and suppose that $(a_{i})_{i\in \I}$ is $\acl$-independent over $\B$, i.e. for any finite disjoint sets $\J, \J' \subseteq \I$, $\acl(\B a_{\J}) \cap \acl(\B a_{\J'}) \subseteq \acl(\B)$. Then $(a_{i})_{i \in \I}$ is a Morley sequence over $\B$. 
\end{lemma}
\begin{proof}
This is \cite[Lemma $2.6$]{HHM}. 
\end{proof}
The proof of the next statement follows very closely the argument in \cite[Proposition $3.16$]{HHM}, but we need a slight refined version. We include the proof for sake of completeness.
\begin{lemma}\label{lem: acl-independence implies Morley in St} Let $\A \subseteq \B$ be small sets of parameters. Let $(a_{i})_{i \in \I}$ be an infinite indiscernible sequence over $\B$ that is $\acl$-independent over $\B$, i.e. for any $\J,\J' \subseteq \I$ finite sets that are disjoint $\acl(\B a_{\J}) \cap \acl(\B a_{\J'}) \subseteq \acl(\B)$. Then $\St_{\A}(\A; \B a_{i})_{i \in \I}$ is a Morley sequence in the stable structure $\St_{\A}$ over $ \St_{\A}(\A,\B)$ (equivalently over $\B$). 
\end{lemma}
\begin{proof}
For each $i \in \I$ let $c_{i}= \St_{\A}(\A; \B a_{i})$. Since $(a_{i})_{i \in \I}$ is $\B$-indiscernible and $c_{i} \subseteq \dcl(\B a_{i})$ then $(c_{i})_{i \in \I}$ is also $\B$-indiscernible, in particular it is $\St_{\A}(\A;\B)$-indiscernible since $\St_{\A}(\A;\B) \subseteq \dcl(\B)$. \\
\underline{Claim $1$:} \emph{The sequence $(c_{i})_{i \in \I}$ is $\dcl$-independent over $\B$.}\\
\textit{Proof} %By \cref{rem: equivalence for usual dcl and acl} it is sufficient to show that the sequence $(c_{i})_{i \in \I}$ is $\dcl$-independent over $\B$, i.e. $\dcl(\B c_{\J}) \cap \dcl(\B c_{\J'}) \subseteq \dcl(\B)$ for any disjoint pair of finite subsets $\J, \J' \subseteq \I$.\\ 
Let $\J, \J'$ be two finite disjoint subsets of $\I$. By \cref{rem: equivalence for usual dcl and acl}, the sequence $(a_{i})_{i \in \I}$ is $\dcl$-independent over $\B$, and since  $c_{i} \in \dcl(\B a_{i})$ for all $i \in \I$ we have:
 \begin{equation*}
 \dcl(\B c_{\J}) \cap \dcl(\B c_{\J'}) \subseteq \dcl(\B a_{\J}) \cap \dcl(\B a_{\J'}) \subseteq \acl(\B). \end{equation*}
 This concludes the proof of the claim.\\
\underline{Claim $2$:} \emph{The sequence $(c_{i})_{i \in \I}$ is $\dcl$-independent over $\St_{\A}(\A;\acl(\B))$.}\\
Given $\J$ and $\J'$ two disjoint finite subsets of $\I$, by the first claim $\displaystyle{\dcl(c_{\J}\B) \cap \dcl(c_{\J'}\B) \subseteq \acl(\B) }$, by intersecting with $\St_{\A}$ this implies that:
\begin{equation}\label{eqpat}
\underbrace{\big(\St_{\A} \cap \dcl(c_{\J}\B)\big)}_{\St_{\A}(\A; \B c_{\J})} \cap \underbrace{\big(\St_{\A} \cap \dcl(c_{\J'}\B)\big)}_{\St_{\A}(\A; \B c_{\J'})}\subseteq  \underbrace{\St_{\A} \cap \acl(\B)}_{\St_{\A}(\A;\acl(\B))}.  
\end{equation}
Since $\St_{\A} \cap \dcl(c_{\J}; \St_{\A}(\A;\B)) \subseteq \St_{\A}(\A; \B c_{\J})$ and similarly for $\J'$, then \cref{eqpat} implies the statement of the claim.

\smallskip
By \cref{rem: equivalence for usual dcl and acl} together with the second claim we conclude that $(c_{i})_{i \in \I}$ is $\acl$-independent over $ \St_{\A}(\A;\acl(\B))$. By \cref{lem: acl implies Morley}, $(c_{i})_{i \in \I}$ is a Morley sequence in the stable structure $\St_{\A}$ over $\St_{\A}(\A;\acl(\B))$. Since $\St_{\A}$ eliminates imaginaries in particular it codes finite set, so $\St_{\A}(\A; \acl(\B)) \subseteq \acl_{\St_{\A}}(\St_{\A}(\A;\B))$. Since forking independence preserves algebraic closure (\emph{c.f.} \cite[Proposition 2.12]{3surprising}) then $(c_{i})_{i \in \I}$ is a Morley sequence over $\St_{\A}(\A;\B)$. 
The last part of the statement follows by \cref{lem: facts stably embedded}.

\end{proof}

%The following is \cite[Proposition $3.16$]{HHM}.
%\begin{lemma}\label{lem: acl-independence implies Morley in St} Let $\A$ be small sets of parameters. 
%Let $(a_{i})_{i \in \I}$ be an infinite indiscernible sequence over $\A$ that is $\acl$-independent over $\A$, i.e. for any $\J,\J' \subseteq \I$ finite sets that are disjoint $\acl(\A a_{\J}) \cap \acl(\A a_{\J'}) \subseteq \acl(\A)$. Then $\St_{\A}(\A; a_{i})_{i \in \I}$ is a Morley sequence in $\St_{\A}$. 
%\end{lemma}

\subsection{Facts on stable and $\NIP$ theories}
We briefly summarize a number of statements that we will use on stable theories, we make occasional use of the notion of weight.  
\begin{definition} Let $\T$ be stable. The \emph{pre-weight} of a type $\p=\tp(a/\A)$, denoted as $\mathrm{pre-wt(\p)}$ is the supremum of the set of cardinals $\kappa$ for which there is an $\A$-independent set $\{ b_{i} \ | \ i < \kappa\}$ such that $a \not \downfree^{f}_{\A} b_{i}$ for all $i$. The \emph{weight} $\mathrm{wt(\p)}$ is the supremum of $\{  \mathrm{pre-wt(\q)} \ | \ \q$ is a non-forking extension of $\p\}$. 
\end{definition}

\begin{lemma}\label{lem: weight stable} In a stable theory, any type has weight bounded by the cardinality of the language. In a superstable theory, any type in finitely many variables has finite weight.
\end{lemma}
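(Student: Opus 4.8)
\textbf{Proof plan for Lemma~\ref{lem: weight stable}.}

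The plan is to reduce everything to the statement about ranks and the standard counting argument for weight in stable theories. First I would recall that in a stable theory every type does not fork over a set of size at most $|\T|$ (local character), and that a Morley sequence witnessing pre-weight can be taken to consist of realizations of a single type over a small model $\M_0 \prec \M$ with $|\M_0| = |\T|$. So suppose $\{b_i \mid i < \kappa\}$ is an $\A$-independent set (which we may take to be independent over a small model $\M_0 \supseteq \A$ by adjusting base and using stationarity) with $a \notdownfree^f_{\M_0} b_i$ for all $i < \kappa$. The key point is then that each $i$ contributes a ``reason'' for the forking, i.e.\ a formula $\varphi_i(x,b_i) \in \tp(a/\M_0 b_i)$ that forks over $\M_0$, and by the finite equivalence relation theorem / definability of types this forking is detected by a drop in some local rank $R(-,\varphi_i,2)$ or by the canonical base $\Cb(b_i/\M_0 a)$ being nontrivial. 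Since $a \downfree^f_{\M_0} \Cb(b_i/\M_0 a)$ fails while the $b_i$ are mutually independent over $\M_0$, the canonical bases $\Cb(b_i/\M_0 a)$ lie in $\dcl^{\eq}(\M_0 a)$ but pairwise the corresponding types remain non-orthogonal to $a$ in a way that is controlled by $\tp(a/\M_0)$; a standard argument (e.g.\ Pillay's \emph{Geometric Stability Theory}, Ch.~1, or Tent--Ziegler) shows that the number of such mutually independent $b_i$ is bounded by the number of formulas over $\M_0$, hence by $|\T| + |\M_0| = |\T|$. That gives $\mathrm{pre\text{-}wg}(\p) \leq |\T|$, and since this bound is uniform over all non-forking extensions $\q$ of $\p$ (they all live over sets still of size $|\T|$ after naming a small model), we get $\mathrm{wt}(\p) \leq |\T|$.

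For the superstable case, I would instead use that $U$-rank (or $\infty$-rank) is ordinal-valued and finite on each finite tuple is \emph{not} automatic, so one cannot argue directly from $U(\p) < \omega$. The correct route is the classical fact that in a superstable theory weight is \emph{additive} and every type of $U$-rank $1$ (equivalently, every regular type component) has weight $1$; decomposing $\tp(a/\A)$ via a sequence of forking extensions realizing a ``domination sequence'' into regular types $r_1, \dots, r_n$ (finitely many because $U$-rank is ordinal-valued and drops at each step, and the tuple is finite so the rank is $< \omega^\omega$ with finitely many Cantor-normal-form terms, yielding a finite analysis into $U$-rank~$1$ types), one gets $\mathrm{wt}(\p) = \sum_{k} \mathrm{wt}(r_k) = n < \omega$. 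This is exactly \cite[Ch.~V]{shelah} / Pillay's book, and I would just cite it rather than reprove it.

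The main obstacle I anticipate is purely expository: the paper wants a clean citable statement, so rather than reconstructing the weight theory from scratch the honest move is to attribute both clauses to the standard references (Pillay, \emph{Geometric Stability Theory}, Theorem~1.4.5.1 and Corollary~1.4.5.4, or Tent--Ziegler) and only sketch why the hypotheses apply. If a self-contained argument is wanted, the delicate step is the finiteness of the regular analysis in the superstable case — making sure the decomposition of a finite tuple into regular types is genuinely finite — which follows from the ordinal-valuedness and additivity of $U$-rank on finite tuples but requires a short induction on $U(\p)$ in Cantor normal form.

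\begin{proof}
Both statements are classical; see e.g.\ \cite[Chapter~1]{pillaygeometric} or \cite[Chapter~8]{tentziegler}. For the first, fix a small model $\M_0$ with $\A \subseteq \M_0$ and $|\M_0| = |\T|$; by stationarity and base monotonicity it suffices to bound, for $\p = \tp(a/\M_0)$, the size of an $\M_0$-independent family $\{b_i \mid i < \kappa\}$ with $a \notdownfree^f_{\M_0} b_i$ for all $i$. For each such $i$ the canonical base $c_i := \Cb(b_i/\M_0 a)$ satisfies $c_i \in \dcl^{\eq}(\M_0 a)$ and $c_i \notin \acl^{\eq}(\M_0)$; moreover $\tp(c_i/\M_0 a)$ is determined by finitely many $\M_0$-formulas (definability of types in the stable theory $\T$). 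Since the $b_i$ are $\M_0$-independent, a standard counting argument bounds the number of pairwise non-equivalent such configurations by the number of $\La(\M_0)$-formulas, i.e.\ by $|\T|$. Hence $\mathrm{pre\text{-}wg}(\p) \leq |\T|$, and as the same bound applies to every non-forking extension of $\p$ (each living over a set still of size $|\T|$), $\mathrm{wt}(\p) \leq |\T|$.

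For the second statement, assume $\T$ superstable and $a$ a finite tuple. Then $U(\tp(a/\A))$ is an ordinal $< \infty$, and writing its Cantor normal form $\omega^{\alpha_1} n_1 + \dots + \omega^{\alpha_k} n_k$ with $\alpha_1 > \dots > \alpha_k$ and $n_j < \omega$ yields, by Lascar's inequalities and the standard regular analysis, a finite chain of forking extensions decomposing $\tp(a/\A)$ into finitely many regular types $r_1, \dots, r_m$. Each regular type has weight~$1$, and weight is additive along such a decomposition in a superstable theory, so $\mathrm{wt}(\tp(a/\A)) = m < \omega$.
\end{proof}
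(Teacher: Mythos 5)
The paper gives no proof of this lemma; it is stated as a classical fact in the preliminaries, so there is no authors' argument to compare yours against. Evaluating your sketch on its own terms: the superstable clause is the standard regular-decomposition argument, and modulo the care you already flag about making the regular analysis finite it is fine. The problem is the stable clause. After reducing to bounding $\mathrm{pre\text{-}wg}(p)$ for $p$ over a small model $\M_0$, you form the canonical bases $c_i = \Cb(b_i/\M_0 a) \in \dcl^{\eq}(\M_0 a)$ and then appeal to a "standard counting argument" bounding "the number of pairwise non-equivalent such configurations" by $|\La(\M_0)|$. This step is not an argument: nothing you have shown makes the $c_i$ pairwise distinct (a priori $c_i = c_j$ is compatible with $b_i$ and $b_j$ being independent realizations of the same type over $c_i$), and even if they were distinct, the number of elements of $\dcl^{\eq}(\M_0 a)$ whose type is isolated by finitely many $\M_0$-formulas is not bounded by $|\La(\M_0)|$. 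The canonical-base route is not the usual one and I do not see how to close it without new content.

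The standard argument is shorter and avoids canonical bases entirely: it is local character plus transitivity. Suppose $(b_i)_{i<|\T|^+}$ is $\A$-independent and $a \not \downfree^{f}_{\A} b_i$ for all $i$. By local character in a stable theory ($\kappa(\T)\le|\T|^+$), there is $\J\subseteq|\T|^+$ with $|\J|\le|\T|$ such that $a \downfree^{f}_{\A b_{\J}} b_{<|\T|^+}$. Fix $i\notin\J$. Monotonicity gives $a\downfree^{f}_{\A b_{\J}}b_i$, and $\A$-independence of the $b$'s together with symmetry gives $b_{\J}\downfree^{f}_{\A} b_i$; left transitivity (\cref{prop: properties of non-forking}(5)) then yields $ab_{\J}\downfree^{f}_{\A} b_i$, hence $a\downfree^{f}_{\A} b_i$, a contradiction. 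Since every non-forking extension of $p$ lives over a base of the same size, the bound passes to $\mathrm{wt}(p)\le|\T|$. Two cosmetic points: the citation keys \texttt{pillaygeometric}, \texttt{tentziegler} and lowercase \texttt{shelah} do not exist in the paper's bibliography, and the macro \verb|\notdownfree| used in your proof plan is not defined in the preamble (the paper writes \verb|\not \downfree| or \verb|\ndownfree|); as written the source would not compile.
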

 If $\T$ is stable and $\bar{b}=(b_{i})_{i \in \I}$ is an $\A$-indiscernible sequence then $\q=\Av(\bar{b}/ \M)$ is a complete type and is definable over $\A b_{\J}$ for any $\J \subseteq \I$ infinite. 
\begin{definition}\label{def: canonical base sequence} Let $\T$ be a stable theory and let $\bar b=(b_{i})_{i \in \I}$ be an $\A$-indiscernible sequence. We write $\Cb(\bar b)$ to denote the canonical base of $\bar b$ i.e. $\displaystyle{\bigcap_{\I_{0} \subseteq_{\text{infinite}} \I} \dcl(\A b_{\I_{0}})}$. 
\end{definition}
\begin{fact}\label{fact: facts canonical base sequence} Let $\T$ be a stable theory and let $\bar b=(b_{i})_{i \in \I}$ be an $\A$-indiscernible sequence where $|\I| > |\T(\A)|$.
\begin{enumerate}
\item Let $|\J| \geq |\T(\A)|$ and $\bar b'= (b_{i}')_{i \in \J}$ be such that $\bar b+\bar b'$ is an $\A$-indiscernible sequence, then $\Cb(\bar b)\subseteq \Cb( \bar b')$. In particular, $\Cb(\bar b+ \bar b')= \Cb(\bar b)$. Likewise, $\Cb(\bar b') \subseteq \Cb(\bar b)$ and $\Cb(\bar b+ \bar b')=\Cb(b')$. 
\item Let $d$ be a finite tuple in $\Cb(\bar b)$ then there is some $k < \omega$ such that for any $\J \subseteq \I$ of size $k$ we have $d \in \dcl(\A b_{\J})$. 
\end{enumerate}
\end{fact}
\begin{proof}
Both results are straightforward from the definition. 
\end{proof}
\begin{proposition}\label{shelah} Assume $\T$ is $\NIP$. Let $\A \subseteq \B$ and $\p \in \mathcal{S}^{m}(\A)$. Then there is $\C \subseteq \B$, $|\C| \leq |\T|$ and $\q \in \mathcal{S}^{m}(\B)$ such that $\p \subseteq \q$ and $\q$ does not split over $\A \cup \C$. 
\end{proposition}
\begin{proof}
This is \cite[Chapter~III, Theorem~7.5]{Shelah}. 
\end{proof}

\section{The bounded stabilizing property}
\textbf{Assumption:} Recall that we work under the hypothesis that $\T$ eliminates imaginaries.   

\begin{definition}\label{def: BS} 
Let $\kappa$ be an infinite cardinal, $\kappa \geq |\T|^{+}$.
\begin{enumerate}
\item We say that $\T$ has the \emph{ $\kappa$-bounded stabilizing property} ($\BS$) if for every $\A \subseteq \M$ small set of parameters and finite tuple $a \in \M$, there is no strictly increasing chain of definably closed sets between $\dcl(\A)$ and $\dcl(\A a)$ of length $\kappa^{+}$. Equivalently, there is no sequence $(b_{i})_{i \in \kappa^{+}}$ such that $b_{i} \in \dcl(\A a) \backslash \dcl(\A b_{<i})$. If $\kappa=|\T|$ we say that $\T$ has $\BS$.  
\item Likewise, we say that $\T$ has the \emph{$\kappa$- algebraic stabilizing property} ($\ABS$) if given $\A \subseteq \M$ a small set of parameters and a finite tuple $a \in \M$ there is no strictly increasing chain of algebraically closed sets of length $\kappa^{+}$ between $\acl(\A)$ and $\acl(\A a)$. Equivalently, there is no sequence $(b_{i})_{i \in \kappa^{+}}$ such that $b_{i} \in \acl(\A a) \backslash \acl(\A b_{<i})$. If $\kappa=|\T|$ we say that $\T$ has $\ABS$. 
\item  Let $\Lat$ be a $\C$-definable stably embedded set over $\C$. Then $\Lat$ has $\BS$ if for any tuple $a \in \M$ and small set of parameters $\C \subseteq \A$ there is no strictly increasing chain of definably closed sets of length $|\T(\C)|^{+}$ between $\Lat \cap \dcl(\A)$ and $\Lat \cap \dcl(\A a)$. Likewise, we say that it has $\ABS$ if there is no strictly increasing sequence of algebraically closed sets of length $|\T(\C)|^{+}$ between $\Lat \cap \acl(\A)$ and $\Lat \cap \acl(\A a)$. 
\end{enumerate}
\end{definition}
The main motivation of this section is to clarify and correct the following proposition that corresponds to \cite[Proposition $6.7$]{HHM}.
\begin{IS}\label{mistake} Let $\p$ be a global $\A$-definable type and assume that $\St_{\A}$ has $\BS$. Let $\f$  be a definable function on $\p(\M)$, the set of realizations of $\p \upharpoonright_{\A}$ and suppose that $\f(a) \in \St_{\A a}$ for all $a \in \p(\M)$. Then: 
\begin{itemize}
\item The germ $[\f]_{p}$ is strong over $\A$; 
\item $[\f]_{p} \in \St_{\A}$.
\end{itemize}
\end{IS}
The first part of this proposition is a generalization of Theorem \ref{stronggerms}. Unfortunately, the first result is false, while the second part remains true. The original proof of the second part used the strongness of the germ and the argument had significant gaps. We clarify the picture providing a counterexample to $(1)$ in Remark \ref{notstrong} and a proof of $(2)$ in  \cref{prop: correction}. We later use this lemma to give a simplified argument for an easier case of descent for stably dominated types. We also prove that $\BS$ holds for all stable sets in $\NIP$, which provides a simpler proof that the stable structure $\St_{\A}$ has the bounded stabilizing property $\BS$ (corresponds to \cite[Proposition $9.7$]{HHM}).\\
\begin{lemma}\label{lem: general dichotomy} Let $\A_{0}$ be a small set of parameters and $a$ be a finite tuple. 
Then exactly one of the following holds:
\begin{enumerate}
   \item There is a sequence $(d_{i})_{i \in |\T|^{+}}$ of elements $d_{i} \in \dcl(\A_{0} \mathbf{d}_{<i})$ where $\mathbf{d}_{j}$ denotes the finite set of conjugates of $d_{j}$ over $\A_{0}$. 
    \item There is no strictly increasing sequence of definably closed sets $(\mathrm{X}_{i})_{i \in |\T|^{+}}$ between $\dcl(\A_{0})$ and $\dcl(\A_{0} a)$ contained in $\acl(\A_{0})$.  
\end{enumerate}
\end{lemma}
\begin{proof}
We argue that either we can construct a sequence as in $(1)$ of length $|\T|^{+}$ or $(2)$ holds. We attempt to construct a sequence as in $(1)$ by transfinite induction. Take $d_{0} \in  \big(\acl(\A_{0}) \cap \dcl(\A_{0} a)\big)\backslash ( \dcl(\A_{0}))$. Let  $i < |\T|^{+}$ and assume the sequence $d_{<i} \subseteq \acl(\A_{0}) \cap \dcl(\A_{0} a)$ has been constructed, and let $\mathbf{d}_{<i}=(\mathbf{d}_{j})_{j< i}$ where each $\mathbf{d}_{j}$ is the finite set of conjugates of $d_{j}$ over $\A_{0}$. Let $\A_{0} \subseteq \B$  since $d_{j} \in \acl(\A_{0})$ then $d_{j} \in \acl(\B)$. In particular,  
\begin{equation}\label{isolated1}
\tp(\mathbf{d}_{j}/ \B) \ \text{is isolated and} \  \ulcorner \mathbf{d}_{j} \urcorner \in \dcl(\A_{0}) \subseteq \dcl(\B).
\end{equation}
Either there is some $d_{i} \in \big( \acl(\A_{0}) \cap \dcl(\A_{0} a) \big) \setminus \dcl(\A_{0} \mathbf{d}_{<i})$ or $ \dcl(\A_{0} \mathbf{d}_{<i})=\big( \dcl(\A_{0} a) \cap \acl(\A_{0})\big)$. We show that if $\dcl(\A_{0} \mathbf{d}_{<i})=(\dcl(\A_{0} a) \cap \acl(\A_{0}))$ then $(2)$ must hold. Let $(\X_{j})_{j \in |\T|^{+}}$ be a strictly increasing sequence of definably closed sets between $\dcl(\A_{0})$ and $ \dcl(\A_{0} a)$ contained in $\acl(\A_{0})$.\\

For each $i < |\T|^{+}$ and $z \in \mathbf{d}_{<i}$ the type $\displaystyle{\tp(z/\bigcup_{j \in |\T|^{+}} \X_{j})}$ is isolated ( by \cref{isolated1} which can be applied as $\A_{0} \subseteq \X_{i}$). Let $\displaystyle{z' \in \bigcup_{j \in |\T|^{+}} \X_{j}} \subseteq \acl(\A_{0})$ be such that $\displaystyle{\tp(z/ \A_{0}, z') \vdash \tp(z/  \bigcup_{j \in |\T|^{+}} \X_{j})}$. Let $\displaystyle{\Z'=\{ z' \ | \ z \in (\mathbf{d}_{<i})^{m} \ | \ m=1,2, \dots \}}$, then $\tp(\mathbf{d}_{<i}/ \A_{0} \Z') \vdash \tp(\mathbf{d}_{<i}/ \bigcup_{j \in |\T|^{+}} \X_{j})$. There is some $\mu \in |\T|^{+}$ such that $\Z' \subseteq \X_{\mu}$. This  as $|\Z'|=|[\mathbf{d}_{<i}]|^{<\omega}|= |\mathbf{d}_{<i}|=|\T|$, because $i < |\T|^{+}$.\\

We now argue that  the chain of $\X_{j}$ stabilizes for $j > \mu$. For each $j<|\T|^{+}$, we have  $\X_{j} \subseteq  \dcl(\A_{0} \mathbf{d}_{<i})$, as 
$ \X_{j} \subseteq  \big(\dcl(\A_{0} a) \cap \acl(\A_{0}) \big)=  \dcl(\A_{0} \mathbf{d}_{<i})$. Fix $j > \mu$
 and let  $y \in \X_{j}  \subseteq \dcl(\A_{0} \mathbf{d}_{<i})$, then there is some $\A_{0}$-definable function $\f$ and tuple $x \in \mathbf{d}_{<i}$ such that $\displaystyle{\f(x)=y \in \tp(x/\A_{0} \bigcup_{\ell \in |\T|^{+}} \X_{\ell})}$. That type is isolated by a formula in the type $\tp(x/ \A_{0} z')$ and therefore is $\A_{0} z'$-definable. Hence $y \in \dcl(\A_{0} z') \subseteq \X_{\mu}$ as $\X_{\mu}$ is definably closed. 
\end{proof}
We will need as well a similar version on the previous lemma but working inside $\Lat$ a $\C$-definable set and stably embedded over $\C$, and in this context we need to work with sequences of length $|\T(\C)|^{+}$.  The proof is essentially the same, but we include details for sake of completeness with the require modifications where needed.
\begin{lemma}\label{lem: stably embedded dichotomy} Let $\Lat$ be a $\C$-definable set  stably embedded  over $\C$. Let $a \in \M$ and $\A_{0} \supseteq \C$ a small set of parameters, then exactly one of the following holds:
\begin{enumerate}
\item there is a sequence $(d_{i})_{i \in |\T(C)|^{+}}$ of elements $d_{i} \in \Lat^{eq} \cap (\acl(\A_{0}) \cap \dcl(\A_{0} a))$ such that $d_{i} \notin \dcl(\A_{0} \mathbf{d}_{<i})$ for all $i <|\T(C)|^{+}$, where $\mathbf{d}_{j}$ denotes the finite set of conjugates of $d_{j}$ over $\A_{0}$, or
\item there is no strictly increasing sequence  of definably closed sets $(\X_{i})_{i \in |\T(\C)|^{+}}$ between $\Lat^{eq} \cap \dcl(\A_{0})$ and $\Lat^{eq} \cap  \dcl(\A_{0} a)$ contained in $\Lat^{eq} \cap \acl(\A_{0})$. 
\end{enumerate}
\end{lemma}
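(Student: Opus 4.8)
The plan is to show that alternatives $(1)$ and $(2)$ are incompatible and that at least one of them must hold; together this gives ``exactly one''. Both implications are soft, the only real subtlety being a counting argument, so I will be brief.

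For incompatibility, suppose $(b_i)_{i<|\T|^+}$ is a sequence as in $(1)$. I would set $\B_i := \Lat \cap \dcl^{\eq}(\A b_{<i})$ and check that this is a strictly increasing chain of the type forbidden by $(2)$: it starts at $\B_0 = \Lat \cap \dcl^{\eq}(\A)$; each $\B_i$ lies below $\Lat \cap \dcl^{\eq}(\A a)$ because $b_j \in \dcl^{\eq}(\A a)$ for $j<i$; each $\B_i$ is contained in $\acl^{\eq}(\A)$ because $\acl^{\eq}(\A b_{<i}) = \acl^{\eq}(\A)$; and strictness holds because $b_i \in \B_{i+1}$ while $b_i \notin \dcl^{\eq}(\A b_{<i})$, the latter since $\dcl^{\eq}(\A b_{<i}) \subseteq \dcl^{\eq}(\A \mathbf{b}_{<i})$ and $b_i \notin \dcl^{\eq}(\A \mathbf{b}_{<i})$ by hypothesis. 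The one thing worth checking here is that each $\B_i$ genuinely counts as a definably closed set in the sense of \cref{def: BS}, which is immediate from stable embeddedness of $\Lat$.

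For exhaustiveness I would assume $(1)$ fails and deduce $(2)$ by a greedy construction: having built $b_{<i}$ with $i<|\T|^+$, if there is some $b_i \in \Lat \cap \acl^{\eq}(\A) \cap \dcl^{\eq}(\A a)$ with $b_i \notin \dcl^{\eq}(\A \mathbf{b}_{<i})$, add it and continue. Because $(1)$ fails, the construction must stop at some $i^* < |\T|^+$, so that
\[\Lat \cap \acl^{\eq}(\A) \cap \dcl^{\eq}(\A a)\ \subseteq\ \dcl^{\eq}(\A \mathbf{b}_{<i^*}),\]
a set that is $\Aut(\M/\A)$-invariant and built from $\A$ together with only boundedly many elements of $\Lat$. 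Then for any strictly increasing chain $(\B_j)_{j<|\T|^+}$ of definably closed sets between $\Lat \cap \dcl^{\eq}(\A)$ and $\Lat \cap \dcl^{\eq}(\A a)$ with all $\B_j \subseteq \acl^{\eq}(\A)$, every $\B_j$ lies inside $\dcl^{\eq}(\A \mathbf{b}_{<i^*})$; picking some $d_j \in \B_{j+1} \setminus \B_j$ would produce $|\T|^+$ distinct elements of $\Lat$ definable over $\A \mathbf{b}_{<i^*}$ but, lying outside $\B_0 \supseteq \Lat \cap \dcl^{\eq}(\A)$, not over $\A$. As there are too few such elements, no such chain exists, i.e.\ $(2)$ holds.

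I expect the main obstacle to be exactly this last counting step: one must argue that $\dcl^{\eq}(\A \mathbf{b}_{<i^*})$ contains fewer than $|\T|^+$ points of $\Lat$ outside $\dcl^{\eq}(\A)$. This is precisely why alternative $(1)$ is phrased in terms of $\dcl^{\eq}(\A \mathbf{b}_{<i})$ (an $\Aut(\M/\A)$-invariant set of controlled size $\le |\T(\A)|$) rather than $\dcl^{\eq}(\A b_{<i})$, and it is where the running conventions relating $|\T|$, $|\T(\A)|$ and the smallness of $\A$ enter. A secondary, routine point is the identification throughout of sets of the form $\Lat \cap \dcl^{\eq}(B)$ with honest definably closed subsets of $\Lat$, which again rests on stable embeddedness of $\Lat$.
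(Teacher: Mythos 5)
Your incompatibility direction (if $(1)$ holds then $(2)$ fails) is fine, and is in fact more than the paper bothers to write out. The problem is with the exhaustiveness direction, where you yourself flag the weak point: the counting step does not work, and the way you hope to rescue it is not available.

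Concretely, after the greedy construction stops at $i^*<|\T|^+$, you want to derive a contradiction from the existence of a strictly increasing chain $(\B_j)_{j<|\T|^+}$ by noting that all $\B_j$ live inside $\dcl^{\eq}(\A\mathbf{b}_{<i^*})$ and then counting the elements of $\Lat\cap\dcl^{\eq}(\A\mathbf{b}_{<i^*})\setminus\dcl^{\eq}(\A)$. The only bound you can prove on this set is $|\T(\A)|$, as you in fact acknowledge. But $\A$ is an arbitrary small set of parameters, so $|\T(\A)|$ can be strictly larger than $|\T|^+$, and the counting collapses. There is no convention in the paper identifying $|\T|$ and $|\T(\A)|$ here; indeed the whole point of $\BS$ is to quantify over all small $\A$, and the interesting case is precisely $|\A|>|\T|$. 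So the sentence "as there are too few such elements" has no justification and, as far as I can see, cannot be repaired by cardinality considerations alone.

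The paper avoids counting entirely and instead uses an isolation argument. Having reached the stopping point $i^*$, one takes an arbitrary chain $(\B_j)_{j<|\T|^+}$ as in $(2)$ with union $\B$. Each finite tuple $\bar z$ from $\mathbf{b}_{<i^*}$ lies in $\acl^{\eq}(\A)$, so $\tp(\bar z/\A\B)$ is algebraic, hence isolated by a formula with finitely many parameters $\bar z'\subseteq\B$. Collecting all these parameter tuples (there are only $|\T|$ many finite tuples $\bar z$) gives a set $\Z'\subseteq\B$ of size $\le|\T|$, hence $\Z'\subseteq\B_\mu$ for some $\mu<|\T|^+$. Then for $j>\mu$ and $y\in\B_j$, one has $y\in\Lat\cap\dcl^{\eq}(\A a)\cap\acl^{\eq}(\A)\subseteq\dcl^{\eq}(\A\mathbf{b}_{<i^*})$, say $y=\f(\bar z)$; since the formula $\f(\bar x)=y$ belongs to $\tp(\bar z/\A\B)$ and this type is already implied over $\A\bar z'$ with $\bar z'\subseteq\B_\mu$, it follows that $y\in\dcl^{\eq}(\A\bar z')\cap\Lat\subseteq\B_\mu$. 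So the chain stabilizes at $\mu$, contradicting strictness. This argument is insensitive to $|\A|$: the only counting used is that $\mathbf{b}_{<i^*}$ has $\le|\T|$ finite subtuples, which is genuinely bounded by $|\T|$. I would replace your cardinality argument with this isolation argument; everything else in your write-up is sound.
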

\begin{proof}
We argue that either we can construct a sequence as in $(1)$ of length $|\T(C)|^{+}$ or $(2)$ holds. We attempt to construct a sequence as in $(1)$ by transfinite induction. Take $d_{0} \in  \Lat^{eq} \cap (\acl(\A_{0}) \cap \dcl(\A_{0} a)) \backslash (\Lat^{eq} \cap \dcl(\A_{0}))$. Let  $i < |\T(C)|^{+}$ and assume the sequence $d_{<i} \subseteq  \Lat^{eq} \cap (\acl(\A_{0}) \cap \dcl(\A_{0} a))$ has been constructed, and let $\mathbf{d}_{<i}=(\mathbf{d}_{j})_{j< i}$ where each $\mathbf{d}_{j}$ is the finite set of conjugates in $\Lat^{eq}$ of $d_{j}$ over $\A_{0}$. Note that by stable embeddedness $\mathbf{d}_{j}$ are the conjugates in $\Lat^{eq}$ of $d_{j}$ over $\Lat^{eq} \cap \dcl(\A_{0})$.\\
By \cref{lem: facts stably embedded} $\tp(d_{j}/\C\Lat^{eq}(\acl(\A_{0})))\vdash \tp(d_{j}/ \acl(\A_{0}))$. Thus $\tp(d_{j}/ \C \Lat^{eq}(\acl(\A_{0})))$ is algebraic. Since $\Lat^{eq}$ codes finite sets, then $\Lat^{eq}(\acl(\A_{0})) \subseteq \acl( C \big(\Lat^{eq}\cap \dcl(\A_{0})\big))$, thus $\tp(d_{j}/ \acl(\C \big(\Lat^{eq} \cap \dcl (\A_{0})\big)$ is algebraic. Consequently,
\begin{equation}\label{eq: isolatedcorrected}
\text{for any} \  \C\Lat^{eq}(\A) \subseteq \B \subseteq \Lat^{eq} \ \text{the type 
$\tp(\mathbf{d}_{j}/ \B)$ is isolated.}
\end{equation}

Exactly one of the following cases hold:\\
a) we can find an element $d_{i} \in \Lat^{eq} \cap \big( \acl(\A_{0}) \cap \dcl(\A_{0} a) \big) \setminus \dcl(\A_{0} \mathbf{d}_{<i})$ and we keep building the sequence,\\
b) or $\Lat^{eq} \cap \dcl(\A_{0} \mathbf{d}_{<i})=\Lat^{eq} \cap \big( \dcl(\A_{0} a) \cap \acl(\A_{0})\big)$.\\

We show that if $\Lat^{eq} \cap \dcl(\A_{0} \mathbf{d}_{<i})= \Lat^{eq} \cap (\dcl(\A_{0} a) \cap \acl(\A_{0}))$ then $(2)$ must hold. \\
Let $(\X_{j})_{j \in |\T(\C)|^{+}}$ be a strictly increasing sequence of definably closed sets between $\Lat^{eq}\cap \dcl(\A_{0})$ and $\Lat^{eq} \cap \dcl(\A_{0} a)$ such that for each  $j \in |\T(\C)^{+}|$ we have $\X_{j} \subseteq \Lat^{eq} \cap \acl(\A_{0})$.\\
For each $j < |\T(\C)|^{+}$ and $z \in \mathbf{d}_{<i}$ the type $\displaystyle{\tp(z/\C \bigcup_{j \in |\T|^{+}} \X_{j})}$ is isolated (\cref{eq: isolatedcorrected})  and note that $\Lat^{eq}(\A_{0}) \subseteq \X_{i}$. Let $\displaystyle{z' \in \bigcup_{j \in |\T(\C)|^{+}} \X_{j}} \subseteq \Lat^{eq} \cap \acl(\A_{0})$ be such that $\displaystyle{\tp(z/ \A_{0}, z') \vdash \tp(z/ \C \bigcup_{j \in |\T(\C)|^{+}} \X_{j})}$. Let $\displaystyle{\Z'=\{ z' \ | \ z \in (\mathbf{d}_{<i})^{m} \ | \ m=1,2, \dots \}}$, then $\tp(\mathbf{d}_{<i}/ \A_{0} \Z') \vdash \tp(\mathbf{d}_{<i}/\C \bigcup_{j \in |\T(\C)|^{+}} \X_{j})$. There is some $\mu \in |\T(\C)|^{+}$ such that $\Z' \subseteq \X_{\mu}$ (since $|\Z^{'}|=|\mathbf{d}_{<i}|^{<\omega}=|\T(\C)|$ and by ). The argument to show that  the chain of $\X_{j}$ stabilizes for $j > \mu$ follows exactly as in \cref{lem: general dichotomy}.
\end{proof}
\begin{lemma}\label{lem: sequencestabilizes} Let $\T$ be $\mathrm{NIP}$, $\A_{0}$ be a small set of parameters and $a$ be a finite tuple. Let $(b_{i})_{i \in |\T|^{+}}$ be a sequence between $\dcl(\A_{0})$ and $\dcl(\A_{0}a)$ contained in $\acl(\A_{0})$. Let $\X_{i}=\dcl(\A_{0} b_{<i})$ then the chain stabilizes i.e. there is some $\mu \in |\T|^{+}$ such that for any $i > \mu$, $\X_{i}=\X_{\mu}$.
\end{lemma}
\begin{proof}
 We proceed by contradiction. By \cref{lem: stably embedded dichotomy} we may assume that we can find a sequence  $(d_{i})_{i \in |\T|^{+}} \subseteq \big( \acl(\A_{0}) \cap \dcl(\A_{0} a)\big)$ such that $d_{i} \notin \dcl(\A_{0} \mathbf{d}_{<i})$ for all $i <|\T|^{+}$ where $\mathbf{d}_{j}$ denotes the finite set of conjugates  of $d_{j}$ over $\A_{0}$.\\
By \cref{shelah} there is some $\eta< |\T|^{+}$ such that $\tp(a/\A_{0} (\mathbf{d}_{\ell})_{\ell \in |\T|^{+}})$ does not split over $\A \mathbf{d}_{<\eta}$. Let $z \in \A_{0} (\mathbf{d}_{\ell})_{\ell \in |\T|^{+}}$ and $\sigma \in \mathrm{Aut}(\M/ \A_{0}\mathbf{d}_{<\eta})$ then $\sigma(z) \in \A_{0} (\mathbf{d}_{\ell})_{\ell \in |\T|^{+}}$ and by non-splitting for any $\La(\A_{0}\mathbf{d}_{<\eta})$-formula $\phi(x,y)$ we have $\vDash \phi(a,z)$ if and only if $\vDash \phi(a, \sigma(z))$. Consequently,  $\tp(a/\A (\mathbf{d}_{\ell})_{\ell \in |\T|^{+}})$ is $\A \mathbf{d}_{<\eta}$-invariant.\\
This implies that $\tp(d_{\eta}/ \A_{0} \mathbf{d}_{<\eta}) \vdash \tp(d_{\eta}/ \A_{0} \mathbf{d}_{<\eta} a)$. For this, let $d' \equiv_{\A_{0} \mathbf{d}_{<\eta}} d_{\eta}$ if $d' \not\equiv_{\A_{0} \mathbf{d}_{<\eta} a}d_{\eta}$ then there is a $\La(\A_{0}\mathbf{d}_{<\eta})$-formula $\psi(x,y)$ such that $\vDash \psi(d', a)$ while $\vDash \neg \psi(d_{\eta}, a)$, but this is a contradiction as $\tp(a/ \A_{0} (\mathbf{d}_{\ell})_{\ell \in |\T|^{+}})$ is $\A_{0} \mathbf{d}_{\eta}$-invariant.\\
By hypothesis, $d_{\eta} \in \dcl(\A_{0} a)$, then $d_{\eta} \in \dcl(\A_{0} \mathbf{d_{<\eta}})$. This is a contradiction since $d_{\eta} \notin \dcl(\A_{0} \mathbf{d}_{<\eta})$, thus such sequence cannot exist and the statement must hold by \cref{lem: general dichotomy}. This concludes the proof of the Lemma.
\end{proof}
We will need a version of this result for $\Lat$ a $\C$-definable set stably embedded over $\C$. 
\begin{lemma}\label{lem: sequencestablyembedded} Let $\T$ be a $\mathrm{NIP}$ theory and let $\Lat$ be a $C$-definable and stably embedded set over $\C$. Let $\C \subseteq \A_{0}$ be a small set of parameters and $a$ a finite tuple. Then there is no strictly increasing sequence of definably closed sets $(\X_{i})_{i \in |\T(\C)|^{+}}$ between $\Lat^{eq} \cap \dcl(\A_{0})$ and $\Lat^{eq} \cap \dcl(\A_{0} a)$ contained in $\Lat^{eq}\cap \acl(\A_{0})$.
\end{lemma}
\begin{proof}

The proof is exactly the same as in \cref{lem: sequencestabilizes}, but we use  \ref{lem: stably embedded dichotomy} instead of \cref{lem: general dichotomy}, work with the cardinal $|\T(\C)|^{+}$ instead of $|\T|^{+}$ and we intersect everything with $\Lat^{eq}$ when definable closure or algebraic closure are considered.  Recall that any expansion by constants of a $\mathrm{NIP}$-theory is still $\mathrm{NIP}$. This allows us to work with the cardinal $|\T(\C)|^{+}$ instead of $|\T|$, when we apply \cref{shelah}.  
\end{proof}
\begin{proposition}\label{baby}
\begin{enumerate}
\item If $\T$ is rosy, then it has $\ABS$.
\item Assume $\T$ is $\NIP$ and let $\Lat$ be $\C$-definable set stably embedded over $\C$. If $\Lat^{eq}$ has $\ABS$, then it has $\BS$. In particular, if $\T$ is $\NIP$ and has $\ABS$ then it has $\BS$.
\item If $\T$ is rosy and $\NIP$, then it has $\BS$. In particular, any stable theory has $\BS$. 
\end{enumerate}
\end{proposition}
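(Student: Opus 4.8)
\textbf{Proof plan for \cref{baby}.}

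My plan is to tackle the three items in the order $(1)$, then $(2)$, then $(3)$, since $(3)$ is a formal consequence of the first two combined with the fact that stable theories are rosy and $\NIP$.

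For item $(1)$, the point is that rosy theories admit a symmetric, local-character notion of independence (þ-forking) defined on real (or rather $\eq$) elements, and þ-forking has local character: for every finite tuple $a$ and set $\B$ there is a subset $\C \subseteq \B$ with $|\C| \leq |\T|$ over which $a$ does not þ-fork. Suppose toward a contradiction that there were a strictly increasing chain $(\acl(\A b_{<i}))_{i \in |\T|^{+}}$ of algebraically closed sets between $\acl(\A)$ and $\acl(\A a)$, witnessed by $b_{i} \in \acl(\A a) \setminus \acl(\A b_{<i})$. First I would note that $a \mathop{\text{þ-}\downfree}_{\acl(\A b_{<i})} b_{i}$ fails for each $i$ — because $b_{i} \in \acl(\A a)$ but $b_{i} \notin \acl(\A b_{<i})$, so adding $a$ to the base makes $b_i$ algebraic, which is a witness to þ-dividing (adding a new algebraic element strictly extending the algebraic closure of the base is precisely the obstruction to þ-independence via the "algebraic" component of þ-forking). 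Then, running local character for þ-forking with the tuple $a$ against the set $\B = \bigcup_{i} \acl(\A b_{<i})$, we find $\C \subseteq \B$ of size $\leq |\T|$ with $a \mathop{\text{þ-}\downfree}_{\C} \B$; by base monotonicity $a \mathop{\text{þ-}\downfree}_{\acl(\A b_{<i})} b_i$ for all $i$ with $\C \subseteq \acl(\A b_{<i})$, which happens for all large enough $i$ — contradiction. The main care needed here is to phrase "an algebraically closed set strictly contained in $\acl(\A a)$" correctly as an obstruction to þ-independence, using the characterization of þ-forking via the existence of an acl-witnessing chain; alternatively one invokes directly that rosy theories have $\ABS$ as an instance of boundedness of þ-weight, but I would prefer the self-contained local-character argument.

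For item $(2)$, let $\Lat$ be a stably embedded $\C$-definable set, assume $\Lat$ has $\ABS$, and suppose for contradiction that $\Lat$ does not have $\BS$: there are $\C \subseteq \A$, a finite tuple $a$, and a strictly increasing chain $(\B_{i})_{i \in |\T|^{+}}$ of definably closed sets between $\Lat \cap \dcl^{eq}(\A)$ and $\Lat \cap \dcl^{eq}(\A a)$. Here I would invoke \cref{lem: stably embedded dichotomy}: either there is a sequence $(b_{i})_{i \in |\T|^{+}}$ with $b_{i} \in \Lat \cap (\acl^{eq}(\A) \cap \dcl^{eq}(\A a))$ and $b_{i} \notin \dcl^{eq}(\A \mathbf{b}_{<i})$ (with $\mathbf{b}_{i}$ the finite orbit of $b_{i}$ over $\A$), or there is no strictly increasing chain of definably closed sets between $\Lat \cap \dcl^{eq}(\A)$ and $\Lat \cap \dcl^{eq}(\A a)$ lying inside $\acl^{eq}(\A)$. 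The hypothesis (failure of $\BS$) would seem to give us a chain, but note that we cannot immediately assume it lies inside $\acl^{eq}(\A)$; to handle that I would first pass to the \emph{algebraic part} of the chain. Concretely, the $\NIP$ hypothesis is what lets us reduce to the acl-bounded situation: over a stable stably embedded set in an $\NIP$ theory, $\dcl^{eq}(\A a) \cap \Lat$ is controlled by $\acl^{eq}(\A a) \cap \Lat$ up to boundedly much data — more precisely, using \cref{shelah} (Shelah's theorem on existence of non-splitting extensions over sets of size $\leq |\T|$ in $\NIP$), one shows that any $|\T|^{+}$-chain of definably closed subsets must eventually be "trapped" by a single type over a set of size $|\T|$ and hence cannot be strictly increasing unless the corresponding algebraic-closure chain is, which is the content needed to transfer $\ABS$ to $\BS$. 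So from the dichotomy, the first alternative gives a strictly increasing chain of \emph{algebraically} closed sets (take $\acl^{eq}(\A \mathbf{b}_{<i}) \cap \Lat$), contradicting $\ABS$ of $\Lat$; and the second alternative, combined with the $\NIP$/Shelah reduction showing the chain may be taken inside $\acl^{eq}(\A)$ after throwing away boundedly much, also yields a contradiction with $\ABS$. The final sentence of $(2)$ — that $\NIP + \ABS$ implies $\BS$ — is the special case $\Lat = \M$, i.e.\ $\Lat$ the home sort (which is trivially stably embedded over $\emptyset$ once we are in $\M^{eq}$).

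For item $(3)$: a stable theory is both rosy (with þ-forking $=$ forking) and $\NIP$, so by $(1)$ it has $\ABS$ and by $(2)$ it has $\BS$; more generally any rosy $\NIP$ theory has $\ABS$ by $(1)$ and hence $\BS$ by $(2)$. The main obstacle overall is step $(2)$: getting the reduction from the definable-closure chain to an algebraic-closure chain right — in particular, correctly using \cref{lem: stably embedded dichotomy} together with the $\NIP$ hypothesis (via \cref{shelah}) to ensure that a putative $\BS$-violating chain can be refined to an $\ABS$-violating one, rather than disappearing into the "bounded remainder" that the dichotomy's second alternative permits. Items $(1)$ and $(3)$ are comparatively routine given the standard theory of þ-forking and the formal structure just described.
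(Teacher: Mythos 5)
Your treatment of items $(1)$ and $(3)$ is essentially correct and matches the paper. For $(1)$ the paper uses local character plus symmetry of þ-forking rather than local character plus base monotonicity, but these are interchangeable here and both land on the same contradiction. For $(3)$ the derivation from $(1)$ and $(2)$ is immediate, as you note.

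Item $(2)$, however, has a genuine error: you have swapped the roles of the two hypotheses ($\ABS$ and $\NIP$), and as a result the argument you describe does not close. Concretely, you assert that the first alternative of \cref{lem: stably embedded dichotomy} ``gives a strictly increasing chain of algebraically closed sets $\acl^{eq}(\A\mathbf{b}_{<i}) \cap \Lat$, contradicting $\ABS$''. But in that alternative, all the $b_i$ (hence all the $\mathbf{b}_i$) lie in $\acl^{eq}(\A)$; therefore $\acl^{eq}(\A\mathbf{b}_{<i}) = \acl^{eq}(\A)$ for every $i$, so the $\acl$-chain is \emph{constant}, and $\ABS$ says nothing about it. The first alternative cannot be ruled out by $\ABS$ at all. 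What actually kills it is $\NIP$ via \cref{shelah}: one finds $i < |\T|^+$ with $\tp(a/\A\mathbf{b}_{<i}) \vdash \tp(a/\A\mathbf{b}_{<|\T|^+})$, which (by the usual automorphism argument) gives $\tp(\mathbf{b}_i/\A\mathbf{b}_{<i}) \vdash \tp(\mathbf{b}_i/\A\mathbf{b}_{<i}a)$; since $b_i \in \dcl^{eq}(\A a)$ this forces $b_i \in \dcl^{eq}(\A\mathbf{b}_{<i})$, contradicting the choice of $b_i$. This step — the only place $\NIP$ enters — is absent from your proposal. Conversely, the reduction to the ``chain inside $\acl^{eq}(\A)$'' case is exactly where $\ABS$ is used (not $\NIP$/Shelah as you suggest): given a purported $\BS$-violating chain $(b_i)$, by $\ABS$ the weakly increasing chain $\Lat \cap \acl^{eq}(\D b_{\leq i})$ must stabilize at some $\mu < |\T|^+$; the tail $(b_i)_{i > \mu}$ then lies in $\Lat \cap \acl^{eq}(\D b_{\leq \mu})$, so replacing the base by $\D b_{\leq \mu}$ produces a strictly increasing $\dcl$-chain contained in $\acl^{eq}$ of the new base, which the claim (proved via the dichotomy and Shelah) forbids. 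So the correct logical shape is: $\ABS$ gives the reduction to the algebraic case; $\NIP$/Shelah refutes the algebraic case. Your proposal inverts both and, in particular, never supplies the Shelah step that the dichotomy actually needs.
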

\begin{proof}
\begin{enumerate}
\item  Suppose there is a sequence $(b_{i})_{i \in |\T|^{+}} \subseteq \acl(\A a)$ such that  $b_{i} \notin \acl(\A b_{<i})$ but $b_{i} \in \acl(\A a)$, so $b_{i} \ndownfree_{\A b_{<i}}^{\text{\TH}} a$. By  local character of thorn forking (see \cite[Theorem $3.7$]{rosy}), $a \downfree_{\A b_{\leq|\T|}}^{\text{\TH}}b_{\leq |\T|^{+}}$. Let $i=|\T|+1$, by symmetry (\cite[Theorem $3.7$]{rosy}) $ b_{i}\downfree_{\A b_{\leq |\T^{+}|}}^{\text{\TH}}a$, which is a contradiction.
\item Let $\C \subseteq \A$ be a small set of parameters and $( b_{i})_{i \in |\T(\C)|^{+}}$ be a sequence between $\Lat^{eq} \cap \dcl(\A)$ and $\Lat^{eq} \cap \dcl(\A a)$. Let $\D_{i}=\Lat^{eq} \cap \acl(\A b_{\leq i})$ and $\B_{i}=\Lat^{eq} \cap \dcl(\A b_{\leq i})$, we want to show that the sequence $\B_{i}$ stabilizes. 
By $\ABS$ there is some $\mu \in |\T(\C)|^{+}$ such that for all $j \geq \mu$ we have $\D_{j}=\D_{\mu}$, consequently $(b_{i})_{i \in |\T(\C)|^{+}} \subseteq \acl(\A b_{< \mu})$.\\
Let $\A_{0}= \A b_{<\mu}$, by \cref{lem: sequencestablyembedded} there is no strictly increasing sequence of definably closed sets $(\X_{i})_{i \in |\T(\C)|^{+}}$ between $\Lat^{eq} \cap \dcl(\A_{0})$ and $\Lat^{eq} \cap  \dcl(\A_{0} a)$ contained in $\acl(\A_{0}) \cap \Lat^{eq}$.The sequence $(\B_{i})_{\mu<i< |\T(\C)|^{+}}$ is a sequence of definably closed sets between $\Lat^{eq} \cap \dcl(\A_{0})$ and $\Lat^{eq} \cap \dcl(\A_{0} a)$ contained in $\acl(\A_{0})\cap \ \Lat^{eq}$, by \cref{lem: sequencestablyembedded} such sequence stabilizes and we conclude that $\Lat^{eq}$ has $\mathrm{BS}$. 
\item Immediate from $(1)$ and $(2)$. \end{enumerate}\end{proof}

\begin{remark} Not every $\NIP$ theory has $\BS$, in fact $\ACVF$ does not have $\BS$. To see this, fix an element $a \in \M$ and  $\kappa > |\T|^{+}$. One can find $(\gamma_{i})_{i<\kappa}$ a strictly increasing sequence in the value group such that $ \sup\{ \gamma_{j} \ | j < i\} < \gamma_{i}$. Let $b_{i}$ be the closed ball centered at $a$ of radius $\gamma_{i}$, and $\A=(\gamma_{i})_{i < \kappa}$. Then $\dcl(\A) \subseteq (b_{i})_{i \in \kappa} \subseteq \dcl(\A a)$ and $b_{i} \notin \dcl(\A b_{<i})$. Consequently, $\T$ does not have $\BS$. 
\end{remark}

\begin{proposition}\label{prop: stable BS} Assume $\T$ is $\NIP$. Let $\Lat$ be a stable $\C$-definable set, then $\Lat^{eq}$ has $\BS$. 
\end{proposition}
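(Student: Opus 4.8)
The statement asserts that a stable $\C$-definable set $\Lat$ in an $\NIP$ theory has $\BS$. By \cref{baby}(2) it suffices to prove that $\Lat$ has $\ABS$, i.e.\ that there is no strictly increasing sequence $(\B_i)_{i \in |\T|^+}$ of algebraically closed sets between $\Lat \cap \acl^{eq}(\A)$ and $\Lat \cap \acl^{eq}(\A a)$, equivalently no sequence $(b_i)_{i \in |\T|^+}$ with $b_i \in \Lat \cap \acl^{eq}(\A a)$ but $b_i \notin \acl^{eq}(\A b_{<i})$. The idea is to transfer the question into the stable structure $\Lat$ itself and use that types over stable and stably embedded sets have weight bounded by the cardinality of the language (\cref{lem: weight stable}), exactly as in the classical argument that a stable theory has $\BS$.

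\textbf{Key steps.} First, because $\Lat$ is stably embedded, for any parameter set $\D \supseteq \C$ the set $\Lat \cap \acl^{eq}(\D)$ is precisely the algebraic closure of $\Lat \cap \dcl^{eq}(\D)$ computed inside the stable structure $\Lat^{eq}$ (with the induced structure over $\C$): this is the standard stable-embeddedness reflection principle, and it lets us replace the ambient theory by $\mathrm{Th}(\Lat)$, a stable theory. Second, suppose toward a contradiction that a sequence $(b_i)_{i \in |\T|^+}$ as above exists. Let $\A_0 = \Lat \cap \acl^{eq}(\A)$ and work inside $\Lat^{eq}$; the element $a$ is replaced by its \emph{trace} $\St := \Lat \cap \acl^{eq}(\A a)$ in $\Lat$, which is itself a (possibly infinite, but this is harmless) set of imaginaries of the stable structure, and each $b_i$ lies in $\acl(\A_0 \cup \St)$ while $b_i \notin \acl(\A_0 b_{<i})$. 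Since $b_i \in \acl(\A_0 \cup \St) \setminus \acl(\A_0 b_{<i})$, the type $\tp(b_i / \A_0 b_{<i})$ forks (in the stable sense) over $\A_0$ against $\St$, so working with $c$ a finite tuple from $\St$ witnessing the forking of each $b_i$ one extracts a large independent family of elements each forking with (a finite subtuple of) $c$ — this contradicts the bound on the weight of $\tp(c / \A_0)$ in the stable theory $\mathrm{Th}(\Lat)$, which by \cref{lem: weight stable} is at most $|\T|$. (If $c$ is genuinely an infinite tuple one works with finite approximations: each $b_i$ forks over $\A_0$ against some \emph{finite} subtuple of $\St$, pigeonhole gives $|\T|^+$ many $b_i$ forking against a single finite tuple, and that tuple has finite — or at worst $|\T|$-bounded — weight.)

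\textbf{Main obstacle.} The delicate point is bookkeeping: $\St = \Lat \cap \acl^{eq}(\A a)$ need not be finite, and one cannot directly speak of ``the weight of $\tp(\St/\A_0)$'' — so the argument must be run through finite subtuples, extracting by a pigeonhole on $|\T|^+$ a cofinal subsequence of the $(b_i)$ all of which fork over $\A_0$ against a fixed finite tuple $c \subseteq \St$, and checking that non-algebraicity over $\A_0 b_{<i}$ genuinely yields forking independence in the stable reduct (this uses that over a model — or an $\acl^{eq}$-closed set — in a stable theory, algebraicity is the bottom of the forking order, so $b_i \notin \acl(\A_0 b_{<i})$ forces $b_i \mathop{\not\downfree}^f_{\A_0} b_{<i}c$, hence $b_i \mathop{\not\downfree}^f_{\A_0} c$ after base change, giving a pre-weight witness). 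Modulo this care, the conclusion follows from $\cref{baby}(2)$: $\Lat$ has $\ABS$, hence $\BS$.
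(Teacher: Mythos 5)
Your proposal starts correctly: reducing to $\ABS$ via \cref{baby}(2) is exactly what the paper does, and the observation that stable embeddedness lets one reflect $\acl$ into $\Lat$ is sound. But the main argument has two genuine gaps, and the second one reveals that a pure weight argument does not suffice here — the paper uses an $\NIP$-specific tool at this point, not weight.

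\textbf{Gap 1: extracting an independent subsequence.} You write that one ``extracts a large independent family'' of the $b_i$, but this is a substantive step, not bookkeeping. The sequence $(b_i)$ is only assumed to satisfy $b_i \notin \acl^{eq}(\A b_{<i})$; it need not be independent over any reasonable base. The paper handles this by first (via a pigeonhole on formulas) fixing a uniform algebraicity witness $\phi(a,c_i,b_i)$, then by compactness \emph{lengthening} the sequence to $|\T|^{++}$, taking for each $i$ the unique global non-forking extension $\p_i$ of $\tp(b_i/\Lat\cap\acl^{eq}(\A b_{<i}))$ in the stable structure, observing that at each $i$ of cofinality $\geq |\T|^+$ the canonical base $\Cb(\p_i)$ drops below some $j<i$, and then applying Fodor's lemma to find a length-$|\T|^+$ subsequence whose canonical bases all land in a fixed $\A_0$. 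Only then is the subsequence independent over $\A_0$. Without the lengthening and Fodor step, there is no independence to feed into a weight argument.

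\textbf{Gap 2: the weight contradiction does not close.} Even granting an $\A_0$-independent subsequence, your plan is to pigeonhole to a single finite tuple $c \subseteq \St = \Lat \cap \acl^{eq}(\A a)$ with $b_i \mathop{\not\downfree}^f_{\A_0} c$ for $|\T|^+$ many $i$, and then contradict $\mathrm{wt}(\tp(c/\A_0)) \leq |\T|$. But the collection of finite subtuples of $\St$ has size roughly $|\St|$, and $\St$ (and $\A_0$) may be far larger than $|\T|$, so the pigeonhole does not produce a fixed $c$. One cannot instead apply the weight bound to $\tp(\St/\A_0)$ directly, because \cref{lem: weight stable} only bounds the weight of types in boundedly many variables; types of arbitrary infinite tuples can have pre-weight as large as the tuple (already in the theory of an infinite pure set). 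The underlying obstruction is that $\tp(a/\A_0)$ is not a stable type — $a$ lives outside $\Lat$ — so stability-theoretic weight cannot be applied to it, and routing through $\St$ trades the finite tuple $a$ for an uncontrolled infinite one.

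The paper's proof sidesteps this entirely. After the Fodor reduction it builds Morley sequences $\I_i$ in the types $\p_i$, obtains a family $(b_i + \I_i)_{i<|\T|^+}$ of sequences mutually indiscernible over $\A_0$, and observes that \emph{none} of them is indiscernible over $\A_0 a$ (since $b_i$ is among the $\leq k$ solutions of $\phi(a,c_i,x)$ while the rest of the Morley sequence eventually leaves this finite set). This contradicts the $\NIP$ fact that a single \emph{finite} tuple can break at most $|\T|$ many mutually indiscernible sequences (\cite[Proposition 4.8]{simon}). That fact applies directly to the finite tuple $a$, with no need to pass to $\St$, and it is precisely where $\NIP$ (beyond the use in \cref{baby}(2)) is invoked. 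So the paper's argument is not a weight argument at all, and I do not see how to make your weight-based plan go through.
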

\begin{proof} Recall that we assume that $\T$ eliminates imaginaries. By Proposition \ref{baby}.(2) it is sufficient to verify that $\Lat^{eq}$ has $\ABS$. We proceed by contradiction. Let $\A \supseteq \C$, $a \in \M$ and let $(b_{i})_{i \in |\T(\C)|^{+}}$ be a sequence  of elements of $\Lat^{eq} \cap \acl(\A a)$ such that $b_{i} \notin \Lat^{eq} \cap \acl(\A b_{<i})$. \\

\underline{Claim:} \emph{ There is a set of parameters $\A_0 \supseteq \A$ and a sequence $(b_{i}')_{i \in |\T(\C)|^{+}}$ such that:
\begin{enumerate}
\item for all $i < |\T(\C)|^{+}$ $b_{i}' \in \Lat^{eq} \cap \acl(\A_{0} a)$ and $b_{i}' \not \in \Lat^{eq} \cap \acl(\A_{0} b'_{<i})$, and
\item  for all $i < |\T(\C)|^{+}$, let $\p_{i}$ be  the global non-forking extension of $\q_{i}=\tp(b'_{i}/ \Lat^{eq} \cap \acl(\A_{0} b'_{<i}))$. Then $\mathrm{Cb}(\p_{i}) \subseteq \Lat^{eq}\cap \acl(\A_{0})$.  
\end{enumerate}
} 
\textit{Proof:} First we show that we can extend the sequence $(b_{i})_{i \in |\T(\C)|^{+}}$ to one of length $|\T(\C)|^{++}$. Indeed by the pigeonhole principle we can find a $\La$-formula $\phi(y,z,x)$, some $k \in \mathbb{N}$ and tuples $c_{i} \in \A$ such that $\M \vDash \phi(a,c_{i},b_{i})$ and $\phi(a,c_{i},\M)$ has exactly $k$-elements.\\
Add a predicate $P$ to distinguish $\A$ and consider the partial type:
\begin{align*}
\Sigma(y_{i})_{i<|\T(\C)|^{++}}&=\{ \exists c \in P (\phi(a,c, y_{i}) \wedge |\phi(a,c,\M)|=k) \}_{i <|\T(\C)|^{++}} \cup \{ y_{i} \in \Lat \}_{i< |\T(\C)|^{++}}\\
&\cup \{ \forall c \in P \big(|\psi(a,c,z, \M)|=n \rightarrow \neg \psi(a,c,z,y_{i})\big) \ | \ z \subseteq y_{<i}, n \in \mathbb{N}, \, \psi \text{ an $\La$-formula}\}_{i \in |\T(\C)|^{++}},
\end{align*}
which is consistent. Let $(M,A) \prec (N,P(N))$ be an elementary extension that contains a realization $(d_{i})_{i \in |\T(\C)|^{+}}$ of $\Sigma$. Then $d_{i} \notin \acl(P(N)d_{<i})$ and for every  $i< |\T(\C)|^{++}$, $d_{i} \in \acl(P(N)a)$. Replacing $A$ by $P(N)$ and considering $(d_{i})_{i \in |\T(\C)|^{++}}$, we may assume that the sequence has length $|\T(\C)|^{++}$.\\
Let $\p_{i}$ be the unique non-forking global extension of $\tp(b_{i}/\C\cup \big(\Lat^{eq} \cap \acl(\A b_{<i})\big))$. Note that  $\mathrm{Cb}(\p_{i}) \subseteq \C \cup \big(\Lat^{eq} \cap \acl(\A b_{<i})\big)$.

 If $\cof(i) \geq |\T(\C)|^{+}$ then there is some $j<i$ such that $\mathrm{Cb}(\p_{i}) \subseteq \C \cup \big(\Lat^{eq} \cap \acl(\A b_{<j})\big)$. (As the canonical base is the set of canonical parameters for each formula in the definition scheme of the type, and they are $|\T(\C)|$-many of those.)
 
 Let $\D=\{ i \in |\T(\C)|^{++} \ | \ \cof(i) \geq |\T(\C)|^{+}\}$ and $\f: \D \rightarrow |\T(\C)|^{++}$ be the regressive function defined by sending $i$ to $\min\{ j < i \ | \ \mathrm{Cb}(\p_{i}) \subseteq \C \cup \big(\Lat^{eq} \cap \acl(\A b_{<j})\big)\}$.
 
 By Fodor's Lemma there is some stationary set $\D_{0} \subseteq \D$ and an element $\mathbf{j} \in |\T(\C)|^{++}$ such that for all $i \in \D_{0}$, $\f(i)=\mathbf{j}$. 
 Let $( b_{i_{\ell}})_{\ell \in |\T(\C)|^{+}}$ be a subsequence of $(b_{i})_{i \in |\T(\C)|^{++}}$ where each $i_{\ell} \in \D_{0}$ and $i_{\ell}> \mathbf{j}$. Then for any $\ell \in |\T(\C)|^{+}$, we have that $\mathrm{Cb}(\p_{i_{\ell}}) \subseteq \acl(\A b_{< \mathbf{j}})$. Now take $\A_{0}=\acl(\A b_{< \mathbf{j}})$ and $(b_{i}')_{i \in |\T(\C)|^{+}}$ be the sub-sequence $(b_{i_{\ell}})_{\ell \in |\T(\C)|^{+}} \square_{\text{Claim}}$. \\

Let $\A_{0}$ and $(b'_{i})_{i \in |\T(\C)|^{+}}$ be as in the previous Claim and
$\B= \A_{0} (b'_{i})_{i \in |\T(\C)|^{+}}$. For each $i \in |\T(\C)|^{+}$ let $\I_{i}=(d_{i}^{j})_{j < |\T(\B)|^{+}}$ be a Morley sequence in $\p_{i}$ over 
$\B \I_{<i}$ i.e. $d_{i}^{j} \vDash \p_{i}\upharpoonright_{\B \I_{i} d_{i}^{<j}}$. Then $(b'_{i}+\I_{i} \ | \ i \in |\T(\C)|^{+})$ are mutually indiscernible over $\A_{0}$. The first element $b_{i}'$ of the sequence $(b'_{i}+ \I_{i})$ is algebraic over $\A_{0} a$. By \cref{fact: first things about generically stable}, there is $\mathrm{J} \subseteq \mathrm{I}$ such that $|\mathrm{I} \backslash \mathrm{J}| \leq |\T(\B)|$ and for every $j \in \J$ we have $d_{i}^{j} \models \p_{i} \rest_{\B a}$. As the elements $d_i^j$ are distinct as $j$ varies, we have $d_{i}^{j} \notin \acl(\A_{0} a)$. We conclude that  $(b_{i}' +\I_{i})$ is not $\A_{0}a$-indiscernible for each $i \in |\T(\C)|^{+}$. This is a contradiction to \cite[Proposition $4.8$]{simon} (and the fact that $\mathrm{NIP}$ is preserved after adding constants.)
\end{proof}

We now need to generalize this argument to $\Lat$ being a family of stable sorts, instead of one stable sort. This will be required to obtain a corrected version of the statement in \cite[Proposition $6.7$]{HHM}. It also generalizes the later argument in \cite[ Proposition 5.7]{AKE}, where it is shown that given an $\A$-definable type $\p$ in a henselian valued field of equicharacteristic zero with algebraically closed residue field, $a \models \p$  and  $\f$ a definable function with image in the \emph{linear structure} (see \cite[ Definition $3.17$]{AKE}) the germ of $\f$ on $\p$ lies in an $\A$-definable set that is almost internal to the residue field. 
\begin{proposition}\label{prop: correction} Assume $\T$ is $\NIP$. Let $\A$ be a small set of parameters. Let $\p$ be a global $\A$-definable type and let $\f_{b}$ be an
$\A$-definable family of functions such that, for all $b \in \M$ and $a \models \p \rest_{\A}$ we have $\f_{b}(a) \in \St_{\A a}(\M)$. Let $\Theta$ be the $\A$-definable set of all $[\f_{m}]_{\p}$, then $\Theta$ is stable and stably embedded. In particular, $[\f_{b}] \in \St_{\A}(\M)$. 
\end{proposition}
\begin{proof} 
Let $(a_\alpha)_{\alpha < |\T(\A)|^{++}}$ be a Morley sequence of $\p$ over $\A$. Let $c_\alpha = \f_b(a_\alpha) \in \St_{\A a_\alpha}$. This is an element in some stable stably embedded set $\mathrm{X}_{\alpha}$ definable over $\A a_\alpha$.  Let $\q_{\alpha}$ be the global non-forking extension of $\tp(c_\alpha /\A a_{\alpha} \X_{\alpha}^{eq}(\acl(\A a_{\leq \kappa} c_{<\alpha})))$.\\

\underline{Claim $1$:} \emph{There is a subsequence $(a_{\alpha_{\beta}})_{\beta \in |\T(\A)|^{++}}$ and an element $\mathbf{j} \in |\T(\A)|^{++}$ such that for every $\beta \in |\T(\A)|^{++}$ $\mathrm{Cb}(\q_{\alpha_{\beta}}) \subseteq \acl(\A (a_{< |\T(\A)|^{++}} c_{<\mathbf{j}}))$ .}\\
\textit{Proof:} This is similar to \cref{prop: stable BS}, we include details for sake of completeness.  For any $\alpha \in |\T(\A)|^{++}$ such that $\cof(\alpha)>|\T(\A)|$, there is $j<\alpha$ such that $\mathrm{Cb}(\q_\alpha) \subseteq \acl(\A a_{\leq \kappa} c_{<j})$ (we compute the canonical base inside the stable set $\X_{\alpha}^{eq}$ which is stably embedded over $\A a_{\alpha}$, so the defining scheme requires $|\T(\A)|$-many parameters).\\
As in the claim of \cref{prop: stable BS} we let $\D=\{ \alpha \in |\T(\A)|^{++} \ | \ \cof(\alpha) \geq |\T(\A)|^{+}\}$ and let $\f: \D \rightarrow |\T(\A)|^{++}$ be the regressive function defined by sending $\alpha$ to $\min\{ j < \alpha \ | \ \mathrm{Cb}(\q_{\alpha}) \subseteq  \acl(\A a_{\leq |\T(\A)|^{++}} c_{<j})\big)\}$. By Fodor's lemma, we find some $\mathbf{j} \in |\T(\A)|^{++}$ such that the set $C$ of $\alpha<|\T(\A)|^{++}$ for which $\mathrm{Cb}(\q_\alpha) \subseteq \acl(\A a_{<|\T(\A)|^{++}} c_{<\mathbf{j}})$ is cofinal in $|\T(\A)|^{++}$. Let $(a_{\alpha_{\beta}})_{\beta \in |\T(\A)|^{++}}$ be the subsequence of elements in $\C$. This concludes the proof of the first Claim.\\

\underline{Claim $2$:} \emph{ Let $\A_{0}=\acl(\A a_{<|\T(\A)|^{++}} c_{<\mathbf{j}})$ then for at most  $|\T(\A)|$-many $\alpha_{\beta}'s$ the type $\q_{\alpha_{\beta}} \rest_{\A_{0}}$ is non algebraic.}\\
\textit{Proof:} We proceed by contradiction and we assume that there is a subset $\Y \subseteq |\T(\A)|^{+}$ such that for any $\alpha_{\beta} \in \Y$ $\q_{\alpha_{\beta}} \rest_{\A_{0}}$ is not algebraic.  To simplify the notation, let $(a_{\delta})_{\delta \in |\T(\A)|^{+}}$ be a re-indexing of a subsequence of size $|\T(\A)|^{+}$ for the indices $\alpha_{\beta} \in \Y$. 
Let $\I_{\delta}=(d_{\delta}^{j})_{j < |\T(\A)|^{+}}$ be a Morley sequence in $\q_{\delta}$ over 
$\A_{0} \I_{<\delta}$ this is $d_{\delta}^{j} \vDash \q_{\delta}\upharpoonright_{\A_{0} \I_{<\delta} d_{\delta}^{<j}}$. Then $(c_{\delta}+\I_{\delta} \ | \ \delta \in |\T(\A)|^{+})$ are mutually indiscernible over $\A_{0}$ (since the types are stable and the base contains the canonical base). The first element $c_{\delta}$ of the sequence $(c_{\delta}+ \I_{\delta})$ is algebraic over $\A_{0} b$. As in \cref{prop: stable BS}, by \cref{fact: first things about generically stable}, there is $\mathrm{J} \subseteq |\T(\A)|^{+}$ such that $||\T(\A)|^{+} \backslash \mathrm{J}| \leq |\T(\A)|$ and for every $j \in \J$ we have $d_{\delta}^{j} \models \q_{\delta} \rest_{\A_{0} b}$. As the elements $d_\delta^j$ are distinct as $j$ varies, we have $d_\delta^{j} \notin \acl(\A_{0} b)$. We conclude that  $(c_\delta' +\I_\delta)$ is not $\A_{0}b$-indiscernible for each $\delta \in |\T(\A)|^{++}$. As in \cref{prop: stable BS} this is a contradiction to $\mathrm{NIP}$. This concludes the proof of the second Claim.\\

Restricting to a subsequence and re-indexing we have a sequence $(a_{\delta})_{\delta_{\in |\T(\A)|^{++}}}$ such that $\q_{\delta} \rest_{\A_{0}}$ is algebraic, i.e. $c_{\delta} \in \acl(\A_{0})$. Then $c_{\delta} \in \acl(\A_{0}) \cap \dcl(\A a_{\delta} b) \subseteq \acl(\A_{0}) \cap \dcl(\A_{0} b)$. Let $\D_{\delta}:= \dcl(\A a_{<|\T(\A)|^{++}} c_{< \delta}) \cap \acl(\A_{0})$, by \cref{lem: sequencestabilizes} there is $\mu \in |\T(\A)|^{++}$ such that $\D_{\mu}=\D_{\alpha}$ for all $\alpha> \mu$. In particular, for each $\alpha> \mu$ we have $c_{\alpha} \in \dcl(\A_{0} c_{<\mu})=\dcl(\A (a_{\delta})_{\delta \in |\T(\A)|^{++}} c_{<\mu})$.\\

\underline{Claim $3$:} \emph{There is some $n \in \mathbb{Z}_{>0}$ such that for any $\delta_{1}< \dots< \delta_{n}$ in $|\T(\A)|^{++}$ we have $c_{\delta_{n}} \in \dcl(\A a_{\delta_{1}}, \dots, a_{\delta_{n}}; c_{\delta_{1}},\dots, c_{\delta_{n-1}})$}. \\

\textit{Proof:}  To simplify the notation we write $\delta_{\leq s}$ for $\delta_{1}<\dots<\delta_{s}$, and
 $\eta_{\leq k}$ for $\eta_{1}<\dots<\eta_{k}$.
Since the sequence $(a_{\delta})_{\delta \in |\T(\A)|^{++}}$ is $\A b$-indiscernible there are $n,m \in \mathbb{Z}_{>0}$ such that $\delta_{1}<\dots< \delta_{n}<\eta_{1}< \dots <\eta_{m}$ and $ c_{\delta_{n}} \in \dcl(\A a_{\delta_{\leq n}},c_{\delta_{\leq n-1}}, a_{\eta_{\leq m}})$. \\

 Let $c'_{\delta_{n}} \equiv_{\A a_{\delta_{\leq n}}, c_{\delta_{\leq n-1}}} c_{\delta_{n}}$, we will show that $c_{\delta_{n}}=c'_{\delta_{n}}$. This implies that $c_{\delta_{n}} \in \dcl(\A a_{\delta_{\leq n}} c_{\delta_{\leq n-1}})$.\\
For each $k \leq m$ we construct a Morley sequence $(\alpha_{1}, \dots, \alpha_{m}) \models \p^{\otimes m} \rest_{\A b a_{\delta \in |\T(\A)|^{++}}c_{\delta \in |\T(\A)|^{++}} c'_{\delta_{n}}}$. The  two tuples $(a_{\delta_{\leq n}},a_{\eta_{\leq m}})$ and $(a_{\delta_{\leq n}}; \alpha_{1},\dots, \alpha_{m})$ both realize $\p^{\otimes m} \rest_{\A b}$, so they have the same type over $\A b$. Since $c_{\ell}=\f(a_{\ell})$ and $\f$ is $\A b$-definable it follows that:
\begin{equation*}
(a_{\delta_{\leq n}}, c_{\delta_{\leq n-1}}, c_{\delta_{n}}, a_{\eta_{\leq m}})\equiv_{\A b} (a_{\delta_{\leq n}}, c_{\delta_{\leq n-1}}, c_{\delta_{n}}, \alpha_{1}, \dots, \alpha_{m}).
\end{equation*}
Therefore,
\begin{equation}\label{eq1}
c_{\delta_{n}} \in \dcl(\A a_{\delta_{\leq n}},  c_{\delta_{\leq n-1}}; \alpha_{1},\dots, \alpha_{m}).
\end{equation}
Now the type $\p^{\otimes m}$ is $\A$-invariant so the fact that 
\begin{equation}\label{eq2}
(a_{\delta_{\leq n}}, c_{\delta_{\leq n-1}},c_{\delta_{n}}) \equiv_{\A} (a_{\delta_{\leq n}}, c_{\delta_{\leq n-1}}, c_{\delta_{n}}') 
\end{equation}
implies that
\begin{equation}\label{eq3}
(a_{\delta_{\leq n}}, c_{\delta_{\leq n-1}}, c_{\delta_{n}}, \alpha_{1}, \dots, \alpha_{m}) \equiv_{\A} (a_{\delta_{\leq n}} c_{\delta_{\leq n-1}}, c_{\delta_{n}}', \alpha_{1}, \dots, \alpha_{m}). 
\end{equation}
Since $(\alpha_{1},\dots, \alpha_{m})\models \p^{\otimes m} \rest_{\A b a_{\delta \in |\T(\A)|^{++}}c_{\delta \in |\T(\A)|^{++}} c'_{\delta_{n}}}$  (note that  this set contains both tuples in \cref{eq2}), then \cref{eq3} says that $c_{\delta_{n}}$ and $c_{\delta_{n}}'$ realize the same type over $(a_{\delta_{\leq n}}, b_{\delta_{< n-1}}, \alpha_{1}, \dots, \alpha_{m})$. Since $c_{\delta_{n}}$ is in the definable closure of this tuple by \cref{eq1}, it follows that $c_{\delta_{n}}'= c_{\delta_{n}}$. This completes the proof of the Claim.\\
By indiscernibility over $\A b $, for any \(a_{\leq n}\models \p^{\tensor
n}\rest_{\A b} \) and \(c_\delta = \f_{b}(a_{\delta})\), it follows that $c_{n} \in \dcl(\A a_{\leq n}
c_{<n})$. So there is an $\A a_{<n}b_{<n}$-definable function $\gamma( \cdot, a_{<n}c_{<n})$
sending \(a_n\) to \(c_n\) , this is  $c_n = \gamma(a_n,a_{<n}, c_{<n})$. \\

\underline{Claim $4$:}\emph{There is an $\A$-definable function $\g$ such that $[\f_{b}]_{\mathrm{p}}=\g(a_{<n}, c_{<n})$.}\\
Let $b'\models \tp(b/\A a_{<n}c_{<n})$. We claim that then $[f_{b'}]_{\p} = [f_b]_{\p}$. Indeed, take $a_* \models p \rest_{\A bb'a_{<n}}$. Then $f_b(a_*)=\gamma(a_*,a_{< n}, c_{<n})=f_{b'}(a_*)$. It follows that the germ $[f_b]_{\p}$ is invariant under automorphisms fixing $\A a_{<n} c_{<n}$, hence $[f_b]_{\p} \in \dcl(\A a_{<n}c_{<n})$. Thus we can write $[f_b]_{\p} = g(a_{<n},c_{<n})$ for some $\A$-definable function $g$. This concludes the proof of the claim.\\

\underline{Claim $5$:}\emph{The germ $[\f_{b}]_{\p} \in \St_{\A}(M)$.}\\
Fixing $\mathbf{a} := a_{<n}$, let $Y_{\mathbf{a}}$ be the image of the map $g(\mathbf{a},\cdot)$. Note that $Y_{\mathbf{a}}$ is $\A \mathbf{a}$-definable and is stable stably embedded: indeed it is internal to $\St_{\A \mathbf{a}}$ by construction. If $\M_0$ is a small model containing $\A$ such that $\mathbf{a}\models \p^{\otimes n}\rest_{\M_0}$, then $Y_{\mathbf{a}}$ contains all the germs $[f_{b'}]_{\p}$ for $b' \in M_0$. It follows that any small subset of the set of germs is included in a set of the form $Y_{\mathbf{a}'}$ for some $\mathbf{a}' \models \p^{\otimes n}$. This implies in particular that the set of germs is stable (any witness of instability can be included in a $Y_{\mathbf{a}'}$, which is stable).

In an NIP theory, any stable set is stably embedded. Hence the set of germs is stable stably embedded as required.
%The situation we have now is that the set $X$ of germs is stable and we have a definable family $Y_{\mathbf{a}}$ of definable subsets of $X$ such that any small subset of $X$ is included in one element of the family. This implies that finitely many of the $Y_{\mathbf{a}}$'s cover $X$: if some $Y_{\mathbf{a}_0},\ldots,Y_{\mathbf{a}_m}$ do not cover $X$, take $d_m\in X$ witnessing it and find $Y_{\mathbf{a}_{m+1}}$ containing $d_{<m}$. This gives a witness to the order property. It follows that the set $X$ of germs is internal to $\St_{\A \mathbf{a}_0\ldots \mathbf{a}_m}$ for some $\mathbf{a}_0\ldots \mathbf{a}_m$. Hence that set is stable and stably embedded, as required.
\end{proof}

\begin{remark}\label{rem: generalizations} Assume $\T$ is $\NIP$. Let $\A$ be a small set of parameters and $\N$ be a sufficiently saturated and homogenous model that contains $\A$. Let $\p$ be a global $\A$-definable type  and let $\f_{b}$ be an
$\A$-definable family of functions and let $\Lat$ be a stable and stably embedded set. 
Let $\Theta$ be the $\A$-interpretable set of all $[\f_{m}]_{\p}$, then $\Theta$ is stable and internal to $\Lat$. 
\end{remark}
\begin{proof}
 Fix $d \in \N$. The proof of the first statement follows exactly as in \cref{prop: correction}. We start with $(a_{\alpha})_{\alpha \in |\T(\A)|^{++}}$ a Morley sequence of $\p$ and let $c_{\alpha}=\f_{b}(a_{\alpha}) \in \Int(\Lat, \A a_{\alpha})$. Then $c_{\alpha}=f_{b}(a_{\alpha}) \in \mathrm{X}_{\alpha}$ where $\mathrm{X}_{\alpha}$ is an $\A a_{\alpha}$ -definable set internal to $\Lat$. Since $\Lat$ is a stable definable set, then $\mathrm{X}_{\alpha}$ is also a stable and stably embedded set, then the proof of the first, second, third and forth claim go trough. For the fifth claim  the set $\Y_{\A\mathbf{a}}$ is internal to  $\Int(\Lat, \A a_{< n})$-internal, instead of internal to $\St_{\A \mathbf{a}}$. In particular, it is internal to $\Lat$.\\
 The same argument in the fifth  Claim of \cref{prop: correction} shows that $\Theta$ is stable.\\
Since $\Theta$ is an $\A$-interpretable set, it is only left to show that it is $\Lat$-internal. \\
\underline{Claim:} Let $M_{0}$ be a small model then $\Theta(M_{0})$ is covered by finitely many of the $\Y_{\A \mathbf{a}}(\M_{0})$'s for $d \in \M_{0}$. \\
\textit{Proof:} We proceed by contradiction. By compactness there is some $n \in \mathbb{N}$ and a $\La(\A)$-formula $\psi(x,y)$ such that for every $d \in \M$ and realization $a_{<n} \models \p^{< \tensor n} \rest_{\A d}$ the set $\Y_{\A \bar a_{<n}}$ is defined by $\psi(x, \bar a_{< n})$. By induction we construct $a_{i}$ and $d_{i}$ such that:
\begin{itemize}
\item $\bar a_{i+1} \vDash \p^{\tensor n-1}\rest_{\A \bar a_{\leq i}d_{\leq i}}$. 
\item $e_{i}=[\f_{d_{i}}]_{\p} \in \Theta \backslash (\Y_{\bar a_{0}} \cup \dots \cup \Y_{\bar a_{i}})$.
\end{itemize}
Then $e_{i}=[\f_{d_{i}}]_{\p} \in \Y_{\bar a_{j}}$ if and only if $j>i$, but this contradicts the stability of $\Theta$. This concludes the proof of the Claim. Since each $\Y_{\A \bar{a}_j}$ is $\Lat$-internal, $\Theta$ also is. 
\end{proof}
We conclude this section with the counterexample to \cite[Proposition $6.7$]{HHM}(1). 

\begin{remark}\label{notstrong} 
\begin{enumerate}
\item Let $\M$ be a meet-tree (in the language $(\leq, \wedge)$) and $c \in \M$ a point. The closed cone of center $d$ is by definition $\C(d)=\{ x \in \M \ | \ x \geq d\}$. We define on $\C(d)$ a relation $E_{d}$ by: $xE_{d}y$ if $ x \wedge y > d$. One can easily check that this is an equivalence relation. We define an open cone of center $d$ to be an equivalence class under the relation $E_{d}$. Let $\mathcal C$ be the class of finite two-sorted structures $(\M_0,\Lat_0,g(x,y))$, where $\M_0$ is a meet-tree, $\Lat_0$ a set with no structure and for every $d\in \M$, $g$ is a function $g\colon \{(d,e)\in \M_0^2 \colon d < e\} \to \Lat_0$ such that $g_d := g(d,\cdot)$ is constant on open cones of center $d$, hence induces a map from $C(d)/ E_{d}$ to $\Lat$. This class is easily seen to be a Fra\"iss\'e class and we let $T$ be the theory of its Fra\"iss\'e limit and $(\M, \Lat)$ a model of it.

The theory $T$ is NIP as is easily checked by counting types over finite sets: Let $(M_0, \Lat_0)$ be a finite subset of $(\M, \Lat)$ of size $n$ and $(M_1,\Lat_1)$ the substrucutre generated by it. The meet tree $M_1$ generated by $M_0$ has size at most $n^2$ (since every element of $M_0$ is the meet of two elements of $M_0$). Then closing under $g$ also adds at most $n^2$ points to $\Lat$, hence $(M_1, \Lat_1)$ has size polynomial in $n$ (indeed it can be seen to have size linear in $n$, but we do not need this). There is a unique non-realized 1-type of an element of the sort $\Lat$ over $\Lat_1$. In particular, the sort $\Lat$ is strongly minimal. Let now $a$ be a new element of the tree sort. The set $\{a \wedge c \colon c\in \M_1\}$ is linearly ordered. Let $a_* := a\wedge c_*$ be its maximal element. Then the meet-tree generated by $\M_1 a$ is equal to $\M_1 \cup \{a, a_*\}$ (with possibly $a_* = a$, or $a_* \in \M_1$). From this tree, we can define at most two new open cones that are not definable from $\M_1$ alone (with center $a_*$ and containing $a$ and $c_*$ respectively). It follows that $\tp(a/\M_1)$ is given by its type in the meet-tree structure and at the types of the images of at most two pairs of elements of the resulting tree. Hence there are quadratically many 1-types over $\M_1$.

Let $\p(x)$ be the generic global type of $\M$: that is the type of an element in the tree incomparable with all elements in $\M$. Then $p$ is $\emptyset$-definable. For $c \in \M$ let $\f_{c} \colon \M \rightarrow \Lat$, defined as $\f_{c}(x)= \g_{x\wedge c}(x)$ (i.e. given $x \in \M$ we look at the maximal open ball containing $x$ in the closed cone around $x \wedge c$, and we send $x$ to the image of this maximal open ball under the map $\g_{x \wedge c}$). Let $a \vDash \p \upharpoonright_{M}$ then for any $c, c' \in \M$, $\f_{c}(a)=\f_{c'}(a)$ (as $c\wedge c' > c\wedge a = c'\wedge a$). Consequently, $[\f_{c}]_{\p}$ is $\emptyset$-definable. If the germ of $\f_{c}$ over $\p$ is strong, then there is a $\emptyset$-definable function $\alpha$ such that for any realization $a \vDash \p \upharpoonright_{c}$, $\f_{c}(a)=\alpha(a) \in \Lat$. In particular, we can define elements in $\Lat$ with a single element in the tree, but this is impossible: we have quantifier elimination in the meet-tree language with the function $g$ and from one element, one cannot define any other.
\item Let $\p$ be a global $\A$-definable type and $\f_{c}$ be a definable function. We say that the germ of $\f_{c}$ over $\p$ is \emph{almost strong} if for any $a \vDash \p\upharpoonright_{c}$, $\f_{c}(a) \in \acl(\A, [f_{c}]_{p}, a)$. The previous example illustrates that the germ of $f_{c}$ over $\p$ is not even almost strong under the hypothesis of Proposition \ref{mistake}. Take $\q$ be the generic type of $\M$ which is $\emptyset$-definable, and $\p=\q$. All the functions $\{ \f_{c} \ | \ c \vDash \q\}$ have the same $\p$-germ, i.e. for $c \vDash \q$, $c' \vDash \q$ if $a \vDash \p\upharpoonright_{c,c'}$ then $f_{c}(a)=f_{c'}(a)$. If $\p$ has almost strong germs, then there is some formula $\phi(x,y)$ such that for any $c \vDash \q$ and $a \vDash \p\upharpoonright_{c}$, $\phi(a,y)$ defines a finite set and $\f_{c}(a) \in \phi(a,y)$. But no finite set in $\Lat$ is definable from a single point in the tree. 
\end{enumerate}
\end{remark}

\section{Descent}
The main result of this section is the following theorem.
\begin{theorem}\label{thm:descent}
    Let $\p$ be a global $\A$-invariant type and let $b$ be such that $\p$ is stably dominated over $\A b$. Then $\p$ is stably dominated over $\A$.
\end{theorem}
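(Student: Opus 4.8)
The plan is to reduce the statement to the easy descent theorem \cref{thm: descent for internal sets} (equivalently, to re-run its argument) after descending the stable set to the base $\A$. First observe that, by \cref{fact: stable domination implies generically stable for all n}, $\p^{\tensor n}$ is generically stable for every $n<\omega$; in particular $\p$ is generically stable over $\A$ — it is $\A$-invariant, so this follows from \cref{ fact: first things about generically stable}(3) — and hence $\A$-definable by \cref{prop: generically stable properties}(3). Therefore, by \cref{prop: basic facts stable domination}(1), we may assume $\A=\acl^{\eq}(\A)$, and, enlarging $b$ to an enumeration of $\acl^{\eq}(\A b)$ (which only strengthens the hypothesis, by \cref{prop: change of base going up}), that $\A b=\acl^{\eq}(\A b)$ as well. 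By the definition of stable domination, fix an $\A b$-definable function $\f_{b}$ into a stable, stably embedded $\A b$-definable set $\Lat_{b}\subseteq\St_{\A b}$ dominating $\p$ over $\A b$. By \cref{stronggerms} the germ $[\f_{b}]_{\p}$ is strong over $\A$, and by \cref{lem: internal $p$-germs}(1) the set of $\p$-germs of instances of the family $\f_{y}$ is $\Lat_{b}$-internal, hence stable.

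To descend the stable set, take by \cref{cor: total sequence} an $\A$-indiscernible sequence $\bar b=(b_{i})_{i<\lambda}$ in $\tp(b/\A)$ with $b_{0}=b$, $\lambda$ large, totally $\downfree^{\p,\vee}$-independent over $\A$ (so $\acl$-independent over $\A b_{0}$, and with $b_{\I}\downfree^{\acl^{\vee}}_{\A}b_{\J}$ for disjoint $\I,\J$). Let $a\models\p\rest_{\A\bar b}$; then $a\models\p\rest_{\A b_{i}}$ for every $i$, so $\f_{b_{i}}(a)\in\Lat_{b_{i}}$, and strongness of the germ gives an $\A[\f_{b_{i}}]_{\p}$-definable function $\g_{i}$ with $\g_{i}(a)=\f_{b_{i}}(a)$. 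Applying \cref{lem: staying indiscernible} and then \cref{lem: acl-independence implies Morley in St} over the appropriate base, the sequence $\bigl([\f_{b_{i}}]_{\p},\f_{b_{i}}(a)\bigr)_{i}$ becomes a Morley sequence in a stable, stably embedded structure; one then takes its canonical base $e$, argues that $e$ depends only on $a$ (not on $\bar b$), lies in an $\A$-definable stable stably embedded set $\Lat\subseteq\St_{\A}$, and allows one to recover $[\f_{b}]_{\p}$ — hence $\f_{b}(a)$ — from $e$, $b$ and $a$. Replacing $\f_{b}$ by the resulting $\A b$-definable map $\f'_{b}\colon\p\to\Lat$, the type $\p$ is still dominated by $\f'_{b}$ over $\A b$; and since $\p^{\tensor n}$ is generically stable for all $n$, hypothesis $(2)$ of \cref{thm: descent for internal sets} holds, so the theorem yields an $\A$-definable $\h\colon\p\to\Int(\Lat,\A)\subseteq\St_{\A}$ dominating $\p$ over $\A$, i.e. $\p$ is stably dominated over $\A$.

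The \emph{main obstacle} is the middle step: showing that the germ data along the sequence is coherent enough that the canonical base $e$ (a priori living over $\A a$ and depending on $\bar b$) is in fact $\A$-definable-in-$a$ and lands in an honest $\A$-definable stable set, and that post-composing with the projection to that set does not destroy domination over $\A b$. This is exactly where the absence of a global invariant extension of $\tp(b/\A)$ — hence of a non-forking Morley sequence of $\tp(b/\A)$ — is felt, and must be compensated for by working with the total $\downfree^{\p,\vee}$-independent sequence of \cref{cor: total sequence}: the transfer of the relevant implications along this sequence will use the symmetry and transitivity of $\downfree^{\p}$ and $\downfree^{\acl^{\vee}}$ (\cref{ lem: properties p relation}, \cref{rem: properties aclV independence}), the characterisation of realizing $\p\rest_{\B}$ by $\St$-independence (\cref{prop: basic facts stable domination}(2)), and careful bookkeeping of which finite subtuple of $\bar b$ a given test parameter can depend on. Getting all of this to fit together is, I expect, the real content of the proof.
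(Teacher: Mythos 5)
Your overall strategy is sound and closely parallels the paper's: take a totally $\downfree^{\p,\vee}$-independent $\A$-indiscernible sequence in $\tp(b/\A)$ (from \cref{cor: total sequence}), realize $\p$ over it, build canonical-base data in the stable part along the sequence, and package it into an $\A$-definable stable set. Your extra idea of reducing the final step to \cref{thm: descent for internal sets} (via hypothesis $(2)$, since $\p^{\tensor n}$ is generically stable by \cref{fact: stable domination implies generically stable for all n}) would indeed close the proof \emph{if} the middle step were done, and is a slightly different packaging from the paper, which constructs the dominating map $\alpha\colon\p\to\Lat$ directly and verifies domination by hand rather than quoting \cref{thm: descent for internal sets}.

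However, the "main obstacle" you flag is not a routine bookkeeping issue --- it is where essentially all the new content of the theorem lives, and your sketch does not contain the idea needed to overcome it. The problem is that the canonical base $e=\Cb\bigl(\St_{\A b_{i}}(b_{j},\St_{\A b_{j}}(a))\mid j\bigr)$ is defined from the sequence $\bar b$, and in the absence of a global $\A$-invariant extension of $\tp(b/\A)$ there is no canonical Morley sequence, so one must prove that whether $a$ and $a'$ have "the same germ on $\bar b$'' is independent of the choice of $\bar b\equiv_{\EM(\A)}\bar b_{0}$. Under the invariant-extension hypothesis this is \cref{lem:same germs}, proved by bridging $\bar b$ and $\bar b'$ through a common Morley continuation $\bar b_{*}$; that bridge is exactly what disappears in the general case. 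The paper's replacement is \cref{lem: fixing equivalence relation general case}: one shows that the pairs $(\bar b,\bar b')$ for which such a bridge exists span a definable graph $\G$ whose connected components are $\vee$-definable over $\A$ (using \cref{lem: elements in aclV are definable when base is acl closed} and the fact that $\bar b$ is $\acl^{\vee}$-independent over $\A=\acl^{\eq}(\A)$), whence a uniform bound on distances by \cref{lem: characterizing being in aclV} and the transitivity observation \cref{rem: transitivity} force the relation $\E'_{\pi}$ to be an equivalence relation on all of $\Q'$. This graph-theoretic argument --- and the whole development of $\acl^{\vee}$, $\dcl^{\vee}$ and $\downfree^{\acl^{\vee}}$ in Section 2.2.3 that makes it possible --- is the key new ingredient. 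Your proposal invokes the $\acl^{\vee}$-independence of the sequence, which is the right ingredient to reach for, but it gives no indication of how to use it, and "the transfer of the relevant implications along this sequence will use symmetry and transitivity of $\downfree^{\p}$ and $\downfree^{\acl^{\vee}}$" does not describe the actual mechanism. Without the graph/$\vee$-definability argument (or an equivalent one) there is no reason the canonical base $e$ should be well defined over $\A a$ independently of $\bar b$, and the construction of the $\A$-definable set $\Lat$ you want to feed into \cref{thm: descent for internal sets} does not go through.

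A secondary, more minor gap: before you can even speak of $\f(\bar b,a)_{i}$, you need to extract from $\bar b$ a co-small $\J$ with $a\downfree^{f}_{\A}b_{\J}$. In the invariant case this is \cref{lem: big tail independent when there is a non-forking extension}; in the general case the paper proves the replacement \cref{lem: independent over big tail p^{n} generically stable}, which uses both that $b_{i}\downfree^{\p}_{\A}b_{<i}$ (to find $i_{*}$ with $a\models\p\rest_{\A b_{i_{*}}}$ via the generically stable partial type $\pi_{\p}$ of \cref{prop: basic facts on p rel}) and that $(b_{i})_{i>0}$ is $\acl$-independent over $\A b_{0}$ (to then invoke \cref{lem:independent subsequence for stably dominated types}). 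Your sketch does not address this either, though it is more routine once one knows about $\downfree^{\p}$.
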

This result generalizes \cite[Theorem $4.9$]{HHM}, which assumes that $\tp(b/\A)$ has a global $\A$-invariant extension. This provides a positive answer to the question posed by Hrushovski and Rideau-Kikuchi in \cite[Question 1.3(1)]{metastable}. 
\subsection{Easier cases of descent}\label{sec: easy descent}
In this subsection we start by presenting some simplified versions of descent, giving in particular a much simpler proof for the case of $\ACVF$.

\medskip
The following proposition is a generalization of \cite[Lemma $4.2$]{HHM}. It proves descent in a very special case, namely assuming that the parameters $b$ are not needed to define $\Lat$, nor the dominating function. When the dominating function takes value in a stable set, we do not need any other assumptions on $\tp(b/\A)$.

\begin{proposition}\label{prop:easy descent}
  Let $\p$ be a generically stable $\A$-invariant global type and assume that it is dominated over $\A b$ by an $\A$-definable function $\f$ into some sort $\Lat$. Assume furthermore either:
    \begin{enumerate}
    \item  $\tp(b/\A)$ does not fork over $\A$, or 
    \item  $\p^{ \otimes n}$ is generically stable for all $n$ (which holds when $\Lat$ is stable).
    \end{enumerate}
    Then $\f$ dominates $\p$ over $\A$. 
\end{proposition}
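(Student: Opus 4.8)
The plan is to fix $a\models\p\rest_{\A}$ and $d\in\M$ with $\f(a)\models\f(\p)\rest_{\A d}$ and to show $a\models\p\rest_{\A d}$. Here $\f(\p)$ denotes the pushforward of $\p$ along $\f$; it is again a complete global $\A$-invariant type (since $\p$ is complete, global, $\A$-invariant, and $\f$ is $\A$-definable). The key observation is that the hypothesis ``$\p$ is dominated over $\A b$ by $\f$'' depends only on $\tp(b/\A)$: as $\p$ is $\A$-invariant and $\f,\Lat$ are $\A$-definable, $\p$ is dominated over $\A b'$ by $\f$ for \emph{every} $b'\equiv_{\A}b$. Hence it is enough to produce one conjugate $b'$ of $b$ over $\A$ with $\f(a)\models\f(\p)\rest_{\A b' d}$: feeding the realization $a\models\p\rest_{\A}$ and the parameter $d$ into the domination of $\p$ over $\A b'$ then gives $a\models\p\rest_{\A b' d}$, and so $a\models\p\rest_{\A d}$.

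To produce $b'$ I would introduce an auxiliary realization. Let $a^{*}\models\p\rest_{\A d b}$. Then $a^{*}\models\p\rest_{\A d}$, so $\f(a^{*})\models\f(\p)\rest_{\A d}$, and since $\f(\p)\rest_{\A d}$ is a complete type over $\A d$ we get $\f(a^{*})\equiv_{\A d}\f(a)$. Also $a^{*}\downfree_{\A d}^{f}b$: since $\p$ is generically stable over $\A d\supseteq\A$, the type $\p\rest_{\A d}$ is stationary with $\p$ as its unique global non-forking extension (\cref{prop: generically stable properties}), so $\tp(a^{*}/\A d b)=\p\rest_{\A d b}$ does not fork over $\A d$. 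Now choose $\sigma\in\Aut(\M/\A d)$ with $\sigma(\f(a^{*}))=\f(a)$ and set $b':=\sigma(b)$; applying $\sigma$ throughout and using invariance of non-forking under automorphisms (\cref{prop: properties of non-forking}), we may replace $a^{*}$ by $\sigma(a^{*})$ and so assume $\f(a^{*})=\f(a)$, $a^{*}\models\p\rest_{\A d}$, $a^{*}\downfree_{\A d}^{f}b'$ and $b'\equiv_{\A}b$. By stationarity of $\p\rest_{\A d}$ and $a^{*}\downfree_{\A d}^{f}b'$ we get $\tp(a^{*}/\A d b')=\p\rest_{\A d b'}$; pushing this type forward through $\f$ gives $\f(a^{*})\models\f(\p)\rest_{\A d b'}$, and as $\f(a)=\f(a^{*})$ this is exactly $\f(a)\models\f(\p)\rest_{\A b' d}$. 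The first paragraph then concludes.

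The role of hypotheses (1) and (2) is in the freedom one has when positioning $b'$. In case (1), the fact that $\tp(b/\A)$ does not fork over $\A$ is what allows one to conjugate $b$ into a position non-forking independent from the auxiliary data and then let the symmetry and stationarity of the generically stable $\p$ do the rest, using \cref{prop: properties of non-forking} and \cref{prop: generically stable properties}. In case (2), where $\Lat$ is not assumed stable, the relation $\downfree^{\p}$ is available and symmetric precisely because $\p^{\otimes n}$ is generically stable for all $n$ (\cref{fact: stable domination implies generically stable for all n} and \cref{prop: basic facts on p rel}), and one runs the same scheme with $\downfree^{\p}$ in place of plain non-forking.

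I expect the main obstacle to be this bookkeeping around $b'$: one has to move $b$ by an automorphism that fixes $\A d$ while preserving the independence arranged, and --- crucially --- to transfer the type of the auxiliary realization $a^{*}$ rather than that of $a$, since $\tp(a/\A d)$ is exactly what is in question and cannot be taken to equal $\p\rest_{\A d}$ at the start. In case (2) there is the extra point of checking that $\downfree^{\p}$ supplies the symmetry and extension that were used, which is where the assumption that $\Lat$ is stable (equivalently here, that $\p^{\otimes n}$ is generically stable for all $n$) is doing its work.
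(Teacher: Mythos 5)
Your argument hits a gap at the final step, and it is exactly the gap that hypotheses (1) and (2) are there to fill. You construct $b' = \sigma(b)$ for some $\sigma \in \Aut(\M/\A d)$ sending $\f(a^{*})$ to $\f(a)$, conclude $\f(a) \models \f(\p)\rest_{\A b' d}$, and then "feed the realization $a \models \p\rest_{\A}$ into the domination of $\p$ over $\A b'$." But domination of $\p$ by $\f$ over $\A b'$ is an assertion about realizations of $\p\rest_{\A b'}$: to invoke it for the given $a$ one must first know $a \models \p\rest_{\A b'}$, equivalently $a \downfree_{\A}^{f} b'$. Your construction does not deliver this. The automorphism $\sigma$ was chosen only to match $\f$-images; it gives $\sigma(a^{*}) \downfree_{\A d}^{f} b'$, but $\sigma(a^{*})$ and $a$ are distinct realizations of $\p\rest_{\A d}$ that merely happen to share their image under $\f$, and nothing forces the original $a$ to be independent from $b'$ over $\A$. (If the definition of domination were read so literally as to quantify over \emph{all} $a \models \p\rest_{\A}$ rather than $a \models \p\rest_{\A b}$, then your conjugation argument would actually make domination over $\A b$ by an $\A$-definable $\f$ trivially equivalent to domination over $\A$, rendering the whole proposition and its hypotheses vacuous — this is a sign that the literal reading is not the intended one, and the paper's own proof visibly works to arrange $a\downfree_\A^f b'$ before applying domination.)

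Arranging $\f(a) \models \f(\p)\rest_{\A b' d}$ and $a \downfree_{\A}^{f} b'$ \emph{simultaneously} is exactly the nontrivial content, and it is where (1) and (2) enter. In case (1) the paper uses the non-forking hypothesis on $\tp(b/\A)$ to produce, via the extension axiom, a conjugate $b'$ with $b' \downfree_{\A}^{f} d a$, and then symmetry and right transitivity for the generically stable types $\p$ and $\q = \f(\p)$ simultaneously yield $a \downfree_{\A}^{f} b'$ and $\f(a) \downfree_{\A}^{f} b' d$. In case (2) one instead builds a long sequence of conjugates $(b_i)$ together with $\p$-bases $(\bar a_i)$ and uses generic stability of $\p^{\omega}$ to find $i$ with $a \downfree_{\A}^{f} b_i$. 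Your closing paragraph correctly guesses that (1) and (2) should enter when "positioning $b'$," but this remains a speculation that is never integrated into the argument: as written, the proof applies domination to an $a$ that is not known to realize $\p$ over $\A b'$.
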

\begin{proof}
    Let $\q= \f(\p)$; this is a global $\A$-invariant and generically stable type. Let $a \models \p \rest_{\A}$ and $d$ be such that $\f(a) \models \q \rest_{\A d}$. We want to show that $a\models \p \rest_{\A d}$. By \cref{prop: generically stable properties}.(2) it is enough to show $a\downfree_{\A}^{f} d$.\\
\begin{enumerate}

  \item   Assume first that $\tp(b/\A)$ is non-forking over $\A$. \\
    
    \underline{Claim:} \emph{There is some $b'$ such that $b' \equiv_{\A} b$, $a\downfree_{\A}^{f} b'$ and $\f(a) \models \q \rest_{\A db'}$.}\\
    \textit{Proof:} By \cref{prop: properties of non-forking}.(6), since $b \downfree_{\A}^{f} \A$ there is $b' \equiv_{\A} b$ such that
    \begin{equation}\label{eqf1}
    b' \downfree_{\A}^{f} d a.  
    \end{equation}
 As  $b' \downfree_{\A d}^{f} a$ (by \cref{prop: properties of non-forking}.(4) and \cref{eqf1}), by  symmetry (\cref{prop: generically stable properties}.(4)) we  have $a \downfree_{\A d}^{f} b'$. Since $\f$ is $\A$-definable, it follows that
\begin{equation}\label{eqf2}
  \f(a) \downfree_{\A d}^{f} b'.
\end{equation}

Since $\f(a) \models \q\rest_{\A d}$ then 
\begin{equation}\label{eqf3}
\f(a) \downfree_{\A}^{f} d.
\end{equation}
Combining \cref{eqf3}, \cref{eqf2} and right transitivity (\cref{prop: generically stable properties}.(5)) we have 
    \begin{equation}\label{eqf4}
    \f(a) \downfree_{\A}^{f} b'd.
    \end{equation}
    The claim then follows from \cref{prop: generically stable properties}.(2). \\
    
 Because $\p$ is dominated by $\f$ over $\A b$, it is dominated by $\f$ over $\A b'$.
    By monotonicity (\cref{prop: properties of non-forking}.(3) and \cref{eqf1}) $b'\downfree_{\A}^{f} a$ and by symmetry (\cref{prop: generically stable properties}.(4)) it follows that: 
    \begin{equation}\label{eqf5}
    a \downfree_{\A}^{f} b'.
    \end{equation}
    By domination of $\p$ via the function $\f$ over $\A b'$, we have: 
    \begin{equation}\label{eqf6}
    a \downfree_{\A b'}^{f} d.
    \end{equation}
    Combining \cref{eqf5}, \cref{eqf6} and right transitivity (\cref{prop: generically stable properties}.(5)) we conclude $a \downfree_{\A}^{f} b'd$. It follows that $a \downfree_{\A}^{f} d$ by monotonicity of non-forking (\cref{prop: properties of non-forking}.(2)). This concludes the proof in the first case. 
    \item Suppose now that $\p^{\otimes n}$ is generically stable for all $n$. By Lemma \ref{lem: sequence for gen n stable} there is a sequence $(\bar a_{i}, b_{i} \ | \ i < |\T(\A)|^{+})$ such that $\bar a_{i}$ is a Morley sequence in $\p^{\omega}$ over $\A$,  $\bar a_{i}$ is a $\p$-basis of $b_{i}$ and $\bar a_{i} b_{i} \equiv_{\A} \bar a_{0} b_{0}$.\\
    Since $\f(a) \downfree_{\A}^{f} d$ we may assume $\f(a) \downfree_{\A}^{f} d b_{<|\T(\A)|^{+}} \bar{a}_{<|\T(\A)|^{+}}$ (\cref{prop: properties of non-forking}.(6)), thus by monotonicity and base monotonicity of non-forking  (\cref{prop: properties of non-forking}.(3-4)) we have:
    \begin{equation}\label{eqf5}
    \f(a) \downfree_{\A b_{i}}^{f} d \ \text{for any $i \in |\T(\A)|^{+}$}.
    \end{equation}
    Because $\p^{\omega}$ is generically stable and $\A$-invariant, there is some $i < |\T(\A)|^{+}$ such that $ \bar a_{i} \downfree_{\A}^{f} a$ (\cref{fact: first things about generically stable}.(2)). By symmetry $a \downfree_{\A}^{f} \bar a_{i}$ (\cref{prop: generically stable properties}.(4)), thus  as $\bar a_{i}$ is a $\p$-basis of $b_{i}$ we have:
    \begin{equation}\label{eqf6}
    a \downfree_{\A}^{f} b_{i}.
    \end{equation}
    
     Since $\p$ is dominated over $\A b_{i}$ by \cref{eqf5} we have:
     \begin{equation}\label{eqf7}
     a \downfree_{\A b_{i}}^{f} d. 
     \end{equation}
     Combining \cref{eqf6}, \cref{eqf7} and right transitivity 
    $a \downfree_{\A}^{f} b_{i}d$ (\cref{prop: generically stable properties}.(5)). We conclude that $a \downfree_{\A}^{f} d$ by monotonicity of non-forking (\cref{prop: properties of non-forking}.(3)). 
    \end{enumerate}
\end{proof}
We now move to a second case where the parameter $b$ is used to define the function witnessing domination, but is not needed to define $\Lat$. 
\begin{theorem}\label{thm: descent for internal sets} Let $\p$ be generically stable over $\A$ and assume that it is dominated over $\A b$ by an $\A b$-definable function $\f_{b}$ into some $\A$-definable stably embedded set $\Lat$.  Assume furthermore either:
\begin{enumerate}
\item $\tp(b/ \A)$ does not fork over $\A$, or
\item $\p^{\otimes n}$ is generically stable for all $n < \omega$.
\end{enumerate}
 Then there is an $\A$-definable function $\h \colon \p \rightarrow \Int(\Lat,\A)$ that dominates $\p$ over $\A$. 
\end{theorem}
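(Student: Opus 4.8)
The plan is to strip the parameter $b$ out of $\f_{b}$ by passing to the germ it carries, which by \cref{lem: internal $p$-germs} is already internal to $\Lat$, and then to feed the resulting $\A$-definable function into \cref{prop:easy descent}.

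First, by \cref{lem: internal $p$-germs}(1) the set $D$ of $\p$-germs of instances of the family $\f_{\bullet}$ is an $\A$-definable set internal to $\Lat$, and by \cref{lem: internal $p$-germs}(2) the set $\Int(\Lat,\A)$ is stably embedded. Put $e := [\f_{b}]_{\p} \in D$. Since $\p$ is generically stable over $\A$, \cref{stronggerms} says $e$ is strong over $\A$: there is an $\A$-definable family $(\g_{y})_{y \in D}$ of functions into $\Lat$ with $\g_{e}(a) = \f_{b}(a)$ for every $a \models \p\rest_{\A b}$. In particular $\g_{e}$ is $\A b$-definable, it agrees with $\f_{b}$ on a set in $\p$, hence $(\g_{e})_{*}\p = (\f_{b})_{*}\p$ as global types and $\g_{e}$ still dominates $\p$ over $\A b$; so we may replace $\f_{b}$ by $\g_{e}$. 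Write $\Phi(x,y) := \g_{y}(x)$, an $\A$-definable function.

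Next I would define the candidate $\h$. For $a$ in the home sort of $\p$, the map $y \mapsto \Phi(a,y)$ is an $\A a$-definable function $D \to \Lat$; let $\h(a)$ be its canonical parameter. Definability of $\h$ in $a$ is immediate. For the target: fix an $\A$-definable surjection $\pi \colon \Lat^{k} \to D$ witnessing internality of $D$, so that $y \mapsto \Phi(a,y)$ is coded by the $\A a$-definable function $\Lat^{k} \to \Lat$, $s \mapsto \Phi(a,\pi(s))$, whose graph, by stable embeddedness of $\Lat$, is definable over $\A$ together with a tuple from $\Lat$. As this is a uniformly definable family in $a$, a compactness argument yields a single $\A$-definable family $(\Psi^{t})_{t \in T}$ of functions $\Lat^{k}\to\Lat$ with $T \subseteq \Lat^{N}$ for some $N<\omega$, such that each of our functions equals some $\Psi^{t}$; quotienting $T$ by the $\A$-definable relation $\Psi^{t}=\Psi^{t'}$ produces an $\A$-definable set $T/E \subseteq \Int(\Lat,\A)$ in which $\h$ takes its values. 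Since $\g_{e}(a)=\Phi(a,e)$ is obtained from $\h(a)$ by an $\A e$-definable operation and $e \in \dcl(\A b)$, we get $\g_{e}(a) \in \dcl\bigl(\A b\,\h(a)\bigr)$ for all $a$, while $\h(a)\in\dcl(\A a)$; pushing $\p$ forward along $\h$ and then along the evaluation map at $e$ recovers $(\g_{e})_{*}\p$, which gives that $\h$ dominates $\p$ over $\A b$. Finally, $\h$ is $\A$-definable into the $\A$-definable set $T/E$ and $\p$ is generically stable $\A$-invariant, so \cref{prop:easy descent}, under whichever of hypotheses (1),(2) is assumed, upgrades this to: $\h$ dominates $\p$ over $\A$.

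The main obstacle is the claim that $\h$ takes values in $\Int(\Lat,\A)$, i.e. that the canonical parameter of a definable function from a $\Lat$-internal set into $\Lat$ lies in an $\A$-definable set internal to $\Lat$; this is precisely where stable embeddedness of $\Lat$ enters, and it requires the compactness step that bounds uniformly the number of $\Lat$-coordinates needed to code these functions. A secondary point requiring care is the domination transfer in the last step, which is routine but is phrased in terms of pushforwards of the global type rather than of individual realizations.
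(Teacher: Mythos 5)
Your proof is correct, and the construction of $\h$ is identical in substance to the paper's: both take the canonical parameter (equivalently, the code of the graph $\Gamma_a$) of the evaluation map $e' \mapsto \g(e',a)$ on the germ set, and both appeal to Lemma \ref{lem: internal $p$-germs} and stable embeddedness of $\Int(\Lat,\A)$ to land that parameter in $\Int(\Lat,\A)$. Where you differ from the paper is in the final verification. The paper first applies Proposition \ref{prop:easy descent} to conclude that $\g(e,\cdot)$ dominates $\p$ over $\A e$, and then directly proves that $\h$ dominates $\p$ over $\A$ by an explicit forking computation that mirrors the proof of Proposition \ref{prop:easy descent} (splitting into the non-forking and $\p^{\otimes n}$-generically-stable cases, choosing $e'\equiv_\A e$ independent from $ad$, using left/right transitivity, etc.). You instead observe that $\g_e = \mathrm{ev}_e\circ\h$ factors through $\h$, so that $\h$ being ``finer'' than $\g_e$ immediately inherits domination over $\A b$, and then you apply Proposition \ref{prop:easy descent} a second time — now with $\f=\h$, which is $\A$-definable — to descend from $\A b$ to $\A$. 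This reuse of Proposition \ref{prop:easy descent} is tidier and avoids repeating the case analysis; the only step you leave compressed is the sentence ``which gives that $\h$ dominates $\p$ over $\A b$,'' which rests on the (correct but unstated) principle that if $G = H\circ F$ with $H$ defined over the base and $G$ dominates, then $F$ dominates. Spelling that out — for $a\models\p\rest_{\A b}$, if $\h(a)\models\h(\p)\rest_{\A bd}$ then $\mathrm{ev}_e(\h(a))=\g_e(a)=\f_b(a)\models\f_b(\p)\rest_{\A bd}$, and domination by $\f_b$ finishes — would make the argument fully self-contained.
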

\begin{proof}
 Let $e=[\f_{b}]_{\p}$ be the $\p$-germ of $\f_{b}$. By Theorem \ref{stronggerms}, there is an $\A$-definable function $\g(x,y)$ such that  $\f_{b}(a)=\g(e,a)$ for $a \models\p\rest_{\A b}$. By Proposition \ref{prop:easy descent}, $\p$ is dominated by $\g(e, \cdot)$ over $\A e$.
 
 Let $\Theta$ be the set of all $\p$-germs of  instances of $\f_{\underline{}}$. This is an $\A$-definable set and $\Lat$-internal by Lemma \ref{lem: internal $p$-germs}(1). Let $\X(x,y)$ be the $\A$-definable set defined by  
 \begin{equation*}
 x \in \Theta \wedge \g(x,y) \in \Lat \wedge \exists! z (\g(x,y)=z).
 \end{equation*}
 For $a \models \p\rest_{\A}$ let $\Gamma_{a}=\{ (e', \g(e',a)) \ | \ (e',a) \in \X \}$. This is an $\A a$-definable subset of $\Theta \times \Lat$.  By Lemma \ref{lem: internal $p$-germs}(2) $\Int(\Lat, \A)$ is stably embedded and note that it is closed under interpretable sets. Thus we can define the $\A$-definable function $\displaystyle{\h \colon \p \rightarrow \Int(\Lat,\A)}$    which sends $a \models \p \rest_{\A}$ to $\ulcorner \Gamma_{a} \urcorner$.\\ 
 
\underline{Claim:} \emph{The type $\p$ is dominated by $\h$ over $\A$.}\\
\textit{Proof:} Let $a \models \p \rest_{\A}$ and let $d$ be such that $\h(a) \downfree_{\A}^{f} d$ and we aim to show that $a \downfree_{\A}^{f} d$.\\
\begin{enumerate}
\item Assume first that $\tp(b/\A)$ does not fork over $\A$. Since $e \in \dcl(\A b)$, then $e \downfree_{\A}^{f}\A$. Take $e' \equiv_{\A} e$ such that $e' \downfree_{\A}^{f} a d$ (\emph{c.f.} \cref{prop: properties of non-forking}.(7)). As $\h(a) \in \dcl(\A a)$ we must have
\begin{equation}\label{eqnf21}
e' \downfree_{\A}^{f} \h(a) d.
\end{equation}
Since $e' \downfree_{\A}^{f} a d$ by monotonicity (\emph{c.f.} \cref{prop: properties of non-forking}.(3)) we have $ e' \downfree_{\A}^{f} a$, and by symmetry (\emph{c.f.} \cref{prop: generically stable properties}.(4))
we conclude that:
\begin{equation}\label{eqnf22}
a \downfree_{\A}^{f} e'. 
\end{equation}

We will show that $\g(e',a) \downfree_{\A e'}^{f} d$. Combining base monotonicity (\emph{c.f.} \cref{prop: properties of non-forking}.(4)) and \cref{eqnf21} we have that $e' \downfree_{\A d} \h(a)$. Because $h(\p)$ is a generically stable type $\A$-invariant type, by symmetry (\emph{c.f.} \cref{prop: generically stable properties}) we conclude
\begin{equation}\label{eqnf23}
\h(a) \downfree_{\A d}^{f} e'.
\end{equation}

Combining \cref{eqnf23}, the hypothesis that $\h(a) \downfree_{\A} d$ and right transitivity (\emph{c.f} \cref{prop: generically stable properties}.(5)),  we have that $\h(a) \downfree_{\A}^{f} e' d$ so $\h(a) \downfree_{\A e'}^{f} d$ (by base monotonicity \cref{prop: properties of non-forking}.(4)). As $\g(e',a) \in \dcl(\A e',\h(a))$, thus $\g(e', a) \downfree_{\A e'}^{f} d$. Since $\p$ is dominated by $\g(e', \cdot)$ over $\A e'$, then 
\begin{equation}\label{eqnf24}
a \downfree_{\A e'}^{f} d.
\end{equation}
By \cref{eqnf24}, \cref{eqnf22} and right transitivity (\emph{c.f} \cref{prop: generically stable properties}.(5)) $a \downfree_{\A}^{f} de'$. By monotonicity of non-forking $a \downfree_{\A}^{f} d$ (\emph{c.f.} \cref{prop: properties of non-forking}.(3)). This concludes the proof of the claim for the first case. \\
\item Assume now that $\p^{\otimes n}$ is generically stable for all $n< \omega$. By \cref{lem: sequence for gen n stable} there is a sequence $(\bar a_{i}, e_{i} \ | \ i < |\T(\A)|^{+})$ where $\bar a_{i}$ is a Morley sequence in $\p^{\omega}$ over $\A$,  $\bar a_{i}$ is a $\p$-basis of $e_{i}$ and $\bar{a}_{i}e_{i} \equiv_{\A} \bar{a}_{0}e$.\\
Since $\h(a) \downfree_{\A}^{f} d$, we may assume that $\h(a) \downfree_{\A}^{f} d e_{<|\T(\A)|^{+}} \bar a_{<|\T(\A)|^{+}}$ (\emph{c.f.} \cref{prop: properties of non-forking}.(7)). In particular, for any $k \in |\T(\A)|^{+}$ we have $\h(a) \downfree_{\A e_{k}}^{f} d$ (by base monotonicity \emph{c.f.} \cref{prop: properties of non dividing}.(4)). Thus $\g(e_{k},a) \downfree_{\A e_{k}}^{f} d$, as $\g(e_{k},a) \in \dcl(\A e_{k} \h(a))$. \\
Because $\p^{\omega}$ is generically stable over $\A$ there is some $i \in |\T(\A)|^{+}$ such that $a \downfree_{\A}^{f} \bar a_{i}$ (\emph{c.f.} \cref{fact: first things about generically stable}.(2) and symmetry -\emph{c.f.} \cref{prop: generically stable properties}.(4)), and since $\bar{a}_{i}$ is a $\p$-basis of $e_{i}$ we have 
\begin{equation}\label{eqnf25}
a \downfree_{\A}^{f} e_{i}.
\end{equation}
Since $\g(e_{i},a) \downfree_{\A e_{i}}^{f} d$ by domination of $\p$ via $\g(e_{i}, \cdot)$ over $\A e_{i}$ one has
\begin{equation}\label{eqnf26}
a \downfree_{\A e_{i}}^{f} d.
\end{equation}
By \cref{eqnf25}, \cref{eqnf26} and right transitivity (\cref{prop: generically stable properties}.(5)), $a \downfree_{\A}^{f} e_{i} d$ so $a \downfree_{\A}^{f} d$ (by monotonicity, \cref{prop: properties of non-forking}.(3)). This concludes the proof of the claim for the second case, and therefore the proof of the theorem.
\end{enumerate}
\end{proof}
 
\begin{remark}\label{rem: easy ACVF} This already provides a simplified proof of descent for stably dominated types in $\ACVF$. Let $\p$ be a global generically stable $\A=\acl(\A)$-invariant type and $k$ be the residue field sort. For any set of parameters $ \A \subseteq \C=\acl(\C)$ one can consider the structure $\mathrm{VS}_{k,\C}$ defined as in \cite[Definition 7.6]{HHM}. By \cite[Proposition $7.8$]{HHM}, $\St_{\C}$, $\Int(k,\C)$ and $\mathrm{VS}_{k,\C}$ are the same. Thus if $\p$ is stably dominated over $\C$, by adding parameters for the basis of each $k$-vector space $\red(s)=s/\mathcal{M}s$ where $s$ is a $\C$-definable $\mathcal{O}$-lattice we can assume that the domination function goes into $k$. By Theorem \ref{thm: descent for internal sets}, $\p$ is stably dominated over $\A$.  
\end{remark}
\subsection{A brief description of the proof of descent}
We would like now to move to the proof of Theorem \ref{thm:descent}. The difference with the previous theorem is that we allow the set $S=S_b$ to depend on $b$. This makes the proof substantially more difficult because we need to produce a stable stably-embedded set defined over $\A$ and it is not clear a priori how to do that. The proof is rather technical and consists of two parts: 
\begin{enumerate}
\item For the simplified case that $\tp(b/ \A)$ has a global $\A$ invariant extension $\q$, we show that given $(b_i)_{i \in \I}$, a sufficiently large Morley sequence in $\q$ over $\A$, and $a \models \p\rest_{\A}$, then we can remove a bounded number of points from the sequence $(b_i)_{i \in \I}$ so as to make it independent from $a$ (in the sense that $a$ realizes $\p$ over it, this is \cref{lem:independent subsequence for stably dominated types}).  %More precisely, it allows us to show that given a global $\A$-invariant extension $\q$ of $\tp(b/\A)$ and $(b_{i})_{i \in \I}$ a sufficiently large Morley sequence in $\q$, if $a \models \p\rest_{\A}$  then there is some element in the sequence $b_{i}$ such that $a \models \p_{\rest_{\A b_{i}}}$ (Proposition \ref{prop: a independent from some element in morley seq of q}).\\
For the general case, we show the existence of an $\A$-indiscernible sequence sequence $(b_{i})_{i \in |\T(\A)|^{+}}$ in the type $\tp(b/\A)$ such that:
\begin{enumerate}
\item  $b_{i} \downfree_{\A}^{\p} b_{< i}$ for all $i$,
\item  $(b_{i})_{i>0}$ is $\acl$-independent over $\A b_{0}$, and
\item  $(b_{i})_{i \in |\T(\A)|^{+}}$ is $\acl^{\vee}$-independent over $\A$.
\end{enumerate}
We prove that given $a \models \p\rest_{\A}$, we can remove a bounded number of points from the sequence $(b_i)_{i \in |\T(\A)|^{+}}$ so as to make it independent from $a$ (in the sense that $a$ realizes $\p$ over it, this is \cref{lem: independent over big tail p^{n} generically stable}, and it is where  $\p$-independence and $\acl$-independence of the sequence over the first element play a role). \\
This is the key step that allows us to simplify and generalize the arguments from \cite[Chapter $4$]{HHM}. 
\item The second part of the proof essentially follows the construction from \cite[Theorem $4.9$]{HHM}, though the details of it and the proof of why it works are slightly different. 
\end{enumerate}
Each of those two parts are substantially easier to prove if we assume that $\tp(b/\A)$ extends to a global $\A$-invariant type as in \cite[Theorem 4.9]{HHM} and this will be made explicit at the cost sometimes of giving two different proofs of the same result. To simplify the presentation we present first a complete proof under the assumption that $\tp(b/\A)$ has a global $\A$-invariant extension $\q$ (Theorem \ref{thm: descent with invariant extension}). Once this has been clarified, we prove Theorem \ref{thm:descent} making explicit the required modifications for the argument.
\subsection{The proof of descent assuming that $\tp(b/\A)$ has a global non-forking extension $\q$}
In this section we will prove the following theorem.
\begin{theorem}\label{thm: descent with invariant extension}  Let $\p$ be a global $\A$-invariant type and let $b$ be such that $\p$ is stably dominated over $\A b$. Assume the type $\tp(b/\A)$ has a global $\A$-invariant extension $\q$. Then $\p$ is stably dominated over $\A$.
\end{theorem}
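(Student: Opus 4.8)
The plan is to follow the two-part strategy laid out in the introduction: first extract, for any $a\models\p\rest_{\A}$, a large subsequence of a Morley sequence $(b_i)_{i\in\I}$ of $\q$ over $\A$ which is independent from $a$, and then use the canonical-base machinery of \cref{prop: domination by the canonical base} to produce an $\A$-definable pro-definable map witnessing stable domination over $\A$.

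\textbf{Step 1: the dominating datum over $\A$.} Fix $a\models\p\rest_{\A}$ and take $\bar b=(b_i)_{i\in\I}$ a Morley sequence of $\q$ over $\A$ with $b_0=b$ and $|\I|\geq|\T(\A)|^{++}$. By \cref{lem: big tail independent when there is a non-forking extension} there is $\J\subseteq\I$ with $|\I\setminus\J|\leq|\T(\A)|$ and $a\downfree^f_{\A}b_{\J}$. For $i\in\J$ set $\C_i=\f(\bar b,a)_i=\Cb\big(\St_{\A b_i}(b_j,\St_{\A b_j}(a))\mid j\in\J_{>i}\big)$ as in the Notation; by \cref{prop: domination by the canonical base} each $\C_i$ lies in $\St_{\A b_i}$, is contained in $\acl(\A b_i a)$, and dominates $a$ over $\A b_{\J_{\geq i}}$. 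The point of having a global invariant $\q$ is that we may now \emph{define} the germ: since $\q$ is $\A$-invariant, as $(b_i)$ ranges over realisations of an initial segment of a Morley sequence, the tuple $\C_i$ depends in an $\A$-definable (pro-definable) way on $a$ and on a bounded piece of the sequence. More precisely, for each finite projection $\pi$ one shows that the map $a\mapsto \f_\pi(\bar b,a)_i$ factors through an $\A$-definable function of $a$ and finitely many $b_j$'s, using that $\C_i\in\dcl$ of any sufficiently long infinite sub-block of $(\St_{\A b_i}(b_j,\St_{\A b_j}(a)))_j$ (\cref{fact: facts canonical base sequence}) together with $s$-indiscernibility/total indiscernibility of the Morley sequence.

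\textbf{Step 2: assembling a pro-$\A$-definable function.} The family $\{\C_i\mid i\in\J\}$ is itself, after passing to a cofinal piece, indiscernible, and its canonical base $\C=\Cb(\C_i\mid i\in\J)$ lies in $\St_{\A}$ (it is fixed by $\Aut(\M/\A)$ up to a bounded orbit, since $\q$ is $\A$-invariant and the $b_i$ are interchangeable). Define $\h\colon\p\to\St_{\A}$ by sending $a$ to (an enumeration of) this canonical base, which one must check is genuinely $\A$-definable and independent of the chosen Morley sequence — this is where $\A$-invariance of $\q$ does the work, exactly mirroring \cite[Theorem 4.9]{HHM}. Then one verifies that $\h$ stably dominates $\p$ over $\A$: if $d\in\M$ and $\St_{\A}(d)\downfree^f_{\A}\h(a)$, one climbs back up — for suitable $i$, $d\downfree^f_{\A b_{\J_{\geq i}}}\C_i$ — and applies the "Moreover" clause of \cref{prop: domination by the canonical base} to conclude $a\downfree^f_{\A b_{\J_{\geq i}}}d$, hence $\tp(d/\A\h(a))\vdash\tp(d/\A a)$ by left transitivity and the domination already established, giving $a\models\p\rest_{\A d}$.

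\textbf{Main obstacle.} The technical heart is Step 1's second half together with Step 2: showing that the assignment $a\mapsto\C_i$, and then $a\mapsto\Cb(\C_i\mid i)$, is \emph{pro-$\A$-definable} rather than merely $\A b$-invariant — one must argue that the bounded set of conjugates of the relevant germ over $\A$ is itself $\A$-definable (via a $\Code$-type construction as in \cref{lem: elements in aclV are definable when base is acl closed}) and that different choices of $\bar b$ give the same $\h$. Handling the bounded-orbit-vs-definable gap carefully, and tracking that all independences take place inside the stable, stably embedded structures $\St_{\A b_i}$ so that weight arguments (\cref{lem: weight stable}) and transitivity of forking apply, is where the proof is delicate; the invariance of $\q$ is precisely the hypothesis that makes the orbit of the germ over $\A$ manageable, whereas in \cref{thm:descent} this is exactly what must be replaced by the $\p$-independence and $\acl^\vee$-independence of the sequence.
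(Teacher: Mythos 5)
Your Step~1 matches the paper: \cref{lem: big tail independent when there is a non-forking extension} extracts the large subsequence $b_{\J}$ with $a\downfree^f_{\A}b_{\J}$, and \cref{prop: domination by the canonical base} supplies the canonical-base data $\C_i=\f(\bar b,a)_i\subseteq\St_{\A b_i}$ with the required domination over $\A b_{\J_{\geq i}}$.

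The gap is in Step~2, and it is exactly the obstacle you flag without resolving. You propose to take $\C=\Cb(\C_i\mid i\in\J)$ and assert it lies in $\St_{\A}$ and that $a\mapsto\C$ is pro-$\A$-definable. But the $\C_i$ live in the \emph{different} stable structures $\St_{\A b_i}$, not in a fixed $\A$-definable stable sort, so $\Cb(\C_i\mid i\in\J)=\bigcap_{\I_0}\dcl(\C_{\I_0})$ has no reason to be a subset of $\St_{\A}$; and "fixed by $\Aut(\M/\A)$ up to a bounded orbit" does not produce membership in an $\A$-definable stable stably embedded set, which is what the definition of stable domination requires. The paper never takes this canonical base. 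Instead it builds the target sort from scratch: it shows that "$a$ and $a'$ have the same $\pi$-germ on $\bar b$" is a definable condition on $\R$ (\cref{lem: almost all are the same or different}, \cref{cor: reducing to n}, \cref{lem: definability of almost everywhere both sequences agree}), that it is independent of the choice of Morley sequence (\cref{lem:same germs}), and hence that the relation $\E_{\pi}$ of \cref{def: Q and relations} is an $\A$-definable equivalence relation; the germ sort is then the $\A$-interpretable quotient $\Lat_{\pi}=\X/\E_{\pi}$. Claim~1 of the proof shows every $\E_{\pi}$-class has a representative of the form $(\bar b,\bar d)$ over one \emph{fixed} Morley sequence $\bar b$, giving a pro-$\A\bar b$-definable surjection $\St_{\A\bar b}\twoheadrightarrow\Lat_{\pi}$; Claim~2 deduces that $\Lat_{\pi}$ is stable and stably embedded first over $\A\bar b$ and then, via \cite[Lemma 2.17]{HHM}, over $\A$, so that $\Lat_{\pi}\subseteq\St_{\A}$. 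The dominating map is then $\alpha(a)=\dcl(\A a)\cap\bigsqcup_{\pi}\Lat_{\pi}$, not the canonical base of the $\C_i$'s. Finally, your verification of domination skips the step in Claim~3 where one must \emph{construct} a tuple $\bar d$ realizing the germ so that $\bar d\downfree^f_{\A\bar b_{\J}\alpha(a)}e$ (using that stable types form extension bases), then apply \cref{prop: properties of non-forking}(5) and \cref{prop: domination by the canonical base}. Without the $\E_{\pi}$/$\Lat_{\pi}$ construction you have no $\A$-definable stable sort for $\h$ to land in, so the proof as sketched does not close.
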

In the following subsection we tackle the first step of the proof, i.e. we show that given $(b_i)_{i \in \I}$, a sufficiently large Morley sequence in $\q$ over $\A$, and $a \models \p\rest_{\A}$, one can remove a bounded number of points from the sequence $(b_i)_{i \in \I}$ so as to make it independent from $a$ (in the sense that $a$ realizes $\p$ over it, this is \cref{lem:independent subsequence for stably dominated types}).
\subsubsection{Extracting a large sub-sequence independent from $a \models \p_{\rest_{\A}}$ when $\tp(b/\A)$ has a global $\A$-invariant extension $\q$}

\begin{lemma}\label{lem:independent subsequence for stably dominated types}
    Let $\p$ be a global $\A$-invariant type and assume it is stably dominated over $\B \supseteq \A$.  Let $(b_i)_{i\in \I}$ be a sequence of finite tuples, indiscernible and $\acl$-independent over $\B$. Assume $|\I| \geq |\T(\B)|^{+}$ and let $a \models \p \rest_{\B}$, then there is $\J \subseteq \I$, $|\I \setminus \J| \leq |\T(\B)|$, such that $a \downfree^{f}_{\B} b_{\J}$.
    
    In particular, the statement holds if $(b_{i})_{i \in \I}$ is an indiscernible and non-dividing sequence over $\B$. 
\end{lemma}
\begin{proof}
    \underline{Claim:} \emph{For each  $k<\omega$,  there is $\I_{k} \subseteq \I$ such that $|\I \setminus \I_{k}| \leq | \T(\B)|$ and $a \downfree_{\B}^{f} b_{S}$ for any subset $S$ of size $k$ of $\I \setminus \I_{k}$.}\\
    \textit{Proof of claim:} If there is a subset $\J_{0}$ of $\I$ of size $k$  such that $a\ndownfree^{f}_{\B} b_{\J_{0}}$, let $\I'_{0} = \I \setminus \J_{0}$. If there is a subset $\J_{1}$ of $\I'_{0}$ of size $k$ such that $a\ndownfree^{f}_{\B} b_{\J_{1}}$, then let $\I'_{1} = \I'_{0} \setminus \J_{1}$, and keep going. It is sufficient to argue that the removal process must stop after less than $|\T(\B)|^+$ steps. We proceed by contradiction. First, by the pigeonhole principle we can assume that it is always the same formula $\psi(a,b_{J_{l}})$ that witnesses the non-independence $a \ndownfree^{f}_{\B} b_{J_{l}}$ for all $l \in |\T(\B)|^{+}$.
    
    Next, we can further assume that the sequence of indices $(\J_{l})_{l<|\T(\B)|^+}$ is quantifier-free indiscernible in the language ${<}$ on $\I$. (Why? Set $L=|\T(\B)|^+$. Consider the first order structure $N$ where we add to the universe a linearly ordered sort for $I$ along with functions from $I$ to $M$ enumerating $b_i$ for each $i\in I$, another sort $L$ along with $k$ functions from $L$ to $I$ enumerating $J_l$ for $l\in L$. In an elementary extension $N^*$ of this structure, take an indiscernible sequence $L_0 \subseteq L^*$ indexed by $|\T(\B)|^+$. Then replace $M$ by $M^*$, $(b_i)_{i\in I}$ by $(b_i)_{i\in I^*}$ and consider the sequence $(J_l)_{l\in L^*}$. We still have $a \ndownfree^{f}_{\B} b_{J_{l}}$ for all $l \in L^*$, as witnessed by the same formula $\psi$.)
    
    The sequence $(b_{\J_{i}})_{i<|\T(\B)|^+}$ is a $\B$-indiscernible sequence and $\acl$-independent over $\B$. By Lemma \ref{lem: acl-independence implies Morley in St} $(\St_{\B}(\B; b_{\J_{i}}):i<|\T(\B)|^+)$ is independent in the stable structure $\St_{\B}$. By Lemma \ref{lem: weight stable}  $\St_{\B}(\B; a) \downfree^{f}_{\B} \St_{\B}(\B; b_{\J_{i}})$ for some $i \in |\T(\B)|^{+}$. Because $\p$ is stably dominated over $\B$, then  $a\downfree^{f}_{\B} b_{\J_{i}}$, and this is a contradiction. This concludes the proof of the Claim.\\
    
    Now take $\displaystyle{\J=\bigcup_{k < \omega} \I_{k}}$, then by construction $a \downfree^{f}_{\B} b_{\J}$ and $|\I \setminus \J| \leq |\T(\B)|$.\\
    The last part of the statement is an immediate consequence of \cref{lem: acl non dividing}.
    \end{proof}

\begin{proposition} \label{prop: a independent from some element in morley seq of q} Let $\p$ be a global $\A$-invariant type and let $b$ be such that $\p$ is stably dominated over $\A b$.  Let $\q$ be a global $\A$-invariant extension of $\tp(b/\A)$. Let $(b_i)_{i\in \I}$ be a Morley sequence of $\q$ over $\A$ with $|\I|\geq |\T(\A)|^{+}$ and let $a \models \p\rest_{\A}$. Then there is $i\in \I$ such that $a \downfree_{\A}^{f} b_i$.    
\end{proposition}
\begin{proof}
Without loss of generality assume that $|\{ i \in \I \ | \ a \ndownfree_{\A}^{f} b_{i}\}| \geq |\T(\A)|^{+}$, recall that we write $\M$ to denote the monster model.\\ 
\underline{Step 1, Going up:} \emph{ Assume that there is some $i\in \I$ such that $a\models \p\rest_{\A b_i}$, then there is a subset $J\subseteq \I_{>i}$ such that $|\I_{>i}\setminus \J|\leq |\T(\A)|$ and we have $a\models \p\rest_{\A b_ib_{\J}} $}.\\
\textit{Proof:} Since $\p$ is dominated over $\A b$, then it is dominated over $\A b_{i}$. The sequence $b_{\I_{> i}}$ is indiscernible and non-dividing over $\A b_{i}$.  We conclude by Lemma \ref{lem:independent subsequence for stably dominated types} (applied to the base $\A b_{i}$).\\
\underline{Step 2, Going down:} \emph{Assume that there is some $i\in \I$ such that $a\models \p\rest_{\A b_i}$ and $\I$ does not have a maximal element.  Then $|\{ j < i \ | \ a \ndownfree_{\A}^{f} b_{j} \} | \leq |\T(\A)|$.}\\ 
\textit{Proof:} Assume not, by the pigeonhole principle  we may assume that forking dependence is witnessed by the same $\La(\A)$-formula $\phi(x,y)$, i.e.  for $d\models \q\rest_{\A}$, $\p\rest_{\A d} \vdash \phi(x,d)$ and $\M \vDash \neg \phi(a,b_j)$ holds for $j<i$.  By Step 1, we can further assume that $a\models \p\rest_{\A b_{\geq i}}$.\\
 Let $\R= \R^{0}+ \R^{-1}$ where $k \in \{ 0,-1\}$ and $\R^{k}$ is a copy of $(\R, \leq)$, a dense linear order without endpoints of size bigger than $|\T(\A)|^{+}$.  By compactness, we can construct a Morley sequence $(b_{\ell})_{\ell \in \R}$ in $\q$ over $\A$  such that $a\models \p \rest_{\A b_{\ell}}$ for $\ell \in \R^{-1}$ while $\M \models \neg \phi(a, b_{\ell})$ holds for $\ell \in \R^{0}$. \\
 We are now going to construct inductively sequences $(\bar b_{i})_{i < |\T(\A)|^{+}}$ and $(a_{i})_{i < |\T(\A)|^{+}}$ satisfying the following conditions:

 \begin{enumerate}
 \item for every $\eta < |\T(\A)|^{+}$, $a_{\eta} \models \p \rest_{\A a_{<\eta} \bar b_{<\eta}}$ and  $\M \models \neg \phi(a_{\eta},b)$ for $b$ and element of the sequence $\bar b_{\eta}$,
 \item the concatenation $ \bar b_{\eta}+ \dots+ \bar b_{0}+ \bar b_{-1}$ is a Morley sequence in $\q$ over $\A$, 
 \item if $\beta< \eta$ then $a_{\beta} \nmodels \p\rest_{A b}$ for $b$ an element of the sequence $\bar b_{\eta}$. 
 \end{enumerate}
Set $\bar b_{-1} = (b_{\ell})_{\ell \in \R^{-1}}$, $\bar b_{0} = (b_{\ell})_{\ell \in \R^{0}}$, and $a_0=a$. Assume $(a_{\beta})_{\beta< \eta}$ and $(\bar b_{\beta})_{\beta< \eta}$ have been constructed satisfying the $(a), (b)$ and $(c)$ requirements. Let $a_{\eta} \models \p \rest_{\A a_{<\eta} \bar b_{<\eta}}$. By compactness we can find $\bar b_{\eta}$ such that $\bar b_{\eta} + \dots+ \bar b_{0} + \bar b_{-1}$ is a Morley sequence of $\q$ over $\A$ and $\M \models \neg \phi(a_{\eta},b)$ for all elements $b$ in the sequence $\bar b_{\eta}$.\\ 
We just need to verify that condition $(c)$ holds. Fix $\beta < \eta$ and assume there is some $b$ in $\bar b_{\eta}$ such that $a_{\beta} \downfree^{f}_{\A} b$. By Step $1$ we would have $a_{\beta} \models \p\rest_{\A b'}$ for some $b'$ tuple in the sequence $\bar b_{\beta}$, but we know that this is not the case by the induction hypothesis, since condition $(c)$ holds for $\beta$.\\
Continuing this construction for $|\T(\A)|^+$ steps, we find that for any $\eta<|\T(\A)|^{+}$ and any $b$ element in the sequence $\bar b_{|\T(\A)|^{+}}$, $a_{\eta} \nmodels \p \rest_{\A b}$. This is a contradiction to the generic stability of $\p$ (\emph{c.f.} \cref{fact: first things about generically stable}.(2)).\\
\underline{Step 3, Going up and down:}\\
\textit{Proof:} Let $(b'_{i})_{i \in \I}$ be a Morley sequence of $\q$ over $\A a (b_{i})_{i \in \I}$. In particular, $b'_{0} \downfree^{f}_{\A} a (b_{i})_{i\in \I}$, then $a \downfree^{f}_{\A} b_{0}'$ (this follows by monotonicity of non-forking \cref{prop: properties of non-forking}.(3) and symmetry \cref{prop: generically stable properties}.(4)). The statement of the Proposition follows by Step $2$.  
\end{proof}
\begin{lemma} \label{lem: big tail independent when there is a non-forking extension} Let $\p$ be a global $\A$-invariant type and stably dominated over $\A b$. Let $\q$ be a global $\A$-invariant extension of $\tp(b/\A)$. Let $(b_i)_{i\in \I}$ be a Morley sequence of $\q$ over $\A$ with $|\I|\geq |\T(\A)|^{+}$ and let $a \models \p\rest_{\A}$. Then there is $\J \subseteq \I$ such that $|\I \setminus \J| \leq |\T(\A)|$ and $a \downfree^{f}_{\A} b_{\J}$. 
\end{lemma}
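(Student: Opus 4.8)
The plan is to locate a point $b_{i_0}$ of the sequence that is at once \emph{good} --- i.e.\ $a\models\p\rest_{\A b_{i_0}}$ --- and has only $\le|\T(\A)|$ predecessors. Then the part of the sequence lying \emph{above} $b_{i_0}$ is again a Morley sequence, now over $\A b_{i_0}$, over which $\p$ \emph{is} stably dominated, and it already contains all but boundedly many points of $\I$. One then invokes \cref{lem:independent subsequence for stably dominated types} over $\A b_{i_0}$ and patches things together with right transitivity.

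To begin, I would make the standing reductions. As $\p$ is stably dominated over $\A b$ it is generically stable (\cref{fact: stable domination implies generically stable for all n}), and being $\A$-invariant it is generically stable over $\A$; hence by \cref{prop: generically stable properties}(2) the type $\p\rest_\A$ is stationary and $\p$ is its only global non-forking extension, so that $a\downfree^f_\A C\iff a\models\p\rest_C$ for every $C\supseteq\A$. If $|\I|\le|\T(\A)|$ the conclusion is trivial ($\J=\emptyset$), so assume $|\I|\ge|\T(\A)|^+$. I may further assume that the index set has a proper initial segment $\I_0$ of order type $\kappa:=|\T(\A)|^+$: if not, use that $\q$ is $\A$-invariant to prepend to $(b_i)_{i\in\I}$ a chain of order type $\kappa$, obtaining a Morley sequence of $\q$ over $\A$ indexed by $\kappa+\I$; a subset of $\kappa+\I$ with complement of size $\le|\T(\A)|$ meets $\I$ in a subset of $\I$ with the same property, and non-forking independence passes to sub-tuples, so it is enough to treat $\kappa+\I$.

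Now apply \cref{prop: a independent from some element in morley seq of q} to the sub-Morley sequence $(b_i)_{i\in\I_0}$, of length $\kappa\ge|\T(\A)|^+$: this produces $i_0\in\I_0$ with $a\models\p\rest_{\A b_{i_0}}$. Since $b_{i_0}\equiv_\A b$, pick $\sigma\in\Aut(\M/\A)$ with $\sigma(b)=b_{i_0}$; as $\sigma(\p)=\p$, $\p$ is stably dominated over $\A b_{i_0}$. The predecessors of $i_0$ form an initial segment of $\I_0$ of order type $<\kappa$, hence of cardinality $\le|\T(\A)|$. On the other hand $(b_j)_{j>i_0}$ is a Morley sequence of $\q$ over $\A b_{i_0}$ (note $\q$ is $\A b_{i_0}$-invariant), hence indiscernible and non-dividing over $\A b_{i_0}$, hence $\acl$-independent over $\A b_{i_0}$ by \cref{lem: acl non dividing}. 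Applying \cref{lem:independent subsequence for stably dominated types} with base $\A b_{i_0}$ gives $\J_1\subseteq\{j:j>i_0\}$ with $|\{j:j>i_0\}\setminus\J_1|\le|\T(\A b_{i_0})|=|\T(\A)|$ and $a\downfree^f_{\A b_{i_0}}b_{\J_1}$. By right transitivity for the generically stable $\p$ (\cref{prop: generically stable properties}(5)), $a\downfree^f_\A b_{i_0}b_{\J_1}$. Put $\J:=\J_1\cap\I$; then $a\downfree^f_\A b_\J$ by monotonicity, while (every element of $\I$ being $>i_0$) $\I\setminus\J=\I\setminus\J_1\subseteq\{j:j>i_0\}\setminus\J_1$, so $|\I\setminus\J|\le|\T(\A)|$, as wanted.

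I do not expect a genuinely hard step. The single point requiring care is that $b_{i_0}$ be drawn from a \emph{proper initial segment} of (an extension of) the sequence, which is exactly what renders the piece below $b_{i_0}$ negligible. This lets us avoid the delicate question of how the ``head'' of a Morley sequence of a merely $\A$-invariant type behaves over its ``tail'' (such sequences being only order-indiscernible, not totally indiscernible as generically stable ones are) --- the obstruction one would meet if one tried instead to produce a $\J$ containing points both below and above $b_{i_0}$.
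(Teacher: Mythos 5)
Your proof is correct and takes essentially the same route as the paper's: locate a good point $b_{i_0}$ (one with $a\models\p\rest_{\A b_{i_0}}$) via \cref{prop: a independent from some element in morley seq of q}, apply \cref{lem:independent subsequence for stably dominated types} to the tail over $\A b_{i_0}$, and conclude by right transitivity for the generically stable $\p$. The one point you handle more carefully than the paper's written proof is the possibility that the good index $i_0$ supplied by the Proposition has more than $|\T(\A)|$ predecessors in $\I$ --- in which case excising $\I_{\le i_0}$ would violate the size bound. Your prepending trick (extend to a Morley sequence indexed by $\kappa+\I$ with $\kappa=|\T(\A)|^+$, locate $i_0$ in the new head, so that all of $\I$ sits above $i_0$) deals with this cleanly while only using the Proposition's statement as a black box; the paper's one-line proof leaves this step implicit, and would otherwise need to lean on the finer information embedded in Step 2 of the Proposition's proof (that the bad indices below a good one number at most $|\T(\A)|$).
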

\begin{proof}
By Proposition \ref{prop: a independent from some element in morley seq of q} there is $i \in \I$ such that $a \downfree_{\A}^{f} b_{i}$, as $\p$ is dominated over $\A b$ it is dominated over $\A b_{i}$. The sequence $b_{\I_{> i}}$ is indiscernible and non-dividing over $\A b_{i}$.  The conclusion follows by Lemma \ref{lem:independent subsequence for stably dominated types}.
\end{proof}

The following proposition can be thought of as saying that we can modify a dominating function $\f_b$ so that it \emph{factors through} a conjugate function $\f_{b'}$.

\begin{proposition}\label{prop: replacing dominating function} Let $\p$ be a global $\A$-invariant generically stable type. Assume that $\p$ is stably dominated by $\f_{b}$ over $\A b$. Let $b' \vDash \tp(b/\A)$, and $\F \colon \p \rightarrow \St_{\A b}$ be the $\A bb'$-definable function defined by sending $a \models \p\rest_{\A bb'}$ to an enumeration of $\St_{\A b}(\A b;b'\f_{b'}(a))$.  Then $\F$ dominates $\p$ over $\A b' b$. 
\end{proposition}
\begin{proof}
Let $a \models\p\rest_{\A b' b}$ and $d$ be a tuple such that $\St_{\A b}(\A b; b'd) \downfree^{f}_{\A b'b} \St_{\A b}(\A b; b'\f_{b'}(a))$ inside the stable structure $\St_{\A b}$. To simplify the notation we will write $\D$ to indicate $\St_{\A b}(\A b; b'd)$. Let $\mathrm{r}$ be a global non-forking extension of $\tp(\D/\A b \St_{\A b}(\A b; b'\f_{b'}(a)))$ and $(\D_{i})_{i \in \I }$ be a Morley sequence of $\mathrm{r}$  over $\A b \St_{\A b}(\A b;  b'\f_{b'}(a))$ such that $\D_{0}=\D$. Note that $(\D_{i})_{i \in \I}$ is a sequence of (infinite) tuples in $\St_{\A b}$.\\
Because $\St_{\A b}$ is stably embedded over $\A b$ and eliminates imaginaries, by \cref{lem: facts stably embedded}.(1) for all $i \in 2^{|\T(\A b)|^{+}}$, $\displaystyle{
\tp(\D_{i}/\A b \St_{\A b}(\A b; b'\f_{b'}(a)) \vdash \tp(\D_{i}/ \A b b' \f_{b'}(a))}$, thus:
\begin{equation}\label{eqsametype}
 \D_{i} \equiv_{\A b b' \f_{b'}(a)} \D.
 \end{equation}
 By Lemma \ref{lem: weight stable} there is some $i \in 2^{|\T(\A b)|^{+}}$ such that $\f_{b}(a) \downfree^{f}_{\A b b' \f_{b'}(a)} \D_{i}$. As $\D_i \downfree^{f}_{\A b'b} \St_{\A b}(\A b; b'\f_{b'}(a))$ (by the first line of this proof), transitivity of non-forking in the stable structure $\St_{\A b}$ implies \[\f_{b}(a) \downfree^{f}_{\A b b'} \D_{i}.\] Since $\St_{\A b}$ is stably embedded over $\A b$ and eliminates imaginaries $\f_{b}(a) \downfree^{f}_{ \St_{\A b}(\A b; b')} \D_{i}$ (\emph{c.f.} \cref{lem: facts stably embedded}.(2)). In particular,
\begin{equation}\label{eqtran1}
\f_{b}(a) \downfree^{f}_{\A b \St_{\A b} (\A b; b')} \St_{\A b}(\A b; b' \D_{i}).
\end{equation}
Because  $a \models \p_{\rest_{\A b' b}}$ we have $a \downfree^{f}_{\A b} b'$ and by \cref{lem: facts stably embedded}.(2):
\begin{equation}\label{eqtrans2}
\f_{b}(a) \downfree^{f}_{\A b} \St_{\A b}(\A b; b').
\end{equation}
 By transitivity of non-forking in stable theories together with \cref{eqtran1} and \cref{eqtrans2} one has
 \begin{equation}\label{final}
 \f_{b}(a) \downfree^{f}_{\A b}\St_{\A b}(\A b; b'\D_{i}).
 \end{equation}
 Since $\f_{b}$ dominates $\p$ over $\A b$, \cref{final} implies $a \models \p \rest_{\A b b' \D_{i}}$. In particular, $a \downfree_{\A b'} \D_{i} b$, and by \cref{lem: facts stably embedded}.(2) we have $\f_{b'}(a) \downfree^{f}_{\A b'} \St_{\A b'}(\A b';\D_{i} b)$ inside the stable structure $\St_{\A b'}$. Since $\D_{i} \equiv_{\f_{b'}(a) \A bb'} \D$ (\emph{c.f.} \cref{eqsametype}), then $\f_{b'}(a) \downfree^{f}_{\A b'} \St_{\A b'}(\A b';\D b)$. 
 Since $\f_{b'}$ dominates $\p$ over $\A b'$ , this implies that $a \models \p \rest_{\A b' b \D}$ . Then $\f_{b}(a) \downfree^{f}_{\A b} \D$, as $\D=\St_{\A b}(\A b; b'd)$ and by domination of $\p$ over $\A b$ we conclude that $a \models \p\rest_{\A b b' d}$. 
\end{proof}

Let us give some intuition for the next step of the proof. Recall that given a dominating function $\f_b$, our goal is to find another one that does not depend on $b$. We have seen in the previous proposition, that we can in some sense factorize $\f_b$ through a conjugate $\f_{b'}$. It is then natural to push this further. Say that we had access to a Morley sequence $(b_i)_{i<\kappa}$ of $\tp(b/\A)$ (for instance if $\tp(b/\A)$ extends to an invariant type). Then we would have that $\f_b$ can factor through any $\f_{b_i}$. So we can hope that it would factor through something of the form $a \mapsto \bigcap_i \dcl(\A b_i a)$. If the sequence $(b_i)_i$ is sufficiently independent, we can hope that this in turns descends to a function over $\A$. The details are more complicated because we need to keep track of the base, juggle between the different stable structures, and we cannot literally take the intersection of $\dcl$, but use canonical bases of indiscernible sequences instead.

\begin{proposition}\label{prop: domination by the canonical base}
Let $\p$ be a global generically stable $\A$-invariant type. Assume that $\p$ is stably dominated over $\A b$ via the function $\f_{b}$. Let $\bar{b}=(b_{i})_{i \in \I}$ be an $\A$-indiscernible sequence where $b_{0}=b$ and $|\T(\A)|^{+} \leq |\I|$. Let $a \models \p\rest_{\A}$, and assume that:
\begin{itemize}
\item There is $\J \subset \I$ such that $|\I \backslash \J| \leq |\T(\A)|$ and $a \downfree^{f}_{\A} b_{\J}$.
\item The sequence $b_{\I_{>0}}$ is $\acl$-independent over $\A b_{0}$.
\end{itemize}
Then:
\begin{enumerate}
\item For each $i \in \J$, $ \{\St_{\A b_{i}}(\A b_{i}; b_{j} \St_{\A b_{j}}(\A b_{j};a)) \ | \ j \in \J_{> i}\}$ is a Morley sequence in the structure $\St_{\A b_{i}}$  over $\A b_{i} a$.
\item Let $\displaystyle{\C=\Cb(\{\St_{\A b_{i}}(\A b_{i};b_{j} \St_{\A b_{j}}(\A b_{j};a) \ | \ j \in \J_{>i}\})}$ (see \cref{def: canonical base sequence}). Then $\St_{\A b_{i}}(\A b_{i}; b_{\J_{>i}}) \downfree_{\A b_{i}}^{f} \C$ inside the stable structure $\St_{\A b_{i}}$.
\item The tuple $a$ is dominated over $\A b_{\J_{\geq i}}$ by $\C$;  i.e. if $d \in \M$ and $d \downfree^{f}_{\A b_{\J_{\geq i}}} \C$ then $a \downfree^{f}_{\A b_{\J_{\geq i}}} d$.
\end{enumerate}
In particular, if $\q$ is some global invariant extension of $\tp(b/ \A)$ and $(b_{i})_{i \in \I}$ is a Morley sequence in $\q$ where $|\I| \geq |\T(\A)|^{+}$ then the conclusions $(1)$, $(2)$ and $(3)$ hold for some $J$. 
\end{proposition}
\begin{proof}
\begin{enumerate}
%\item The first statement is the conclusion of \cref{lem: big tail independent when there is a non-forking extension}.
\item Let $i \in \J$. Then the sequence $b_{\J_{>i}}$ is indiscernible and $\acl$-independent over $\A b_{i}$. Fix $k \in \mathbb{N}$ and let $(\bar b_{\ell})_{\ell \in \K}$ be the sequence of finite tuples of length $k$ obtained by grouping $k$-terms of $b_{\J_{>i}}$. The sequence $(\bar b_{\ell})_{\ell \in \K}$ is still indiscernible and $\acl$-independent over $\A b_{i}$.

\smallskip
\underline{Claim $1$}: \emph{The sequence $(a \bar b_{\ell})_{\ell \in \K}$ is indiscernible  and $\acl$-independent over $\A b_{i} a$. Furthermore the sequence $\{ \St_{\A b_{i}}(\A b_{i}; \bar{b}_{\ell} a) \ | \ \ell \in \K \}$ is a Morley sequence over $\A b_{i}a$ in the stable structure $\St_{\A b_{i}}$.}\\
 Because $a \downfree_{\A}^{f} b_{\J}$, by monotonicity base monotonicity of non-forking (\cref{prop: properties of non-forking}.(3) and (4)) $a \downfree_{\A b_{i}}^{f} b_{\J_{>i}}$ and in particular $a \downfree_{\A b_{i}}^{f }(\bar b_{\ell})_{\ell \in \K}$.  Thus we can apply \cref{lem: staying indiscernible} (to the base $\A b_{i}$ and the sequence $(\bar{b}_{\ell})_{\ell \in \K}$), and we conclude that the sequence $(a \bar{b}_{\ell})_{\ell \in \K}$ is indiscernible and $\acl$-independent over $\A b_{i} a$. By \cref{lem: acl-independence implies Morley in St} the sequence $\{\St_{\A b_{i}}(\A b_{i}, \bar b_{\ell} a) \ | \  \ell \in \K\}$ is a Morley sequence in the stable structure $\St_{\A b_{i}}$ over $\A b_{i}a$ (in the statement take $\A b_{i}$ as the base and $\A b_{i} a$ as the larger set of parameters $\B$). This concludes the proof of the claim.\\

Taking $k=1$, the previous claim shows that $(\St_{\A b_{i}}(\A b_{i}; b_{j}a) \ | \ j \in \J_{>i})$ is a Morley sequence over $\A b_i a$. Note that for each $j \in \J_{>i}$, $\St_{\A b_{i}}(\A b_{i}; b_{j} \St_{\A b_{j}}(\A b_{j}; a) )\subseteq \St_{\A b_{i}} (\A b_{i}; b_{j} a)$, then 
\begin{equation*}
\{\St_{\A b_{i}}(\A b_{i}; b_{j}\St_{\A b_{j}}(\A b_{j}; a)) \ | \ j \in \J_{>i}\}
\end{equation*}
is also a Morley sequence over $\A b_{i} a$.\\
\item \underline{Claim $2$:} \emph{$\C \subseteq \dcl(\A b_{i} a)$. }\\
Let $\I_{0}$ be a fixed finite subset of $\J_{>i}$. For each $j \in \I_{0}$ note that
\begin{equation*}
\St_{\A b_{i}} (\A b_{i}; b_{j} \St_{\A b_{j}}(\A b_{j}; a)) \subseteq \St_{\A b_{i}}(\A b_{i}; b_{j} a) \subseteq \St_{\A b_{i}}(\A b_{i}; b_{\I_{0}} a).
\end{equation*}
Consequently, 
\begin{equation}\label{eq:detail}
\dcl(\A b_{i}a; \{ \St_{\A b_{i}}(\A b_{i}; b_{j} \St_{\A b_{j}}(\A b_{j}; a))\}_{j \in \I_{0}}) \subseteq \dcl(\A b_{i} a; \St_{\A b_{i}}(\A b_{i};b_{\I_{0}} a)).
\end{equation}
By the first claim for each $k \in \mathbb{N}$, the sequence
$\{\St_{\A b_{i}}(\A b_{i}, \bar{b}_{\ell} a) \ | \ \ell \in \K\}$ is a Morley sequence over $\A b_{i} a$, in particular it is $\acl$-independent over $\A b_{i} a$. By \cref{rem: equivalence for usual dcl and acl}, it is $\dcl$-independent over $\A b_{i} a$. This together with \cref{eq:detail} implies that $\C \subseteq \dcl(\A b_{i} a)$, and this concludes the proof of the second claim.\\

Given $k \in \mathbb{N}$, the sequence $(\bar b_{\ell})_{\ell \in \K}$ defined above is still $\acl$-independent and indiscernible over $\A b_{i}$. By Lemma \ref{lem: acl-independence implies Morley in St} $(\St_{\A b_{i}}(\bar{b}_{\ell}))_{\ell \in \K}$ is Morley over $\A b_{i}$.  Note that it is $\C$-indiscernible since $\St_{\A b_{i}}(\A b_{i}; \bar b_{\ell}) \subseteq \St_{\A b_{i}} (\A b_{i}; \bar{b}_{\ell} a)$ and by Claim $(1)$ and $(2)$ $\displaystyle{\{ \St_{\A b_{i}}(\A b_{i}; \bar{b}_{\ell} a) \ | \ \ell \in \K\}}$ is indiscernible over $\C$. 
Thus $\{\St_{\A b_{i}}(\A b_{i}; \bar b_{\ell}) | \ \ell \in \K \}$ is Morley over $\A b_{i}\C$.  
Consequently, $\St_{\A b_{i}}(\A b_{i};b_{\J_{>i}}) \downfree^{f}_{\A b_{i}} \C$ inside $\St_{\A b_{i}}$. 
\item \underline{Claim $3$:} \emph{Let $e$ be a (possibly infinite) tuple of $\M$, such that $e \downfree^{f}_{\A b_{i}} \C$. Then $a \models \p \rest_{\A b_{i} e}$.}\\
\textit{Proof:} It is enough to prove this for every finite tuples of $e$, so assume $e$ is a finite tuple. By Lemma \ref{lem: weight stable}  $\St_{\A b_{i}}(\A b_{i};e) \downfree^{f}_{\A b_{i} \C } \St_{\A b_{i}}(\A b_{i};b_{j}\St_{\A b_{j}}(\A b_{j};a))$ for some $j \in \J_{>i}$. On the other hand, because $e \downfree_{A b_{i}}^{f} \C$, we have $\displaystyle{\St_{\A b_{i}}(\A b_{i};e) \downfree^{f}_{\A b_{i}} \C}$. By transitivity $\St_{\A b_{i}}(\A b_{i};e) \downfree^{f}_{\A b_{i}} \C \St_{\A b_{i}}(\A b_{i}; b_{j}\St_{\A b_{j}}(\A b_{j};a))$. By monotonicity, $\St_{\A b_{i}}(\A b_{i}; e) \downfree^{f}_{\A b_{i}}  \St_{\A b_{i}}(b_{j} \St_{\A b_{j}}(\A b_{j}; a))$. Since $\St_{\A b_{i}}$ is stably embedded and eliminates imaginaries  (\emph{c.f.} \cref{lem: facts stably embedded}.(2)) this implies that 
\begin{equation*}
\St_{\A b_{i}}(\A b_{i};e) \downfree^{f}_{\A b_{i}}  \St_{\A b_{i}}(b_{j} \St_{\A b_{j}}(\A b_{i};a)) b_{j}.
\end{equation*}
By base monotonicity (\emph{c.f.} \cref{prop: properties of non-forking}.(4)), we have:
\begin{equation*}
\St_{\A b_{i}}(\A b_{i};e) \downfree^{f}_{\A b_{i} b_{j}} \St_{\A b_{i}}(b_{j},\St_{\A b_{j}}(\A b_{j};a)) \ \text{in the structure $\St_{\A b_{i}}$}.
\end{equation*}
By Proposition \ref{prop: replacing dominating function}, $a \models \p \rest_{\A b_{i}b_{j} e}$, in particular $a \models \p \rest_{\A b_{i} e}$ as required.This concludes the proof of the third claim.

\medskip
Let now $d \in \M$ and assume $d \downfree^{f}_{\A b_{\J_{\geq i}}} \C$. We aim to show that $d \downfree^{f}_{\A b_{\J_{\geq i}}} a$. Since  $d \downfree^{f}_{\A b_{i} b_{\J_{> i}}} \C$ then $\St_{\A b_{i}}(\A b_{i}; d b_{\J_{>i}}) \downfree^{f}_{\St_{\A b_{i}}(\A b_{i};b_{J_{>i}})} \C$ in the stable structure $\St_{\A b_{i}}$ (\emph{c.f.} \cref{lem: facts stably embedded}.(2)). Then by transitivity $\St_{\A b_{i}}(\A b_{i}; d b_{\J_{>i}}) \downfree^{f}_{\A b_{i}} \C$, as by the third statement $\St_{\A b_{i}}(\A b_{i}, b_{\J_{>i}}) \downfree^{f}_{\A b_{i}} \C$ . As $\St_{\A b_{i}}$ is stably embedded and eliminates imaginaries, by \cref{lem: facts stably embedded}.(2), we have $d b_{\J_{>i}} \downfree^{f}_{\A b_{i}} \C$. By the third claim $a \models \p \rest_{\A b_{i}  b_{\J_{>i}} d}$  thus $a \models \p \rest_{\A b_{\J_{\geq i}}d}$ as required. 
\end{enumerate}
\medskip
Assume now that $\q$ is a global non-forking extension of $\tp(b/ \A)$ and let $(b_{i})_{i \in \I}$ be a Morley sequence of $\q$ over $\A$. The first hypothesis of the statement is given by \cref{lem: big tail independent when there is a non-forking extension}. For the second requirement, for $k \in \mathbb{N}$ let $(\bar{d}_{\ell})_{\ell \in \K}$ be the sequence of finite of length $k$ obtained by grouping $k$-elements of $b_{\I_{>0}}$.  By \cref{prop: properties of non dividing}, the sequence $(\bar d_{\ell})_{\ell \in \K}$ is still non dividing over $\A b_{0}$, in particular it is $\acl$-independent over $\A b_{0}$.
\end{proof}

\begin{notation}\label{not notation almost for all} Let $\p$ be a global generically stable $\A$-invariant type. Assume that $\p$ is stably dominated over $\A b$ via the function $\f_{b}$. Let $\bar{b}=(b_{i})_{i \in \I}$ be an $\A$-indiscernible sequence where $b_{0}=b$ and $|\T(\A)|^{+} \leq |\I|$. Let $a \models \p\rest_{\A}$, and assume that:
\begin{itemize}
\item There is $\J \subset \I$ such that $|\I \backslash \J| \leq |\T(\A)|$ and $a \downfree^{f}_{\A} b_{\J}$.
\item The sequence $b_{\I_{>0}}$ is $\acl$-independent over $\A b_{0}$.
\end{itemize}

\begin{enumerate}
\item  We write and $\f(\bar{b},a)_{i}$ to indicate an enumeration of $\Cb(\St_{\A b_{i}}(b_{j}\St_{\A b_{j}}(a)) \ | \ j \in \J_{>i})$. Furthermore, given a projection $\pi$ of $\f(\bar{b},a)_{i}$ to a finite sub-tuple, we write $\f_{\pi}(\bar{b},a)_{i}$ to denote the image $\pi(\f(\bar{b},a)_{i})$. (This makes sense by \cref{prop: domination by the canonical base}). 
\item  Let  $P$ be a property. We say that \emph{$P$ holds for almost all $i \in \I$} and write $\forall_{i}^{a} P$ if 
\begin{equation*}
|\{ i \in \I \ | \ \neg P(b_{i}) \ \text{holds}\}| \leq |\T(\A)|. 
\end{equation*}
\end{enumerate}
\end{notation}
\subsubsection{The main proof assuming $\tp(b/\A)$ has a global $\A$-invariant extension $\q$}
In this subsection we go through the second step of the proof of \cref{thm: descent with invariant extension}. 
\begin{lemma}\label{lem: almost all are the same or different} Let $\p$ and $\q$ be as in \cref{thm: descent with invariant extension} and $\bar{b}=(b_{i})_{i \in \I}$ a Morley sequence in $\q$ over $\A$ with $|\I|\geq |\T(\A)|^{+}$. Let $a,a'\models \p \rest_{\A}$. Then either:
\begin{itemize}
\item for almost all $i \in \I$,   $a, a'\models p \rest_{\A b_{i}} \text{ and } \f(\bar b, a)_{i} = \f (\bar b, a')_{i}$ or
\item for almost all $i \in \I$,   $a,a'\models p \rest_{\A b_{i}}, \text{ and } \f(\bar b, a)_{i} \neq \f (\bar b, a')_{i}$.
\end{itemize}
A similar statement holds for $\f_{\pi}$ where $\pi$ is some fixed projection on some finite sub-tuple. 
\end{lemma}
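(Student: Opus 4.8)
I want to show that the relation "$a$ and $a'$ realize $\p$ over $\A b_i$ and $\f(\bar b,a)_i = \f(\bar b, a')_i$" holds either for almost all $i$ or for almost all $i$ it fails (among those $i$ where both $a,a'$ realize $\p$ over $\A b_i$, which is automatically almost all of them). The idea is to set things up so that a pigeonhole/indiscernibility argument applies: if the relation held for an unbounded set of indices $\I_0$ and failed for an unbounded set $\I_1$, I would extract two long sub-Morley-sequences, one along $\I_0$ and one along $\I_1$, which are conjugate over $\A$ (both being Morley sequences of $\q$ over $\A$), and derive a contradiction.

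\textbf{First steps.} First I would apply \cref{lem: big tail independent when there is a non-forking extension} to both $a$ and $a'$: there is $\J \subseteq \I$ with $|\I \setminus \J| \le |\T(\A)|$ such that $a \downfree^{f}_{\A} b_{\J}$ and $a' \downfree^{f}_{\A} b_{\J}$; in particular for every $i \in \J$ we have $a, a' \models \p\rest_{\A b_i}$, and by \cref{prop: domination by the canonical base} the tuple $\f(\bar b, a)_i$ (and likewise $\f(\bar b, a')_i$) is well-defined for $i \in \J$ (strictly, for $i \in \J$ other than a bounded tail, or after shrinking $\J$ slightly so that $\J_{>i}$ remains large — this is a harmless bookkeeping adjustment). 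So it suffices to prove the dichotomy for indices ranging over $\J$. Suppose toward a contradiction that both $\I_0 = \{ i \in \J : \f(\bar b,a)_i = \f(\bar b,a')_i \}$ and $\I_1 = \{ i \in \J : \f(\bar b,a)_i \neq \f(\bar b,a')_i\}$ have size $> |\T(\A)|$.

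\textbf{The core argument.} Fix $i_0 \in \I_0$ and $i_1 \in \I_1$. The point is that whether $\f(\bar b,a)_i = \f(\bar b,a')_i$ holds is controlled by $\tp(a a' b_{\geq i} / \A)$ in a uniform way: since $\f(\bar b, a)_i$ is a canonical base computed from $\St_{\A b_i}(b_j, \St_{\A b_j}(a))$ for $j > i$, and since $\bar b$ is $\A$-indiscernible with $\tp(a/\A) = \tp(a'/\A) = \p\rest_\A$, the truth value of the equality $\f(\bar b,a)_i = \f(\bar b,a')_i$ depends only on $\tp(a a' / \A b_{\geq i})$ — indeed on $\tp(a a' / \A b_{\I'})$ for a suitable infinite $\I' \subseteq \J_{>i}$, using \cref{fact: facts canonical base sequence} to pin down the canonical base in the definable closure of finitely many (equivalently, of any large enough subset of the) terms. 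Now, because $\bar b$ is a Morley sequence of the $\A$-invariant type $\q$, for any two infinite initial-ordered index subsets the corresponding subsequences have the same type over $\A$; more precisely $b_{\geq i_0}$ and $b_{\geq i_1}$ are each Morley sequences of $\q$ over $\A$, hence $b_{\geq i_0} \equiv_\A b_{\geq i_1}$. Transporting by an automorphism fixing $\A$, and using that $\tp(aa'/\A b_{\geq i_0})$ and $\tp(aa'/\A b_{\geq i_1})$ — being determined by the same data over the abstractly-isomorphic tails — must agree (here is where I use that $a, a' \models \p\rest_\A$ and that $\p$ is $\A$-invariant, so $\tp(a a' / \A b_{\geq i})$ is actually independent of $i$ for $i$ ranging over the appropriate cofinal part of $\J$, via \cref{lem: big tail independent when there is a non-forking extension} applied to group more points and re-extract indiscernibly), I conclude $\f(\bar b, a)_{i_0} = \f(\bar b, a')_{i_0}$ iff $\f(\bar b, a)_{i_1} = \f(\bar b, a')_{i_1}$, contradicting $i_0 \in \I_0$, $i_1 \in \I_1$.

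\textbf{The main obstacle.} The delicate point is the claim that the truth value of $\f(\bar b,a)_i = \f(\bar b,a')_i$ is genuinely a function of $\tp(aa'/\A b_{\geq i})$ alone and that this type does not depend on $i$. The first half needs the canonical-base to be recoverable inside a fixed definable closure uniformly in $i$ (\cref{fact: facts canonical base sequence}(2) gives $\f(\bar b,a)_i \in \dcl(\A b_i \, \St(a) \, b_{\J'})$ for any size-$k$ subset $\J' \subseteq \J_{>i}$, with $k$ uniform by indiscernibility); the second half requires extracting, from $\bar b$, a tail that is $\A a a'$-indiscernible — which one gets by the usual Erdős–Rado extraction applied to a Morley sequence of $\q$ over $\A a a'$ and comparing — and checking that replacing $\bar b$ by such a tail changes neither side of the equality on almost all indices. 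I expect the bookkeeping here (keeping track of which bounded sets of indices are discarded at each extraction) to be the part requiring care; the conceptual content is just indiscernibility plus the canonical-base computation already done in \cref{prop: domination by the canonical base}. The final sentence, about $\f_\pi$ for a fixed finite projection $\pi$, follows verbatim since $\f_\pi(\bar b, a)_i = \pi(\f(\bar b,a)_i)$ is again determined by the same type data.
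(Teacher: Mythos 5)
Your overall plan (reduce to showing the truth value of $\f(\bar b,a)_i = \f(\bar b,a')_i$ is eventually constant in $i$ via an indiscernibility argument) is the right one, and the first step — passing to $\J$ via \cref{lem: big tail independent when there is a non-forking extension} — matches the paper. But the core of your argument has a gap, and the paper's actual mechanism is different from what you propose.

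The problematic step is "$\tp(aa'/\A b_{\geq i_0})$ and $\tp(aa'/\A b_{\geq i_1})$ must agree." You produce an $\A$-automorphism taking $b_{\geq i_0}$ to $b_{\geq i_1}$, but nothing forces it to fix $a$ or $a'$; the fact that $a \downfree^{f}_\A b_\J$ and $a' \downfree^f_\A b_\J$ does not make $\bar b$ (or any tail of it) indiscernible over $\A a a'$, and your parenthetical appeal to \cref{lem: big tail independent when there is a non-forking extension} doesn't supply this either. Your fallback — extracting a fresh $\A a a'$-indiscernible sequence by Erdős--Rado and "comparing" — is circular here: the extracted sequence is a \emph{different} Morley sequence, so comparing $\f(\bar b,\cdot)_i$ with $\f(\bar b',\cdot)_i$ requires exactly the germ-transfer statement of \cref{lem:same germs}, which is proved \emph{after} this lemma and whose proof invokes it.

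The paper's key idea, absent from your proposal, is to fix a Morley sequence $\bar b_*$ of $\q$ over $\A\bar b a a'$ and work relative to $\St_{\A\bar b_*}$. By \cref{fact: facts canonical base sequence}(1), $\f(\bar b, a)_i = \f(\bar b + \bar b_*, a)_i$, and the canonical base then lies in $\dcl(b_i, \bar b_*, \St_{\A\bar b_*}(a))$, uniformly in $i$. By stable embeddedness of $\St_{\A\bar b_*}$, the truth value of $\f(\bar b,a)_i = \f(\bar b,a')_i$ therefore depends only on $\tp\bigl(\St_{\A\bar b_*}(b_i)\,/\,\St_{\A\bar b_*}(a a')\bigr)$. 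Now one is inside the stable structure $\St_{\A\bar b_*}$, where the sequence $(\St_{\A\bar b_*}(b_i))_{i\in\J}$ is indiscernible; a standard stability fact lets one remove at most $|\T(\A)|$ indices so that the remaining sequence is indiscernible over $\St_{\A\bar b_*}(aa')$, whence the truth value is constant. This anchoring to a fixed $\bar b_*$ sidesteps the need to make $\bar b$ itself indiscernible over $a a'$, which is exactly where your argument stalls.
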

\begin{proof}
By Lemma \ref{lem: big tail independent when there is a non-forking extension}, there is some $\J \subseteq \I$ such that $|\I \setminus \J| \leq |\T(\A)|$ and for any $i \in \J$ both $a$ and $a'$ realize $\p\rest_{\A b_{i}}$. Let $\bar b_*$ be a Morley sequence of $\q$ over $\A \bar{b} a a'$. By Fact \ref{fact: facts canonical base sequence}$(1)$ $\f(\bar b, a)_i \subseteq \dcl(\A b_i;\bar b_* \St_{\A \bar b_*}(\A \bar{b}_{*};a))$ and $\f(\bar b, a ')_i \subseteq \dcl(\A b_i;\bar b_* \St_{\A \bar b_*}(\A \bar b_{*}; a'))$. Since $\St_{\A \bar b_*}$ is stably embedded over $\A \bar b_*$, the property $\f(\bar b, a)_i =\f (\bar b, a')_{i}$ only depends on $\tp(\St_{\A \bar b_*}(\A \bar{b}_{*}; b_i)/\St_{\A \bar b_*}(\A \bar b_{*}; aa'))$ (\emph{c.f.} \cref{lem: facts stably embedded}.(1)).

The sequence $(\St_{\A \bar b_*}(\A \bar b_{*}; b_j))_{j \in \J}$ is indiscernible over $\A \bar{b}_{*}$ in the stable structure $\St_{\A \bar b_*}$, thus after removing at most $|\T(\A)|$ elements from the sequence $\bar b=(b_j)_{j \in \J}$, we may assume that it is indiscernible over $\St_{\A \bar b_*}( \A \bar b_{*}; aa')$. The result follows. The same argument applies to $\f_{\pi}$. 
\end{proof}
\begin{corollary}\label{cor: reducing to n} Let $\p$ and $\q$ as in \cref{thm: descent with invariant extension} and $\bar{b}=(b_{i})_{i \in \I}$ be a Morley sequence in $\q$ over $\A$ with $|\I|\geq|\T(\A)|^{+}$. Let $\pi$ be the projection of $\f(\bar b,a)_{i}$ on some finite sub-tuple, then there is an $n < \omega$ such that either $\f_{\pi}(\bar b, a)_{i}= \f_{\pi}(\bar b, a')_{i}$ for at most $n$ values of $i \in \I$ or $\f_{\pi}(\bar b, a)_{i}\neq \f_{\pi}(\bar b, a')_{i}$ for at most $n$ values of $i \in \I$. 
\end{corollary}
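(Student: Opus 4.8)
The corollary says that the ``all but at most $|\T(\A)|$ many'' appearing in \cref{lem: almost all are the same or different} can be sharpened to ``all but finitely many''. The plan is to apply that lemma and then upgrade its bound using the stability of $\St$.

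First I would apply \cref{lem: almost all are the same or different} with the finite projection $\f_{\pi}$ in place of $\f$. This places us in one of the two cases of that lemma, so there is $\J_{0}\subseteq\I$ with $|\I\setminus\J_{0}|\le|\T(\A)|$ on which the truth value of ``$\f_{\pi}(\bar b,a)_{i}=\f_{\pi}(\bar b,a')_{i}$'' is constant; it remains to show that only finitely many $i\in\I$ take the minority value. For this I would revisit the set-up in the proof of \cref{lem: almost all are the same or different}: one fixes a Morley sequence $\bar b_{*}$ of $\q$ over $\A\bar b a a'$; by \cref{fact: facts canonical base sequence}(1) each coordinate of the finite tuple $\f_{\pi}(\bar b,a)_{i}$ lies in $\dcl\bigl(b_{i}\bar b_{*}\,\St_{\A\bar b_{*}}(a)\bigr)$, and symmetrically for $a'$, while the sequence $\bigl(\St_{\A\bar b_{*}}(b_{i})\bigr)_{i}$ is an indiscernible sequence of the stable, stably embedded structure $\St_{\A\bar b_{*}}$. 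Using finite character of $\dcl$ together with this indiscernibility, the equality $\f_{\pi}(\bar b,a)_{i}=\f_{\pi}(\bar b,a')_{i}$ is, uniformly in $i$ over a cofinite subset of $\I$, expressed by a single $\St_{\A\bar b_{*}}$-formula $\psi(x;c)$ — equality of two finite tuples, each $\St_{\A\bar b_{*}}$-definable from $\St_{\A\bar b_{*}}(b_{i})$ and a fixed finite parameter $c$ from $\St_{\A\bar b_{*}}(aa')$ — evaluated on $\St_{\A\bar b_{*}}(b_{i})$.

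Now I would invoke the standard fact that in a stable theory, for any formula $\psi(x;y)$, any indiscernible sequence $(e_{i})_{i\in I}$ and any tuple $c$, the set $\{i : {\models}\,\psi(e_{i};c)\}$ is finite or cofinite (total indiscernibility in a stable theory would otherwise let one re-order the sequence into one alternating infinitely often on $\psi$, contradicting $\NIP$). Applied to $\psi$, $\bigl(\St_{\A\bar b_{*}}(b_{i})\bigr)_{i}$ and $c$, this shows that ``$\f_{\pi}(\bar b,a)_{i}=\f_{\pi}(\bar b,a')_{i}$'' is constant off a finite subset of $\I$, at least on those $i$ to which the regime of \cref{prop: domination by the canonical base} applies. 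The at most $|\T(\A)|$ remaining indices — those where $a$ or $a'$ fails to realise $\p\rest_{\A b_{i}}$ — are handled by the same principle applied to the (stable) formulas witnessing non-realisation, or are subsumed by the convention fixing $\f_{\pi}(\bar b,a)_{i}$ outside that regime; in either case one obtains a finite $n$ as required.

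The step I expect to be the main obstacle is the uniformity claim in the second paragraph: a priori $\f_{\pi}(\bar b,a)_{i}$ is only known to lie in $\dcl$ of $b_{i}$, $\bar b_{*}$ and \emph{all} of $\St_{\A\bar b_{*}}(a)$, so one must use the indiscernibility of $\bigl(\St_{\A\bar b_{*}}(b_{i})\bigr)_{i}$ in the stable structure to replace $\St_{\A\bar b_{*}}(a)$ by one fixed finite parameter tuple valid simultaneously for all $i$ in a cofinite set, so that the finite‑exception fact genuinely applies to a single formula. Everything else is routine: once the situation is reduced to one formula, one finite parameter and one indiscernible sequence in a stable structure, finiteness of the exceptional set is immediate.
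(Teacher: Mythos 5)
Your plan — reduce the equality $\f_{\pi}(\bar b,a)_{i}=\f_{\pi}(\bar b,a')_{i}$ to a single formula $\psi(x;c)$ evaluated on the indiscernible sequence $\bigl(\St_{\A\bar b_{*}}(b_{i})\bigr)_{i}$ inside the stable structure, and then invoke the finite-alternation property of indiscernible sequences in an \NIP{} (in particular, stable) theory — is a genuinely different route from the paper's. The paper does not re-open Lemma~\ref{lem: almost all are the same or different}'s proof; instead it argues by compactness: if no finite $n$ worked, then (using that $\p$ is $\A$-definable and that $\f_{\pi}$ is, via Fact~\ref{fact: facts canonical base sequence}(2), an $\A$-definable function of finitely many terms of $\bar b$ and $a$) one could realize a configuration $(a,a',\bar b')$ with $\bar b'$ a Morley sequence of $\q$ over $\A$ of length $|\T(\A)|^{+}$ in which \emph{both} $\B_{0}$ (agreements) and $\B_{1}$ (disagreements) have size $|\T(\A)|^{+}$, directly contradicting Lemma~\ref{lem: almost all are the same or different}. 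That argument never needs a uniform formula.

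The gap in your version is exactly the step you flag as ``the main obstacle,'' and I do not think the sketch you give closes it. You need, uniformly over a set of $i$'s whose complement is \emph{finite}, that $\f_{\pi}(\bar b,a)_{i}$ (and likewise for $a'$) is $\psi_{0}(\St_{\A\bar b_{*}}(b_{i}),c)$ for a single $\A\bar b_{*}$-formula $\psi_{0}$ and a single finite tuple $c\in\St_{\A\bar b_{*}}(aa')$. A priori $\f_{\pi}(\bar b,a)_{i}$ lies in $\dcl\bigl(b_{i}\bar b_{*}\St_{\A\bar b_{*}}(a)\bigr)$, and finite character gives you, for each $i$, \emph{some} formula $\phi_{i}$, \emph{some} finite subtuple of $\bar b_{*}$, and \emph{some} finite $c_{i}\subseteq\St_{\A\bar b_{*}}(a)$. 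To make these choices uniform you would naturally pass to a subsequence on which $\phi_{i}$, the $\bar b_{*}$-subtuple, and $\tp(\St_{\A\bar b_*}(b_i)/\St_{\A\bar b_*}(aa'))$ are constant — but that is precisely a removal of up to $|\T(\A)|$-many indices, which is the bound you already have from Lemma~\ref{lem: almost all are the same or different}; the argument is circular. (Note that $(b_{i})_{i}$ is only known to be indiscernible over $\A\bar b_{*}$, not over $\A\bar b_{*}\St_{\A\bar b_*}(aa')$, so indiscernibility alone does not force $\phi_{i}$ or $c_{i}$ to be constant on a cofinite set.) What is actually needed is the bound $k$ from Fact~\ref{fact: facts canonical base sequence}(2) applied uniformly in $i$, which is exactly the definability input that makes the paper's compactness argument go through; if you were to carry that out carefully you would essentially be reproducing the paper's proof. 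Finally, the last paragraph on the $\le|\T(\A)|$ indices where $a$ or $a'$ fails to realize $\p\rest_{\A b_{i}}$ is a non-issue only under the (correct but unstated) reading that both $\B_{0}$ and $\B_{1}$ are restricted to indices where $\f_{\pi}$ is defined; ``the same principle applied to the stable formulas witnessing non-realisation'' does not give finiteness, since $\bar b$ is a Morley sequence of $\q$, not of $\p$, and $\q$ is not assumed generically stable.
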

\begin{proof}
 Otherwise, since $\p$ is $\A$-definable by compactness we can find $\bar b'=(b'_{j})_{j \in |\T(\A)|^{+}}$ a Morley sequence in $\q$ over $\A$  such that 
 \begin{align*}
 \B_{0}&= \{ j< |\T(\A)|^{+} \ | \ a,a' \models \p \rest_{\A b_{j}} \ \text{and} \  \ \f_{\pi}(\bar b', a)_{j}= \f_{\pi}(\bar b', a')_{j}\}, \ \text{and}\\
 \B_{1}&= \{ j< |\T(\A)|^{+} \ | \ a,a' \models \p \rest_{\A b_{j}} \ \text{and} \  \ \f_{\pi}(\bar b', a)_{j}\neq \f_{\pi}(\bar b', a')_{j}\} 
 \end{align*}
 have both size $|\T(\A)|^{+}$. 
 This contradicts Lemma \ref{lem: almost all are the same or different}.  
\end{proof}
\begin{lemma}\label{lem: definability of almost everywhere both sequences agree}Let $\p$ and $\q$ as in \cref{thm: descent with invariant extension}. Let $\R$ be the $\A$-type-definable set  
\begin{equation*}
\{(a,a', \bar{b}) \ | \ a,a'\models\p\rest_{\A} \  \text{and} \ \bar b= (b_{i})_{i \in |\T(\A)|^{+}} \ \text{is a Morley sequence of} \ \q \  \text{over} \ \A\}. 
\end{equation*}
Let $\pi$ be some fixed projection of $\f(\bar{b},a)_{i}$ on some finite sub-tuple. The statement \emph{$ \forall^{a}_{i} \f_{\pi}(\bar b, a)_{i}= \f_{\pi}(\bar b, a')_{i}$} is a definable condition for $(a,a',\bar b) \in \R$.
\end{lemma}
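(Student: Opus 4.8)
The plan is to lean on a single compactness principle: a subset of an $\A$-type-definable set which is relatively ind-definable over $\A$ and whose complement in that set is also relatively ind-definable over $\A$ is relatively definable over $\A$. So I would first extract a clean dichotomy on $\R$ from \cref{cor: reducing to n}, and then exhibit both the condition ``$\forall^a_i\,\f_\pi(\bar b,a)_i=\f_\pi(\bar b,a')_i$'' and its negation as relatively ind-definable conditions on $\R$.

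\emph{Step 1: the dichotomy.} By \cref{cor: reducing to n} — and, as the compactness argument proving it in fact shows, with the integer $n$ taken uniformly over all triples in $\R$ — there is $n<\omega$ such that for every $(a,a',\bar b)\in\R$ exactly one of the following holds: (i) $\f_\pi(\bar b,a)_i=\f_\pi(\bar b,a')_i$ for all but at most $n$ of the indices $i$; or (ii) $\f_\pi(\bar b,a)_i\neq\f_\pi(\bar b,a')_i$ for all but at most $n$ of them. Here ``indices'' ranges over the $|\T(\A)|^{+}$-many $i\in|\T(\A)|^{+}$ with $a,a'\models\p\rest_{\A b_i}$, which is all but $\leq|\T(\A)|$ of $|\T(\A)|^{+}$ by \cref{lem: big tail independent when there is a non-forking extension}. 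As the index set has size $>2n$, (i) and (ii) are mutually exclusive and exhaustive on $\R$, (i) is precisely ``$\forall^a_i\,\f_\pi(\bar b,a)_i=\f_\pi(\bar b,a')_i$'', and moreover (i) is equivalent to ``there are at least $n+1$ good indices $i$ with $\f_\pi(\bar b,a)_i=\f_\pi(\bar b,a')_i$'' while (ii) is equivalent to the same with $\neq$.

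\emph{Step 2: localisation of $\f_\pi$.} I would then show that, uniformly for good $i$, the finite tuple $\f_\pi(\bar b,a)_i$ is computed by a single $\La(\A)$-formula from $a$, $b_i$ and a bounded number $k=k(\pi)$ of further good terms of $\bar b$, with a value not depending on which such terms are chosen. This comes from unwinding the description of the canonical base of an indiscernible sequence in the stable structure $\St_{\A b_i}$ via its $\varphi$-definitions: $\f_\pi(\bar b,a)_i$, being a finite sub-tuple of $\Cb\big((\St_{\A b_i}(b_j,\St_{\A b_j}(a)))_{j>i}\big)$, lies in the definable closure of finitely many of those $\varphi$-definitions (\cref{fact: facts canonical base sequence}(2)), and each such $\varphi$-definition is the code of a ``$\varphi$-majority'' set of a bounded number of terms and is independent of which terms are used (\cref{fact: facts canonical base sequence}(1)); after fixing the finitely many formulas $\varphi$ involved, the requirement that the chosen terms be ``good enough'' for these codes to be well-defined is itself a single $\La(\A)$-formula in $a,b_i$ and those terms (using that $\p$ is $\A$-definable). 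Hence ``$\f_\pi(\bar b,a)_i=\f_\pi(\bar b,a')_i$'' and ``$\f_\pi(\bar b,a)_i\neq\f_\pi(\bar b,a')_i$'' at a good index, witnessed by a fixed choice of further terms, are each given by an $\La(\A)$-formula in $a,a'$ and finitely many terms of $\bar b$. Consequently ``there are $\geq n+1$ good indices with agreement'' is a disjunction (over the choices of the relevant finitely-many terms and their order) of $\La(\A)$-formulas, hence relatively ind-definable over $\A$ on $\R$, and likewise for ``$\geq n+1$ good indices with disagreement''; by Step 1 these two conditions partition the $\A$-type-definable set $\R$, so each is relatively definable on $\R$, which is the assertion of the lemma.

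The main obstacle is Step 2: matching the a priori ``transcendental'' object $\f_\pi(\bar b,a)_i$ — a canonical base attached to an infinite sequence — with a genuine $\La(\A)$-formula in $a$, $b_i$ and boundedly many further terms, uniformly in $i$ and across $\R$, and in particular arranging that the ``agreement at a good index'' formulas really do detect (up to $\leq|\T(\A)|$ indices) the true behaviour of $\f_\pi$, so that the dichotomy of Step 1 applies; here one has to lean on \cref{lem: staying indiscernible} and \cref{lem: acl-independence implies Morley in St} to know that the finitely many terms actually used sit inside the relevant Morley sequence in $\St_{\A b_i}$, as in the proof of \cref{prop: domination by the canonical base}.
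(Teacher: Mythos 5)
Your proposal takes essentially the same route as the paper: extract a uniform $n$ from \cref{cor: reducing to n} by compactness, reduce the condition $\forall^a_i\,\f_\pi(\bar b,a)_i=\f_\pi(\bar b,a')_i$ to a finite condition on the first $2n+1$ indices, and invoke the compactness principle that a set whose complement (within the type-definable set $\R$) shares its definability class is relatively definable. Your Step~2 spells out why the individual equalities $\f_\pi(\bar b,a)_i=\f_\pi(\bar b,a')_i$ are (ind-)definable on $\R$ via $\varphi$-definitions and \cref{fact: facts canonical base sequence}(2), a point the paper asserts summarily (``both conditions are type-definable when restricted to $\R$'') without working through the details.
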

\begin{proof}
  Since $\p$ is $\A$-definable we can apply compactness and Corollary \ref{cor: reducing to n} to show the existence of some  $n< \omega$ such that for any $a,a'\models\p\rest_{\A}$ and every Morley sequence $\bar b=(b_{i})_{i \in |\T(\A)|^{+}}$ in $\q$ over $\A$ either $ \f_{\pi}(\bar b, a)_{i}= \f_{\pi}(\bar b, a')_{i}$ holds for at most $n$-elements or  $\f_{\pi}(\bar b, a)_{i}\neq \f_{\pi}(\bar b, a')_{i}$ holds for at most $n$-elements.  Let $\F=\{ 1,\dots,2n+1\}$. Then 
 
 \begin{align*}
 \forall^{a} i \ \f_{\pi}(\bar b, a)_{i}&= \f_{\pi}(\bar b, a')_{i} \iff \bigvee_{\B \subseteq \F , |\B|= n+1} \bigwedge_{i \in \B} \f_{\pi}(\bar b, a)_{i}= \f_{\pi}(\bar b, a')_{i}. \\ 
%  \forall^{a} i \ \f_{\pi}(\bar b, a)_{i}&\neq \f_{\pi}(\bar b, a')_{i} \iff \bigvee_{\B \subseteq \F, |\B|= n+1} \bigwedge_{i \in \B} \f_{\pi}(\bar b, a)_{i}\neq \f_{\pi}(\bar b, a')_{i}.
 \end{align*}
Thus both conditions and its complement are type-definable when restricted to $(a,a', \bar{b}) \in \R$, so they are definable. 
\end{proof}
\begin{definition}\label{def:same germs} Let $\p$ and $\q$ as in \cref{thm: descent with invariant extension}. Let $a,a'\models\p\rest_{\A}$, and let $\bar b= (b_i)_{i\in \I}$ be a Morley sequence of $\q$ over $\A$  where $|\I|\geq|\T(\A)|^{+}$.  Fix a projection $\pi$ of $\f(\bar b, a)_{i}$ to a finite sub-tuple. We say that \emph{$a$ and $a'$ have the same $\pi$-germ on $\bar b$ if $\f_{\pi}(\bar b, a)_{i}=\f_{\pi}(\bar b, a')_{i}$ for almost all $i \in \I$}.
\end{definition}
\begin{lemma}\label{lem:same germs} Let $\p$ and $\q$ as in Theorem \ref{thm: descent with invariant extension}. Let $a,a'\models\p\rest_{\A}$, $\bar b=(b_i)_{i\in \I}$, $\bar b'=(b_{j})_{j \in \I'}$ be Morley sequences of $\q$ over $\A$  where $|\I|,|\I'|\geq |\T(\A)|^{+}$. Assume that $a$ and $a'$ have the same $\pi$-germ on $\bar b$, then they have the same $\pi$-germ on $\bar b'$.
\end{lemma}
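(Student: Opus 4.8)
The statement says that "having the same $\pi$-germ on a Morley sequence of $\q$" does not depend on the chosen Morley sequence. The natural strategy is the classical germ-independence trick: compare the two Morley sequences $\bar b$ and $\bar b'$ by putting them into a single mutual Morley configuration, and transfer the agreement along the way. Concretely, first I would note that by Lemma \ref{lem: definability of almost everywhere both sequences agree} (applied to the relation $\R$), whether $a$ and $a'$ have the same $\pi$-germ on a Morley sequence $\bar c$ of $\q$ over $\A$ is a property of $\tp(a a' \bar c / \A)$; since all Morley sequences of $\q$ over $\A$ (of the right length) have the same type over $\A$, and $\q$ is $\A$-invariant, this already shows the property depends only on $\tp(aa'/\A)$ once we can connect $\bar b$ and $\bar b'$ by a chain of such sequences fixing $a,a'$. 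So the real content is: can we move between $\bar b$ and $\bar b'$ by automorphisms fixing $a a'\A$, possibly passing through an intermediate sequence?

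The key step is to produce an auxiliary Morley sequence $\bar b''$ of $\q$ over $\A$ which is simultaneously "compatible" with both $\bar b$ and $\bar b'$ in the sense that $a,a'$ have the same $\pi$-germ on $\bar b$ iff on $\bar b''$, and on $\bar b'$ iff on $\bar b''$. Here I would take $\bar b''$ to be a Morley sequence of $\q$ over $\A \bar b \bar b' a a'$, so that the concatenations $\bar b + \bar b''$ and $\bar b' + \bar b''$ are each Morley in $\q$ over $\A$. Then I use the definability/finiteness given by Corollary \ref{cor: reducing to n}: for the concatenated sequence $\bar b + \bar b''$, there is a fixed $n$ such that $\f_\pi(\bar b + \bar b'', a)_i = \f_\pi(\bar b + \bar b'', a)_i$ (or $\neq$) for all but at most $n$ indices; since $\bar b''$ is long, the behavior on the tail $\bar b''$ determines the behavior on the whole concatenation, and in particular on $\bar b$. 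This forces: $a,a'$ have the same $\pi$-germ on $\bar b$ iff they have the same $\pi$-germ on $\bar b''$. Symmetrically for $\bar b'$ and $\bar b''$. Chaining the two equivalences gives the result.

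One subtlety I need to handle carefully: $\f_\pi(\bar b, a)_i$ depends on $\bar b$, not just on $b_i$, so when I pass from $\bar b$ to a longer sequence $\bar b + \bar b''$ I must check that the value $\f_\pi$ at an index $i$ inside $\bar b$ is unchanged. This is exactly the canonical-base stability of Fact \ref{fact: facts canonical base sequence}(1): $\f(\bar b, a)_i = \Cb(\St_{\A b_i}(b_j \St_{\A b_j}(a)) : j > i)$ computed in the tail, and adding more terms after does not change the canonical base of an indiscernible sequence (and $b_i$ together with a cofinal infinite piece of the tail already computes it). So $\f_\pi(\bar b, a)_i = \f_\pi(\bar b + \bar b'', a)_i$ for all $i$ in the domain of $\bar b$, and likewise with $a'$. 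The main obstacle is getting this bookkeeping right — making sure "almost all" is preserved under concatenation and that the Corollary \ref{cor: reducing to n} dichotomy on the long concatenated sequence can be localized to the tail $\bar b''$ — but none of it is deep; it is the same compactness-plus-indiscernibility package already developed in the preceding lemmas.
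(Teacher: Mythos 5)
Your proposal is correct and follows essentially the same route as the paper: introduce an auxiliary Morley sequence $\bar b''$ of $\q$ over $\A \bar b \bar b' a a'$, use Fact \ref{fact: facts canonical base sequence}(1) to show $\f_{\pi}(\bar b,a)_i = \f_{\pi}(\bar b + \bar b'',a)_i$ for $i$ in the original index set, apply the dichotomy (you invoke Corollary \ref{cor: reducing to n}, the paper invokes Lemma \ref{lem: almost all are the same or different} directly, but these encode the same alternative) to push same-$\pi$-germ from $\bar b$ to $\bar b''$, and then run the same step from $\bar b''$ to $\bar b'$. The opening observation about Lemma \ref{lem: definability of almost everywhere both sequences agree} is not actually needed for the argument, but the rest matches the paper's proof.
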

\begin{proof}
    Let $\bar b_{*}=(b^{*}_{k})_{k \in \K}$ be a Morley sequence of $\q$ over $\A \bar b \bar b' a a'$, where $|\K| \geq |\T(\A)|^{+}$. By Fact \ref{fact: facts canonical base sequence} (1) $\f(\bar b, a)_{i}= \f(\bar b+\bar b_{*},a)_{i}$  for all $i \in \I$, and similarly for $a'$. Since $a$ and $a'$ have the same $\pi$-germ over $\bar b$ then $\f_{\pi}(\bar{b},a)_{i}=\f_{\pi}(\bar{b},a')_{i}$ for almost all $i \in \I$, and therefore
     $ \f_{\pi}(\bar b+ \bar b_{*}, a)_i = \f_{\pi}(\bar b+ \bar b_{*},a')_i$ for almost all $i \in \I$. By Lemma \ref{lem: almost all are the same or different}, $\f_{\pi}(\bar b+\bar b_{*},a)_j = \f_{\pi}(\bar b+\bar b_{*},a')_j$ holds for almost all $j \in \I' \cup \K$. In particular  $|\{ k \in \K \ | \ \f_{\pi}(\bar b_{*},a)_k \neq \f_{\pi}(\bar b_{*},a')_k \}| \leq |\T(\A)|$, since  $\f_{\pi}(\bar b_{*},a)_k= \f_{\pi}(\bar b+\bar b_{*},a)_k$ for $k \in \K$ by the definition $\f$; and similarly for $a'$. Likewise  $\f_{\pi}(\bar b'+ \bar b_{*}, a)_{k}= \f_{\pi}(\bar b_{*}, a)_{k}$ for $k \in \K$ and for $a'$ as well.
     
    By Lemma \ref{lem: almost all are the same or different}, $\f_{\pi}(\bar b'+\bar b_{*},a)_j = \f_{\pi}(\bar b'+\bar b_{*},a')_j$ holds for almost all $j \in \I' \cup \K$. Hence $|\{ j \in \I' \ | \ \f_{\pi}(\bar b'+\bar b_{*},a)_j \neq \f_{\pi}(\bar b'+\bar b_{*},a')_j \}| \leq  |\T(\A)|$, and by Fact \ref{fact: facts canonical base sequence} (1) $\f(\bar b'+ \bar b_{*}, a)_{j}= \f(\bar b', a)_{j}$ for $j \in \I'$ and correspondingly for $a'$. Consequently, $a$ and $a'$ have the same $\pi$-germ on $\bar b'$. 
\end{proof}
 \begin{definition}\label{def: Q and relations} Let $\p$ and $\q$ as in \cref{thm: descent with invariant extension}. Let
 \begin{align*}
 \Q =&\{ \tp(\bar b,\bar d/ \A) \ |  \bar b= (b_{i})_{i < |\T(\A)|^{+}} \ \text{is a Morley sequence of $\q$ over $\A$}  \\
&\text{and there is}  \ a'\vDash \p\rest_{\A} \ \text{such that} \ d_{i}= \f(\bar b, a')_{i}\}.
\end{align*}
For each fixed projection $\pi$ on a finite sub-tuple, let $\E_{\pi}$ be the relation on instances of $\Q$ defined by  \[ (\bar b, \bar d) \E_{\pi} (\bar b', \bar d') \iff (\exists a\models \p\rest_{\A}) \big(\forall^a i \ \f_{\pi}(\bar b, a)_i = \pi(d_i) \wedge \f_{\pi}(\bar b',a)_i = \pi(d'_i)\big).\]
\end{definition}
By Lemma \ref{lem: definability of almost everywhere both sequences agree} and since $p$ is a definable type, $\E_{\pi}$ is a definable relation.

By Lemma \ref{lem:same germs}, the relation $\E_{\pi}$ can also be defined by 
\[ (\bar b, \bar d) \E_{\pi} (\bar b', \bar d') \iff (\forall a\models \p \rest_{\A}) (\forall^a i \ \f_{\pi}(\bar b, a)_i = \pi(d_i)) \to (\forall^a i \ \f_{\pi}(\bar b',a)_i = \pi(d'_i)).\]
Thus $\E_{\pi}$ is an equivalence relation on $\Q$. By compactness, there is an $\La(\A)$-definable set $\X$ containing $\Q$ such that $\E_{\pi}$ is an equivalence relation on $\X$. We let $\Lat_{\pi}$ be the $\La(\A)$-definable quotient $\X/ \E_{\pi}$ and let $g_\pi: \X \to \Lat_{\pi}$ be the quotient map.\\

 \begin{proof}[Proof of Theorem \ref{thm: descent with invariant extension}] \  \\
 \underline{Claim $1$:}
  \emph{Let $\bar b=(b_{i})_{i< |\T(\A)|^{+}}$ be a Morley sequence of $\q$ over $\A$. For each finite projection $\pi$, every class of $\Q/ \E_{\pi}$ admits a representative of the form $(\bar b,\bar d) \in \Q$.}\\
 \textit{Proof:} Let $(\bar b',\bar d')$ be an element of $\Q$. Take $a\models \p \rest_{\A}$ such that $\f(\bar b',a)_i = d'_i$ for almost all $i$. Then $(\bar b, \f(\bar b,a))\in \Q$ and is $\E_{\pi}$-equivalent to $(\bar b',\bar d')$. \\

Fix $\bar b$ as in the claim. The set $V_{\bar b} = \{\bar d : (\bar b, \bar d) \in \Q\}$ is a pro-type-definable subset of $\St_{\A \bar b}$. It admits an $\A\bar b$-pro-definable map $h_{\bar b}$ to $\Lat_{\pi}$ whose image contains $\Q/ \E_{\pi}$. By compactness, the map $h_{\bar b}$ only depends on finitely many variables, hence is defined on a definable $U_{\bar b} \subseteq \St_{\A \bar b}$ containing $V_{\bar b}$. Hence the image $W_{\bar b} \subseteq \Lat_{\pi}$ of $U_{\bar b}$ under the definable map $h_{\bar b}$ is an $\A\bar b$-definable set containing $\Q/ \E_{\pi}$, and it is stable and stably embedded. Let $\psi(x,\bar b)$ be a formula defining $W_{\bar b}$, which we can take so that $(\forall \bar y) \psi(x,\bar y) \to (x \in \Lat_{\pi})$.

By the claim, the following type-definable conditions on a tuple $(c,\bar b)$ are inconsistent:

\begin{itemize}
\item $c\in \Q/ \E_{\pi}$;
\item $\bar b$ is a Morley sequence of $\q$;
\item $\neg \psi(c, \bar b)$.
\end{itemize}

By compactness, there is some $\A$-definable condition $\eta(\bar b)$ and some $\A$-definable $\Lat'_{\pi} \subseteq \Lat_{\pi}$ containing $\Q/ \E_{\pi}$ such that: \[\eta(\bar b) \wedge c\in \Lat'_{\pi} \to \psi(c, \bar b).\]

Now consider the definable subset $\Lat^*_{\pi}$ of $\Lat'_{\pi}$ defined by \[\theta(x) \equiv (\forall \bar y) \eta(\bar y) \to \psi(x, \bar y).\]

The set $\Lat^*_{\pi}$ is $\A$-definable. Furthermore, for any $\bar b$ as above, it is included in the $\A\bar b$-definable set $W_{\bar b}$, which we know to be stable and stably embedded. Therefore $\Lat^*_{\pi}$ is stable and stably embedded.

Let $\Lat^* = \sqcup \Lat^*_{\pi}$ for $\pi$ ranging over all projections to a finite sub-tuple. Then $\Lat^*$ is in $\St_\A$. Let $\alpha$ be the pro-definable function that enumerates $\dcl(\A a) \cap \Lat^*$ for $a\models \p \upharpoonright_{\A}$.

\underline{Claim $2$:} \emph{The $\A$ pro-definable map $\alpha \colon \p \rightarrow \Lat^*$ dominates $\p$ over $\A$.}\\
\textit{Proof:} Let $a\models\p\rest_{\A}$ and assume that $\alpha(a)\downfree^{f}_{\A} e$. We aim to show that $a\downfree^{f}_{\A}e$. Let $\bar{b}=(b_{i})_{i \in |\T(\A)|^{+}}$ be a Morley sequence in $\q$ over $\A$ such that $\alpha(a) \downfree^{f}_{\A} e \bar b$. By Lemma \ref{lem: big tail independent when there is a non-forking extension} there is some $\J$ such that $a \downfree^{f}_{\A } \bar{b}_{\J}$ and $||\T(\A)|^{+} \backslash \J| \leq |\T(\A)|$. In particular, since $\alpha(a) \downfree_{\A}^{f} e \bar b$ by monotonicity and base monotonicity (\cref{prop: properties of non-forking}.(3) and (4)) we have:
\begin{equation}\label{eqlast1}
\alpha(a) \downfree^{f}_{\A \bar b_{\J}} e. 
\end{equation}
By compactness and Claim $1$ we can find $\bar{d}$ such that for each finite projection $\pi$, $\pi(\bar{b}_{\J}, \bar{d})=\alpha(a)\cap \Lat_{\pi}$. Since $\bar{d}\subseteq \St_{\A \bar{b}_{\J}}$, the type $\tp(\bar{d}/ \A \bar{b}_{\J} \alpha(a))$ is a stable type. As any set in a stable theory is an extension basis, we may further assume that 
\begin{equation}\label{eqlast2}
\bar{d} \downfree^{f}_{\A \bar{b}_{\J} \alpha(a)} e.
\end{equation}
By \cref{eqlast1}, \cref{eqlast2} and left transitivity (\emph{c.f.} \cref{prop: properties of non-forking}.(5)) $\bar{d} \alpha(a) \downfree^{f}_{\A \bar{b}_{\J}} e$ thus $\bar{d} \downfree^{f}_{\A \bar{b}_{\J}} e$. By Proposition \ref{prop: domination by the canonical base}, $a \downfree^{f}_{\A \bar{b}_{\J}} e$. Since $a \downfree_{\A} \bar{b}_{\J}$ by
right transitivity $a \downfree^{f}_{\A} \bar{b_{\J}} e$ (\emph{c.f.} \cref{prop: generically stable properties}.(5)). In particular,  $a \downfree^{f}_{\A} e$. This concludes the proof of Theorem \ref{thm: descent with invariant extension}.\end{proof}

\subsection{The general case}

We now move to the proof of \cref{thm:descent}, which follows closely the argument in Theorem \ref{thm: descent with invariant extension} in the previous section. Assume that $\p$ is a global $\A$-invariant type that is stably dominated over $\A b$, we aim to show that $\p$ is stably dominated over $\A$.
\begin{remark}\label{rem:results} We summarize a few results that we had already obtained that will be used in this section.  
\begin{enumerate}
\item By Proposition \ref{prop: basic facts stable domination}.(1) we may assume that the base is algebraically closed i.e. $\A=\acl(\A)$.
\item By \cref{prop: domination by the canonical base} $\p^{\tensor n}$ is generically stable for all $n< \omega$.  
\item By \cref{cor: total sequence}, there is an $\A$-indiscernible sequence $\bar{b}_{0}=(b_{i})_{i \in |\T(\A)|^{+}}$ satisfying the following conditions:
\begin{itemize}
\item $b_{i} \downfree_{\A}^{\p} b_{<i}$ for all $i$,
\item $\bar{b}_{0}$ is $\acl^{\vee}$-independent over $\A$, i.e. for any $\acl^{\vee}(\A b_{\I}) \cap \acl^{\vee}(\A b_{\J})$ for all finite sets $\I, \J$ with $\I \cap \J=\emptyset$.
\item $\bar{b}_{0}$ is $\acl$-independent over $\A b_{0}$, i.e. for all $0<\I< \J \subseteq \T(\A)^{+}$ we have $b_{\I} \downfree_{\A b_{0}}^{\acl} b_{\J}$. 
\end{itemize}
\end{enumerate}
\end{remark}

\begin{notation}\label{not: sequences of interest} Through the entire section we fix $\p$ a global $\A$-invariant type that is stably dominated over $\A b_0$. Let $\bar{b}_{0}$ be the sequence given by \cref{rem:results}.$(3)$. Through this section we will work with $\A$-indiscernible sequences $\bar b=(b_{i})_{i \in \I}$ where $|\I| \geq |\T(\A)|^{+}$ such that $\EMtype(\bar b/ \A)= \EMtype(\bar b_{0}/ \A)$ instead of Morley sequences of $\q$ over $\A$. We will write $\bar{b}\equiv_{\EM(\A)} \bar{b}_{0}$ to indicate $\EMtype(\bar b/ \A)= \EMtype(\bar b_{0}/ \A)$.
\subsubsection{Extracting a large subsequence independent from $a \models \p \rest_{\A}$ in the general case}
In the following lemma we tackle the first step in the proof; it corresponds to a generalization of \cref{lem: big tail independent when there is a non-forking extension}.
\end{notation}
\begin{lemma}\label{lem: independent over big tail p^{n} generically stable} Let $\p$ be a global $\A$-invariant type. Let $(b_{i})_{i \in |\T(\A)|^{+}}$ be an $\A$-indiscernible sequence such that:
\begin{itemize}
\item  for all $i$ $b_{>i} \downfree_{\A}^{\p} b_{\leq i}$;
%\item $(b_{i})_{i \in \I}$ is $\acl^{\vee}$-independent over $\A$,
\item 
$(b_{i})_{i >0}$ is $\acl$-independent over $\A b_{0}$. 
\end{itemize}

Assume $\p$ is stably dominated over $\A b_{0}$. Then there is a subset $\J \subseteq \I$ such that $|\I \setminus \J| \leq |\T(\A)|$ and such that $a \downfree^{f}_{\A} b_{J}$. 
\end{lemma}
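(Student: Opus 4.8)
The plan is to follow, over the weaker independence relations $\downfree^{\p}$ and $\acl$-independence, the pattern of the proof in the case where $\tp(b/\A)$ has a global $\A$-invariant extension, namely of \cref{prop: a independent from some element in morley seq of q} together with \cref{lem: big tail independent when there is a non-forking extension}; here $a\models\p\rest_{\A}$ denotes the realization in the statement. First I would record that for every index $i$ the type $\p$ is stably dominated over $\A b_{i}$ (apply an $\A$-automorphism sending $b_{0}$ to $b_{i}$, using that $\p$ is $\A$-invariant), and that the tail $(b_{j})_{j>i}$ is indiscernible and $\acl$-independent over $\A b_{i}$ (both reduce, via the $\A$-indiscernibility of $\bar b$, to the hypothesis that $(b_{j})_{j>0}$ is $\acl$-independent over $\A b_{0}$). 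The \emph{going up} step is then immediate: if $a\models\p\rest_{\A b_{i}}$ for some $i$, applying \cref{lem:independent subsequence for stably dominated types} with $\B=\A b_{i}$ to $(b_{j})_{j>i}$ gives $\J\subseteq\I_{>i}$ with $|\I_{>i}\setminus\J|\le|\T(\A)|$ and $a\downfree^{f}_{\A b_{i}}b_{\J}$, whence $a\models\p\rest_{\A b_{i}b_{\J}}$ by right transitivity (\cref{prop: generically stable properties}(5)).

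The main work is the \emph{going down} step: if $a\models\p\rest_{\A b_{i}}$ for some $i$, then $\{j<i : a\not\models\p\rest_{\A b_{j}}\}$ has size at most $|\T(\A)|$. I would prove this by contradiction, transcribing Step 2 of \cref{prop: a independent from some element in morley seq of q} but replacing "Morley sequence of $\q$ over the current base" throughout by "$\A$-indiscernible sequence of the same $\EM$-type over $\A$ as $\bar b$ satisfying the $\p$- and $\acl$-independence clauses of \cref{cor: total sequence}". After the pigeonhole reduction one fixes $\phi\in\p$ with $\neg\phi(a,b_{j})$ for the (assumed $\ge|\T(\A)|^{+}$ many) bad $j<i$ and, using going up, one may assume $a$ realizes $\p$ over the whole tail $b_{\ge i}$. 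One then builds inductively, by compactness, interleaved blocks $(\bar b_{k})_{k\le|\T(\A)|^{+}+1}$ — assembled into a single $\A$-indiscernible sequence of the correct $\EM$-type with the required independence — and realizations $a_{k}\models\p\rest_{\A a_{<k}\bar b_{\le k}}$ with $\neg\phi(a_{k},b)$ for $b$ in $\bar b_{k+1}$ and $a_{j}\not\models\p\rest_{\A b}$ for $j<k$, $b$ in $\bar b_{k}$; finite satisfiability at successor stages comes from compactness together with the extension properties of $\downfree^{\p}$ (\cref{ lem: properties p relation}(1)) and of $\acl$-independence (Neumann's Lemma, \cref{rem: properties aclV independence}), and the propagation of $a_{j}\not\models\p\rest_{\A b}$ across far-apart blocks is forced by going up exactly as in the invariant case. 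Since $(a_{k})_{k<|\T(\A)|^{+}}$ is then a Morley sequence of $\p$ over $\A$ while every $a_{k}$ fails $\p\rest_{\A b}$ for each $b$ in the first block $\bar b_{|\T(\A)|^{+}+1}$, this contradicts \cref{ fact: first things about generically stable}(2).

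To conclude I would produce a \emph{good} index, mirroring Step 3: by full existence for $\downfree^{f}$ (\cref{prop: properties of non-forking}(7)) there is $b_{*}\equiv_{\A}b$ with $a\downfree^{f}_{\A}b_{*}$, which (combining the extension properties of $\downfree^{\p}$, $\acl$-independence and $\downfree^{f}$) may be taken so that $\bar b$ followed by $b_{*}$ is still $\A$-indiscernible of the correct $\EM$-type with the required independence; applying going down to this longer sequence at the index of $b_{*}$ shows $\{j\in\I : a\not\models\p\rest_{\A b_{j}}\}$ has size at most $|\T(\A)|$. Hence there is a good index $i_{0}$ with $|\I_{<i_{0}}|\le|\T(\A)|$, and going up at $i_{0}$ produces $\J\subseteq\I_{>i_{0}}$ with $a\models\p\rest_{\A b_{i_{0}}b_{\J}}$ and $|\I_{>i_{0}}\setminus\J|\le|\T(\A)|$; then $a\downfree^{f}_{\A}b_{\J\cup\{i_{0}\}}$ and $|\I\setminus(\J\cup\{i_{0}\})|\le|\T(\A)|$, as required. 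The main obstacle is the going-down construction: one must check that the combined requirements at each successor stage (correct $\EM$-type, $\p$-independence, $\acl$-independence over the base point, finitely many $\neg\phi$) are jointly finitely satisfiable, which is exactly where one uses that these conditions are either compactness-friendly or governed by ind-definable generically stable partial types; a secondary delicate point is making the choice of $b_{*}$ in the last step genuinely $\downfree^{f}$-generic rather than merely $\downfree^{\p}$- or $\downfree^{\GS}$-generic.
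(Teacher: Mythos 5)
Your proposal takes a genuinely different, much longer route than the paper, and the two obstacles you flag at the end are real gaps in it. The paper's proof is short and avoids the going-up/going-down machinery of \cref{prop: a independent from some element in morley seq of q} entirely: by Ramsey and compactness, extend the sequence \emph{on the left} to $(b_i)_{i\in\I_0+\I}$ with $|\I_0|\ge|\T(\A)|^+$, preserving the EM-type over $\A$ and hence the two independence hypotheses; $\p^{\otimes n}$ is generically stable for all $n$ by \cref{fact: stable domination implies generically stable for all n}, so the generically stable partial type $\pi_\p$ of \cref{prop: basic facts on p rel} is available; the hypothesis $b_i\downfree^{\p}_{\A}b_{<i}$ is exactly the statement that $(b_i)$ is a Morley sequence in $\pi_\p$ over $\A$, so by generic stability of $\pi_\p$ at most $|\T(\A)|$ indices $i$ have $b_i\not\models\pi_\p\rest_{\A a}$, giving $i_*\in\I_0$ with $b_{i_*}\models\pi_\p\rest_{\A a}$; by symmetry of $\downfree^\p$ and \cref{ lem: properties p relation}(4) this means $a\models\p\rest_{\A b_{i_*}}$. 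One then applies \cref{lem:independent subsequence for stably dominated types} to the tail $(b_i)_{i\in\I}$ over $\A b_{i_*}$ and finishes with right transitivity (\cref{prop: generically stable properties}(5)). The $\p$-independence hypothesis was built into the lemma precisely to make the good index automatic.

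Against this, the two points you acknowledge as delicate are not resolvable by the facts you cite. For the going-down step, the inductive compactness argument of \cref{prop: a independent from some element in morley seq of q} relies on the global $\A$-invariant type $\q$ providing a canonical, freely extendable continuation of Morley sequences along any order; the sequence here comes from the Morley-tree construction of \cref{prop: total GS sequences exist}, and neither \cref{ lem: properties p relation}(1) nor \cref{rem: properties aclV independence} nor Neumann's Lemma shows that one can prepend a block with the correct EM-type, $\p$- and $\acl$-independence, and $\neg\phi(a_k,\cdot)$ jointly consistently. For the final ``good index'' step, \cref{prop: properties of non-forking}(7) is right extension (and even presupposes $a\downfree^f_\A b$), not full existence; some $b_*\equiv_\A b$ with $a\downfree^f_\A b_*$ does exist because $\p\rest_\A$ is non-forking, but you additionally need $\bar b + b_*$ to be $\A$-indiscernible of the correct EM-type with the correct independences, and you give no argument that the two demands can be met simultaneously. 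The paper's shortcut sidesteps both issues.
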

\begin{proof}
By Ramsey and compactness we may extend the sequence to a sequence $(b_{i})_{i \in \I_{0}+\I}$ where $|\I_{0}| \geq |\T(\A)|^{+}$. By \cref{rem:results}.(2) $\p^{\tensor n}$ is generically stable for all $n< \omega$.  Let $a \models \p\rest_{\A}$. Let $\pi_{\p}(x)$ be the generically stable partial type given by \cref{prop: basic facts on p rel} such that for any $e$, $b_{i} \models \pi_{\p}(x) \rest_{\A e}$ if and only if $b_{i} \downfree_{\A}^{\p} e$. We then have $b_i \models \pi_{\p} \rest_{\A b_{<i}}$ for every $i$ and by generic stability, there is some $i_{*} \in \I_{0}$ such that $b_{i_*} \models \pi_{\p} \rest_{\A a}$. By definition of $\downfree^{\p}$, this implies that $a \models \p\rest_{\A b_{i_{*}}}$, in particular 
\begin{equation}\label{eqind}
a \downfree_{\A}^{f} b_{i_{*}}.
\end{equation}

 The sequence $(b_{i})_{i \in \I}$ is indiscernible and $\acl$-independent over $\A b_{i_{*}}$ and by assumption $\p$ is stably dominated over $\A b_{i_{*}}$. By Lemma \ref{lem:independent subsequence for stably dominated types}, there is a subset $\J$ such that $|\I \setminus \J| \leq |\T(\A)|$ and such that $a \downfree_{\A b_{i_{*}}}^{f} b_{\J}$. By right transitivity and \cref{eqind} we have $a \downfree_{\A}^{f} b_{i_{*}}b_{\J}$. By monotonicity (\emph{c.f} \cref{prop: properties of non-forking}.(3)) we conclude that $a \downfree_{\A}^{f} b_{\J}$ as required.
\end{proof}
\begin{remark}Let  $\p$ be a global type and $\bar{b}=(b_{i})_{i \in \I}$ be a sequence as in \cref{not: sequences of interest}. Note that the hypothesis of \cref{prop: domination by the canonical base} hold:
\begin{itemize}
\item By Lemma \ref{lem: independent over big tail p^{n} generically stable} there is $\J \subseteq \I$ such that $|\I \setminus \J| \leq |\T(\A)|$ and $a \downfree^{f}_{\A} b_{\J}$. 
\item Moreover, $\bar{b}_{\I_{>0}}$ is $\acl$-independent over $\A b_{0}$ as this holds for the sequence $\bar{b}_{0}$ (\emph{c.f.} \cref{rem:results}.$(3)$).  
\end{itemize}
Therefore, for $i \in \J$ we can define $\f(\bar b, a)_{i}$ and $\f_{\pi}(\bar b, a)_{i}$ as in \cref{not notation almost for all}.
\end{remark}
\subsection{The main proof for the general case}
The following is a generalization of \cref{lem: almost all are the same or different} with essentially the same proof.  
\begin{lemma}\label{lem: almost all general} Let $\p$ be a global type and $\bar{b}=(b_{i})_{i \in \I}$ a sequence as in \cref{not: sequences of interest}. Let $a,a'\models \p \rest_{\A}$. Then either:
\begin{itemize}
\item for almost all $i \in \I$,   $a, a'\models \p \rest_{\A b_{i}} \text{ and } \f(\bar b, a)_{i} = \f (\bar b, a')_{i}$ or
\item for almost all $i \in \I$,   $a,a'\models \p \rest_{\A b_{i}}, \text{ and } \f(\bar b, a)_{i} \neq \f (\bar b, a')_{i}$.
\end{itemize}
A similar statement holds for $\f_{\pi}$ where $\pi$ is some fixed projection on some finite sub-tuple. 
\end{lemma}
\begin{proof}
By \cref{lem: independent over big tail p^{n} generically stable}, there is some $\J \subseteq \I$ such that $|\I \setminus \J| \leq |\T(\A)|$ and for any $i \in \J$ both $a$ and $a'$ realize $\p\rest_{\A b_{i}}$. By compactness we can find a sequence $\bar b_*$ as in \cref{not: sequences of interest} such that $\bar{b}+\bar b_{*}$ is $\A$-indiscernible. By Fact \ref{fact: facts canonical base sequence}$(1)$ $\f(\bar b, a)_i \subseteq \dcl(\A b_i;\bar b_* \St_{\A \bar b_*}(\A \bar{b}_{*};a))$ and $\f(\bar b, a ')_i \subseteq \dcl(\A b_i;\bar b_* \St_{\A \bar b_*}(\A \bar b_{*}; a'))$. Since $\St_{\A \bar b_*}$ is stably embedded over $\A \bar b_*$, the property $\f(\bar b, a)_i =\f (\bar b, a')_{i}$ only depends on $\tp(\St_{\A \bar b_*}(\A \bar{b}_{*}; b_i)/\St_{\A \bar b_*}(\A \bar b_{*}; aa'))$ (\emph{c.f.} \cref{lem: facts stably embedded}.(1)).

The sequence $(\St_{\A \bar b_*}(\A \bar b_{*}; b_j))_{j \in \J}$ is indiscernible over $\A \bar{b}_{*}$ in the stable structure $\St_{\A \bar b_*}$, thus after removing at most $|\T(\A)|$ elements from the sequence $\bar b=(b_j)_{j \in \J}$, we may assume that it is indiscernible over $\St_{\A \bar b_*}( \A \bar b_{*}; aa')$. The result follows. The same argument applies to $\f_{\pi}$. 
\end{proof}
In particular, we can provide an analogue of  \cref{def:same germs}.
\begin{definition}\label{def: same germs gen} Let $\p$ be a global type and $\bar{b}$ a sequence as in \cref{not: sequences of interest}. Let $a,a' \models \p \rest_{\A}$.  Fix a projection $\pi$ of $\f(\bar b, a)_{i}$ to a finite sub-tuple. We say that \emph{$a$ and $a'$ have the same $\pi$-germ on $\bar b$ if $\f_{\pi}(\bar b, a)_{i}=\f_{\pi}(\bar b, a')_{i}$ for almost all $i$}.
\end{definition}
Consequently,  analogues of  \cref{cor: reducing to n} and \cref{lem: definability of almost everywhere both sequences agree} hold in this case. We include their statements and their proofs, which require minor modifications. 
\begin{corollary}\label{cor: reducing to n general} Let $\p$ be a global type and $\bar{b}$ a sequence as in \cref{not: sequences of interest}. Let $\pi$ be the projection of $\f(\bar b,a)_{i}$ on some finite sub-tuple, then there is an $n < \omega$ such that either $\f_{\pi}(\bar b, a)_{i}= \f_{\pi}(\bar b, a')_{i}$ for at most $n$ values of $i \in \I$ or $\f_{\pi}(\bar b, a)_{i}\neq \f_{\pi}(\bar b, a')_{i}$ for at most $n$ values of $i \in \I$. 
\end{corollary}
\begin{proof}
 Assume not. Since $\p$ is generically stable, it $\A$-definable. By compactness we can find $\bar b'=(b'_{j})_{j \in |\T(\A)|^{+}}$ such that $\bar b' \equiv_{\EM(\A)} \bar{b}_{0}$ such that  
 \begin{align*}
 \B_{0}&= \{ j< |\T(\A)|^{+} \ | \ a,a' \models \p \rest_{\A b_{j}} \ \text{and} \  \ \f_{\pi}(\bar b', a)_{j}= \f_{\pi}(\bar b', a')_{j}\}, \ \text{and}\\
 \B_{1}&= \{ j< |\T(\A)|^{+} \ | \ a,a' \models \p \rest_{\A b_{j}} \ \text{and} \  \ \f_{\pi}(\bar b', a)_{j}\neq \f_{\pi}(\bar b', a')_{j}\} 
 \end{align*}
 have both size $|\T(\A)|^{+}$. 
 This contradicts \cref{lem: almost all general}.  
\end{proof}
\begin{lemma}\label{lem: definability of almost everywhere both sequences agree general} Let $\p$ be a global type as in \cref{not: sequences of interest}. Let $\R'$ be the $\A$-type definable set  
\begin{equation*}
\R'=\{ (a,a', \bar b) \ | \ a,a' \models \p \rest_{\A} \  \text{and} \ \bar{b}=(b_{i})_{i \in |\T(\A)|^{+}} \ \text{is a sequence such that}\ \bar{b} \equiv_{\EM(\A)} \bar{b}_{0}\} 
\end{equation*}
Let $\pi$ be some fixed projection of $\f(\bar{b},a)_{i}$ on some finite sub-tuple. The statement \emph{$ \forall^{a}_{i} \f_{\pi}(\bar b, a)_{i}= \f_{\pi}(\bar b, a')_{i}$} is a definable condition for $(a,a',\bar b) \in \R'$.
\end{lemma}
\begin{proof}
 Since $\p$ is $\A$-definable we can apply compactness and \cref{cor: reducing to n general} to show the existence of some  $n< \omega$ such that for any $a,a'\models\p\rest_{\A}$ and every sequence $\bar{b} \equiv_{\EM(\A)} \bar{b}_{0}$ of length $|\T(\A)|^{+}$ either $ \f_{\pi}(\bar b, a)_{i}= \f_{\pi}(\bar b, a')_{i}$ holds for at most $n$-elements or  $\f_{\pi}(\bar b, a)_{i}\neq \f_{\pi}(\bar b, a')_{i}$ holds for at most $n$-elements.  Let $\F=\{ 1,\dots,2n+1\}$. Then 
 
 \begin{align*}
 \forall^{a} i \ \f_{\pi}(\bar b, a)_{i}&= \f_{\pi}(\bar b, a')_{i} \iff \bigvee_{\B \subseteq \F , |\B|= n+1} \bigwedge_{i \in \B} \f_{\pi}(\bar b, a)_{i}= \f_{\pi}(\bar b, a')_{i}. 
 \end{align*}
 As before, this shows that the given condition and its complement are type definable, the statement $\forall_{i}^{a} \f_{\pi}(\bar{b}, a)= \f_{\pi}(\bar{b},a')_{i}$ is a definable condition for $(a,a',\bar{b}) \in \R^{'}$. 
\end{proof}

Let $\Xi_{\pi}(a,a',\bar b)$ be a formula defining $\forall_{i}^{a} \f_{\pi}(\bar{b}, a)= \f_{\pi}(\bar{b},a')_{i}$. Of course only finitely many elements from the tuple $\bar b$ appear in $\Xi_{\pi}$. Note that it follows from the arguments above that $\neg \Xi_{\pi}(a,a',\bar b)$ is equivalent to $\forall_{i}^{a} \f_{\pi}(\bar{b}, a) \neq \f_{\pi}(\bar{b},a')_{i}$.

We can now give an analogue of  \cref{def: Q and relations}. 
\begin{definition}\label{def: Q and relations general} Let $\p$ be a global type as in \cref{not: sequences of interest}. Let
\[\Q' =\{ \tp(\bar b, \bar d/ \A) \ | \ \bar{b}=(b_{i})_{i \in |\T(\A)|^{+}} \ \text{such that} \ \bar b \equiv_{\EM(\A)} \bar b_{0}, \ 
\text{and there is}  \ a \vDash \p \rest_{\A} \ \text{such that} \ d_{i}= \f(\bar b, a)_{i}\}\].\\
Let $\E'_{\pi}$ be the relation on the set of realizations of  $\Q'$  defined by  \[ (\bar b, \bar d) \E'_{\pi} (\bar b', \bar d') \iff \exists a\models \p\rest_{\A} \big(\forall^a i \ \f_{\pi}(\bar b, a)_i = d_i \wedge \f_{\pi}(\bar b',a)_i = d'_i\big).\]
\end{definition}
 The proof of \cref{lem:same germs} does not go through, so we cannot assert immediately that we can replace $\exists a$ by $\forall a$ in the definition of $\E'_{\pi}$ and it is not longer clear that $\E'_{\pi}$ is an equivalence relation on the set of realizations of $\Q'$. The proof of this fact will be slightly more involved.

\begin{definition} Let $\bar{b}$, $\bar{b}'$ be sequences as in \cref{not: sequences of interest}.  Let $\pi$ be some fixed projection on a finite sub-tuple. The pair $(\bar b, \bar b')$ satisfies \emph{ $\exists=_{\pi}\forall$}  if whenever $a,a' \models \p \rest_{\A}$ have the same $\pi$ germ on $\bar b$ they have the same $\pi$ germ on $\bar b'$, i.e.
\begin{equation*}
\forall^{a}_{i} \f_{\pi}(\bar b, a)_{i}= \f_{\pi}(\bar b, a') \rightarrow \forall^{a}_{i}\f_{\pi}(\bar b', a)_{i}= \f_{\pi}(\bar b', a'). 
\end{equation*}
\end{definition}
The following is a useful observation that follows immediately by definition.
\begin{remark}\label{rem: transitivity} Let $\bar{b}, \bar{b}', \bar{b}''$ be sequences as in \cref{not: sequences of interest}. Assume $(\bar b, \bar b')$ and $(\bar b', \bar b'')$ satisfy $\exists =_{\pi}\forall$. Then $(\bar b, \bar b'')$ satisfies $\exists =_{\pi} \forall$. 
\end{remark}

\begin{remark}\label{rem: if sequence extended then same germ} Let  $\bar{b}=(b_{i})_{i \in \I}$ and $\bar{b}'=(b_{i})_{i \in \I'}$ be such that $\bar b \equiv_{\EM(\A)} \bar b_{0}$, $\bar b' \equiv_{\EM(\A)} \bar b_{0}$ and both $|\I|, |\I'| \geq |\T(\A)|^{+}$. If there is a sequence $\bar{b}''=(b_{k})_{k \in \K}$ such that $|\K| \geq |\T(\A)|^{+}$ and $\bar b'' \equiv_{\EM(\A)} \bar{b}_{0}$  such that $\bar{b}+\bar{b}''$ and $\bar{b}'+\bar{b}''$ are $\A$-indiscernible, then $(\bar b, \bar b')$ have the same $\pi$-germ for every finite projection $\pi$.
\end{remark}
\begin{proof}
  If there is a sequence $\bar b''$ such that $\bar b +\bar b'' \equiv_{\EM(\A)} \bar{b}_{0}$ and $\bar b'+\bar b'' \equiv_{\EM(\A)} \bar{b}_{0}$. Then the proof of Lemma \ref{lem:same germs} goes through. We include details for sake of completeness. \\
  By Fact \ref{fact: facts canonical base sequence} (1) $\f(\bar b, a)_{i}= \f(\bar b+\bar b'',a)_{i}$  for all $i \in \I$, and similarly for $a'$. Since $a$ and $a'$ have the same $\pi$-germ over $\bar b$ then $\f_{\pi}(\bar{b},a)_{i}=\f_{\pi}(\bar{b},a')_{i}$ for almost all $i \in \I$, and therefore
     $ \f_{\pi}(\bar b+ \bar b'', a)_i = \f_{\pi}(\bar b+ \bar b'',a')_i$ for almost all $i \in \I$. By \cref{lem: almost all general}, $\f_{\pi}(\bar b+\bar b'',a)_j = \f_{\pi}(\bar b+\bar b'',a')_j$ holds for almost all $j \in \I' \cup \K$. In particular  $|\{ k \in \K \ | \ \f_{\pi}(\bar b'',a)_k \neq \f_{\pi}(\bar b'',a')_k \}| \leq |\T(\A)|$, since  $\f_{\pi}(\bar b'',a)_k= \f_{\pi}(\bar b+\bar b'',a)_k$ for $k \in \K$ by the definition $\f$; and similarly for $a'$. Likewise  $\f_{\pi}(\bar b'+ \bar b'', a)_{k}= \f_{\pi}(\bar b'', a)_{k}$ for $k \in \K$ and for $a'$ as well.\\ 
    By \cref{lem: almost all general}, $\f_{\pi}(\bar b'+\bar b'',a)_j = \f_{\pi}(\bar b'+\bar b'',a')_j$ holds for almost all $j \in \I' \cup \K$. Hence $|\{ j \in \I' \ | \ \f_{\pi}(\bar b'+\bar b'',a)_j \neq \f_{\pi}(\bar b'+\bar b'',a')_j \}| \leq  |\T(\A)|$, and by Fact \ref{fact: facts canonical base sequence} (1) $\f(\bar b'+ \bar b'', a)_{j}= \f(\bar b', a)_{j}$ for $j \in \I'$ and correspondingly for $a'$. Consequently, $a$ and $a'$ have the same $\pi$-germ on $\bar b'$. 
\end{proof}

\begin{lemma}\label{lem: fixing equivalence relation general case} Let $\Q'$ and $\E'_{\pi}$ as in \cref{def: Q and relations general}. The relation  $\E'_{\pi}$ is an equivalence relation on the set of realizations of $\Q'$. 
\end{lemma}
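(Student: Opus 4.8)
\emph{Proof plan.} Reflexivity and symmetry are essentially formal. If $(\bar b,\bar d)$ is a realization of a type in $\Q'$, then by definition of $\Q'$ there is $a\models\p\rest_\A$ with $d_i=\f(\bar b,a)_i$ for almost all $i$, and this same $a$ witnesses $(\bar b,\bar d)\,\E'_\pi\,(\bar b,\bar d)$ (the ``$\forall^a i$'' in the definition of $\E'_\pi$ asks for nothing more). Symmetry is immediate because the defining condition for $\E'_\pi$ is a conjunction and hence unchanged under swapping its two arguments, the same witness serving for both orders. So the whole content of the lemma is transitivity.

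For transitivity I would argue as follows. Let $a$ witness $(\bar b,\bar d)\,\E'_\pi\,(\bar b',\bar d')$ and $a'$ witness $(\bar b',\bar d')\,\E'_\pi\,(\bar b'',\bar d'')$. Unwinding the definition, for almost all $i$ we get $\f_\pi(\bar b,a)_i=\f_\pi(\bar b,a')_i$ being false is impossible to rule out yet, but we do get $\f_\pi(\bar b',a)_i=\f_\pi(\bar b',a')_i$ for almost all $i$, i.e.\ $a$ and $a'$ have the same $\pi$-germ on $\bar b'$ (see \cref{def: same germs gen}). If I knew that $a$ and $a'$ also have the same $\pi$-germ on $\bar b$, then $\f_\pi(\bar b,a')_i=\f_\pi(\bar b,a)_i$ for almost all $i$, and together with $\f_\pi(\bar b'',a')_i$ being the relevant coordinate of $\bar d''$ this shows $a'$ witnesses $(\bar b,\bar d)\,\E'_\pi\,(\bar b'',\bar d'')$, finishing the proof. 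Hence it suffices to prove the following \emph{Key Claim}: for all $\A$-indiscernible $\bar b,\bar b'$ with $\bar b\equiv_{\EM(\A)}\bar b_0\equiv_{\EM(\A)}\bar b'$, the pair $(\bar b',\bar b)$ satisfies $\exists=_\pi\forall$ for every finite projection $\pi$ (indeed by symmetry this holds for all ordered pairs).

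To prove the Key Claim, by \cref{rem: if sequence extended then same germ} and \cref{rem: transitivity} it is enough to produce an $\A$-indiscernible $\bar b^*$ with $\bar b^*\equiv_{\EM(\A)}\bar b_0$, $|\bar b^*|>|\T(\A)|$, such that \emph{both} $\bar b+\bar b^*$ and $\bar b'+\bar b^*$ are $\A$-indiscernible: then $(\bar b',\bar b^*)$ and $(\bar b^*,\bar b)$ satisfy $\exists=_\pi\forall$ by \cref{rem: if sequence extended then same germ}, hence so does $(\bar b',\bar b)$ by \cref{rem: transitivity}. This is where generic stability enters: by the construction of $\bar b_0$ in \cref{cor: total sequence} (via total $\GS$-Morley sequences) together with \cref{prop: basic facts on p rel}, the sequence $\bar b_0$ — and therefore every $\A$-indiscernible sequence with the same $\EM$-type over $\A$, since being such a Morley sequence is a condition on types of finite subtuples — is a Morley sequence over $\A$ in a single $\A$-invariant generically stable partial type $\pi_\star$. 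Choosing $\bar b^*=(b^*_k)_{k<|\T(\A)|^+}$ with $b^*_k\models\pi_\star\rest_{\A\bar b\bar b' b^*_{<k}}$, both $\bar b+\bar b^*$ and $\bar b'+\bar b^*$ are again Morley sequences over $\A$ in $\pi_\star$, hence $\A$-indiscernible by generic stability of $\pi_\star$, and $\bar b^*$, being a tail of an $\A$-indiscernible sequence of $\EM$-type $\bar b_0$, satisfies $\bar b^*\equiv_{\EM(\A)}\bar b_0$. This proves the Key Claim and the lemma.

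I expect the main obstacle to be the last step: pinning down that sequences of $\EM$-type $\bar b_0$ are Morley sequences in one generically stable partial type over $\A$ — this is where it matters that \cref{cor: total sequence} bundles $\downfree^{\p}$, $\downfree^{\acl^{\vee}}$ and $\acl$-independence over the first coordinate into a single such type — and that Morley sequences in a generically stable partial type are $\A$-indiscernible and, crucially, admit a common $\A$-indiscernible right-extension (the ``common continuation'' that follows from generic stability, via the partial-type analogue of \cref{ fact: first things about generically stable}(2)). This common-continuation phenomenon is exactly what is free in the invariant case of \cite{HHM}, where any two of the relevant sequences are Morley sequences in the single global invariant $\q$; recovering it here is precisely the reason \cref{lem:same germs} fails in general and \cref{rem: if sequence extended then same germ} had to be isolated.
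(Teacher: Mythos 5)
Your reduction of the lemma to the ``Key Claim'' --- that every pair $(\bar b,\bar b')$ with $\bar b\equiv_{\EM(\A)}\bar b'\equiv_{\EM(\A)}\bar b_0$ satisfies $\exists=_\pi\forall$ --- matches the paper's strategy, as does the use of \cref{rem: if sequence extended then same germ} and \cref{rem: transitivity}. But your argument for the Key Claim has a genuine gap, and it is exactly the gap the paper's $\acl^{\vee}$-machinery is designed to bridge.

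You argue that $\bar b$ and $\bar b'$ admit a common $\A$-indiscernible right-extension $\bar b^*$ because both are Morley sequences in a single generically stable partial type $\pi_\star$ over $\A$, so that choosing $b^*_k\models\pi_\star\rest_{\A\bar b\bar b' b^*_{<k}}$ makes $\bar b+\bar b^*$ and $\bar b'+\bar b^*$ $\A$-indiscernible ``by generic stability of $\pi_\star$.'' This step is false. The definition of a generically stable partial type only governs the asymptotic behaviour of the formulas $\phi(x,b)\in\pi_\star$ along the sequence; it places no constraint on other $\La(\A)$-formulas and in particular does not pin down the EM-type of the Morley sequence. The trivial partial type $x=x$ is generically stable over $\emptyset$ and every sequence is a Morley sequence in it, so Morley sequences in a generically stable partial type are in general not indiscernible; even when $\bar b$ and $\bar b'$ happen to be indiscernible of the same EM-type, there is no reason the continuation $\bar b^*$ you build can be chosen to make both concatenations indiscernible \emph{of the right EM-type}. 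In the presence of a global $\A$-invariant extension $\q$ one just takes a Morley sequence of $\q$ over $\A\bar b\bar b'$ and everything works, but dropping that hypothesis is the whole point of the lemma, and a direct common continuation is precisely what is lost.

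The paper does not claim a direct common continuation. Instead it packages ``admits a common $\A$-indiscernible right-extension of the right EM-type'' as the edge relation of an $\A$-definable graph $\mathrm{G}$ on sequences $\equiv_{\EM(\A)}\bar b_0$ (using \cref{lem: definability of almost everywhere both sequences agree} and compactness to extract a formula $\phi$, and \cref{rem: reduction to graphs over the emptyset} to make $\mathrm{G}$ $\emptyset$-definable). It then uses the $\acl^{\vee}$-independence built into $\bar b_0$ by \cref{cor: total sequence}: writing $\bar b_0$ as a concatenation $\bar b+\bar b''$ of two $\acl^{\vee}$-independent pieces that are adjacent in $\mathrm{G}$, the connected component $[x]$ they share lies in $\acl^{\vee}(\A\bar b)\cap\acl^{\vee}(\A\bar b'')=\acl^{\vee}(\A)$, hence (since $\A=\acl^{eq}(\A)$) in $\dcl^{\vee}(\A)$ by \cref{lem: elements in aclV are definable when base is acl closed}, hence by \cref{lem: characterizing being in aclV} the component is cut out by a formula of bounded $\mathrm{G}$-diameter $n$. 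Any two sequences of EM-type $\bar b_0$ then lie at $\mathrm{G}$-distance $\leq n$, and \cref{rem: transitivity} propagates $\exists=_\pi\forall$ along the path. Note that this gives a \emph{bounded} path of common continuations, not a single one: the paper's conclusion is strictly weaker than your Key Claim, and that weakening is essential.
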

\begin{proof}
It is clear that $\E_{\pi}'$ is a reflexive and symmetric relation. To show that $\E_{\pi}'$ is transitive it is sufficient to show that all pairs $(\bar b, \bar b')$ where $\bar b, \bar{b}'$ are sequences as in \cref{not: sequences of interest} satisfy $\exists=_{\pi} \forall$.\\
By \cref{rem: if sequence extended then same germ} it is inconsistent to find sequences as in \cref{not: sequences of interest} and realizations $a,a'\models \p \rest_{\A}$ such that:
\begin{enumerate}
\item There exists $\bar{b}''$ such that $\bar{b}+\bar{b}''$ and $\bar{b}'+\bar{b}''$ are $\A$-indiscernible;
\item $\Xi_{\pi}(a,a',\bar b) \wedge \neg \Xi_{\pi}(a,a',\bar b')$.
%$\f_{\pi}(\bar b,a)_i = \f_{\pi}(\bar b,a')_i$ for almost all $i$, but $\f_{\pi}(\bar b',a)_i \neq \f_{\pi}(\bar b',a')_i$ for almost all $i$.
\end{enumerate}
Those are type-definable conditions, hence by compactness there is some formula $\phi(\bar x,\bar y)$ such that for any $\bar b, \bar b'$ as in \cref{not: sequences of interest}, $\phi(\bar b, \bar b')$ holds if there is a sequence $\bar b''$ as in \cref{not: sequences of interest} such that $\bar b+ \bar b ''$ and $\bar{b}'+\bar b''$ are $\A$-indiscernible sequence and 
\begin{equation*}
\models \phi(\bar b, \bar b') \rightarrow \neg \big( \Xi_{\pi}(a,a',\bar b) \wedge \neg \Xi_{\pi}(a,a',\bar b')\big).
\end{equation*}

Consider the graph $\mathrm{G}$ whose edges are defined by $\phi(\bar x,\bar y)\vee \phi(\bar y,\bar x)$. (The reader might want to recall \cref{rem: infinite tuples} about infinite tuples of variables.) Since $\bar{b}+\bar{b''}$ is an $\A$-indiscernible sequence, then $\bar{b}$ and $\bar{b''}$ lie in the same connected component $[\bar x]$. Since $\bar b$ and $\bar {b}''$ are $\acl^{\vee}$-independent over $\A$, $[\bar x] \in \acl^{\vee}(\A \bar{b}) \cap \acl^{\vee}(\A \bar{b}'')=\acl^{\vee}(\A)$.  Since $\A=\acl(\A)$ by \cref{lem: elements in aclV are definable when base is acl closed} the connected component $[\bar b]=[\bar x] \in \dcl^{\vee}(\A)$. By \cref{lem: characterizing being in aclV} there is a formula $\psi(\bar y) \in \tp(\bar b/\A)$ and $n \in \mathbb{N}$ such that $\mathrm{diam}(\psi)\leq n$. Hence, since $\bar{b}, \bar{b}' \equiv_{\EM(\A)} \bar{b}_{0}$ then they lie in the same connected component. By \cref{rem: transitivity} the pair $(\bar b,\bar b')$ satisfies $\exists =_{\pi} \forall$, as required.
\end{proof}
By compactness we can find  an $\La(\A)$-definable set $\X$ containing $\Q'$ such that $\E_{\pi}'$ is an equivalence relation on $\X$, and denote $\Lat_{\pi}'$ the $\A$-interpretable set $\X/ \E'_{\pi}$.
The rest of the proof follows exactly as in \cref{thm: descent with invariant extension}; we include details for sake of completeness.

\bigskip
 \begin{proof}[Proof of Theorem \ref{thm:descent}] \  \\
 \underline{Claim $1$:}
  \emph{Let $\bar b=(b_{i})_{i< |\T(\A)|^{+}}$ be a sequence as in \cref{not: sequences of interest}. For each finite projection $\pi$, every class of $\Q'/\E_{\pi}^{'}$ admits a representative of the form $(\bar b,\bar d) \in \Q'$.}\\
 \textit{Proof:} Let $(\bar b',\bar d')$ be an element of $\Q'$. Take $a\models \p \rest_{\A}$ such that $\f(\bar b',a)_i = d'_i$ for almost all $i$. Then $(\bar b, \f(\bar b,a))\in \Q'$ and is $\E_{\pi}$-equivalent to $(\bar b',\bar d')$. \\
Fix $\bar b$ be a sequence as in \cref{not: sequences of interest}. The set $V'_{\bar b} = \{\bar d : (\bar b, \bar d) \in \Q'\}$ is a pro-type-definable subset of $\St_{\A \bar b}$. It admits an $\A\bar b$-pro-definable map $h_{\bar b}$ to $\Lat'_{\pi}$ whose image contains $\Q'/ \E'_{\pi}$. By compactness, the map $h_{\bar b}$ only depends on finitely many variables, hence is defined on a definable $U_{\bar b} \subseteq \St_{\A \bar b}$ containing $V_{\bar b}$. Hence the image $W_{\bar b} \subseteq \Lat'_{\pi}$ of $U_{\bar b}$ under the definable map $h_{\bar b}$ is an $\A\bar b$-definable set containing $\Q'/ \E'_{\pi}$, and it is stable and stably embedded. Let $\psi(x,\bar b)$ be a formula defining $W_{\bar b}$, which we can take so that $(\forall \bar y) \psi(x,\bar y) \to (x \in \Lat'_{\pi})$.

By the first claim, the following type-definable conditions on a tuple $(c,\bar b)$ are inconsistent:

\begin{itemize}
\item $c\in \Q'/ \E_{\pi}'$;
\item $\bar b$ is a sequence as in \cref{not: sequences of interest};
\item $\neg \psi(c, \bar b)$.
\end{itemize}

By compactness, there is some $\A$-definable condition $\eta(\bar b)$ and some $\A$-definable $\Lat'_{\pi} \subseteq \Lat_{\pi}$ containing $\Q'/ \E'_{\pi}$ such that: \[\eta(\bar b) \wedge c\in \Lat'_{\pi} \to \psi(c, \bar b).\]

Now consider the definable subset $\Lat^*_{\pi}$ of $\Lat'_{\pi}$ defined by \[\theta(x) \equiv (\forall \bar y) \eta(\bar y) \to \psi(x, \bar y).\]

The set $\Lat^*_{\pi}$ is $\A$-definable. Furthermore, for any $\bar b$ as above, it is included in the $\A\bar b$-definable set $W_{\bar b}$, which we know to be stable and stably embedded. Therefore $\Lat^*_{\pi}$ is stable and stably embedded.

Let $\Lat^* = \sqcup \Lat^*_{\pi}$ for $\pi$ ranging over all projections to a finite sub-tuple. Then $\Lat^*$ is in $\St_\A$. Let $\alpha$ be the pro-definable function that enumerates $\dcl(\A a) \cap \Lat^*$ for $a\models \p \upharpoonright_{\A}$.

\underline{Claim $2$:} \emph{The $\A$ pro-definable map $\alpha \colon \p \rightarrow \Lat^{*}$ dominates $\p$ over $\A$.}\\
\textit{Proof:} Let $a\models\p\rest_{\A}$ and assume that $\alpha(a)\downfree^{f}_{\A} e$. We aim to show that $a\downfree^{f}_{\A}e$. Let $\bar{b}=(b_{i})_{i \in |\T(\A)|^{+}}$ be a sequence as in \cref{not: sequences of interest} such that $\alpha(a) \downfree^{f}_{\A} e \bar b$. By Lemma \ref{lem: independent over big tail p^{n} generically stable} there is some $\J$ such that $a \downfree^{f}_{\A } \bar{b}_{\J}$ and $||\T(\A)|^{+} \backslash \J| \leq |\T(\A)|$. In particular, since $\alpha(a) \downfree_{\A}^{f} e \bar b$ by monotonicity and base monotonicity (\cref{prop: properties of non-forking}.(3) and (4)) we have:
\begin{equation}\label{eqlast1}
\alpha(a) \downfree^{f}_{\A \bar b_{\J}} e. 
\end{equation}
By compactness and Claim $1$ we can find $\bar{d}$ such that for each finite projection $\pi$, $\pi(\bar{b}_{\J}, \bar{d})=\alpha(a)\cap \Lat_{\pi}$. Since $\bar{d}\subseteq \St_{\A \bar{b}_{\J}}$, the type $\tp(\bar{d}/ \A \bar{b}_{\J} \alpha(a))$ is a stable type. As any set in a stable theory is an extension basis, we may further assume that 
\begin{equation}\label{eqlast2}
\bar{d} \downfree^{f}_{\A \bar{b}_{\J} \alpha(a)} e.
\end{equation}
By \cref{eqlast1}, \cref{eqlast2} and left transitivity (\emph{c.f.} \cref{prop: properties of non-forking}.(5)) $\bar{d} \alpha(a) \downfree^{f}_{\A \bar{b}_{\J}} e$ thus $\bar{d} \downfree^{f}_{\A \bar{b}_{\J}} e$. By Proposition \ref{prop: domination by the canonical base}, $a \downfree^{f}_{\A \bar{b}_{\J}} e$. Since $a \downfree_{\A} \bar{b}_{\J}$ by
right transitivity $a \downfree^{f}_{\A} \bar{b_{\J}} e$ (\emph{c.f.} \cref{prop: generically stable properties}.(5)). In particular,  $a \downfree^{f}_{\A} e$. This concludes the proof of Theorem \ref{thm: descent with invariant extension}.
\end{proof}
\sloppy
\printbibliography 
\end{document}